\newtheorem{theorem}{Theorem}[section]
\newtheorem{corollary}[theorem]{Corollary}
\newtheorem{lemma}[theorem]{Lemma}
\newtheorem{proposition}[theorem]{Proposition}
\theoremstyle{definition}
\newtheorem{definition}[theorem]{Definition}
\newtheorem{remark}[theorem]{Remark}
\newtheorem{notation}[theorem]{Notation}
\newtheorem{example}[theorem]{Example}
\newtheorem{standing notation}[theorem]{Standing notation for groups and spaces}
\theoremstyle{plain}
\newcounter{theoremintro}
\newtheorem{theoremi}[theoremintro]{Theorem}
\newtheorem{corollaryi}[theoremintro]{Corollary}
\newcommand{\Aut}{{\rm Aut}}
\newcommand{\Ad}{{\rm Ad}\,}
\newcommand{\diag}{{\rm diag}}
\newcommand{\id}{{\rm id}}
\newcommand{\cB}{{\mathcal B}}
\newcommand{\cH}{{\mathcal H}}
\newcommand{\cZ}{{\mathcal Z}}
\newcommand{\sC}{{\mathscr C}}
\newcommand{\sD}{{\mathscr D}}
\newcommand{\sE}{{\mathscr E}}
\newcommand{\sM}{{\mathscr M}}
\newcommand{\sO}{{\mathscr O}}
\newcommand{\sP}{{\mathscr P}}
\newcommand{\sS}{{\mathscr S}}
\newcommand{\sT}{{\mathscr T}}
\newcommand{\sU}{{\mathscr U}}
\newcommand{\sV}{{\mathscr V}}
\newcommand{\sW}{{\mathscr W}}
\newcommand{\Cb}{{\mathbb C}}
\newcommand{\Zb}{{\mathbb Z}}
\newcommand{\Tb}{{\mathbb T}}
\newcommand{\Nb}{{\mathbb N}}
\newcommand{\tr}{{\rm tr}}
\newcommand{\alg}{{\rm alg}}
\newcommand{\comp}{c}
\newcommand{\eps}{\varepsilon}
\newcommand{\unit}{1}
\newcommand{\pr}{\prime}
\numberwithin{equation}{section}
\DeclareMathOperator{\Sym}{Sym}
\DeclareMathOperator{\orb}{orb}
\DeclareMathOperator{\Act}{Act}
\DeclareMathOperator{\WA}{WA}
\DeclareMathOperator{\Astar}{A*}
\DeclareMathOperator{\base}{base}
\DeclareMathOperator{\towertop}{top}
\DeclareMathOperator{\Fix}{Fix}
\newlength{\leftstackrelawd}
\newlength{\leftstackrelbwd}
\def\leftstackrel#1#2{\settowidth{\leftstackrelawd}%
{${{}^{#1}}$}\settowidth{\leftstackrelbwd}{$#2$}%
\addtolength{\leftstackrelawd}{-\leftstackrelbwd}%
\leavevmode\ifthenelse{\lengthtest{\leftstackrelawd>0pt}}%
{\kern-.5\leftstackrelawd}{}\mathrel{\mathop{#2}\limits^{#1}}}
\begin{document}

\title{Stable rank one in nonnuclear crossed products}

\begin{abstract}
We initiate an investigation into the local structure  
of simple nonnuclear C$^*$-crossed products by showing that
stable rank one is generic within two natural classes of minimal actions of free groups
on the Cantor set. The arguments also apply to some other free product groups.
Our approach is inspired by Li and Niu's stable rank one
theorem in the amenable setting and also yields a streamlined argument in that case,
along with a generalization to product actions. 
\end{abstract}

\author[Bell]{Jamie Bell}
\address
{Jamie Bell, Mathematisches Institut, University of M\"unster, Einsteinstr.\ 62, 48149 M\"unster, Germany}
\email{jbell@uni-muenster.de}

\author[Geffen]{Shirly Geffen}
\address
{Shirly Geffen, Mathematisches Institut, University of M\"unster, Einsteinstr.\ 62, 48149 M\"unster, Germany} 
\email{sgeffen@uni-muenster.de}
 
\author[Kerr]{David Kerr}
\address
{David Kerr, Mathematisches Institut, University of M\"unster, Einsteinstr.\ 62, 48149 M\"unster, Germany}
\email{kerrd@uni-muenster.de}
 
\date{November 24, 2025}
 
\maketitle

\tableofcontents

\section{Introduction}

Amenability has long established itself as a fundamental and versatile tool
for probing the finer internal structure of noncommutative operator algebras.
The most definitive and consequential classification results in the realms of von Neumann algebras
and simple C$^*$-algebras are premised on amenability as a basic hypothesis.
This is explained by the fact that, in parallel with Rokhlin-type properties in ergodic theory
that permit one to decompose a dynamical system into almost invariant towers,
operator-algebraic amenability is associated with robust approximation
by finite-dimensional algebras, which provide a versatile combinatorial apparatus for 
detecting, manipulating, and managing structural data. One can thus view amenability 
both as a local organizing principle (form) and as an object of study in its own right by way
of the algebras that possess it as a property (content). 

What is remarkable is that the content of the content 
doesn't simply reduce to form in the sense of there being 
a unique amenable object: there are in fact rich classes of 
amenable von Neumann algebras and C$^*$-algebras, their existence owing to the
fact that local matricial structure can be pieced together asymptotically 
in a great variety of incommensurable ways.
There are, to be sure, significant technical and qualitative differences 
between von Neumann algebras and C$^*$-algebras in the way that this bounty manifests itself, and the very first 
classification result in the field---the uniqueness of the hyperfinite II$_1$ factor established by Murray
and von Neumann---did reveal there to be a unique separable amenable object 
among noncommutative probability spaces, i.e., factors possessing a normal tracial state
(the abstract notion of amenability
for von Neumann algebras actually came later, in various guises including most prominently injectivity, but
these were shown by Connes to all be equivalent to hyperfiniteness in the separable case).
It was eventually discovered, however, that there is an abundance of nontracial amenable factors
and of simple nuclear (i.e., amenable) C$^*$-algebras, both tracial and nontracial.

Amenability can also be applied to great effect even when the algebras in question are 
themselves either nonamenable or a priori not known to be amenable, as in Connes's use of the McDuff
property to prove that injectivity implies hyperfiniteness for separable factors \cite{Con76}
and in Ozawa's proof of his solidity result for the von Neumann algebras of hyperbolic groups,
which combines both von Neumann algebraic and C$^*$-techniques \cite{Oza04}.
On the more purely C$^*$-algebraic side, amenability is everywhere manifest in the scenography
around exactness, nuclear embeddability, and related properties \cite{BroOza08}, and 
the questions of simplicity and unique trace for reduced group C$^*$-algebras were discovered
in \cite{BreKalKenOza17,Ken20} to hinge precisely on the threshold between amenability and nonamenability within the group.

In the present work we take this principle in a new direction by using amenability---in the form of Rokhlin tower
decompositions and F{\o}lner tilings---to show that nonnuclear tracial crossed products 
coming from minimal actions of free groups on the Cantor set frequently have stable rank one.
Inspired by the relation of Bass stable rank to cancellation phenomena in algebraic $K$-theory, 
Rieffel introduced the notion of (topological) stable rank as a dimension-type invariant that is similarly related 
to cancellation issues in (topological) $K$-theory within the framework of unital C$^*$-algebras \cite{Rie83}.
The property of stable rank one boils down to the density of invertible elements 
and implies both that the Murray--von Neumann semigroup has cancellation and that $K_1$
can be expressed without stabilizing as the unitary group modulo the path component of the identity.
It is the operative hypothesis in the analysis of divisibility phenomena that has 
driven recent progress on the Cuntz semigroup and its applications to longstanding problems like rank realization,
which Thiel succeeded in confirming for all simple C$^*$-algebras of stable rank one  
\cite{Rob12,Thi20,AntPerRobThi22}.
Many simple separable unital finite C$^*$-algebras are known to have stable rank one, notably
those satisfying the regularity property of $\cZ$-stability that has come
to play a key role in the Elliott classification program \cite{Ror04}, and more generally
those that are pure in the sense of Winter (i.e., have strict comparison and are almost divisible) \cite{Lin25}. 
Stable rank one also holds
for crossed products of free minimal actions of FC (and in particular Abelian) groups
on compact metrizable spaces \cite{AlbLut22,LiNiu20,Nar22,Nar24b},
some of which fail to be $\cZ$-stable even when the group is $\Zb$ \cite{GioKer10},
as well as for the reduced group C$^*$-algebras of free groups and, more generally, acylindrically hyperbolic groups
\cite{DykHaaRor97,DykHar99,GerOsi20,Rau25}. 
On the other hand, Villadsen showed in \cite{Vil99} that all possible 
values of stable rank are realized by simple separable nuclear C$^*$-algebras. Moreover,
the stable rank of a simple C$^*$-algebra is infinite as soon as a nonunitary isometry is present.
Granted that one stays within the realm of finite C$^*$-algebras, however, one can 
interpret stable rank one as a regularity condition expressing a kind of zero-/one-dimensionality. 
We direct the reader to \cite{SchTikWhi25} for a state-of-the-art picture of how stable rank one 
interweaves into 
the structure and classification theory of nuclear C$^*$-algebras.

Despite all of this progress, little seems to be known 
about the value of stable rank for naturally arising nonnuclear simple separable finite C$^*$-algebras,
in particular those arising as crossed products,
once one moves beyond the reduced group C$^*$-algebras treated in \cite{DykHaaRor97,DykHar99,GerOsi20,Rau25,AmrGaoKunPat25,Vig25,Oza25}. 
The arguments in these papers rely on spectral or topological-dynamical phenomena tied to the geometry of the group,
and it is not clear whether they can be adapted to handle crossed products. Our approach, being rooted in
the framework of amenability and invariant measures,
is completely different. As we are working in the Baire category framework, we also need to
develop some of the descriptive set theory of spaces of
actions of free groups and other free products on the Cantor set, which does not seem to have been explored much
in the literature before the recent appearance of the paper \cite{DouMelTsa25}.

We now formulate our main results.
For a countable discrete group $G$ and a compact metrizable space $X$, we
write $\WA (G, X)$ for the set of all topologically free minimal actions $G \curvearrowright X$ 
that are weakly mixing and admit an invariant Borel probability measure.
We equip this with the topology of elementwise compact-open convergence, which is Polish
(see Sections~\ref{S-preliminaries} and \ref{S-spaces of actions} for more details).

\begin{theoremi}\label{T-main 1}
Let $G$ be a residually finite countable discrete group and $H$ an amenable countable discrete group containing a normal infinite cyclic subgroup.
Let $X$ be the Cantor set.
For a generic action in $\WA (G*H,X)$ the reduced crossed product $C(X)\rtimes_\lambda (G * H)$ has stable rank one.
\end{theoremi}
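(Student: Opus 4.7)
The plan is a Baire category argument in $\WA(G*H,X)$: show that stable rank one of $C(X)\rtimes_\lambda (G*H)$ defines a comeager subset of actions. Following the Li--Niu philosophy mentioned in the abstract, stable rank one should reduce to a countable conjunction of approximation conditions that can be verified via a Rokhlin-tower construction, adapted here to the nonamenable setting by using the amenability of $H$ (specifically its normal infinite cyclic subgroup $Z$) together with the residual finiteness of $G$ to substitute for the missing F\o lner structure on $G*H$.

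First I would fix a countable dense $\ast$-subalgebra $\cA_0$ of the formal crossed product, consisting of finite sums $\sum_{s\in F} f_s u_s$ with $F\subseteq G*H$ finite and each $f_s$ drawn from a fixed countable dense subset of $C(X)$ (for instance $\Qb$-linear combinations of clopen indicators). For each such $a$ and each rational $\eps>0$, let $\cU_{a,\eps}$ denote the set of $\alpha\in\WA(G*H,X)$ such that, realized in $C(X)\rtimes_{\lambda}(G*H)$ via $\alpha$, the element $a$ admits an invertible $\eps$-approximant. Stable rank one of the crossed product is then equivalent to $\alpha\in\bigcap_{a,\eps}\cU_{a,\eps}$. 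Openness of each $\cU_{a,\eps}$ should follow from upper semicontinuity of the reduced-crossed-product norm in $\alpha$ together with openness of the invertible group in the unital crossed product.

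The heart of the argument is density of $\cU_{a,\eps}$. Given a target $\alpha_0\in\WA(G*H,X)$ and a small basic neighborhood of it, I would build a perturbation $\alpha$ in that neighborhood admitting an explicit invertible $\eps$-approximant to $a$. Two combinatorial inputs are combined. First, since $Z\cong\Zb\trianglelefteq H$ is amenable and $X$ is the Cantor set, a small perturbation of the $H$-dynamics yields arbitrarily long clopen Rokhlin towers for $Z$, the normality of $Z$ in $H$ ensuring that the tower structure is respected by the full $H$-action. Second, residual finiteness of $G$ lets one approximate the $G$-restriction, on any clopen partition refining the tower structure, by an action factoring through a finite quotient of $G$. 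Because $G*H$ is a free product, the $G$- and $H$-components can be prescribed independently, so these two approximations can be glued into a single action $\alpha$ within the prescribed neighborhood of $\alpha_0$. Inside $\alpha$ one then carries out the Li--Niu-type argument tower by tower: a small-norm modification of $a$ supported near the tops of the $Z$-towers, using the finite-quotient structure of the $G$-component to handle the group coefficients, produces an element of the crossed product that is both invertible and within $\eps$ of $a$.

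The main obstacle is precisely the absence of a F\o lner sequence for the nonamenable group $G*H$ itself, which prevents classical Rokhlin machinery from being applied globally. The argument must therefore simulate Rokhlin-type decompositions using only the amenable subgroup $H$ and the residually finite part $G$, while ensuring that the $G$-action neither destroys the $H$-tower structure nor obstructs the invertibility construction. In tandem one has to verify that the perturbed $\alpha$ still lies in $\WA(G*H,X)$: topological freeness and minimality are robust under refining the tower partitions, the invariant measure persists through averaging over the tower, but weak mixing is the most delicate point, since the tower construction risks injecting spectral structure along $Z$. Arranging the tower modifications to act on an asymptotically vanishing fraction of the dynamics, together with a separate density statement for weakly mixing actions in $\WA(G*H,X)$ based on the descriptive set theory foreshadowed in the introduction, should resolve this.
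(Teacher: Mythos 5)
Your overall Baire-category strategy is the right frame, but the first step---openness of the sets $\cU_{a,\eps}$---is a genuine gap, and it is precisely the gap the paper is organized to avoid. For a fixed formal element $a=\sum_{s\in F}f_su_s$, the reduced crossed product norm $\|\cdot\|_{\lambda,\alpha}$ is a supremum of norms over representations induced from points of $X$, hence at best lower semicontinuous in $\alpha$; the condition ``$a$ admits an invertible $\eps$-approximant'' quantifies existentially over elements of an algebra that changes with $\alpha$, and an invertibility certificate (a lower bound on the spectrum of $b^*b$) is not stable under perturbations for which one only controls the norm from below. The authors state explicitly that they do not know whether stable rank one is a $G_\delta$ condition on $\Act(G*H,X)$, and instead introduce a purely dynamical property, \emph{weak square divisibility} (Definition~\ref{D-weak SD}), which is formulated through finitely many clopen subequivalences so that it is manifestly open in each parameter (Lemmas~\ref{L-openA} and \ref{L-open}, Proposition~\ref{P-SD G delta}) and which implies stable rank one for topologically free minimal actions with an invariant measure (Theorem~\ref{T-SR1 weak SD}, via a R{\o}rdam-type rotation to a nilpotent element). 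Without some such intermediate $G_\delta$ property your intersection $\bigcap_{a,\eps}\cU_{a,\eps}$ cannot be fed into the Baire category theorem.

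The density step also needs more than a ``small perturbation'' of the $H$-dynamics. Perturbing the generators of an action in $\WA(G*H,X)$ easily destroys the existence of a $(G*H)$-invariant measure (averaging over an $H$-tower only produces $H$-invariance, not $G$-invariance), as well as weak mixing and minimality. The paper's route is instead to approximate a given $\alpha$ by a conjugate of a \emph{diagonal product} $\alpha\times\beta$ (Proposition~\ref{P-extension approximation}), where $\beta$ is an action of $G*H$ extending a strictly ergodic $H$-action chosen, via the ergodic-theoretic machine of Section~\ref{S-diagonal machine} (disjointness, spectral measures, Gaussian and coinduced actions), to be disjoint from the relevant $H$-subsystems of $\alpha$ so that $\alpha\times\beta$ remains minimal and weakly mixing, and where the $G$-part of $\beta$ is built by permuting the levels of an $H$-Rokhlin tower using the Collins--Dykema asymptotic freeness theorem for random permutations (Lemma~\ref{L-RandMatApp}, Proposition~\ref{P-extending}); this preserves $M_H(X)=M_{G*H}(X)$ and delivers the square divisibility witnesses. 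Your instinct that residual finiteness of $G$ and the normal infinite cyclic subgroup of $H$ are the key hypotheses is correct, but the mechanism that combines them is the product construction, not an in-place tower modification.
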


Given that the free group $F_d$ on $d\geq 2$ generators can be written as the free product $F_{d-1} * \Zb$,
we obtain the following as a special case.

\begin{corollaryi}\label{C-main 1}
Let $d\geq 2$ and let $X$ be the Cantor set.
For a generic action in $\WA (F_d, X)$ the reduced crossed product $C(X)\rtimes_\lambda F_d$ has stable rank one.
\end{corollaryi}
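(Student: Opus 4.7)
The plan is to deduce the corollary directly from Theorem~\ref{T-main 1} by exhibiting $F_d$ as a free product of the right shape. The natural choice is $G = F_{d-1}$ and $H = \Zb$: fixing a free generating set $a_1,\dots,a_d$ of $F_d$, the subgroup generated by $a_1,\dots,a_{d-1}$ is isomorphic to $F_{d-1}$, the subgroup generated by $a_d$ is isomorphic to $\Zb$, and the normal form theorem for reduced words in $F_d$ expresses it canonically as the free product $F_{d-1} * \Zb$. Under this identification $\WA (F_d, X)$ and $\WA (F_{d-1} * \Zb, X)$ are the same Polish space of actions, and the reduced crossed products $C(X)\rtimes_\lambda F_d$ and $C(X)\rtimes_\lambda (F_{d-1} * \Zb)$ agree as C$^*$-algebras.

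Next I would verify that the two factors satisfy the hypotheses of Theorem~\ref{T-main 1}. Nonabelian free groups are residually finite (they embed, for instance, into $\SL_2(\Zb)$), and the cyclic group $\Zb$ is trivially residually finite as well, so $F_{d-1}$ fits the role of $G$ for every $d \geq 2$, including the degenerate case $d=2$ in which $F_{d-1}$ is already abelian. The group $\Zb$ is amenable and contains itself as a normal infinite cyclic subgroup, so it fits the role of $H$. Invoking Theorem~\ref{T-main 1} then produces a dense $G_\delta$ subset of $\WA (F_d, X)$ on which the reduced crossed product has stable rank one, which is precisely the conclusion of the corollary.

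Since the corollary is an instantiation of Theorem~\ref{T-main 1}, there is no genuine obstacle at this stage: the substantive work all sits inside the theorem. The only point to double-check is that the topology of elementwise compact-open convergence on $\WA(F_d,X)$, defined using an abstract generating set for $F_d$, really does agree with the one on $\WA(F_{d-1} * \Zb, X)$ inherited from the free product presentation; but this is immediate because both topologies coincide with the pointwise-in-the-group convergence topology on $\Act(F_d, X)$, which does not depend on the chosen generating set or presentation.
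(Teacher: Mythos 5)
Your proposal is correct and coincides with the paper's own derivation: the authors obtain Corollary~\ref{C-main 1} from Theorem~\ref{T-main 1} via exactly the decomposition $F_d = F_{d-1} * \Zb$, with $G = F_{d-1}$ residually finite and $H = \Zb$ amenable containing itself as a normal infinite cyclic subgroup. Your additional checks (the $d=2$ case and the independence of the topology on $\Act(F_d,X)$ from the presentation) are sound and harmless.
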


To establish Theorem~\ref{T-main 1} we introduce a dynamical notion of {\it square divisibility}
that distills some of the operator-algebraic structure
used by Li and Niu in \cite{LiNiu20} to prove a stable rank one result in the context of amenable acting groups.
Like them, we follow the basic strategy pioneered by R{\o}rdam in \cite{Ror91}
in which elements $a$ satisfying $ad=da=0$ for some nonzero positive element $d$
are unitarily rotated into nilpotent elements
(see the introduction to Section~\ref{S-stable rank one}), a procedure that square divisibility permits us to implement in our 
crossed product context.
In fact we develop two versions of this square divisibility (Definitions~\ref{D-SD} and \ref{D-weak SD})
which are tailored to two different applications although they both imply stable rank one,
as we show in Theorems~\ref{T-SR1} and \ref{T-SR1 weak SD} by applying some of the techniques from \cite{LiNiu20}.
The second version, called {\it weak square divisibility}, is only defined over the Cantor set but has the additional virtue that it is  
a $G_\delta$ property by the way the definition is locally formulated in terms of open conditions,
so that we can apply the Baire category theorem in a natural manner. After a series of lemmas that serve to establish the density
of the open sets at play in the definition of weak square divisibility,
we are thereby able to deduce, under the hypotheses of Theorem~\ref{T-main 1}, that the weakly squarely divisible actions in 
$\WA (G*H,X)$ form a dense $G_\delta$ set (Theorem~\ref{T-SD free products}). 
In conjunction with Theorem~\ref{T-SR1 weak SD}, this yields Theorem~\ref{T-main 1}.

In order to establish the aforementioned density we also need to invoke the stronger form of
square divisibility for minimal actions of countably infinite amenable groups satisfying (dynamical) comparison 
and the uniform Rokhlin property (URP),
a fact that we establish in Theorem~\ref{T-amenable} in the general setting of compact metrizable spaces.
For the purposes of Theorem~\ref{T-main 1} we only need to worry about the Cantor set and do not need
to know that square divisibility itself implies stable rank one. We show nevertheless in Theorem~\ref{T-SR1} that the latter 
implication does indeed hold under more general compact metrizable hypotheses, and note in Theorem~\ref{T-SR1 weak SD}
that essentially the same argument also gives the implication for weak square divisibility in the Cantor setting,
which is what we use to derive Theorem~\ref{T-main 1}. 
Theorems~\ref{T-SR1} and \ref{T-amenable} in combination show that, for minimal actions of countably infinite amenable groups 
on compact metrizable spaces, the URP and comparison together imply that the 
crossed product has stable rank one, so that we recover the stable rank one result 
of Li and Niu from \cite{LiNiu20} in a weaker form that replaces their hypothesis of Cuntz comparison on open sets (COS)
with the purely dynamical notion of comparison (actually neither of the latter two properties is known to fail among minimal
actions, and comparison holds in all of the cases where COS is known to hold). We thus obtain, for example, a 
streamlined proof of stable rank one for the crossed products of free minimal actions of infinite Abelian groups 
on compact metrizable spaces, with comparison in this case having been proved in \cite{Nar22} and the URP in \cite{Nar24b}.
Our methods also allow for the following generalization to product actions, 
which is a consequence of Theorems~\ref{T-SR1} and \ref{T-SD product}.

\begin{theoremi}\label{T-product} 
Let $G\curvearrowright X$ and $H\curvearrowright Y$ be minimal actions of countable discrete groups on 
compact metrizable spaces with $G$ infinite, and suppose that the first action has the URP and comparison.
Then the reduced crossed product of the product action $G\times H\curvearrowright X\times Y$
given by $(g,h)(x,y) = (gx,hy)$ has stable rank one.
\end{theoremi}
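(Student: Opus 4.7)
The plan is to deduce Theorem~\ref{T-product} directly from two results announced in the introduction: Theorem~\ref{T-SR1}, by which stable rank one of the reduced crossed product is implied by square divisibility of the action, and Theorem~\ref{T-SD product}, which supplies square divisibility for the product action $G\times H\curvearrowright X\times Y$ given the URP and comparison on the factor $G\curvearrowright X$. Once both are in place, Theorem~\ref{T-product} is an immediate concatenation, so the substantive work lies in verifying square divisibility for the product action.

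First I would reduce the verification of square divisibility to a finite-data problem: fix a finite subset $F\subset C(X\times Y)\rtimes_\lambda(G\times H)$ and a tolerance $\eps>0$, and use a standard approximation to replace the elements of $F$ by linear combinations of elementary tensors of the form $f\,u_{(g,h)}$ with $f\in C(X\times Y)$ and $(g,h)$ ranging over a finite set $K\subset G\times H$. Next I would import tower structure from the $G$-factor: the URP for $G\curvearrowright X$ supplies, for any prescribed approximation parameters, a Rokhlin tower with shape a large F{\o}lner-type set $T\subset G$ and base $B\subset X$ such that the translates $(tB)_{t\in T}$ are pairwise disjoint and exhaust $X$ up to a uniformly measure-small error. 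Taking the product with $Y$ yields a Rokhlin tower for the product action with shape $T\times\{e_H\}$ and base $B\times Y$. Invariant Borel probability measures on $X\times Y$ push down through the first-coordinate projection to $G$-invariant measures on $X$, so the comparison hypothesis on $X$ transfers to subequivalences of the product-shaped open sets that enter the square divisibility construction.

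With these ingredients in hand, the remaining argument would mimic the Li--Niu-style combinatorial construction executed in Theorem~\ref{T-amenable}, applied now to shapes $T\times\{e_H\}$ in the product group: the $H$-direction rides along trivially, and the nontrivial $H$-coordinates occurring in $K$ are absorbed into the norms of the coefficients $f$. The main obstacle I anticipate is showing that this asymmetric Rokhlin geometry, in which one factor of the acting group is not even required to be amenable, still supports the orthogonality relations and the nonzero witness element $d$ that drive R{\o}rdam's cancellation-to-nilpotence device inside $C(X\times Y)\rtimes_\lambda(G\times H)$. Once the square divisibility data are assembled, a direct appeal to Theorem~\ref{T-SR1} yields the desired stable rank one conclusion.
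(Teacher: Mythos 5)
Your top-level reduction is exactly the paper's: Theorem~\ref{T-product} is obtained by combining Theorem~\ref{T-SR1} (square divisibility implies stable rank one) with Theorem~\ref{T-SD product} (square divisibility of the product action). However, your sketch of the substantive ingredient, the proof of Theorem~\ref{T-SD product}, contains a genuine gap and also some conceptual confusion. Square divisibility is a purely dynamical property of open sets, so the opening step of approximating a finite subset of $C(X\times Y)\rtimes_\lambda(G\times H)$ by sums $f\,u_{(g,h)}$, and the closing worry about the orthogonality relations and the witness element $d$ in R{\o}rdam's device, belong to the proof of Theorem~\ref{T-SR1}, which is being invoked as a black box; they play no role in verifying square divisibility. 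Likewise there is no need to rerun the Ornstein--Weiss/URP construction with shapes $T\times\{e_H\}$: the paper simply applies Theorem~\ref{T-amenable} to $G\curvearrowright X$ to get the sets witnessing $(O_1,O_2,K)$-square divisibility in $X$ and crosses everything with $Y$.

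The real gap is your claim that ``the $H$-direction rides along trivially.'' It does not, and this is precisely where the hypothesis that $H\curvearrowright Y$ is \emph{minimal} enters---a hypothesis your argument never uses. Because all the tower data come from the $G$-factor, the witnessing sets are necessarily of the form $O_1\times Y$ and $O_2\times Y$, i.e.\ full in the $Y$-direction; they cannot be shrunk to $O_1\times O_1^Y$ with $O_1^Y\subsetneq Y$, since no invariant measure would then permit the subequivalences in conditions (i)--(iii) of Definition~\ref{D-SD}. But $O$-square divisibility must be verified for an \emph{arbitrary} nonempty open $O\subseteq X\times Y$, say $O=O_X\times O_Y$ with $O_Y$ small. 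The only way to connect the $Y$-full sets $O_1\times Y,\ O_2\times Y$ to such an $O$ is through the subequivalence $\overline{O_1'}\sqcup\overline{O_2'}\prec O_0'\subseteq O$ built into the definition (the paper emphasizes that subequivalence rather than containment here is ``crucial''). Establishing it requires covering $Y$ by finitely many open sets $V_h$ with $hV_h\subseteq O_Y$ (minimality of $H\curvearrowright Y$ plus compactness), pairing each $h$ with a distinct $g_h\in G$ moving a small neighbourhood $O_0\supseteq \overline{O_1}\sqcup\overline{O_2}$ into pairwise disjoint subsets of $O_X$ (topological freeness and minimality of $G\curvearrowright X$), and using the elements $(g_h,h)$ to implement the subequivalence. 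Without this step your construction only produces $(O_1\times Y, O_2\times Y, E)$-square divisibility and never reaches $O$-square divisibility for general $O$, so Theorem~\ref{T-SR1} cannot be applied.
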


That the conjunction of the URP and comparison should really be considered a single property, 
in analogy with almost finiteness as the combination of almost finiteness in measure and comparison,
has been borne out in recent work of Naryshkin in \cite{Nar24b}, which establishes several equivalent formulations
of this conjunction, terminologically abbreviated to URPC, 
and derives some remarkable applications to shift embeddability.

Our approach to stable rank one in the amenable setting, which underpins not only Theorem~\ref{T-product}
but also Theorem~\ref{T-main 1} (as well as Theorem~\ref{T-main 2} below),
differs from that of Li and Niu by leveraging the Rokhlin towers coming from the URP to greater effect
so as to create, as in the definition of square divisibility, an array of open sets whose 
complement is small but whose boundaries are in turn much smaller than this complement.
It is this relative smallness at two different scales that allows us to work with a simpler version of the apparatus 
at play in \cite{LiNiu20}.

The proof of Theorem~\ref{T-product} goes in the direction of trying to show that if $G\curvearrowright X$ 
is squarely divisible then so is every product action of the form $G\times H\curvearrowright X\times Y$
(a stronger statement that we have been unable to verify), but we need some extra local information 
beyond square divisibility that we get from the URP and comparison. It is nevertheless interesting to 
compare Theorem~\ref{T-product} with recent results on the permanence of dynamical and C$^*$-algebraic
regularity properties under taking products. Among regularity properties for C$^*$-algebras, 
$\cZ$-stability hits the sweet spot of being 
both highly consequential (classification being the highest payoff) and accessible to verification 
in a great many cases. An illustration of its robustness is the trivial fact that a minimal tensor
product $A\otimes_{\min} B$ is $\cZ$-stable as soon as one of the factors is.
In particular, the reduced crossed product of a product action $G\times H \curvearrowright X\times Y$
is $\cZ$-stable as soon as this is the case for one of the factors. Once one replaces $\cZ$-stability
by a weaker regularity property like stable rank one, however, the issue of permanence under taking product actions
can become quite tricky. This is even
already true for almost finiteness, the closest dynamical analogue to $\cZ$-stability that we have in the
setting of amenable acting groups. See \cite{KopLiaTikVac23}, where the 
permanence under products of both almost finiteness and almost finiteness in measure 
was established using C$^*$-algebra technology, 
and also \cite{KerLi24}, where it was shown for the related small boundary property
using purely dynamical methods.

Theorem~\ref{T-product} is not as unrelated to Theorem~\ref{T-main 1} as it might appear at first glance.
The proof of Theorem~\ref{T-main 1} also leverages the URP and comparison, via Theorem~\ref{T-amenable},
in the context of a product construction. In that case however we are approximating a given action of $G*H$
via its diagonal product with another action, and so we do not have the freedom of starting
from separate actions of two different groups as in Theorem~\ref{T-product}. This renders
the analysis much more complicated, although the verification of square divisibility is similar
in the two settings. In particular, for Theorem~\ref{T-main 1} we need to build a machine for producing
diagonal actions that are weakly mixing and minimal, a task that involves
some ergodic theory and is carried out in Section~\ref{S-diagonal machine}.

Unfortunately we have been unable to show that the generic action in Theorem~\ref{T-main 1} or Corollary~\ref{C-main 1} 
does not belong to a single conjugacy class, although we strongly suspect that every conjugacy class is meagre,
and indeed this is known to be the case among weakly mixing minimal actions of $\Zb$ on the Cantor set \cite[Theorem 1.2 and Lemma 8.1]{Hoc08}. On the other hand,
a generic minimal action of $F_d$ on the Cantor set admitting an invariant Borel probability measure is conjugate to the universal odometer action
\cite[Theorem 1.5]{DouMelTsa25} (the case $d=1$ was treated in \cite{Hoc08}). 
One can already immediately see from the definition of the topology
on spaces of actions on the Cantor set that the property of having a given action on a finite set as a factor is stable under perturbations whenever the acting group is finitely generated. 
This explains why in Theorem~\ref{T-main 1} we impose the property of weak mixing, which rules out nontrivial finite factors.
On the other hand, Proposition~\ref{P-trivial factor} shows that, when $X$ is the Cantor set, a generic action in $\WA (F_d ,X)$ 
has the property that the homeomorphisms defined by the standard generators all factor onto the trivial action
on the Cantor set and hence are very far from themselves being minimal.
That $\WA (G*H, X)$ in Theorem~\ref{T-main 1} is nonempty can either be seen as a special
case of a general phenomenon recorded in Proposition~\ref{P-Polish} 
or by means of the concrete examples constructed in Section~\ref{S-examples}.

In the case of $F_d$, if we further restrict our scope to actions that are minimal and spectrally aperiodic on generators
then we have in fact been able to establish both the genericity of stable rank one and the meagreness of orbits,
so that we are indeed capturing a relatively large class of actions, all rather different than the generic ones in $\WA (F_d ,X)$,
which as mentioned above are far from being minimal on generators. That this class 
is nonempty is illustrated by the examples in Section~\ref{S-examples}.
We write $\Astar (F_d , X)$ for the set of all topologically free actions $F_d \curvearrowright X$ on the Cantor set
that have an invariant Borel probability measure and are strictly ergodic (i.e., minimal and uniquely ergodic) and 
spectrally aperiodic (Definition~\ref{D-spectrally aperiodic})
on each standard generator. In Theorem~\ref{T-SD free groups} we show that
weak square divisibility is generic in $\Astar (F_d , X)$. Together with
Theorem~\ref{T-SR1 weak SD}, this yields generic stable rank one:

\begin{theoremi}\label{T-main 2}
Let $d\geq 2$ and let $X$ be the Cantor set.
For a generic action in $\Astar (F_d , X)$ the reduced crossed product $C(X)\rtimes_\lambda F_d$ has stable rank one.
\end{theoremi}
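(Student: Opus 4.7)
The plan is to derive the result as an essentially immediate consequence of two earlier results quoted in the introduction: Theorem~\ref{T-SD free groups}, which asserts that weak square divisibility is generic in $\Astar(F_d, X)$, and Theorem~\ref{T-SR1 weak SD}, which shows that weak square divisibility implies stable rank one of the reduced crossed product. Combining the two, the actions in $\Astar(F_d, X)$ whose reduced crossed product has stable rank one contain a dense $G_\delta$ subset, which is precisely the desired genericity.

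Before invoking these theorems I would first verify that $\Astar(F_d, X)$ is Polish, so that ``generic'' has its intended Baire-category meaning. The ambient space of actions on the Cantor set in the topology of elementwise compact-open convergence is Polish, and each of the defining conditions of $\Astar(F_d, X)$ carves out a $G_\delta$ subset: topological freeness and existence of an invariant Borel probability measure are standard; minimality and unique ergodicity on each standard generator (and hence strict ergodicity) can be expressed in terms of the convergence of time averages against a countable dense family of clopen test functions; and spectral aperiodicity is a $G_\delta$ condition by the open-condition formulation in Definition~\ref{D-spectrally aperiodic}.

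The genuine work is hidden in the input theorems, and the main obstacle will lie in the density part of Theorem~\ref{T-SD free groups}: given any $\action \in \Astar(F_d, X)$ and any basic neighborhood of it, one must construct a nearby action satisfying the locally formulated open conditions appearing in the definition of weak square divisibility. Here the hypotheses built into $\Astar(F_d, X)$ are essential. Spectral aperiodicity of each generator rules out nontrivial finite factors in the generator subdynamics and thereby permits Rokhlin towers of arbitrary height, while unique ergodicity fixes the measures of clopen sets and allows one to verify the two-scale smallness estimate at the heart of the definition, in which the clopen complement of a distinguished open set is already small but its boundary is much smaller still. The diagonal product machinery developed in Section~\ref{S-diagonal machine}, combined with the amenable-side input of Theorem~\ref{T-amenable}, then delivers the combinatorial skeleton needed to assemble weak square divisibility in the perturbed action.
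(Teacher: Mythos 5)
Your proposal is correct and follows exactly the paper's own route: Theorem~\ref{T-main 2} is obtained by combining Theorem~\ref{T-SD free groups} (generic weak square divisibility in $\Astar(F_d,X)$) with Theorem~\ref{T-SR1 weak SD}, after noting via Proposition~\ref{P-Polish} that $\Astar(F_d,X)$ is a nonempty $G_\delta$ (hence Polish) subspace and that its members are minimal, topologically free, and admit invariant measures, so that Theorem~\ref{T-SR1 weak SD} applies. Your accompanying remarks on where the real work lies (the density half of Theorem~\ref{T-SD free groups}) also match the paper's structure.
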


The meagreness of orbits we record as follows and establish in Section~\ref{S-meagre}.

\begin{theoremi}\label{T-meagre}
Let $d\geq 2$ and let $X$ be the Cantor set.
Then every orbit in $\Astar (F_d , X)$ under the conjugation action of the homeomorphism group of $X$ is meagre.
\end{theoremi}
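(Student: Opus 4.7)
The plan is to apply the Effros dichotomy for continuous Polish group actions together with a conjugation invariant extracted from the unique invariant measure. Since the conjugation action of $\Homeo(X)$ on the Polish space $\Astar(F_d,X)$ is continuous, a theorem of Effros implies that every orbit is either meagre or locally closed (i.e., open in its closure). It therefore suffices to show that no orbit is locally closed. Fixing $\alpha_0\in\Astar(F_d,X)$ with orbit $\cO=\Homeo(X)\cdot\alpha_0$, we need to verify that for every $\alpha\in\cO$ and every open neighborhood $V$ of $\alpha$ there exists some $\beta\in V\cap\overline{\cO}\setminus\cO$.

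For the distinguishing invariant, we use that every $\alpha\in\Astar(F_d,X)$ is strictly ergodic on the standard generator $s_1$, so there is a unique $\alpha(s_1)$-invariant (and hence $\alpha$-invariant) Borel probability measure $\mu_\alpha$ on $X$, and the assignment $\alpha\mapsto\mu_\alpha$ is continuous for the weak${}^*$ topology. The countable set
\[
D(\alpha) \;=\; \bigl\{\mu_\alpha(U) : U\subseteq X \text{ clopen}\bigr\}\;\subseteq\;[0,1]
\]
is a conjugation invariant of $\alpha$: if $\beta=\varphi\alpha\varphi^{-1}$ with $\varphi\in\Homeo(X)$ then $\mu_\beta=\varphi_*\mu_\alpha$ by uniqueness, and $\varphi$ bijects clopen sets with clopen sets. (Equivalently, the ordered Elliott dimension group of the Cantor minimal $\Zb$-system $(X,\alpha(s_1))$ is a conjugation invariant.) A standard Kakutani--Rokhlin tower argument for $\alpha_0(s_1)$ shows that $D(\alpha_0)$ is dense in $[0,1]$ while remaining countable, so $[0,1]\setminus D(\alpha_0)$ is uncountable.

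The main obstacle is producing the perturbation $\beta$. Given $\alpha\in\cO$ and an open neighborhood $V$ of $\alpha$ specified by a finite clopen partition $\cP$ of $X$ and a finite set $F\subseteq F_d$, we construct homeomorphisms $\varphi_n\in\Homeo(X)$ such that $\alpha_n:=\varphi_n\alpha_0\varphi_n^{-1}\to\beta$ in $\Astar(F_d,X)$, with $\beta\in V$ and $\mu_\beta(U_0)$ equal to a prescribed value $r\in[0,1]\setminus D(\alpha_0)$ for some fixed clopen set $U_0\subseteq X$. The $\varphi_n$ are built on increasingly fine Kakutani--Rokhlin towers for $\alpha_0(s_1)$: each $\varphi_n$ is designed to (i) bring $\alpha_0$ into $V$-proximity of $\alpha$ on the coarse scale determined by $\cP$ and $F$, while (ii) permuting column elements on a fine tower so as to force $\mu_{\alpha_n}(U_0)=\mu_{\alpha_0}(\varphi_n^{-1}(U_0))\to r$ (possible because $D(\alpha_0)$ is dense in $[0,1]$). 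The delicate technical point is the compatibility of (i) and (ii), which is achieved by choosing the tower heights so that the fine-scale manipulations take place strictly below the coarse refinement demanded by $\cP$ and $F$; the neighborhood $V$ is determined by a bounded amount of combinatorial data, so such a separation of scales is available. The resulting $\beta\in V\cap\overline{\cO}$ satisfies $\mu_\beta(U_0)\in D(\beta)\setminus D(\alpha_0)$, hence $D(\beta)\ne D(\alpha_0)$ and $\beta\notin\cO$. The Effros dichotomy then yields the meagreness of $\cO$.
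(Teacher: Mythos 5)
Your overall framework (Effros's dichotomy plus the clopen values set $D(\alpha)=\{\mu_\alpha(U):U\ \text{clopen}\}$ as a countable conjugation invariant) is coherent, and the preliminary observations are correct: $\Astar(F_d,X)$ is Polish, the conjugation action is continuous, $D$ is a conjugation invariant, and $D(\alpha_0)$ is countable and dense in $[0,1]$. But there is a genuine gap exactly where all the work lies: the construction of the homeomorphisms $\varphi_n$. You need the sequence $\varphi_n\alpha_0\varphi_n^{-1}$ to converge, and the limit $\beta$ to (a) lie in $\Astar(F_d,X)$ (otherwise it is not in $\overline{\cO}$ computed inside the Polish space to which Effros is applied), (b) lie in the prescribed neighbourhood $V$, and (c) satisfy $\mu_\beta(U_0)=r\notin D(\alpha_0)$. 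Your sketch only discusses Kakutani--Rokhlin towers for the single generator $s_1$; conjugating by a homeomorphism that permutes levels of a fine tower for $\alpha_0(s_1)$ rearranges the actions of the other $d-1$ generators in an uncontrolled way on precisely those fine scales, so there is no reason the conjugated actions converge on those generators, let alone to a limit that is topologically free and strictly ergodic and spectrally aperiodic on each generator. The density of $D(\alpha_0)$ supplies clopen sets $W_n$ with $\mu_{\alpha_0}(W_n)\to r$, but turning these into a coherent convergent sequence of conjugates with $\varphi_n^{-1}(U_0)=W_n$ is a substantive construction that is not supplied; indeed, whether $\overline{\cO}$ contains \emph{any} action with clopen values set different from $D(\alpha_0)$ is itself a nontrivial assertion that your argument assumes rather than proves.

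This is precisely the obstruction the paper flags at the start of Section~\ref{S-meagre}: one cannot freely perturb a single generator of an action in $\Astar(F_d,X)$ without risking the destruction of the invariant measure (and other properties) of the whole $F_d$-action. The paper circumvents it with Ormes's strong orbit realization theorem: the generator $a$ is replaced by a transformation with the \emph{same orbits} (so the invariant measure, topological freeness, and the remaining generators are untouched) realizing a prescribed measurable system, and the proof is then completed not via Effros but by exhibiting two dense $G_\delta$ sets (zero entropy on $a$, and disjointness from $\gamma_a$ obtained from uniformly positive entropy together with Blanchard's theorem) that a nonmeagre conjugacy class would have to meet, contradicting the fact that no system on a space with more than one point is disjoint from itself. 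To salvage your Effros route you would need an analogous orbit-preserving, globally controlled perturbation mechanism for building the $\varphi_n$ --- at which point you would essentially be importing the same machinery.
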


We begin in Section~\ref{S-preliminaries} by laying out some general definitions and basic dynamical background.
Section~\ref{S-SD} introduces the dynamical properties of square divisibility and weak square divisibility.
These then appear as the operative hypotheses for Theorem~\ref{T-SR1} and \ref{T-SR1 weak SD} 
on stable rank one in Section~\ref{S-stable rank one}.
Section~\ref{S-amenable} establishes square divisibility for minimal actions of countably infinite discrete amenable groups 
on compact metrizable spaces under the assumptions of the URP and comparison (Theorem~\ref{T-amenable}).
With this at hand, we state and prove Theorem~\ref{T-SD product} on square divisibility in 
product actions in Section~\ref{S-product}. Section~\ref{S-diagonal machine}
is devoted to the diagonal action machine that will be used in later sections for various purposes.
After setting up the relevant spaces of actions in Section~\ref{S-spaces of actions},
we then turn to the proofs of Theorem~\ref{T-SD free products} and \ref{T-SD free groups} (generic weak square divisibility) 
in Sections~\ref{S-SD I} and \ref{S-SD II} and 
of Theorem~\ref{T-meagre} (meagreness of orbits) in Section~\ref{S-meagre}.
To conclude we present some examples of squarely divisible actions of free groups in
Section~\ref{S-examples}.
\medskip

\noindent{\it Acknowledgements.}
The authors were supported by the Deutsche Forschungsgemeinschaft
(DFG, German Research Foundation) under Germany's Excellence Strategy EXC 2044-\linebreak 390685587,
Mathematics M{\"u}nster: Dynamics--Geometry--Structure, and 
by the SFB 1442 of the DFG. Portions of this work were carried out during visits of the third
author to Queen's University Belfast in July 2025
as part of the INI program ``C$^*$-Algebras: Classification and Dynamical Constructions''
and to Chongqing University in September 2025.

\section{Notational conventions and preliminaries}\label{S-preliminaries}

\noindent {\bf Standing notation for groups and spaces.}
Throughout the paper $G$ and $H$ are countable discrete groups, to be explicitly subject to extra hypotheses 
depending on the circumstances. The identity element of a group will always be denoted $e$.
By $X$ and $Y$ we always mean compact metrizable spaces, often to be explicitly specialized to the Cantor set.
\medskip

We write $M(X)$ for the convex set of all Borel probability measures on $X$ equipped with the weak$^*$ topology,
under which it is compact.
We use the notation $G\curvearrowright X$ to denote an action of $G$ on $X$ by homeomorphisms. Often we write the action via the simple concatenation $(s,x)\mapsto sx$ but 
when two or more actions are at play we will typically use symbols such as $\alpha$ to avoid confusion,
so that the notation for the action becomes $(s,x)\mapsto \alpha_s x$. We similarly use $sA$ or $\alpha_s A$
for the image of a set $A\subseteq X$ under $s$, and write $LA$ for a set $L\subseteq G$ to mean $\bigcup_{s\in L} sA$.
For sets $V\subseteq X$ and $E\subseteq G$ we write $V^E$ for the intersection $\bigcap_{s\in E} s^{-1}V$. 

When $G = \Zb$ the action is generated by a single transformation $T: X\to X$ associated to the generator $1\in\Zb$,
and so we will usually tacitly identify $\Zb$-actions and single transformations (i.e., homeomorphisms of $X$)
despite the abuse of notation this inevitably leads to. We will also use the notation $T\curvearrowright X$
for a transformation $T:X\to X$.

The action $G\curvearrowright X$ is {\it minimal} if there are no closed $G$-invariant subsets of $X$ other than $\emptyset$ and $X$ itself,
or, equivalently, every $G$-orbit is dense. The {\it stabilizer} of a point $x\in X$ is the fixed point set
$\{ s\in G : sx = x \}$, which forms a subgroup of $G$. The action is {\it free} if the stabilizer of every point is trivial,
and {\it topologically free} if the set of points with stabilizer equal to $\{ e \}$ is dense (in which case it is actually a dense $G_\delta$ set). The action is \emph{faithful} if for every $s\in G\setminus \{e\}$ there exists an $x\in X$ such that $sx\ne x$, i.e., the associated homomorphism of $G$ into the homeomorphism group of $X$ is injective.  

The action $G\curvearrowright X$ is {\it transitive} if for all nonempty open sets $U,V\subseteq X$ there exists $s\in G$ such that
$sU \cap V \neq\emptyset$, which is equivalent to the existence of a dense orbit (using that $X$ is compact and metrizable). The action is \textit{topologically mixing} if $G$ is infinite and for all nonempty open sets $U,V\subseteq X$ there exists a finite subset $F\subseteq G$ such that $sU\cap V\neq \emptyset$ for all $s\in G\setminus F$, and
{\it topologically weakly mixing} if for all nonempty open sets $U_1 , U_2 , V_1 , V_2 \subseteq X$
there exists $s\in G$ such that $sU_1 \cap U_2 \neq\emptyset$ and $sV_1 \cap V_2 \neq\emptyset$,
which is equivalent to the transitivity of the diagonal action $G\curvearrowright X\times X$ as given by
$s(x,y) = (sx,sy)$. We also simply say {\it mixing} or {\it weakly mixing} if it is clear that 
the property we are referring to is not its measure-theoretic version. 

For an action $G\curvearrowright X$, the set of all $G$-invariant measures in $M(X)$ is denoted $M_G (X)$, or $M_\alpha (X)$ if our action has a name $\alpha$. This is a compact convex set whose extreme points are precisely 
the ergodic measures. The action is \emph{uniquely ergodic} if $M_G(X)$ is a singleton, and \emph{strictly ergodic} if it is minimal and uniquely ergodic. 

In Sections~\ref{S-diagonal machine}, \ref{S-SD I}, and \ref{S-SD II} we will have occasion to use some ergodic theory for 
p.m.p.\ (probability-measure-preserving) actions $G\curvearrowright (Z,\zeta )$. 
For this we will outsource most of the basic background and terminology to \cite{KerLi16}.
As in the topological setting, we identify a p.m.p.\ $\Zb$-action on $(Z,\zeta )$ with the generating
transformation $T$ associated to $1\in\Zb$, which we also write as $T \curvearrowright (Z,\zeta )$.
When we say that a p.m.p.\ action $G\curvearrowright (Z,\zeta )$ is {\it free} we mean that the set of 
points $x\in Z$ whose stabilizer $\{ s\in G : sx = x \}$ is trivial has measure one.
An action $G\curvearrowright X$ is 
\textit{essentially free} if $M_G(X)\neq\emptyset$ and for every $\mu\in M_G(X)$ the p.m.p.\ action $G\curvearrowright (X,\mu)$ is free.
It is easy to see that essentially free minimal actions are automatically topologically free. A p.m.p.\ action $G\curvearrowright (Z,\zeta)$ is \textit{mixing} if $G$ is infinite and for all measurable sets $A,B\subseteq Z$ and $\eps>0$ there exists a finite set $F\subseteq G$ such that $\left|\zeta(sA\cap B)-\zeta(A)\zeta(B)\right|<\eps$ for all $s\in G\setminus F$, and it 
is {\it weakly mixing} if for all finite collections $\Omega$
of measurable subsets of $Z$ and $\eps>0$ 
there exists an $s\in G$ such that $\left|\zeta(sA\cap B)-\zeta(A)\zeta(B)\right|<\eps$ for all $A,B\in\Omega$.
It is readily seen that if $G\curvearrowright X$ is an action and $\mu\in M_G(X)$ is a measure
of full support such that $G\curvearrowright (X,\mu)$ is mixing (resp.\ weakly mixing) in the measure-theoretic sense 
then $G\curvearrowright X$ is topologically mixing (resp.\ topologically weakly mixing).

For a closed set $A\subseteq X$ and an open set $B\subseteq X$ we write $A\prec B$
($A$ is {\it (dynamically) subequivalent} to $B$) if there exist open sets $U_1 , \dots , U_n \subseteq X$ and $s_1 , \dots , s_n \in G$
such that $A\subseteq \bigcup_{i=1}^n U_i$ and the sets $s_1 U_1 , \dots , s_n U_n$ are pairwise disjoint subsets
of $B$. For a set $F\subseteq G$ we write $A\prec_F B$ if $A\prec B$ and we can choose the group elements 
$s_1 , \dots , s_n$ in the definition of subequivalence to belong to $F$.
We also write $A\prec_\alpha B$ and $A\prec_{\alpha ,F} B$ if the action $\alpha$ needs to be made explicit.
If $X$ is zero-dimensional and $A$ and $B$ are clopen subsets of $X$
such that $A\prec B$ then one can take the sets $U_1 , \dots , U_n$ in the definition of subequivalence
to form a clopen partition of $A$ \cite[Proposition~3.5]{Ker20}.

The action $G\curvearrowright X$ has {\it (dynamical) comparison} if for every closed set $A\subseteq X$ and 
open set $B\subseteq X$ satisfying $\mu (A) < \mu (B)$ for every $\mu\in M_G (X)$ one has $A\prec B$ \cite[Definition~3.2]{Ker20}.
One can also express this using pairs of open sets $A$ and $B$ by interpreting subequivalence in this case 
to mean $A_0 \prec B$ for every closed set $A_0 \subseteq A$. 

A {\it tower} for the action $G\curvearrowright X$ is a pair $(S,B)$ where $S$ is a nonempty finite subset of $G$ (the {\it shape})
and $B$ is a nonempty subset of $X$ (the {\it base}) such that the sets $sB$ for $s\in S$ (the {\it levels}) are pairwise disjoint. 
The tower is open, clopen, etc., if the levels are open, clopen, etc. 
A \textit{castle} is a finite collection $\{(S_k,V_k)\}_{k\in K}$ of towers such that the sets $S_k V_k$
for $k\in K$ are pairwise disjoint. The {\it remainder} of the castle is the complement 
$X\setminus \bigsqcup_{k\in K} S_k V_k$.
The castle is open, clopen, etc., if the towers are open, clopen, etc.

Let $E$ be a finite subset of $G$ and $\delta > 0$. For a set $F\subseteq G$ we write $F^E$ for $\bigcap_{s\in E} s^{-1} F$.
A finite set $F\subseteq G$ is said to be {\it $(E,\delta )$-invariant} if $|F^{E\cup \{ e \}} | \geq (1-\delta )|F|$.
The existence, for each finite set $E\subseteq G$ and $\delta > 0$, of an $(E,\delta )$-invariant finite set $F\subseteq G$
is the F{\o}lner characterization of amenability for $G$. Another characterization of amenability for $G$ 
is the nonemptiness of $M_G (X)$ for every action $G\curvearrowright X$ on a compact metrizable space.
The class of amenable groups includes finite groups and
Abelian groups and is closed under taking subgroups, quotients, extensions, and direct limits (the bootstrap
class one thereby obtains comprises the {\it elementary amenable groups}).
The canonical examples of nonamenable groups are the free groups 
$F_d = \langle a_1 , \dots , a_d \rangle$ on $d\geq 2$ generators. 

Associated to amenable groups and their actions are strong tiling properties, a couple of which we will exploit in 
Section~\ref{S-amenable} on the way to establishing Theorems \ref{T-main 1} and \ref{T-main 2}.
One of these is the Ornstein--Weiss quasitiling theorem for F{\o}lner subsets of $G$ (see Lemma~\ref{L-tiling}).
The other is the {\it uniform Rokhlin property (URP)} for an action $G\curvearrowright X$, which 
requires the existence, for every finite set $E\subseteq G$ and $\delta > 0$, of an open castle whose towers have
$(E,\delta )$-invariant shapes and whose remainder $R$ 
satisfies $\sup_{\mu\in M_G (X)} \mu (R) < \delta$ \cite[Definition~3.1]{Niu22}. This is similar to the stronger property of {\it almost finiteness}, which is more directly related to $\cZ$-stability \cite{Ker20}
and requires (i) that the remainder $R$ instead be subequivalent to a set made up of a 
small (i.e., less than the given $\delta > 0$) proportion of levels in each tower, and (ii) that
the tower levels additionally have small diameter with respect to a given compatible metric \cite[Definition~8.2]{Ker20}. It is known that free actions of countably infinite discrete groups on compact metrizable spaces with finite covering dimension satisfy the URP, that free minimal actions of countably infinite Abelian groups
on compact metrizable spaces satisfy the URP \cite[Corollary~E]{Nar24b}, that actions of elementary amenable groups and of finitely generated
groups of subexponential growth on the Cantor set are almost finite \cite{DowZha23,KerNar21} 
(see also \cite{Nar24} for a more general result), 
and that a generic action of a fixed countably infinite amenable group on the Cantor set is almost finite \cite[Theorem~4.2]{ConJacKerMarSewTuc18}.
For free (or even essentially free \cite{GarGefGesKopNar24}) actions of countably infinite amenable $G$,
almost finiteness implies comparison \cite{Ker20}
and is equivalent to it when the action has the small boundary property, which is automatic
in the case that $X$ is finite-dimensional \cite{KerSza20}.

For more on amenability and nonamenability, especially in connection with dynamical phenomena, see \cite{KerLi16}.

From an action $G\curvearrowright X$ one forms, in combination with the left regular representation $\lambda: G\to \cB(\ell^2(G))$,
the reduced crossed product C$^*$-algebra $C(X)\rtimes_\lambda G$. This is a certain completion
of the algebraic crossed product consisting of the sums $\sum_{s\in E} f_s u_s$ where $E$ is a finite subset of $G$,
the $u_s$ are fixed unitaries indexed by $G$ via a group homomorphism $s\mapsto u_s$, and
the ``coefficients'' $f_s$ are functions in $C(X)$, with the multiplication
determined by the relation $u_s f u_s^{-1} = sf$ for all $s\in G$ and $f\in C(X)$ where $(sf)(x) = f(s^{-1} x)$ for $x\in X$.
One can also form other completions, including a maximal one (the full crossed product), and if the group $G$ 
is amenable, or more generally if the action is amenable, then the full and reduced crossed products will canonically coincide. 
A key technical feature of the reduced crossed product is the existence of a {\it faithful} conditional
expectation $E : C(X)\rtimes_\lambda G \to C(X)$ satisfying $E(f_s u_s ) = 0$ whenever $s\neq e$.
Another important fact is that the reduced crossed product $C(X)\rtimes_\lambda G$
is simple as a C$^*$-algebra whenever the action $G\curvearrowright X$ is minimal and topologically free \cite{ArcSpi94}. 

A unital C$^*$-algebra $A$ has \emph{stable rank one} if the set of elements in $A$ that generate it as a left ideal
is dense, or, equivalently, if the set of invertible elements in $A$ is dense. 
Stable rank one implies stable finiteness, which in turn implies the existence of a quasitrace \cite{Han81,BlaHan82}.
A quasitrace on the reduced crossed product of an action $G\curvearrowright X$ restricts on $C(X)$ to 
integration with respect to a $G$-invariant Borel probability measure, so that
$M_G(X) \ne \emptyset$ is a necessary condition for the crossed product to have stable rank one.
For definitions and more on C$^*$-algebras we refer the reader to \cite{Bla05,BroOza08}. 

For fixed $G$ and $X$ we denote
by $\Act (G,X)$ the space of all actions $G\curvearrowright X$. We endow $\Act (G,X)$ with the topology
of pointwise compact-open convergence on individual group elements. If we fix a compatible metric $d$ on $X$
then a basis for this topology is given by the sets
\[
U_{\alpha ,E,\delta} = \{ \beta\in\Act (G,X) : \sup\textstyle_{x\in X} d(\beta_s x,\alpha_s x) < \delta \text{ for all } s\in E \}
\]
where $\alpha\in\Act (G,X)$, $E$ is a finite subset of $G$, and $\delta > 0$. Moreover we can endow $\Act (G,X)$
itself with a compatible metric by fixing an enumeration $s_1 , s_2 , \ldots$ of $G$ (assuming $G$ is infinite
and adjusting notation otherwise) and setting
\[
\rho (\alpha , \beta ) = \sum_{k=1}^\infty \frac{1}{2^k} \sup_{x\in X} d(\alpha_{s_k} x , \beta_{s_k} x) .
\]
The space $\Act (G,X)$ is complete under this metric and separable as a topological space, and so it is Polish.

In the case that $X$ is the Cantor set, the topology on $\Act (G,X)$ also has as a basis the open sets
\[
U_{\alpha ,E,\sP} = \{ \beta\in\Act (G,X) : \beta_s A = \alpha_s A \text{ for all } A\in\sP \text{ and } s\in E \}
\]
where $\alpha\in\Act (G,X)$, $E$ is a finite subset of $G$, and $\sP$ is a clopen partition of $X$.

In Section~\ref{S-spaces of actions} we will introduce certain subspaces of $\Act (G,X)$
in preparation for the genericity and meagreness results of Sections~\ref{S-SD I}, \ref{S-SD II}, and \ref{S-meagre}.
In the case of the genericity theorems we will need the following fact in the case that $\alpha$
is the product of $\beta$ with some other action and $h$ is the projection map onto the first coordinate. The idea is essentially the same as in the proof of \cite[Theorem~4.2]{ConJacKerMarSewTuc18}.

\begin{proposition}\label{P-extension approximation}
Suppose that $X$ and $Y$ are the Cantor set.
Let $\alpha\in\Act (G,X)$ and $\beta\in\Act (G,Y)$ and suppose that there is a continuous surjection $h: Y\to X$ such that
$\alpha\circ h = h\circ\beta$. Let $\{ \sP_i \}_{i\in I}$ be the net of all clopen partitions of $X$ ordered
by refinement. Then for every $i\in I$ there is a homeomorphism $g_i : Y\to X$ satisfying 
$g_i (h^{-1} (A)) = A$ for every $A\in\sP_i$, and if $\{ g_i \}_{i\in I}$ is any collection of such homeomorphisms
then the actions $\beta_i$ defined by $\beta_{i,s} = g_i \circ \beta_s \circ g_i^{-1}$ for $s\in G$ converge to $\alpha$.
\end{proposition}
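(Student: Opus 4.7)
The plan is to split the statement into two independent tasks: constructing a suitable $g_i$ for each partition $\sP_i$, and then checking that any resulting family of conjugated actions $\beta_i$ converges to $\alpha$.

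For the existence of $g_i$, I would start by noting that, because $h$ is continuous and surjective, the preimage family $\{h^{-1}(A)\}_{A\in\sP_i}$ is a clopen partition of $Y$ into nonempty pieces. Each such $h^{-1}(A)$, being a nonempty clopen subset of the Cantor set, is itself compact, Hausdorff, metrizable, totally disconnected, and perfect (since the Cantor set is perfect and these sets are open), hence again a Cantor set by Brouwer's topological characterization. The same applies to each $A\in\sP_i$. I would therefore fix a homeomorphism $g_i^{(A)}\colon h^{-1}(A)\to A$ for each $A\in\sP_i$ and glue them, along the finite clopen partition of $Y$, into a single homeomorphism $g_i\colon Y\to X$ with $g_i(h^{-1}(A))=A$ for every $A\in\sP_i$.

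The key geometric fact for the second task is that the intertwining relation $h\circ\beta_s=\alpha_s\circ h$ extends to preimages: for any subset $A\subseteq X$ one has $\beta_s(h^{-1}(A))=h^{-1}(\alpha_s(A))$, as a short chase of definitions using surjectivity of $h$ confirms. To show $\beta_i\to\alpha$, I would fix a basic neighborhood $U_{\alpha,E,\sP}$ of $\alpha$ in $\Act(G,X)$ and let $\sQ$ be the common refinement of $\sP$ together with all $\alpha_s(\sP)$ for $s\in E$. For every $i$ with $\sP_i$ refining $\sQ$, each $A\in\sP$ and each $\alpha_s(A)$ with $s\in E$ is a disjoint union of members of $\sP_i$, and the defining property of $g_i$ extends additively across such unions. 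Consequently $g_i^{-1}(A)=h^{-1}(A)$ and $g_i(h^{-1}(\alpha_s(A)))=\alpha_s(A)$, and feeding these into
\[
\beta_{i,s}(A) \;=\; g_i\,\beta_s\,g_i^{-1}(A) \;=\; g_i\bigl(\beta_s(h^{-1}(A))\bigr) \;=\; g_i\bigl(h^{-1}(\alpha_s(A))\bigr) \;=\; \alpha_s(A)
\]
yields $\beta_i\in U_{\alpha,E,\sP}$ for all sufficiently late $i$.

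I do not expect a serious obstacle: the only mildly non-routine point is the construction of the $g_i$, which reduces to Brouwer's theorem after verifying that nonempty clopen subsets of $X$ and $Y$ inherit perfectness from the ambient spaces. The convergence half is a diagram chase, and the virtue of working with the zero-dimensional basis $\{U_{\alpha,E,\sP}\}$ is that it reduces the neighborhood condition to an exact set-theoretic equality rather than a metric estimate, so the proof never needs to quantify how close $g_i$ is to some idealized map.
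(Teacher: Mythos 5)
Your proposal is correct and follows essentially the same route as the paper: nonempty clopen subsets of the Cantor set are again Cantor sets, so the $g_i$ are built by gluing piecewise homeomorphisms over the partition, and convergence is checked on the basic neighbourhoods $U_{\alpha,E,\sP}$ by passing to a partition refining $\sP$ and its $E$-translates so that the defining property of $g_i$ extends additively, giving the exact equality $\beta_{i,s}A=\alpha_s A$. (The only cosmetic difference is that equivariance alone, not surjectivity, gives $\beta_s(h^{-1}(A))=h^{-1}(\alpha_s A)$; surjectivity is only needed to make the sets $h^{-1}(A)$ nonempty.)
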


\begin{proof}
Given an $i\in I$, for each $A\in\sP_i$ both $A$ and $h^{-1} (A)$ are nonempty clopen subsets
(the latter thanks to surjectivity of $h$) and hence are homeomorphic to the Cantor set.
Since $\sP_i$ and $\{ h^{-1} (A) : A\in\sP_i \}$ are clopen partitions of $X$ and $Y$, respectively, this permits us to construct a 
homeomorphism $g_i : Y\to X$ with the property that $g_i (h^{-1} (A)) = A$ for every $A\in\sP_i$.

Suppose, for every $i\in I$, that $g_i$ is a homeomorphism with this property.
Let $E$ be a finite subset of $G$ and $\sP$ a clopen partition of $X$. Given our description of the topology on $\Act(G,X)$ in terms of clopen partitions, in order to show that the actions $\beta_i$ defined by $\beta_{i,s} = g_i \circ \beta_s \circ g_i^{-1}$ for $s\in G$ converge to $\alpha$ it suffices to demonstrate that there exists an $i_0\in I$ such that $\beta_i\in U_{\alpha,E,\sP}$ for all $i\geq i_0$.

Let $i_0\in I$ be such that $\sP_{i_0}$ refines each of the clopen partitions $\{ \alpha_{s^{-1}} A : A\in\sP \}$ for $s\in E\cup \{ e \}$, i.e., $\sP_{i_0}=\bigvee_{s\in E\cup \{e\}}s^{-1}\sP$. 
Let $i\geq i_0$, $s\in E$, and $A\in\sP$. Then there exist $B_1 , \dots , B_n , C_1 , \dots , C_m\in\sP_i$ 
such that $A = \bigsqcup_{j=1}^n B_j$ and $\alpha_s A = \bigsqcup_{k=1}^m C_k$, so that
\begin{align*}
g_i (h^{-1} (A)) 
= g_i \bigg(\bigsqcup_{j=1}^n h^{-1} (B_j )\bigg)
= \bigsqcup_{j=1}^n g_i (h^{-1} (B_j ))
= \bigsqcup_{j=1}^n B_j 
= A
\end{align*}
and similarly $g_i (h^{-1} (\alpha_s A)) = \bigsqcup_{k=1}^n C_k = \alpha_s A$. It follows that
\begin{align*}
\beta_{i,s}(A)
=(g_i \circ\beta_s \circ g_i^{-1} )(A) 
= g_i (\beta_s  (h^{-1} (A)))
= g_i (h^{-1} (\alpha_s A))
= \alpha_s A.
\end{align*}
That is, $\beta_i\in U_{\alpha,E,\sP}$, as required.
\end{proof}

Finally we observe the following stability of subequivalence under perturbations of the action. This will be needed in the proof of Lemma~\ref{L-open}.

\begin{proposition}\label{P-subequivalence}
Let
$A$ and $B$ be subsets of $X$ with $A$ closed and $B$ open, and $F$ a finite subset of $G$.
Then the set of all $\alpha\in\Act (G,X)$ such that $A\prec_{\alpha ,F} B$ is open.
\end{proposition}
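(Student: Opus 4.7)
The plan is straightforward: starting from witnesses for $A \prec_{\alpha,F} B$, shrink the open sets to ones with compact closures and then exploit the fact that small uniform perturbations of the action on the finitely many group elements in $F$ cannot push a compact set significantly away from its image.

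More concretely, suppose $\alpha\in\Act(G,X)$ with $A\prec_{\alpha,F} B$, witnessed by open sets $U_1,\dots,U_n\subseteq X$ and group elements $s_1,\dots,s_n\in F$ such that $A\subseteq\bigcup_{i=1}^n U_i$ and the sets $\alpha_{s_i}U_i$ are pairwise disjoint and contained in $B$. Since $A$ is compact and covered by the open sets $U_i$, a standard shrinking argument in the compact metrizable (hence normal) space $X$ produces open sets $V_1,\dots,V_n$ with $\overline{V_i}\subseteq U_i$ for each $i$ and $A\subseteq\bigcup_{i=1}^n V_i$. The sets $\alpha_{s_i}\overline{V_i}$ are then pairwise disjoint compact subsets of the open set $B$.

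Next I would fix a compatible metric $d$ on $X$ and, using the compactness and pairwise disjointness of the $\alpha_{s_i}\overline{V_i}$ together with their containment in the open set $B$, choose $\eps>0$ small enough that the open $\eps$-neighborhoods of the sets $\alpha_{s_i}\overline{V_i}$ are themselves pairwise disjoint and contained in $B$. Consider the basic open neighborhood
\[
\cU = \bigl\{\beta\in\Act(G,X) : \sup_{x\in X} d(\beta_{s_i}x,\alpha_{s_i}x) < \eps \text{ for all } i = 1,\dots,n\bigr\}
\]
of $\alpha$. For every $\beta\in\cU$ and each $i$, the set $\beta_{s_i}\overline{V_i}$ lies in the $\eps$-neighborhood of $\alpha_{s_i}\overline{V_i}$, so the sets $\beta_{s_i}V_i \subseteq \beta_{s_i}\overline{V_i}$ are pairwise disjoint open subsets of $B$. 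Since we still have $A\subseteq\bigcup_{i=1}^n V_i$ and $s_1,\dots,s_n\in F$, this shows $A\prec_{\beta,F} B$, so $\cU$ is contained in the set in question, which is therefore open.

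There is no serious obstacle here; the only thing to be mindful of is that subequivalence is defined via open sets while continuity arguments are cleanest on compact sets, which is precisely why the preliminary shrinking to $V_i$ with $\overline{V_i}\subseteq U_i$ is needed.
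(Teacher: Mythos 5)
Your proof is correct and follows essentially the same route as the paper's: shrink the covering open sets so that their closures still cover $A$ and sit inside the originals, then use compactness to extract a uniform margin $\eps$ guaranteeing that for any action $\eps$-close to $\alpha$ on $F$ the images remain pairwise disjoint and inside $B$. The only cosmetic difference is that the paper places the margin around the image sets via the notation $D^\eps$ rather than around $\eps$-neighborhoods, which changes nothing of substance.
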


\begin{proof}
Let $\alpha$ be an action in $\Act (G,X)$ such that $A\prec_{\alpha ,F} B$.
Then there exist open sets $U_s \subseteq X$ for $s\in F$
such that $A\subseteq \bigcup_{s\in F} U_s$ and the sets $\alpha_s U_s$ for $s\in F$ 
are pairwise disjoint and contained in $B$. Fix a compatible metric $d$ on $X$.
For a set $D\subseteq X$ and $\eps > 0$ write 
$D^{\eps} = \{ x\in D: d(x,D^\comp ) > \eps \}$.
The open sets $U_s^{\eps}$ for $s\in F$ and $\eps > 0$ cover $A$, and so by compactness there is a 
particular $\eps > 0$ such that the sets $U_s ^{\eps}$ for $s\in F$ cover $A$.
Using compactness and the fact that the maps $\alpha_s$ for $s\in F$ are homeomorphisms, we can find a $\delta>0$ such that $\alpha_s\overline{U_s^{\eps}}\subseteq (\alpha_sU_s)^{\delta}$ for all $s\in F$. It is now easy to check that whenever $\beta\in U_{\alpha,F,\delta}$ one has $\beta_s \overline{U_s^{\eps}} \subseteq \alpha_s U_s$ for every $s\in F$, so that $A\prec_{\beta ,F} B$. 
\end{proof}

\section{Square divisibility}\label{S-SD}

The notions of square divisibility (Definition~\ref{D-SD}) and weak square divisibility (Definition~\ref{D-weak SD})
that we introduce here are dynamical abstractions 
of part of the apparatus used to establish stable rank one in \cite{LiNiu20}. A key difference with \cite{LiNiu20}
is that our definitions localize the subequivalences at play and thus do not make any blanket assumption
of comparison. In the case of weak square divisibility this localization is set up so that we obtain a $G_\delta$
property in the space of actions (Proposition~\ref{P-SD G delta}). This enables us to establish genericity results for stable rank one 
(Theorems~\ref{T-SD free products} and \ref{T-SD free groups}) without having to directly confront the descriptive-set-theoretic nature of 
either comparison or stable rank one, which we do not know to be $G_\delta$ conditions.
We do not even know if any of the actions in our genericity results satisfy comparison.

At the same time, our genericity results will rely on the fact that essentially free minimal actions of amenable groups on the Cantor set with comparison are $(O_1,O_2,E)$-squarely divisible in the sense of Definition~\ref{D-SD} for all nonempty open sets $O_1,O_2$ and finite sets $e\in E\subseteq G$, as shown in Theorem~\ref{T-amenable} (such actions have the URP, independently of the comparison hypothesis \cite{GarGefGesKopNar24}). 

Definition~\ref{D-SD}, as well as Theorem~\ref{T-amenable} and the resulting stable rank one result (Theorem~\ref{T-SR1}), deal with the general setting of compact metrizable $X$. This will yield, for example, a streamlined
approach to establishing stable rank one for crossed products of free minimal $\mathbb{Z}^d$-actions, 
as originally shown by Li and Niu \cite{LiNiu20}. It is also conceivable that these results apply to some actions
of nonamenable groups on higher-dimensional spaces, although the present paper will focus exclusively on the Cantor
set once we cross the threshold into nonamenability in the later sections.

\begin{definition}\label{D-PE}
Let $G\curvearrowright X$ be an action. We say that the members of a collection $\{V_m\}_{m=1}^{M}$ of subsets of $X$
are \textit{pairwise equivalent} if there exist finitely many 
Borel subsets $\{C_k\}_{k\in K}$ of $X$
satisfying $V_1=\bigsqcup_{k\in K} C_k$ and, for every $k\in K$, elements $\{s_{k,m}\}_{m=1}^{M}$ of $G$ 
with $s_{k,1}=e$ so that $V_m=\bigsqcup_{k\in K}s_{k,m}C_k$ 
and the sets $s_{k,m}C_k$ for $k\in K$ and $m= 1,\ldots, M$ have pairwise disjoint closures in $X$.
\end{definition}

Note that pairwise equivalence implies that the sets $\{V_m\}_{m=1}^{M}$ have pairwise disjoint closures.

\begin{definition}
Let $G\curvearrowright X$ be an action. For a set $E\subseteq G$, we say that the members of a collection $\sV$ of subsets of $X$ 
are {\it pairwise $E$-disjoint} if the sets $E\overline{V}$ for $V\in\sV$ are pairwise disjoint.
\end{definition}

\begin{definition}\label{D-SD}
Let $G\curvearrowright X$ be an action.
Let $O_1$ and $O_2$ be open subsets of $X$ and let $E$ be a finite subset of $G$ containing $e$.
The action $G\curvearrowright X$ is {\it $(O_1 , O_2 ,E)$-squarely divisible} if there exist an $n\in\Nb$, a collection $\{ V_{i,j} \}_{i,j=1}^n$ 
of pairwise equivalent and pairwise $E$-disjoint open subsets of $X$, and, writing $V = \bigsqcup_{i,j=1}^n V_{i,j}$,
an open set $U\subseteq X$ with $\partial V \subseteq U$ such that,
defining $V_1 = \bigsqcup_{i=1}^n V_{i,1}$, $R = V^\comp$,
and $B = \overline{V}\cap ((V\cap \overline{U}^\comp )^E )^\comp$, one has the following:
\begin{enumerate}
\item $\overline{V_{i,1}} \prec O_1 \cap \bigsqcup_{j=2}^n V_{i,j} \cap B^\comp$ for every $i=1,\dots ,n$,

\item $R\prec O_2 \cap V \cap (V_1 \cup B)^\comp$, and

\item $B\cup (\overline{U}\cap R)\prec O_2 \cap \overline{V\cup U}^\comp$.
\end{enumerate}
Given a nonempty open set $O\subseteq X$, the action
is {\it $O$-squarely divisible} if there exist three nonempty open sets $O_0 \subseteq O$ and $O_1 , O_2 \subseteq X$
with pairwise disjoint closures such that 
\begin{itemize}
\item[(iv)] $\overline{O_1}\sqcup \overline{O_2}\prec O_0$, and
\item[(v)] the action is $(O_1 , O_2 ,E)$-squarely divisible
for all finite sets $E\subseteq G$ containing $e$.
\end{itemize}
Finally, the action is {\it squarely divisible} if it 
is $O$-squarely divisible for all nonempty open sets $O\subseteq X$.
\end{definition}

Observe that $B$ contains $\overline{U}\cap V$, and so $B\cup (\overline{U}\cap R)$ contains $\overline{U}$. 
Thus $B\cup (\overline{U}\cap R)$ is a thickening of the topological boundary between the tower and the remainder.
This thickening involves both topological and group-theoretic aspects, namely the use of both $U$ and $E$. 
Condition (i) says that the tower levels are small,
condition (ii) says that the remainder is small, and condition (iii) says that the thickened boundary $B\cup (\overline{U}\cap R)$ is small.
In each case this smallness is expressed with respect to certain parts of the tower structure in conjunction with an independently prescribed parts of the space $X$, namely $O_1$ and $O_2$. Figure~\ref{F-SD} illustrates the definition in the case
of a Cantor set, where one can take all of the sets at play to be clopen and the set $U$ to be empty, as Proposition~\ref{P-SD0Dim}
will demonstrate.

\begin{figure}
\includegraphics[width = 0.9\textwidth]{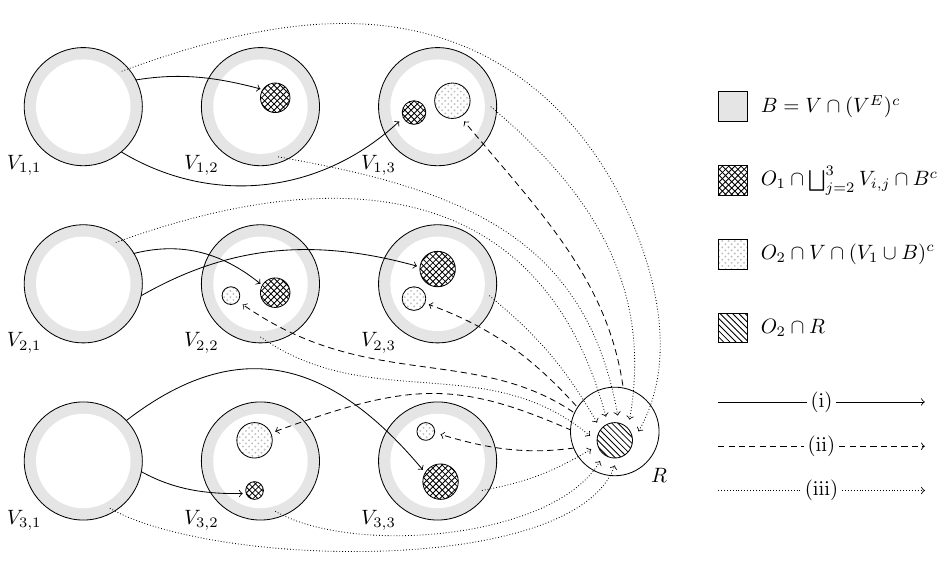}
\caption{$(O_1,O_2,E)$-square divisibility ($n=3$) in the Cantor set setting 
with $U = \emptyset$ (as in Proposition~\ref{P-SD0Dim}) and arrows indicating subequivalences.}
\label{F-SD}
\end{figure}

The fact that we take the sets $V_{i,j}$ to be indexed by a square array is not essential
for establishing Theorems~\ref{T-SR1} and \ref{T-SR1 weak SD}, 
where a more general rectangular array would be sufficient, 
as long as it is not too thin in one direction. In our applications of Theorems~\ref{T-SR1} and \ref{T-SR1 weak SD}, 
however, we can always get away with a square array, and so we have built it into the definition (and terminology) 
in order to take advantage of the notational economy it provides.

The separation property $\overline{O_0}\cap (\overline{O_1}\sqcup \overline{O_2})=\emptyset$ 
that is satisfied by the sets $O_0$, $O_1$, and $O_2$ in the definition of $O$-square divisibility 
is designed for the purpose of being able to construct 
a unitary, via Lemma~\ref{L-unitary}, in the verification of stable rank one in Theorem~\ref{T-SR1}.
That the set $\overline{O_1} \sqcup \overline{O_2}$ be subequivalent to (and not merely contained in)
the given open subset $O$ provides us with a certain flexibility that is crucial
for the proof of Theorem~\ref{T-SD product}. This aspect of the definition is not needed, however, 
in the amenable setting of Theorem~\ref{T-amenable}, where 
we show that minimal actions with the URP and comparison
possess the stronger property of $(O_1 , O_2 ,E)$-squarely divisibility for all 
nonempty open sets $O_1 , O_2 \subseteq X$ 
and all finite sets $e\in E\subseteq G$, which also happens to be critical to the proof of Theorem~\ref{T-SD product}. 
That this is indeed a stronger property we record in the following proposition.

\begin{proposition}\label{P-stronger}
Suppose that $G$ is infinite.
Let $G\curvearrowright X$ be a topologically free minimal action which is $(O_1 , O_2 ,E)$-squarely divisible for all 
nonempty open sets $O_1 , O_2 , \subseteq X$ with $\overline{O_1} \cap \overline{O_2} =\emptyset$
and all finite sets $e\in E\subseteq G$. Then the action is squarely divisible.
\end{proposition}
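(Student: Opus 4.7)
The plan is to show, for every nonempty open set $O\subseteq X$, that one can produce nonempty open sets $O_0\subseteq O$ and $O_1,O_2\subseteq X$ with the three closures $\overline{O_0},\overline{O_1},\overline{O_2}$ pairwise disjoint and $\overline{O_1}\sqcup\overline{O_2}\prec O_0$. Once such $O_0,O_1,O_2$ are in hand, condition~(v) of $O$-square divisibility comes for free from the standing hypothesis applied to $O_1$ and $O_2$ (whose closures are disjoint), while condition~(iv) is exactly the subequivalence arranged. So the whole task reduces to a neighborhood-selection argument driven by topological freeness and the infinitude of $G$.

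To set up the neighborhoods I will work with four distinct points of a single orbit. First I observe that $X$ must be perfect: topological freeness together with $|G|=\infty$ yields a point of infinite orbit, minimality then makes $X$ infinite, and a compact metric minimal $G$-space with an isolated point would have to be discrete and hence finite. Consequently $O$ is uncountable, and for any $x\in X$ with trivial stabilizer $Gx\cap O$ is infinite: $Gx$ is dense in $X$ by minimality, hence $Gx\cap O$ is dense in $O$, so if it were finite it would be closed in $X$ and so equal to $O$, contradicting uncountability of $O$. I pick such an $x$, choose distinct points $y_1,y_2\in Gx\cap O$, and then pick distinct points $x_1,x_2\in Gx\setminus\{y_1,y_2\}$, arranging that $x_1,x_2,y_1,y_2$ are all pairwise distinct, and I write $y_i=g_i x_i$ with $g_i\in G$. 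Using Hausdorffness and continuity of the action I choose open neighborhoods $V_i\ni y_i$ and $W_i\ni x_i$ whose four closures $\overline{V_1},\overline{V_2},\overline{W_1},\overline{W_2}$ are pairwise disjoint, with $V_1\cup V_2\subseteq O$, and---after shrinking the $W_i$ using continuity of $g_i$ at $x_i$---with $g_i\overline{W_i}\subseteq V_i$ for $i=1,2$.

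Setting $O_0:=V_1\cup V_2$, $O_1:=W_1$, and $O_2:=W_2$, the pairwise disjointness of $\overline{O_0},\overline{O_1},\overline{O_2}$ is automatic. For the subequivalence $\overline{O_1}\sqcup\overline{O_2}\prec O_0$ I take the open cover $\{W_1,W_2\}$ of the disjoint union together with the group elements $g_1,g_2$: the images $g_1 W_1\subseteq V_1$ and $g_2 W_2\subseteq V_2$ are disjoint open subsets of $O_0$, as required. Condition~(v) then follows immediately from the hypothesis applied to the chosen $O_1,O_2$, giving $O$-square divisibility; since $O$ was arbitrary, the action is squarely divisible. The only real care point---which is why I cannot simply use three orbit points and single group elements $g_1,g_2$ applied to $O_0$---is that a source for $O_1$ and a source for $O_2$ sitting in $O_0$ must be kept apart; working with four distinct orbit points, two in $O$ and two outside any shrinking of $O_0$, bypasses this cleanly, and the remaining steps are routine shrinking in a Hausdorff space.
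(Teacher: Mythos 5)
Your argument is correct in substance and follows essentially the same route as the paper's proof: use minimality together with the absence of isolated points (forced by topological freeness, minimality, and $|G|=\infty$) to translate small neighbourhoods into $O$, thereby producing $O_0\subseteq O$ and $O_1,O_2$ with pairwise disjoint closures and $\overline{O_1}\sqcup\overline{O_2}\prec O_0$, after which condition (v) of $O$-square divisibility is immediate from the hypothesis. The paper does this slightly more economically with a single orbit point $x_0\in O$ and one group element $s$, placing both $O_1$ and $O_2$ inside one neighbourhood $O_x$ with $sO_x\subseteq O_0$; your four-point, two-translation version is an equivalent variant. One small slip to repair: in verifying $\overline{W_1}\sqcup\overline{W_2}\prec O_0$ you offer $\{W_1,W_2\}$ as the required open cover, but these sets need not cover their own closures; since you already arranged $g_i\overline{W_i}\subseteq V_i$, just enlarge each $W_i$ to an open $U_i\supseteq\overline{W_i}$ with $g_iU_i\subseteq V_i$ (or, equivalently, take $O_i$ to be an open set with $\overline{O_i}\subseteq W_i$), and the subequivalence is witnessed correctly.
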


\begin{proof}
Let $O$ be a nonempty open subset of $X$. Since $G$ is infinite and the action is topologically free and minimal, $X$ has no isolated points.
Choose a point $x\in X$. By minimality, there exists $x_0\in O$ distinct from $x$ and $s\in G$ such that $sx=x_0$. 
Using the continuity of the action
we can then find open neighbourhoods 
$O_x\subseteq X$ and $O_0\subseteq O$ of $x$ and $x_0$, respectively, such that $\overline{O_x} \cap \overline{O_0} = \emptyset$
and $sO_x\subseteq O_0$, the latter of which gives us $O_x \prec O_0$.
Since $X$ has no isolated points, there are open sets $O_1,O_2\subseteq X$ with disjoint closures such that $\overline{O_1}\sqcup\overline{O_2}\subseteq O_x$.
It now follows that $\overline{O_1}\sqcup\overline{O_2}\prec O_0$. Since by hypothesis the action is $(O_1, O_2 ,E)$-squarely divisible for all finite sets $E\subseteq G$ containing $e$, we deduce that the action is $O$-squarely 
divisible and hence squarely divisible.
\end{proof}

\begin{remark}
Knowing that a topologically free minimal action of an infinite group $G$ is $(O_1 , O_2 ,E)$-squarely divisible 
for all $O_1$, $O_2$, and $E$ (as in Theorem~\ref{T-amenable}) will imply stable rank one via Proposition~\ref{P-stronger}
and Theorem~\ref{T-SR1}, but one can reach this conclusion more directly by
simply taking $O_1$ and $O_2$ to be subsets of $O$ and dispensing with the construction of the 
unitary associated to the subequivalence $\overline{O_1} \sqcup \overline{O_2} \prec O_0 \subseteq O$ in the proof of Theorem~\ref{T-SR1}.
\end{remark}

In Definition~\ref{D-SD} the set $U$ and the subequivalences involving it 
are critical for allowing us to rotate the non-invertible element in the proof of Theorem~\ref{T-SR1}
to an element $b$ that is ``spectrally null'' around the boundary of $V$ (this will allow us to 
build ersatz permutation unitaries using homotopies that do not spectrally interfere with $b$,
so that these unitaries will act like genuine permutations on $b$ and rotate it to something nilpotent).
Not unrelated is the fact that when the space $X$ is zero-dimensional
the set $U$ and its associated subequivalences also allow us to
establish the simpler characterization of $(O_1 , O_2 , E)$-squarely divisibility in terms of clopen sets,
recorded in Proposition~\ref{P-SD0Dim} below. For this we need the following compactness principle
for subequivalences.

\begin{lemma}\label{L-subequivalence}
Let $G\curvearrowright X$ be an action. Let $A\subseteq X$ be a closed set and $B\subseteq X$ an open set
such that $A\prec B$. Then there exist an open set $V\supseteq A$ and an open set $W\subseteq B$ with $\overline{W} \subseteq B$
such that $\overline{V}\prec W$.
\end{lemma}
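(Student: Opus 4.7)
The plan is to start with the data provided by the hypothesis $A\prec B$ and carry out a standard two-step shrinking argument in the compact metric space $X$. By the definition of subequivalence, there exist open sets $U_1,\ldots,U_n\subseteq X$ and group elements $s_1,\ldots,s_n\in G$ with $A\subseteq \bigcup_{i=1}^n U_i$ and such that the sets $s_1 U_1,\ldots, s_n U_n$ are pairwise disjoint subsets of $B$. The goal is to take a slightly shrunken version of each $U_i$ to produce $V$, and a slightly enlarged version of $\bigsqcup_i s_i U_i$ to produce $W$.

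Since $X$ is a compact metric space and hence normal, and $A$ is closed, I would apply the standard shrinking lemma twice to produce open sets $U_i'$ and $U_i''$ ($i=1,\ldots,n$) satisfying $\overline{U_i'}\subseteq U_i''$ and $\overline{U_i''}\subseteq U_i$, with $A\subseteq \bigcup_i U_i'$. Setting $V := \bigcup_i U_i'$ yields an open set containing $A$ whose closure $\overline{V}=\bigcup_i \overline{U_i'}$ is covered by the open sets $U_i''$. This double-shrinking is the key technical point: a single application of the shrinking lemma would only give $A\subseteq V$ with the $U_i'$ themselves covering $V$, but not $\overline{V}$.

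Next, I form the closed set $K := \bigcup_{i=1}^n s_i\overline{U_i''}$. Since $s_i \overline{U_i''}\subseteq s_i U_i\subseteq B$ for each $i$, the set $K$ is a closed subset of the open set $B$. By normality of $X$, I choose an open set $W$ with $K\subseteq W$ and $\overline{W}\subseteq B$.

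Finally, I verify that $\overline{V}\prec W$ using the open sets $U_1'',\ldots,U_n''$ and the original group elements $s_1,\ldots,s_n$: the inclusion $\overline{V}\subseteq \bigcup_i U_i''$ was arranged in the second step, the sets $s_i U_i''$ are pairwise disjoint as subsets of the pairwise disjoint sets $s_i U_i$, and they are contained in $K\subseteq W$. There is no substantive obstacle here; the argument is a routine compactness-and-normality construction, and the only step that requires any care is arranging in the shrinking that $\overline{V}$ — not just $V$ itself — lies inside an open cover whose $s_i$-translates remain disjoint inside a compactly contained subset of $B$.
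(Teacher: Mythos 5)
Your proposal is correct and follows essentially the same route as the paper: a double application of the shrinking lemma to get $U_i' \subseteq \overline{U_i'} \subseteq U_i'' \subseteq \overline{U_i''} \subseteq U_i$ with $A$ covered by the $U_i'$, then taking $V=\bigcup_i U_i'$ and a $W$ built from the $s_i U_i''$. The only cosmetic difference is that the paper sets $W=\bigsqcup_i s_i U_i''$ directly (its closure already lies in $B$), whereas you pass to a further normality neighbourhood of $\bigcup_i s_i\overline{U_i''}$, which is harmless but unnecessary.
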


\begin{proof}
By assumption there exist a collection $\{ U_i \}_{i=1}^n$ of open subsets of $X$ covering $A$
and elements $s_1 , \dots , s_n \in G$ such that the sets $s_i U_i$ are pairwise disjoint subsets of $B$.
Using normality and compactness, we find for each $i=1,\ldots,n$ open subsets $V_i,W_i\subseteq X$ with
\[
V_i\subseteq \overline{V_i}\subseteq W_i\subseteq \overline{W_i}\subseteq U_i 
\]
so that $A\subseteq \bigcup_{i=1}^{n}V_i$. Set $V=\bigcup_{i=1}^{n}V_i$ and $W=\bigsqcup_{i=1}^{n}s_iW_i$. 
Then $V\supseteq A$, $\overline{W}\subseteq B$, and $\overline{V}\prec W$, completing the proof.
\end{proof}

\begin{lemma}\label{L-SD prime}
Let $G\curvearrowright X$ be an action on the Cantor set. Let $O_1$ and $O_2$ be open subsets of $X$ and let $E$ be a finite subset of $G$ containing $e$, and suppose that 
the action is $(O_1,O_2,E)$-squarely divisible. Then there are closed sets $C_1 \subseteq O_1$
and $C_2 \subseteq O_2$ such that for all open sets $O_1'$ and $O_2'$ satisfying
$C_1 \subseteq O_1' \subseteq O_1$ and $C_2 \subseteq O_2' \subseteq O_2$
the action is $(O_1',O_2',E)$-squarely divisible.
\end{lemma}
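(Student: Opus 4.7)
The plan is to apply Lemma~\ref{L-subequivalence} to each of the three subequivalence conditions in Definition~\ref{D-SD} so as to extract compact ``cores'' sitting inside the open targets that already witness all the subequivalences. Any open set sandwiched between such a core and the original target will then work just as well, which is exactly what the statement of the lemma demands.

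To begin I would unpack the $(O_1,O_2,E)$-square divisibility hypothesis to obtain the data $n$, $\{V_{i,j}\}_{i,j=1}^n$, $U$, $V$, $V_1$, $R$, and $B$, and observe that each of the three target sets
\[
O_1 \cap \bigsqcup_{j=2}^n V_{i,j} \cap B^\comp , \qquad O_2 \cap V \cap (V_1 \cup B)^\comp , \qquad O_2 \cap \overline{V \cup U}^\comp
\]
is open (in the Cantor setting one may take all the structural sets $V_{i,j}$, $U$, $V$, $V_1$, $B$ to be clopen by Proposition~\ref{P-SD0Dim}, in which case each of the three targets is even clopen). Applying Lemma~\ref{L-subequivalence} to each of the finitely many subequivalences (i), (ii), (iii), I would obtain open sets $W_1,\dots,W_n$, $W'$ and $W''$ such that $\overline{V_{i,1}} \prec W_i$ (for $i=1,\dots,n$), $R\prec W'$, and $B\cup(\overline{U}\cap R)\prec W''$, with the closures $\overline{W_i}$, $\overline{W'}$, $\overline{W''}$ all contained in the corresponding original targets.

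I would then set $C_1 = \bigcup_{i=1}^n \overline{W_i}$ and $C_2 = \overline{W'} \cup \overline{W''}$, which are closed subsets of $O_1$ and $O_2$ respectively since the $\overline{W_i}$ lie in $O_1 \cap \bigsqcup_{j=2}^n V_{i,j} \cap B^\comp \subseteq O_1$ and the sets $\overline{W'}$, $\overline{W''}$ lie in $O_2$. Given open sets $O_1', O_2'$ with $C_1 \subseteq O_1'\subseteq O_1$ and $C_2 \subseteq O_2'\subseteq O_2$, I would reuse the exact same data $n$, $\{V_{i,j}\}$, $U$; pairwise equivalence, pairwise $E$-disjointness, and the inclusion $\partial V\subseteq U$ are all properties of this data alone and so are unchanged. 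For condition (i), since $W_i \subseteq \overline{W_i}\subseteq C_1 \subseteq O_1'$ while also $W_i \subseteq \bigsqcup_{j=2}^n V_{i,j}\cap B^\comp$ by construction, the subequivalence $\overline{V_{i,1}}\prec W_i$ yields $\overline{V_{i,1}} \prec O_1'\cap \bigsqcup_{j=2}^n V_{i,j} \cap B^\comp$ by monotonicity; conditions (ii) and (iii) follow identically using $W'$ and $W''$ with $O_2'$.

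There is no substantive obstacle; this is essentially a compactness argument already packaged into Lemma~\ref{L-subequivalence}, and the only bookkeeping point worth being careful about is that Lemma~\ref{L-subequivalence} requires open targets, which (as noted above) is precisely what the three displayed targets are.
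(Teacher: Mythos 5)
Your proof is correct and takes essentially the same approach as the paper's: apply Lemma~\ref{L-subequivalence} to each of the three subequivalences in Definition~\ref{D-SD} and let $C_1$ and $C_2$ be the unions of the closures of the resulting intermediate open sets. One small caution: your parenthetical appeal to Proposition~\ref{P-SD0Dim} would be circular (that proposition is proved using this lemma), but since you do not actually rely on it the argument stands.
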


\begin{proof}
Apply Lemma~\ref{L-subequivalence} to the definition of $(O_1,O_2,E)$-square divisibility to find open subsets $\{W_i\}_{i=1}^{n}$ of $X$ such that $\overline{W_i}\subseteq O_1\cap \bigsqcup_{j=2}^{n}V_{i,j}\cap B^\comp$ and $\overline{V_{i,1}}\prec W_i$ for each $i=1,\ldots,n$, and set $C_1=\bigcup_{i=1}^{n}\overline{W_{i}}$. To define $C_2$ one
similarly applies Lemma~\ref{L-subequivalence} only this time using the conditions (ii) and (iii) in the definition 
of $(O_1,O_2,E)$-square divisibility.
\end{proof}

\begin{remark}\label{R-SD prime}
It follows from the construction in the proof of Lemma~\ref{L-SD prime} that if $O_1$ and $O_2$ are nonempty 
then so are $C_1$ and $C_2$.
\end{remark}

\begin{proposition}\label{P-SD0Dim}
Let $G\curvearrowright X$ be an action on the Cantor set. Let $O_1$ and $O_2$ be clopen subsets of $X$ and let $E$ be a finite subset of $G$ containing $e$. The action $G\curvearrowright X$ is $(O_1,O_2,E)$-squarely divisible if and only if there exist an $n\in\mathbb{N}$, a collection $\{V_{i,j}\}_{i,j=1}^{n}$ of pairwise equivalent and pairwise $E$-disjoint clopen subsets of $X$, and, writing $V=\bigsqcup_{i,j=1}^{n}V_{i,j}$, $V_1=\bigsqcup_{i=1}^{n}V_{i,1}$, $R=V^\comp$, and $B=V\cap (V^E)^\comp$, one has the following:
\begin{enumerate}
\item[(i)] $V_{i,1}\prec O_1\cap \bigsqcup_{j=2}^{n}V_{i,j}\cap B^\comp$ for every $i=1,\ldots,n$,
\item[(ii)] $R\prec O_2\cap V\cap (V_1\cup B)^\comp$, and 
\item[(iii)] $B\prec O_2\cap R$.
\end{enumerate}
Moreover, given a nonempty open set $O\subseteq X$, the action is $O$-squarely divisible if and only if there exist nonempty disjoint clopen sets $O_0, O_1,O_2\subseteq X$ with $O_0\subseteq O$ such that 

\begin{itemize}
\item[(iv)] $O_1\sqcup O_2\prec O_0$, and
\item[(v)] the action is $(O_1,O_2,E)$-squarely divisible for all finite sets $E\subseteq G$ containing $e$.
\end{itemize}
Finally, the action is squarely divisible if and only if it is $O$-squarely divisible for all nonempty clopen sets $O\subseteq X$.
\end{proposition}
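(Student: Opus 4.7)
My plan is to handle the three equivalences in turn, with all the substance concentrated in the first (only if) direction.

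For the first characterization, the ``if'' direction is an immediate verification: given clopen data, set $U=\emptyset$ in Definition~\ref{D-SD}; then $\overline{V_{i,j}} = V_{i,j}$, $\overline{V\cup U}^\comp = V^\comp = R$, and $\overline{U}\cap R = \emptyset$, so the stated clopen subequivalences (i)--(iii) are precisely those of Definition~\ref{D-SD}. For the ``only if'' direction, suppose we are given open pairwise equivalent pairwise $E$-disjoint $V_{i,j}$ with $\partial V \subseteq U$ and the three subequivalences of Definition~\ref{D-SD}. The plan is to produce clopen $V_{i,j}'$ with $U' = \emptyset$. I first apply Lemma~\ref{L-subequivalence} to each of (i)--(iii) to obtain strict witnesses (closures of covering sets mapping strictly into the targets), providing slack to absorb small perturbations of the sets $V$, $V_1$, $R$, $B$. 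Next, since $E\overline{V_{i,j}}$ are pairwise disjoint compact sets in the Cantor set, I separate them by pairwise disjoint clopen neighborhoods $N_{i,j}$ chosen inside $V_{i,j}\cup U$; the sets $EN_{i,j}$ can be kept pairwise disjoint by choosing $N_{i,j}$ sufficiently tight around $\overline{V_{i,j}}$.

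The delicate point is preserving pairwise equivalence. Writing $V_{1,1} = \bigsqcup_k C_k$ with the $C_k$ Borel, a Baire-category argument forces $\bigcup_k C_k^\circ$ to be dense in $V_{1,1}$: a nonempty open subset disjoint from all $C_k^\circ$ would be a countable union of nowhere dense sets, contradicting the Baire theorem. Using this, I construct a finite clopen partition $\{D_\ell\}$ of a clopen $V_{1,1}'\subseteq V_{1,1}\cup U$ so that each $D_\ell$ is either contained in some $C_{k(\ell)}^\circ$ (in which case I reuse the element $s_{k(\ell),(i,j)}$ from the original equivalence) or contained in $\overline{U}$ (the ``boundary'' part, which condition~(iii) allows us to subequivalently absorb). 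Setting $V_{i,j}' = \bigsqcup_\ell s_{k(\ell),(i,j)} D_\ell$ then yields clopen pairwise equivalent pairwise $E$-disjoint sets. The clopen subequivalences (i)--(iii) follow because $R' \setminus R$ and $B' \setminus B$ lie in the thickening $B \cup (\overline{U}\cap R)$ handled by Definition~\ref{D-SD}(iii), and the slack from Lemma~\ref{L-subequivalence} lets these small additions be absorbed.

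For the second characterization, the ``if'' direction is again immediate from Part 1. For ``only if'', assume $O$-square divisibility with open $O_0\subseteq O$ and $O_1,O_2$ satisfying (iv) and (v). Lemma~\ref{L-SD prime} together with Remark~\ref{R-SD prime} yields nonempty closed $C_1\subseteq O_1$, $C_2\subseteq O_2$ such that $(O_1',O_2',E)$-square divisibility holds for all open $O_1', O_2'$ sandwiched between the $C_i$ and $O_i$. In the Cantor set I select disjoint clopen $O_1', O_2'$ with $C_i \subseteq O_i'\subseteq O_i$, and apply Lemma~\ref{L-subequivalence} to the open subequivalence $\overline{O_1}\sqcup\overline{O_2}\prec O_0$ to obtain a clopen $O_0'\subseteq O_0\subseteq O$ (first thicken $\overline{O_1'}\sqcup\overline{O_2'}$ slightly into the original target, then clopen-approximate from above) with $O_1'\sqcup O_2'\prec O_0'$. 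The $(O_1',O_2',E)$-square divisibility for all $E$ then passes to the clopen form via Part 1. For the third characterization, the ``only if'' is trivial; conversely, given any nonempty open $O$, choose a nonempty clopen $O''\subseteq O$ using that clopens form a basis for the Cantor topology, and the nonempty disjoint clopen $O_0''\subseteq O''\subseteq O$, $O_1'', O_2''$ witnessing $O''$-square divisibility also witness $O$-square divisibility.

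The main obstacle is the pairwise-equivalence preservation in the first ``only if'' direction; the Baire-category observation on $\bigcup_k C_k^\circ$ combined with the slack provided by $\overline{U}$-absorption in Definition~\ref{D-SD}(iii) is what makes the clopen refinement possible without sacrificing the subequivalences.
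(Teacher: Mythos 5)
The ``if'' directions and the overall architecture are fine, but your treatment of the first ``only if'' direction has a genuine gap, concentrated exactly where you locate the main difficulty: preserving pairwise equivalence. Two problems. First, your Baire-category justification is incorrect as stated: a Borel set with empty interior need not be nowhere dense (its closure can have nonempty interior), so ``a nonempty open subset disjoint from all $C_k^\circ$ would be a countable union of nowhere dense sets'' does not follow. The density of $\bigcup_k C_k^\circ$ in $V_{1,1}$ is in fact true, but only because Definition~\ref{D-PE} requires the sets $s_{k,m}C_k$ to have \emph{pairwise disjoint closures} --- this forces each $C_k$ to equal $V_{1,1}\cap\overline{C_k}$ and hence to be relatively clopen in $V_{1,1}$ --- and you never invoke that clause. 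Second, and more seriously, your ``boundary pieces'' $D_\ell\subseteq\overline{U}$ are not actually handled: pairwise equivalence demands an exact decomposition $V'_{i,j}=\bigsqcup_\ell s_{k(\ell),(i,j)}D_\ell$ with all translates having pairwise disjoint closures, so you must assign a group element to every piece, including the boundary ones, and condition~(iii) of Definition~\ref{D-SD} (a subequivalence) cannot ``absorb'' material out of an exact partition. Moreover there is no reason the complement of $\bigcup_k C_k^\circ$ in your clopen $V'_{1,1}$ should lie in $\overline{U}$, since $U$ is only a neighbourhood of $\partial V$, not of the internal boundaries of the Borel pieces.

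The paper avoids all of this by going in the opposite direction: instead of refining/shrinking, it \emph{enlarges}. Using the disjoint-closures clause of Definition~\ref{D-PE}, one separates the compact sets $s_{k,i,j}\overline{\tilde C_k}$ by pairwise disjoint clopen sets $W_{k,i,j}\subseteq(\tilde V_{i,j}\cup U)\cap Y_{i,j}$ (with $Y_{i,j}$ pairwise $E$-disjoint open neighbourhoods of the $\tilde V_{i,j}$) and sets $C_k=\bigcap_{i,j}s_{k,i,j}^{-1}W_{k,i,j}\supseteq\tilde C_k$. The resulting clopen $V_{i,j}\supseteq\tilde V_{i,j}$ satisfy $V_{i,j}\subseteq\tilde V_{i,j}\cup U$, so the excess is confined to $U$ and is exactly what conditions~(ii) and (iii) of Definition~\ref{D-SD}, together with the slack from Lemma~\ref{L-subequivalence}, are designed to control; the inclusions $R\subseteq\tilde R$ and $B\subseteq\tilde B\cup(\overline U\cap\tilde R)$ then give the three clopen subequivalences directly. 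You should rework your argument along these lines. Separately, in the second equivalence you quote Lemma~\ref{L-SD prime} as producing a single pair $C_1,C_2$ valid for all $E$, but that lemma is for a fixed $E$; to obtain clopen $O_1,O_2$ witnessing $(O_1,O_2,E)$-square divisibility for \emph{all} finite $E$ simultaneously one must take the union of the (clopen) sets $C_{i,E}$ over the countably many $E$, as the paper does. This is a smaller but real omission.
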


\begin{proof}
Suppose that $G\curvearrowright X$ is $(O_1 , O_2 , E)$-squarely divisible. Then there exist $n\in\Nb$,
a collection $\{ \tilde{V}_{i,j} \}_{i,j}^n$ of pairwise equivalent and pairwise $E$-disjoint open subsets of $X$,
and an open neighbourhood $U$ of $\partial \tilde{V}$ satisfying the conditions in Definition~\ref{D-SD}
with $\tilde{V}= \bigsqcup_{i,j=1}^n \tilde{V}_{i,j}$, $\tilde{V}_1 = \bigsqcup_{i=1}^n \tilde{V}_{i,1}$, $\tilde{R} = \tilde{V}^\comp$, and
$\tilde{B} = \overline{\tilde{V}}\cap((\tilde{V}\cap \overline{U}^\comp)^E)^\comp$. Let $\{ \tilde{C}_k \}_{k\in K}$ and $\{ s_{k,i,j} \}_{i,j=1}^n$ for $k\in K$ be as in the definition
of pairwise equivalence for the sets $\tilde{V}_{i,j}$. 
Since the collection $\{\tilde{V}_{i,j}\}_{i,j=1}^{n}$ is $E$-disjoint, we can
use the continuity of the action, together with the compactness and normality of $X$,
to find open sets $Y_{i,j}\supseteq \tilde{V}_{i,j}$ which are $E$-disjoint.
In view of the separation properties these sets and group elements
are required to satisfy, the zero-dimensionality of $X$ implies the existence of clopen sets $\{C_k\}_{k\in K}$
such that $\tilde{C}_k \subseteq C_k$ for each $k\in K$ and
the sets $s_{k,i,j} C_k$ are pairwise disjoint with $s_{k,i,j} C_k\subseteq (\tilde{V}_{i,j}\cup U) \cap Y_{i,j}$ for $k\in K$ and $i,j=1,\ldots , n$. 
Indeed, one first finds pairwise disjoint clopen sets $W_{k,i,j}\subseteq (\tilde{V}_{i,j}\cup U) \cap Y_{i,j}$ 
satisfying $s_{k,i,j}\overline{\tilde{C}_k}\subseteq W_{k,i,j}$ for $k\in K$ and $i,j=1,\ldots,n$ and then defines 
$C_k =\bigcap_{i,j=1}^{n}s_{k,i,j}^{-1}W_{k,i,j}$.
For all $i,j=1,\dots , n$ set $V_{i,j} = \bigsqcup_{k\in K} s_{k,i,j} C_k \supseteq \tilde{V}_{i,j}$.
Then the sets $V_{i,j}$ are clopen, pairwise equivalent, and pairwise $E$-disjoint. Set 
$V = \bigsqcup_{i,j=1}^n V_{i,j} \supseteq \tilde{V}$,
$V_1 = \bigsqcup_{i=1}^n V_{i,1} \supseteq \tilde{V}_1$, $R = V^\comp \subseteq \tilde{R}$, 
and $B = V\cap ( V^E )^\comp$.

Since $\overline{\tilde{V}_{i,1}} \prec O_1 \cap \bigsqcup_{j=2}^n \tilde{V}_{i,j} \cap \tilde{B}^\comp$ for every $i=1,\ldots , n$ by condition (i) 
in the definition of square divisibility, Lemma~\ref{L-subequivalence} shows that we can
take the sets $C_k$ to be smaller if necessary so as to arrange for every $i=1,\ldots,n$ that
\[
V_{i,1} \prec O_1 \cap \bigsqcup_{j=2}^n \tilde{V}_{i,j} \cap \tilde{B}^\comp
\subseteq O_1 \cap \bigsqcup_{j=2}^n V_{i,j} \cap B^\comp.
\]
Since we chose $s_{k,i,j}C_k$ to be contained in $\tilde{V}_{i,j}\cup U$ for all $i$, $j$, and $k$, we have $V_1 \subseteq \tilde{V}_1\cup U$ and hence
\begin{align*}
\tilde{V}\cap(\tilde{V}_1\cup \tilde{B})^\comp
=\tilde{V}_1^\comp\cap (\tilde{V}\cap \overline{U}^\comp)^E 
&=(\tilde{V}_1\cup U)^\comp\cap (\tilde{V}\cap \overline{U}^\comp)^E \\ 
&\subseteq V_1^\comp\cap V^E = V\cap(V_1\cup B)^\comp.
\end{align*}
This implies
\[
R \subseteq \tilde{R}\prec O_2 \cap \tilde{V} \cap (\tilde{V}_1 \cup \tilde{B})^\comp 
\subseteq O_2 \cap V \cap (V_1 \cup B)^\comp .
\]
Finally, since we chose each $s_{k,i,j}C_k$ to be contained in $\tilde{V}\cup U$ we have $V \subseteq \tilde{V} \cup U$ and hence, applying this fact to get both of the inclusions below, 
\[
B \subseteq \tilde{B}\cup (\overline{U} \cap \tilde{R}) \prec O_2 \cap \overline{\tilde{V}\cup U}^\comp \subseteq O_2 \cap R .
\]
This establishes the forward implication in the first part of the lemma
statement. The reverse implication is immediate from the definition of $(O_1 , O_2 , E)$-square divisibility.

The reverse implication of the second part of the lemma statement is trivial, and
so let us assume, in the other direction, that the action is $O$-squarely divisible.
Then there are nonempty open sets $\tilde{O}_0 \subseteq O_0$ and $\tilde{O}_1 , \tilde{O}_2 \subseteq X$ with pairwise disjoint closures 
such that $\overline{\tilde{O}_1}\sqcup \overline{\tilde{O}_2}\prec \tilde{O}_0$ and the action is $(\tilde{O}_1,\tilde{O}_2,E)$-squarely divisible for all finite subsets $E\subseteq G$ containing $e$. 
By Lemma~\ref{L-SD prime} (see also Remark~\ref{R-SD prime}) and the zero-dimensionality of $X$,
we can then find, for each finite set $E\subseteq G$ containing $e$, 
nonempty clopen sets $C_{1,E}\subseteq \tilde{O}_1$ and $C_{2,E}\subseteq \tilde{O}_2$ such that for all open sets $O_1'$ and $O_2'$ satisfying
$C_{1,E}\subseteq O_1'\subseteq \tilde{O}_1$ and $C_{2,E}\subseteq O_2'\subseteq \tilde{O}_2$ the action is $(O_1',O_2',E)$-squarely divisible.
Consider the (nonempty) clopen sets $O_1:=\bigcup_{E} C_{1,E}$ and $O_2:=\bigcup_E C_{2,E}$, where the (countable) unions range over all finite subsets $e\in  E\subseteq G$. By construction, the action is $(O_1,O_2,E)$-squarely divisible for all finite subsets $e\in E\subseteq G$.
Moreover, since $O_1\subseteq \tilde{O}_1$ and $O_2\subseteq \tilde{O}_2$ 
we have $O_1\sqcup O_2\prec \tilde{O}_0$.
By Lemma~\ref{L-subequivalence} and the zero-dimensionality of $X$, there exists a nonempty clopen set $O_0\subseteq \tilde{O}_0$ such that $O_1\sqcup O_2\prec O_0$. Finally we observe that $O_0,O_1,O_2$ are pairwise disjoint and that $O_0\subseteq O$.

Since every nonempty open subset of $X$ contains a clopen set, the reverse implication in the third part of the lemma statement is immediate, while the forward
implication is trivial.
\end{proof}

We do not know if the set of squarely divisible actions in $\Act (G,X)$ is a $G_\delta$. 
The problem is the universal quantification over $E$ in condition~(v) of Definition~\ref{D-SD}, which is nested within the universal quantification over $O$ and existential quantification over $O_0,O_1$, and $O_2$.
In order to remedy this deficiency so that we can establish the genericity results of Sections~\ref{S-SD I} and \ref{S-SD II},
we formulate the following weaker property in the Cantor set setting 
that will both be a $G_\delta$ condition in the space of actions and imply stable rank one.
The definition appears somewhat ad hoc in the way that the set $FEF$ appears, but this enables
an order of quantification that permits us to construct the unitary in the first part of the proof of Theorem~\ref{T-SR1 weak SD}.

\begin{definition}\label{D-weak SD}
Let $G\curvearrowright X$ be an action on the Cantor set.
Given a nonempty clopen set $O\subseteq X$ and a finite set $e\in E\subseteq G$, the action
is {\it $(O,E)$-squarely divisible} if there exist pairwise disjoint nonempty clopen sets $O_0,O_1,O_2\subseteq X$ with $O_0\subseteq O$
and a finite symmetric set $e\in F\subseteq G$ with $O_1 \sqcup O_2 \prec_F O_0$ such that the action is $(O_1 , O_2 ,FEF)$-squarely divisible.
We say that the action is {\it weakly squarely divisible} if it is $(O,E)$-squarely divisible
for every nonempty clopen set $O\subseteq X$ and finite set $e\in E\subseteq G$.
\end{definition}

\begin{remark}\label{R-0dimOSD}
For an action $G\curvearrowright X$ on the Cantor set, we observe that $O$-square divisibility 
for a nonempty clopen set $O\subseteq X$ implies $(O,E)$-square divisibility for all finite sets $e\in E\subseteq G$ 
(see Proposition~\ref{P-SD0Dim}). 
In particular, square divisibility implies weak square divisibility. We do not know whether the converse holds.  
\end{remark}

To verify that weak square divisibility is a $G_\delta$ condition in the space
of actions on the Cantor set, we first record a couple of lemmas.

\begin{lemma}\label{L-openA}
Suppose $X$ is the Cantor set. Let $O_1,O_2$ be nonempty clopen subsets of $X$ and let $e\in E\subseteq G$ be a finite set. 
Then the set of all $\alpha\in \Act(G,X)$ that are $(O_1,O_2,E)$-squarely divisible is open.
\end{lemma}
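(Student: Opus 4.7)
The plan is to use Proposition~\ref{P-SD0Dim} to extract, for a given $(O_1,O_2,E)$-squarely divisible $\alpha$, a witness consisting of finitely many clopen sets and finitely many group elements, and then to observe that any $\beta$ in a suitably small basic open neighbourhood $U_{\alpha,F',\sP}$ of $\alpha$ in $\Act(G,X)$ agrees with $\alpha$ on this entire diagram, so that the same data witnesses $(O_1,O_2,E)$-square divisibility for $\beta$.

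Concretely, I would invoke Proposition~\ref{P-SD0Dim} to obtain an $n\in\Nb$, clopen sets $\{V_{i,j}\}_{i,j=1}^n$, a clopen partition $\{C_k\}_{k\in K}$ of $V_{1,1}$ (the clopen version of $\{C_k\}$ is available by the construction in the proof of Proposition~\ref{P-SD0Dim}), group elements $\{s_{k,i,j}\}$ with $s_{k,1,1}=e$ realizing the pairwise equivalence, and, using the fact recalled in Section~\ref{S-preliminaries} that subequivalences between clopen sets in a zero-dimensional space can be witnessed by clopen partitions of the source, clopen families realizing each of the three subequivalences in conditions (i)--(iii) of Proposition~\ref{P-SD0Dim}. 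Let $F'\subseteq G$ be the finite set consisting of $e$, of $E\cup E^{-1}$, and of all the group elements appearing in any of these witnesses together with their inverses, and let $\sP$ be a clopen partition of $X$ that refines $O_1$, $O_2$, the $V_{i,j}$, the $C_k$, and all the clopen subequivalence witnesses, and which is closed under $\alpha_s$ for $s\in F'$ in the sense that $\alpha_s A\in\sP$ for every atom $A\in\sP$ and every $s\in F'$. Such a $\sP$ exists because there are only finitely many clopen sets and finitely many $\alpha_s$-translates to accommodate.

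For any $\beta\in U_{\alpha,F',\sP}$ the equality $\beta_s A=\alpha_s A$ then holds on every union of atoms of $\sP$ for every $s\in F'$. In particular, the pairwise equivalence identities $V_{i,j}=\bigsqcup_k\beta_{s_{k,i,j}}C_k$ with pairwise disjoint $\beta_{s_{k,i,j}}C_k$, the pairwise $E$-disjointness of the $\{V_{i,j}\}$, the description of $B = V\cap(V^E)^\comp$ (which only involves applications of $\alpha_{s^{-1}}$ for $s\in E$), and each of the subequivalences in (i)--(iii) all transfer verbatim from $\alpha$ to $\beta$. Hence $\beta$ is $(O_1,O_2,E)$-squarely divisible, and $U_{\alpha,F',\sP}$ is an open neighbourhood of $\alpha$ entirely contained in the set in question. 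The only mild obstacle is the bookkeeping required to ensure simultaneously that $\sP$ realizes every clopen set involved and is closed under the maps $\alpha_s$ for $s\in F'$, so that the auxiliary sets $V^E$ and $B$ are computed identically by $\alpha$ and by $\beta$; once this is arranged, openness is immediate.
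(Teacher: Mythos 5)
Your overall strategy is the same as the paper's: extract a finite clopen witness for $(O_1,O_2,E)$-square divisibility via Proposition~\ref{P-SD0Dim}, put all the relevant clopen sets into a partition $\sP$ and all the relevant group elements (with inverses) into a finite set $F'$, and observe that every $\beta\in U_{\alpha,F',\sP}$ computes the same diagram. The one genuine difference is how the three subequivalences are handled: the paper keeps them as subequivalences of $\alpha$ and then shrinks the neighbourhood using the stability argument of Proposition~\ref{P-subequivalence} (with added control on the implementing group elements), whereas you pull the clopen witnesses of the subequivalences (via \cite[Proposition~3.5]{Ker20}) into $\sP$ and $F'$ so that they transfer verbatim. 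In the Cantor setting your variant is perfectly legitimate and arguably cleaner, since it avoids any metric estimate.

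There is, however, one misstatement you should repair. You require $\sP$ to be ``closed under $\alpha_s$ for $s\in F'$'' in the sense that $\alpha_s A\in\sP$ for every atom $A\in\sP$ and every $s\in F'$. A finite clopen partition with this property is invariant under the subgroup generated by $F'$, and for (say) a minimal action of an infinite group no nontrivial finite clopen partition can be invariant under a generating set; so such a $\sP$ need not exist, and your justification (``finitely many $\alpha_s$-translates to accommodate'') only produces the join $\bigvee_{s\in F'\cup\{e\}}\alpha_s\sP_0$, which is not closed under the $\alpha_s$. Fortunately the condition is not needed: if $D$ is a union of atoms of $\sP$ and $s\in F'$, then for any $\beta\in U_{\alpha,F',\sP}$ one has $\beta_s D=\bigcup_{A\in\sP,\,A\subseteq D}\beta_s A=\bigcup_{A\in\sP,\,A\subseteq D}\alpha_s A=\alpha_s D$, irrespective of whether $\alpha_s D$ is itself a union of atoms. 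Since every set you need to track ($O_1$, $O_2$, the $V_{i,j}$, the $C_k$, the subequivalence witnesses, and hence also $V$, $V^E$, and $B$, using that $E^{-1}\subseteq F'$) is a union of atoms of $\sP$, the transfer to $\beta$ goes through once you simply delete the closure requirement.
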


\begin{proof}
Let $\alpha\in \Act(G,X)$ be $(O_1,O_2,E)$-squarely divisible. By Proposition~\ref{P-SD0Dim} we can find an $n\in\mathbb{N}$ 
and a collection $\{V_{i,j}\}_{i,j=1}^{n}$ of pairwise equivalent and pairwise $E$-disjoint clopen subsets of $X$ such that 
the conditions (i)--(iii) in Proposition~\ref{P-SD0Dim} are satisfied. Write $\Omega= \{1,\ldots,n\}^2$. 
The pairwise equivalence of the sets $\{V_{i,j}\}_{i,j=1}^{n}$ provides a finite clopen partition of $V_{1,1}=\bigsqcup_{k\in K}C_k$ and elements $s_{k,p}\in G$ for $k\in K$ and $p\in \Omega$ with $s_{k,(1,1)}=e$ for all $k\in K$ such that $V_p=\bigsqcup_{k\in K}\alpha_{s_{k,p}}C_k$. 
Let $\sP$ be a finite clopen partition of $X$ containing both $\{C_k\}_{k\in K}$ and $\{V_p\}_{p\in\Omega\setminus\{(1,1)\}}$, and let $F\subseteq G$ be a finite set containing $E\cup E^{-1}\cup \{s_{k,p}\}_{k\in K, p\in \Omega}$. For all $\beta\in U_{\alpha,F,\sP}$ the sets $\{V_{i,j}\}_{i,j=1}^{n}$ are pairwise equivalent under $\beta$, with the equivalence 
being implementable by the same sets $\{C_k\}_{k\in K}$ given that $\alpha_{s_{k,p}}C_k=\beta_{s_{k,p}}C_k$ for all $k\in K$ and $p\in \Omega$. The sets $\{V_{i,j}\}_{i,j=1}^{n}$ are also $E$-disjoint under $\beta$ since $\beta_sV_p=\alpha_sV_p$ for all $s\in E$ and $p\in\Omega$. Note also that, for the same reason, the boundary $B=V\cap (V^E)^\comp$, which in general depends on the action $\alpha$, remains unchanged for $\beta$.

With the sets $V_{i,j}$ and $B$ fixed in this way across all actions in $U_{\alpha,F,\sP}$,
we can now pass to a smaller open neighbourhood of $\alpha$ on which the subequivalences (i)--(iii) in Proposition~\ref{P-SD0Dim} 
always hold (the argument is identical to the proof of Proposition~\ref{P-subequivalence}, where we additionally had to control the group elements which implement the subequivalence).
\end{proof}

\begin{lemma}\label{L-open}
Suppose $X$ is the Cantor set.
Let $O$ be a nonempty clopen subset of $X$ and $e\in E$ a finite subset of $G$.
Then the set $\sW_{O,E}$ of all $(O,E)$-squarely divisible actions in $\Act (G,X)$ is open.
\end{lemma}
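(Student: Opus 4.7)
The plan is to unpack the definition of $(O,E)$-square divisibility, fix a specific set of witnesses for a given action $\alpha \in \sW_{O,E}$, and then use the two prior openness results to show that these same witnesses continue to work on an open neighbourhood of $\alpha$.

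More concretely, suppose $\alpha \in \sW_{O,E}$. By Definition~\ref{D-weak SD} we obtain pairwise disjoint nonempty clopen sets $O_0, O_1, O_2 \subseteq X$ with $O_0 \subseteq O$, together with a finite symmetric set $e\in F \subseteq G$, such that $O_1 \sqcup O_2 \prec_{\alpha, F} O_0$ and $\alpha$ is $(O_1, O_2, FEF)$-squarely divisible. The goal is to produce an open neighbourhood $\cU$ of $\alpha$ in $\Act(G,X)$ on which every $\beta \in \cU$ admits the very same witnesses $O_0, O_1, O_2, F$. Since $O_0, O_1, O_2$ are fixed clopen subsets of $X$ satisfying the separation properties built into the definition, once both the subequivalence and the $(O_1,O_2,FEF)$-square divisibility persist under perturbation of the action, the same data certifies $(O,E)$-square divisibility of $\beta$.

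The two persistence statements we need are already in hand. First, since $O_1 \sqcup O_2$ is clopen (hence closed) and $O_0$ is open, Proposition~\ref{P-subequivalence} applied to the closed set $O_1 \sqcup O_2$, the open set $O_0$, and the finite set $F$ gives that the collection
\[
\cU_1 = \{ \beta \in \Act(G,X) : O_1 \sqcup O_2 \prec_{\beta, F} O_0 \}
\]
is open and contains $\alpha$. Second, since $O_1$ and $O_2$ are nonempty clopen and $FEF$ is a finite subset of $G$ containing $e$, Lemma~\ref{L-openA} ensures that
\[
\cU_2 = \{ \beta \in \Act(G,X) : \beta \text{ is } (O_1, O_2, FEF)\text{-squarely divisible} \}
\]
is open and contains $\alpha$. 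The intersection $\cU := \cU_1 \cap \cU_2$ is then an open neighbourhood of $\alpha$, and by construction every $\beta \in \cU$ is $(O,E)$-squarely divisible, so $\cU \subseteq \sW_{O,E}$.

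Since the essential content is being reused from Proposition~\ref{P-subequivalence} and Lemma~\ref{L-openA}, there is no real obstacle here; the only slightly delicate point is to notice that the witnesses $O_0, O_1, O_2, F$ can be chosen once and for all depending on $\alpha$ alone (they need not vary with $\beta$), so that the existential quantifiers in Definition~\ref{D-weak SD} are satisfied uniformly on $\cU$ by the very same data.
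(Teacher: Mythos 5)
Your proposal is correct and follows essentially the same route as the paper's proof: fix the witnesses $O_0, O_1, O_2, F$ for $\alpha$, then apply Proposition~\ref{P-subequivalence} to the subequivalence $O_1 \sqcup O_2 \prec_F O_0$ and Lemma~\ref{L-openA} to the $(O_1,O_2,FEF)$-square divisibility, and intersect the two resulting open neighbourhoods. The closing observation that the same witnesses serve uniformly on the neighbourhood is exactly the point the paper relies on.
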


\begin{proof}
Let $\alpha\in\sW_{O,E}$. Then there exist nonempty disjoint clopen sets $O_0, O_1 , O_2 \subseteq X$ with $O_0\subseteq O$
and a finite symmetric set $e\in F\subseteq G$ with $O_1 \sqcup O_1 \prec_F O_0$ such that the action is $(O_1 , O_2 ,FEF)$-squarely divisible.
By Proposition~\ref{P-subequivalence}, the subequivalence $O_1 \sqcup O_1 \prec_F O_0$ is stable under 
perturbation of $\alpha$, and by Lemma~\ref{L-openA} so is the property of $(O_1 , O_2 ,FEF)$-square divisibility. It follows that $\alpha$ has a neighbourhood contained in $\sW_{O,E}$,
and so we conclude that $\sW_{O,E}$ is open.
\end{proof}

In the proof below we use the well-known fact that
the collection of clopen subsets of the Cantor set is countable. One can see this
C$^*$-algebraically by observing that the projections in $C(X)$ 
(which are precisely the indicator functions of clopen sets) are at norm distance $1$ 
from each other. Since $C(X)$ is separable, this implies that there are only countably many projections. 

\begin{proposition}\label{P-SD G delta}
Suppose $X$ is the Cantor set.
Then the set of all weakly squarely divisible actions in $\Act (G,X)$ is a $G_\delta$.
\end{proposition}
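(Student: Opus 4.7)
The plan is to unwind the definition of weak square divisibility directly and exhibit the set of weakly squarely divisible actions as a countable intersection of open sets. By Definition~\ref{D-weak SD}, an action $\alpha\in\Act(G,X)$ is weakly squarely divisible precisely when $\alpha\in\sW_{O,E}$ for every nonempty clopen $O\subseteq X$ and every finite set $e\in E\subseteq G$, where $\sW_{O,E}$ denotes the set of $(O,E)$-squarely divisible actions. In other words, the set of weakly squarely divisible actions equals
\[
\bigcap_{O,E} \sW_{O,E},
\]
where the intersection ranges over all nonempty clopen $O\subseteq X$ and all finite subsets $E\subseteq G$ with $e\in E$.

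By Lemma~\ref{L-open}, each of the sets $\sW_{O,E}$ is open in $\Act(G,X)$. It therefore remains only to verify that the index set over which we are intersecting is countable. For this we use the fact (noted just before the proposition in the excerpt) that the collection of clopen subsets of the Cantor set $X$ is countable; and since $G$ itself is a countable discrete group, the collection of finite subsets of $G$ containing $e$ is also countable. Hence the intersection above is a countable intersection of open sets, i.e., a $G_\delta$ set.

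There is no real obstacle here: the hard technical work has already been packaged into Lemma~\ref{L-open} (whose proof in turn relies on Proposition~\ref{P-subequivalence} and Lemma~\ref{L-openA}), and the definition of weak square divisibility was explicitly designed by the authors to achieve this $G_\delta$ structure by quantifying only over the countable family of clopen sets and the countable family of finite subsets of $G$, rather than over all open subsets of $X$ as in the notion of square divisibility itself.
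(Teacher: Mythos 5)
Your proposal is correct and follows exactly the same route as the paper: express the set of weakly squarely divisible actions as $\bigcap_{O,E}\sW_{O,E}$ over the countable families of nonempty clopen sets $O\subseteq X$ and finite sets $e\in E\subseteq G$, and invoke Lemma~\ref{L-open} for the openness of each $\sW_{O,E}$. Nothing is missing.
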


\begin{proof}
Write $\sO$ for the countable collection of nonempty clopen subsets of $X$ and
$\sE$ for the countable collection of finite subsets of $G$ containing $e$.
By Lemma~\ref{L-open}, for every $O\in\sO$ and $E\in\sE$ the set $\sW_{O,E}$ of all $(O,E)$-squarely divisible
actions in $\Act (G,X)$ is open, and so the intersection $\bigcap_{O\in\sO , E\in\sE} \sW_{O,E}$,
which is equal to the set of all weakly squarely divisible actions in $\Act (G,X)$, is a $G_\delta$.
\end{proof}

One obstruction to square divisibility and weak square divisibility is strong ergodicity.
Recall that a p.m.p.\ action $G\curvearrowright (Z,\zeta )$ is {\it strongly ergodic} if for every 
sequence $( W_n )_{n\in\Nb}$ of Borel subsets of $X$ with $\lim_{n\to\infty}\zeta (sW_n \Delta W_n )= 0$ for all $s\in G$
one has $\lim_{n\to\infty}\zeta (W_n )(1-\zeta (W_n )) = 0$.

\begin{proposition}\label{P-strong ergodicity}
Let $G\curvearrowright X$ be a minimal squarely divisible action on an infinite compact metrizable space
or a minimal weakly squarely divisible action on the Cantor set. Let $\mu\in M_G(X)$.
Then the p.m.p.\ action $G\curvearrowright (X,\mu )$ is not strongly ergodic.
\end{proposition}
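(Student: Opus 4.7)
The plan is to exhibit a sequence of Borel sets $(W_m)_{m\in\Nb}$ witnessing the failure of strong ergodicity. For this it suffices to produce, for every finite set $e\in E\subseteq G$ and every $\eps>0$, a single set $W\subseteq X$ with $\mu(W)(1-\mu(W))\geq 1/8$ and $\mu(sW\Delta W)<\eps$ for all $s\in E$; a diagonal argument along an enumeration of $G$, taking $\eps_m\to 0$ and $E_m$ increasing to $G$, then delivers the desired sequence.

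First I would verify that $\mu$ has no atoms. Since $X$ is infinite and the action is minimal, $G$ must be infinite (otherwise each orbit would be finite and hence a proper closed invariant set). Any atom of $\mu$ would then belong to an infinite orbit of equally weighted atoms whose masses sum to at most $1$, forcing each such mass to vanish. By regularity of $\mu$, for every $\eps>0$ there is consequently a nonempty open (clopen in the Cantor setting) set $O\subseteq X$ with $\mu(O)<\eps/2$.

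Given $E$ and $\eps$, replace $E$ by $E\cup E^{-1}\cup\{e\}$ to assume $E$ is symmetric and contains $e$. In the squarely divisible case, apply $O$-square divisibility to extract nonempty open $O_0\subseteq O$ and $O_1,O_2$ with $\overline{O_1}\sqcup\overline{O_2}\prec O_0$ and $(O_1,O_2,E)$-square divisibility. In the weakly squarely divisible Cantor case, apply $(O,E)$-square divisibility to obtain the analogous data with $(O_1,O_2,FEF)$-square divisibility for some symmetric finite $F\ni e$; this still entails $E$-disjointness since $E\subseteq FEF$. In either case we obtain pairwise equivalent and pairwise $E$-disjoint sets $\{V_{i,j}\}_{i,j=1}^{n}$ with $n\geq 2$ (the case $n=1$ would force $V=\emptyset$ by condition (i) and thus $X=\emptyset$ by condition (ii)), together with a remainder $R$ satisfying $\mu(R)\leq \mu(O_2)\leq \mu(O)<\eps/2$ via condition (ii) of square divisibility and the $G$-invariance of $\mu$ applied to the subequivalence between $O_1\sqcup O_2$ and $O_0$.

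The crucial observation is that pairwise $E$-disjointness forces $sV_{i,j}\subseteq V_{i,j}\cup R$ for every $s\in E$ and every $(i,j)$: if $sx$ landed in some $V_{k,l}$ with $(k,l)\neq(i,j)$, it would lie in both $E\overline{V_{i,j}}$ (via $s\in E$) and $E\overline{V_{k,l}}$ (via $e\in E$), a contradiction. Now fix $\Omega\subseteq\{1,\dots,n\}^2$ with $|\Omega|=\lfloor n^2/2\rfloor$ and set $W=\bigsqcup_{(i,j)\in\Omega}V_{i,j}$. Pairwise equivalence gives $\mu(W)=(|\Omega|/n^2)(1-\mu(R))\in[1/3,1/2]$ for $\eps$ small, so $\mu(W)(1-\mu(W))\geq 1/8$. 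For $s\in E$, the key observation yields $sW\setminus W\subseteq sR$ directly, and symmetry of $E$ yields $W\setminus sW\subseteq sR$ by applying the observation to $s^{-1}$, whence $\mu(sW\Delta W)\leq 2\mu(R)<\eps$. The main obstacle would have been understanding how individual group elements permute the labels $(i,j)$ of the $V_{i,j}$, but pairwise $E$-disjointness entirely eliminates this combinatorial issue, reducing everything to the measure estimate on the remainder.
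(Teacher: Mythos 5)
Your proof is correct and follows essentially the same strategy as the paper's: exploit the pairwise $E$-disjointness of the array $\{V_{i,j}\}$ together with the smallness of the complement of $V$ to manufacture almost-invariant sets of measure roughly $1/2$, then diagonalize over an exhaustion of $G$. Your version is marginally leaner than the paper's — you get $\mu(sW\Delta W)\le 2\mu(R)$ directly from the containments $sW\setminus W\subseteq R$ and $W\setminus sW\subseteq sR$ (these two are interchanged in your write-up, harmlessly, since $\mu(sR)=\mu(R)$), using only condition (ii) of Definition~\ref{D-SD}, whereas the paper instead passes to $W_m=\tilde W_m^{E_m}$ and invokes condition (iii) to bound $\mu(V_m\setminus V_m^{E_m})$, and it also needs $n_m\to\infty$ (extracted from conditions (i) and (ii)) because it aims for $\mu(W_m)\to 1/2$ rather than the uniform lower bound $\mu(W)(1-\mu(W))\ge 1/8$ that suffices for you.
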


\begin{proof}
The argument is the same in both cases, and so
we assume that $G\curvearrowright X$ is a squarely divisible minimal action on an infinite compact metrizable space.
This implies that $\mu$ is atomless and has full support.
Thus for every $m\in\Nb$ there exists an open set $O_m \subseteq X$ 
with $0<\mu(O_m)\leq 1/m$.
Since $G$ is countable, there exists an increasing sequence $( E_m )_{m=1}^\infty$ of finite subsets of $G$ containing $e$ such that $G=\bigcup_{m=1}^\infty E_m$.
Let $m\in\Nb$. By assumption our action is $O_m$-squarely divisible, and so there exist nonempty open sets $O_{m,0}\subseteq O_m$ and $O_{m,1},O_{m,2}\subseteq X$ with disjoint closures such that $\overline{O_{m,1}}\sqcup \overline{O_{m,2}}\prec O_{m,0}$ and the action is $(O_{m,1},O_{m,2},E_m)$-squarely divisible for each $m\in \Nb$. 
Denote by $\{V_{m,i,j}\}_{i,j=1}^{n_m}$ the sets obtained by this square divisibility and set $V_m=\bigsqcup_{i,j=1}^{n_m}V_{m,i,j}$.
Since for every closed set $A\subseteq X$ and open set $B\subseteq X$ the relation $A\prec B$ clearly implies $\mu (A) \leq \mu (B)$, condition~(iii) in Definition~\ref{D-SD} implies that $\mu(V_m\setminus V_m^{E_m})\leq 1/m$. 

Set $\tilde{W}_m=\bigsqcup_{i=1}^{\lfloor{n_m/2}\rfloor}\bigsqcup_{j=1}^{n_m}V_{m,i,j}$ and $W_m= \tilde{W}_m^{E_m}$.
We claim that
\begin{gather}\label{E-claimone}
\lim_{m\to\infty}\mu(sW_m\Delta W_m)=0
\end{gather}
for all $s\in G$.
Let $s\in G$ and $\eps>0$.
Pick $M\in\Nb$ so that $1/M<\eps /2$ and $s\in E_M$, and let $m\geq M$.
We will show that $V_m^{E_m}\cap \tilde{W}_m\subseteq W_m$. 
Let $x\in V_m^{E_m}\cap \tilde{W}_m$ and $r\in E_m$.
It suffices to verify that $rx\in \tilde{W}_m$. 
Since $x\in \tilde{W}_m$ and $V_{m,i,j}$ are $E_m$-disjoint, we have
$rx\notin \bigsqcup_{i=\lfloor{n_m/2}\rfloor +1}^{n_m}\bigsqcup_{j=1}^{n_m}V_{m,i,j}$. On the other hand we have $rx\in V_m$
by assumption and hence $rx\in \tilde{W}_m$, as we wanted to show.
Observe also that $sW_m\cup W_m\subseteq \tilde{W}_m$.
Using the invariance of $\mu$, we obtain
\begin{align*}
\mu(sW_m\Delta W_m)
&\leq \mu(\tilde{W}_m\setminus W_m)+\mu(\tilde{W}_m\setminus sW_m)\\
&=2\mu(\tilde{W}_m\setminus W_m)\\
&\leq 2\mu( \tilde{W}_m\setminus (V_m^{E_m}\cap \tilde{W}_m) )\\
&= 2\mu( \tilde{W}_m\setminus V_m^{E_m} )\\
&\leq 2\mu(V_m\setminus V_m^{E_m}) \leq \eps ,
\end{align*}
which verifies (\ref{E-claimone}).

We next check that
\begin{gather}\label{E-claimtwo}
\lim_{m\to\infty}\mu(W_m)=\frac12 ,
\end{gather}
which will establish strong ergodicity.
Let $\eps>0$.
Since the sets $\{V_{m,i,j}\}_{i,j=1}^{n_m}$ are pairwise equivalent and $\mu$ is invariant, they all have equal $\mu$-measure.
Writing $R_m=V_m^\comp$, condition~(ii) in Definition~\ref{D-SD} implies that $\mu(R_m)\leq 1/m$.
Together with condition~{(i)} in Definition~\ref{D-SD}, we get
\[
1=n_m^2\mu(V_{m,1,1})+\mu(R_m)\leq \frac{n_m^2+1}{m}.
\]
Let $M\in \Nb$ be such that $1/\sqrt{M-1}<\eps /2$ and let $m\geq M$.
Note that $1/n_m<\eps /2$. Clearly $\mu(W_m)\leq 1/2$,
and so we aim at showing that $\mu(W_m)\geq 1/2-\eps$.
The pairwise equivalence of the sets $V_{m,i,j}$ implies that
$2\mu(\tilde{W}_m)+n_m\mu(V_{m,1,1})\geq \mu(V_m)$.
Since $\mu(V_{m,1,1})\leq 1/n_m^{2}$, we obtain
\begin{align*}
\mu(\tilde{W}_m)\geq \frac{1}{2}\mu(V_m)-\frac{1}{2}n_m\mu(V_{m,1,1}) \geq \frac{1}{2}\mu(V_m)-\frac{1}{2n_m} \geq \frac{1}{2}\mu(V_m)-\frac{\eps}{4}. 
\end{align*}
Since $\mu(R_m)\leq 1/m< \eps /2$ we have $\mu(V_m)\geq 1-\eps /2$, and
we conclude that $\mu(\tilde{W}_m)\geq 1/2- \eps /2$.
As shown in the first half of the proof, $\mu(\tilde{W}_m\setminus W_m)\leq \mu(V_m\setminus (V_m)^{E_m})\leq 1/m
<\eps /2$.
Thus
\[\mu(W_m)=\mu(\tilde{W}_m)-\mu(\tilde{W}_m\setminus W_m)\geq \frac{1}{2}-\eps, \]
which verifies (\ref{E-claimtwo}).
\end{proof}

\begin{remark}
Suppose that the action $G\curvearrowright (X,\mu )$ in Proposition~\ref{P-strong ergodicity}
is free and ergodic, in which case the von Neumann algebra crossed product $L^\infty (X,\mu )\rtimes G$
is a II$_1$ factor.
The fact that the action is not strongly ergodic implies 
that $L^\infty (X,\mu )\rtimes G$ has property $\Gamma$. 
Since for a given $m\in\Nb$ the sets $\{ V_{m,i,j} \}_{i,j=1}^{n_m}$ in the proof are pairwise equivalent,
one can use them to construct matrix embeddings $M_{n_m^2} \hookrightarrow L^\infty (X,\mu )\rtimes G$
that are almost unital in trace norm (up to the indicator function of the complement of their union). However, while
the indicator functions of the set $W_m$ are asymptotically central in $L^\infty (X,\mu )\rtimes G$
(thereby witnessing property $\Gamma$) the images of these matrix algebras, or of any of their noncommutative
unital matrix subalgebras, cannot be made asymptotically central in general, as this would show that
$L^\infty (X,\mu )\rtimes G$ has the McDuff property, which would imply
that $G$ is inner amenable \cite[Proposition 4.1]{DepVae18}. 
This would mean in particular that $G$ cannot be one of the free groups $F_d$ for $d\geq 2$,
although these groups admit plenty of squarely divisible actions possessing an invariant Borel probability
measure $\mu$ for which the corresponding p.m.p.\ action is free and ergodic, as shown 
in Propositions~\ref{P-examples} and \ref{P-SD examples}.
\end{remark}

\begin{example}\label{E-notSD}
By \cite{Wei12,Ele21}, if $G$ is countably infinite then there exists a free minimal action $G \curvearrowright X$ on the Cantor set which is universal
for free p.m.p.\ actions of $G$, i.e., for every free measure-preserving action $G \curvearrowright (Z,\zeta )$ on a standard atomless
probability space there exists a $\mu\in M_{G} (X)$ such that the p.m.p.\ actions $G \curvearrowright (X,\mu )$
and $G \curvearrowright (Z,\zeta )$ are measure conjugate.

Fix a countably infinite $G$ and let $(Y_0,\nu_0)$ be a standard probability space
which is nontrivial, i.e., which has no atom of full measure. It is well-known that the Bernoulli action
$G\curvearrowright (Y_0^G,\nu_0^G)$ is free and mixing.
If $G$ is moreover assumed to be nonamenable then $G\curvearrowright (Y_0^G,\nu_0^G)$ is also strongly ergodic.
By the previous paragraph, there exists a free minimal action $G\curvearrowright X$ on the Cantor set and $\mu\in M_G(X)$ such that
the actions $G\curvearrowright (X,\mu)$ and $G\curvearrowright (Y_0^G,\nu_0^G)$ are measure conjugate.
Then the p.m.p.\ action $G\curvearrowright (X,\mu)$ is free, mixing, and strongly ergodic. 
By Proposition~\ref{P-strong ergodicity}, the action $G\curvearrowright X$ is neither squarely divisible nor weakly squarely divisible.
Since $\mu$ has full support by minimality, we also see that $G\curvearrowright X$ is (topologically) mixing.

This shows in particular that for $G=F_d$ with $d\geq 2$ there 
exists a free minimal mixing action $F_d \curvearrowright X$ on the Cantor set
that is neither squarely divisible nor weakly squarely divisible. This action
belongs to the space $\WA (F_d ,X)$ that is
the subject of the generic weak square divisibility result in Section~\ref{S-SD I}
and of Corollary~\ref{C-main 1} in the introduction (see Definition~\ref{D-spaces}).
\end{example}

To round out this section we prove that square divisibility is an invariant of continuous orbit equivalence.
Let $G\curvearrowright X$ and $H\curvearrowright Y$ be two free actions and $\varphi : X\to Y$
a homeomorphism such that $\varphi (Gx) = H\varphi (x)$ for all $x\in X$. Then there are cocycles
$\kappa : G\times X\to H$ and $\lambda : H\times Y\to G$ uniquely determined, as a consequence of freeness,
by the equations $\varphi (gx) = \kappa (g,x)\varphi (x)$ and $\varphi^{-1} (hy) = \lambda (h,y)\varphi^{-1} (y)$.
These cocycles are Borel but not in general continuous. When $\kappa$ and $\lambda$ are continuous
we say that $\varphi$ is a {\it continuous orbit equivalence} between the two actions. The actions
are said to be {\it continuously orbit equivalent} if there exists a continuous orbit equivalence between them.
Since continuity of the cocycles implies that they are locally constant, the theory of continuous orbit equivalence
is primarily of interest for spaces that are zero-dimensional, or at least far from being connected.

Suppose that $\varphi :X\to Y$ is a continuous orbit equivalence between two
free actions
$G\stackrel{\alpha}{\curvearrowright} X$ and $H\stackrel{\beta}{\curvearrowright} Y$ 
with cocycle $\kappa : G\times X\to H$ as above. Since $\kappa$ is continuous and $H$ is discrete, 
for each $g\in G$ the image $\kappa (g,X)$ is finite in $H$. If $A\subseteq X$ is closed, 
$B\subseteq X$ is open, and $F$ is a finite subset of $G$ such that $A\prec_{\alpha ,F} B$,
then setting $L = \kappa (F, X)$ (which is finite by the continuity of $\kappa$) one has $\varphi (A)\prec_{\beta ,L} \varphi (B)$, for if $\{ U_g \}_{g\in F}$ is an open
cover of $A$ such that the sets $\alpha_g U_g$ for $g\in F$ are pairwise disjoint subsets of $B$ then 
setting $D_{g,h} = \{ x\in X : \kappa (g,x) = h \}$ for $g\in G$ and $h\in H$ the collection
$\{ \varphi (D_{g,h}\cap U_g) \}_{g\in F,\,h\in \kappa (g, X) }$ is an open cover of $\varphi (A)$ such that the sets
$\beta_h \varphi(D_{g,h}\cap U_g)$ for $g\in F$ and $h\in \kappa (g,X)$ are pairwise disjoint subsets of $\varphi (B)$.
Using these observations one can show the following.

\begin{proposition}
Let $G\curvearrowright X$ and $H\curvearrowright Y$ be two free actions which are continuously orbit equivalent. Suppose that $G\curvearrowright X$ is squarely divisible. Then $H\curvearrowright Y$ is squarely divisible. 
\end{proposition}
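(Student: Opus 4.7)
The plan is to transfer the $O$-square divisibility data from $X$ to $Y$ via the homeomorphism $\varphi$, using the cocycle $\kappa$ to push elements of $G$ forward into $H$ and the cocycle $\lambda$ to pull a prescribed finite subset $E \subseteq H$ back to a finite subset of $G$. The main tool throughout is the observation spelled out in the paragraph preceding the proposition: any relation of the form $\tilde A \prec_{\alpha, \tilde F} \tilde B$ in $X$ transfers to $\varphi(\tilde A) \prec_{\beta, \kappa(\tilde F, X)} \varphi(\tilde B)$ in $Y$.

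Fix a nonempty open set $O \subseteq Y$ and set $\tilde O = \varphi^{-1}(O)$. By $\tilde O$-square divisibility of $G \curvearrowright X$, there exist pairwise disjoint open sets $\tilde O_0 \subseteq \tilde O$ and $\tilde O_1, \tilde O_2 \subseteq X$ with pairwise disjoint closures satisfying $\overline{\tilde O_1} \sqcup \overline{\tilde O_2} \prec \tilde O_0$, and such that the $G$-action is $(\tilde O_1, \tilde O_2, \tilde E)$-squarely divisible for every finite set $e \in \tilde E \subseteq G$. Defining $O_i = \varphi(\tilde O_i)$ for $i=0,1,2$ produces open subsets of $Y$ with pairwise disjoint closures and $O_0 \subseteq O$, and the transfer observation yields $\overline{O_1} \sqcup \overline{O_2} \prec O_0$. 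It thus remains, given a finite set $e \in E \subseteq H$, to verify $(O_1, O_2, E)$-square divisibility of $H \curvearrowright Y$.

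For this, take $\tilde E := \lambda(E, Y) \cup \{e\} \subseteq G$, which is finite since $\lambda$ is continuous with values in the discrete group $H$. From $(\tilde O_1, \tilde O_2, \tilde E)$-square divisibility of the $G$-action, extract the open sets $\{\tilde V_{i,j}\}_{i,j=1}^n$, the open neighbourhood $\tilde U$ of $\partial \tilde V$, the Borel sets $\{\tilde C_k\}_{k\in K}$, and the elements $\{\tilde s_{k, p}\}$ implementing pairwise equivalence. Set $V_{i,j} = \varphi(\tilde V_{i,j})$ and $U = \varphi(\tilde U)$. To obtain pairwise equivalence of $\{V_{i,j}\}$ in $H$, use that each $\kappa(\tilde s_{k, p}, \cdot)\colon X \to H$ is locally constant with finite image (being continuous into a discrete space), so one can refine $\{\tilde C_k\}$ by intersecting with a common clopen partition of $X$ on which all of the cocycle functions $\kappa(\tilde s_{k, p}, \cdot)$ are constant. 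Pushing this refinement forward through $\varphi$ and using the corresponding constant values of $\kappa$ as group elements in $H$ yields the required pairwise equivalence data, and pairwise disjoint closures are preserved because the refining sets are clopen. Pairwise $E$-disjointness of $\{V_{i,j}\}$ follows from pairwise $\tilde E$-disjointness of $\{\tilde V_{i,j}\}$ via the identity $h \varphi(x) = \varphi(\lambda(h, \varphi(x)) x)$ together with $\lambda(h, \varphi(x)) \in \tilde E$.

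The main obstacle is verifying that the boundary-thickening set $B$ and remainder $R$ attached to the $H$-action are contained in the $\varphi$-images of their $G$-counterparts $\tilde B$ and $\tilde R$, so that the transfer observation delivers the three subequivalences (i)--(iii) of Definition~\ref{D-SD} intact. One directly has $R = V^\comp = \varphi(\tilde R)$. For the boundary set, if $y = \varphi(x) \in B$ then some $h \in E$ satisfies $hy \notin V \cap \overline{U}^\comp$, which via $hy = \varphi(\lambda(h, y) x)$ translates to $\lambda(h, y) x \notin \tilde V \cap \overline{\tilde U}^\comp$; since $\lambda(h, y) \in \tilde E$, this forces $x \notin (\tilde V \cap \overline{\tilde U}^\comp)^{\tilde E}$, and combined with $x \in \overline{\tilde V}$ gives $x \in \tilde B$, hence $B \subseteq \varphi(\tilde B)$. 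Applying the transfer observation to the $X$-side subequivalences in conjunction with these containments, and using the monotonicity of $\prec$ in its first argument, yields the corresponding subequivalences on the $Y$-side and completes the verification of $(O_1, O_2, E)$-square divisibility.
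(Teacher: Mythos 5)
Your proof is correct and follows essentially the same route as the paper's: pull $O$ back through $\varphi$, push the witnessing data forward, transfer subequivalences via the cocycle $\kappa$, obtain pairwise equivalence by refining $\{\tilde C_k\}$ along the (finite, clopen) level sets of the cocycle functions, and reduce everything to the containment of the $Y$-side boundary in the $\varphi$-image of the $X$-side boundary via $\varphi(A^{\tilde E})\subseteq\varphi(A)^E$. The only cosmetic differences are that you perform the common refinement on the $X$ side rather than after pushing forward, and that you make explicit the (correct) observation that clopenness of the refining sets preserves disjointness of closures.
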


\begin{proof}
Let $\varphi: X\to Y$ be a continuous orbit equivalence between the two actions,
with corresponding cocycle maps $\kappa: G\times X\to H$ and $\lambda : H\times Y\to G$. 
Let $O\subseteq Y$ be a nonempty open set. We want to show that $H\curvearrowright Y$ is $O$-squarely divisible.
Since $G\curvearrowright X$ is $\varphi^{-1}(O)$-squarely divisible, 
one can find nonempty open sets $O_0,O_1,O_2\subseteq X$ with disjoint closures such that $O_0\subseteq \varphi^{-1}(O)$, $\overline{O_1}\sqcup \overline{O_2}\prec O_0$, and $G\curvearrowright X$ is $(O_1,O_2,E)$-squarely divisible for all finite sets $e\in E\subseteq G$.
Considering the sets $\varphi(O_0),\varphi(O_1),\varphi(O_2)\subseteq Y$, 
it is clear, based on the discussion before the proposition statement, that $H\curvearrowright Y$ will be squarely divisible if we
manage to show that it is $(\varphi(O_1),\varphi(O_2),F)$-squarely divisible for all finite sets $e\in F\subseteq H$.

Let $e\in F\subseteq H$ be a finite set. 
Consider the finite set $E:=\lambda(F, Y)\subseteq G$. Then $e\in E$, and by assumption $G\curvearrowright X$ is $(O_1,O_2,E)$-squarely divisible.
Let $\{\tilde{V}_{i,j}\}_{i,j=1}^{n}$ be open sets in $X$ implementing this square divisibility, and denote by $R$ and $B$ the remainder and boundary, respectively (as in Definition~\ref{D-SD}). Set $V_{i,j}=\varphi(\tilde{V}_{i,j})$ for $i,j=1,\ldots,n$ and write $\tilde{R}$ and $\tilde{B}$ for the corresponding remainder and boundary. We claim that these sets witness $(\varphi(O_1),\varphi(O_2),F)$-square divisibility for $H\curvearrowright Y$.
It is a straightforward computation to verify that the $E$-disjointness of 
the sets $\{\tilde{V}_{i,j}\}_{i,j=1}^{n}$ implies that the sets $\{V_{i,j}\}_{i,j=1}^{n}$ are $F$-disjoint.
It is more subtle to check that the sets $\{V_{i,j}\}_{i,j=1}^{n}$
are pairwise equivalent. 
To do that, let $\{\tilde{C}_k\}_{k\in K}$ be Borel subsets of $X$ and $\{s_{k,i,j}\}_{k\in K,i,j=1,\ldots,n}$ elements of $G$ such that $\tilde{V}_{1,1}=\bigsqcup_{k\in K} \tilde{C}_k$ and 
$\tilde{V}_{i,j}=\bigsqcup_{k\in K} s_{k,i,j}\tilde{C}_k$ for all $i,j=1,\ldots, n$.
Fix $i,j\in \{1,\ldots,n\}$, with $(i,j)\neq (1,1)$.
As before define $D_{g,h}=\{x\in X: \kappa(g,x)=h\}$ for $g\in G$ and $h\in H$.
One checks that 
\[
V_{1,1}=\bigsqcup_{k\in K}\,\bigsqcup_{h\in \kappa(s_{k,i,j},X)}\varphi(\tilde{C}_k\cap D_{s_{k,i,j},h}), 
\]
and 
\[
V_{i,j}=\bigsqcup_{k\in K}\,\bigsqcup_{h\in \kappa(s_{k,i,j},X)}h\varphi(\tilde{C}_k\cap D_{s_{k,i,j},h}). 
\]
Thus the partition $\mathcal{C}_{i,j}:=\{\varphi(\tilde{C}_k\cap D_{s_{k,i,j},h}) : 
k\in K, \, h\in \kappa(s_{k,i,j},X) \}$ implements an equivalence between $V_{1,1}$ and $V_{i,j}$.
To obtain a pairwise equivalence between the sets $\{V_{i,j}\}_{i,j=1}^{n}$ we need however to find a partition of $V_{1,1}$ that works simultaneously for all $i,j=1,\ldots,n$.
Consider $\mathcal{C}=\bigvee_{i,j=1,\ldots,n}\mathcal{C}_{i,j}$, i.e., the common refinement.
This is a partition of $V_{1,1}$ into Borel sets which is easily seen to implement the desired pairwise equivalence. 
The discussion preceding the proposition statement now completes the proof
modulo the fact that $B\subseteq \varphi(\tilde{B})$, which follows from the inclusion
$\varphi(A^E)\subseteq \varphi(A)^F$ for every $A\subseteq X$.  
\end{proof}

We do not know whether weak square divisibility for free actions on the Cantor set is preserved under continuous orbit equivalence.

\section{Stable rank one}\label{S-stable rank one}

Our goal here is to show that, for topologically free minimal actions, 
stable rank one is a consequence of square divisibility (Theorem~\ref{T-SR1}) and also, in the Cantor set setting,
of weak square divisibility (Theorem~\ref{T-SR1 weak SD}). This we do by recasting some of the main
ideas of \cite{LiNiu20}, which in turn have precedents in \cite{Ror91,Ror04,Suz20,AlbLut22,Lin25}. The common strategy 
originates in the work of R{\o}rdam in \cite{Ror91}. 
R{\o}rdam's first insight was that if a simple unital C$^*$-algebra is finite 
then in order to approximate a non-invertible element $a$ by invertible elements one may assume,
by performing a unitary rotation and perturbation,
that $ad = da = 0$ for some nonzero positive element $d$.
Under favourable circumstances (such as $\cZ$-stability \cite{Ror04} or, as we will show, square divisibility)
the element $a$ may then be rotated by unitaries to produce a nilpotent element, which can then be 
perturbed to something invertible by adding a small scalar multiple of the identity. One can then rotate 
such a perturbation back to an (invertible) element that approximates the original non-invertible element,
yielding stable rank one. In the dynamical framework one furthermore needs the element $d$ above to be of 
a special form, and for this we will appeal to a result in \cite{LiNiu20}.

The argument in the Cantor setting is much simpler as we can build unitaries and partial isometries directly
from clopen sets and group elements, without having to negotiate boundaries. The reader seeking to get
the quickest grasp of the basic mechanisms at play is thus advised to start with the proof of Theorem~\ref{T-SR1 weak SD}.

Given an action $G\curvearrowright X$,
the embedding of $C(X)$ into the C$^*$-algebra $B(X)$ of bounded Borel functions on $X$ extends canonically to an embedding of $C(X)\rtimes_\lambda G$ into $B(X)\rtimes_\lambda G$. When convenient we will accordingly view $C(X)\rtimes_\lambda G$ as sitting inside the larger C$^*$-algebra $B(X)\rtimes_\lambda G$.
To be specific, we will find it useful to be able to express certain multiplicative relationships using indicator functions of Borel sets.

The following is a version of Lemma~3.2 in \cite{LiNiu20} that is tailored to the setting of dynamical subequivalence
and relies on the same functional calculus construction.

\begin{lemma}\label{L-unitary}
Let $G\curvearrowright X$ be an action. Let $A\subseteq X$ be a closed set and $B\subseteq X$ an open set such that 
$A\cap B = \emptyset$ and $A\prec B$. Let $A_0 \subseteq X$ be an open set with $A\subseteq A_0$.
Then there are a closed set $B^- \subseteq B$, an open set $A^+ \subseteq X$ satisfying $A\subseteq A^+\subseteq A_0$ and $A^+\cap B^-=\emptyset$, and a unitary $u\in C(X)\rtimes_\lambda G$ such that the following hold:
\begin{enumerate}
\item $u\unit_{A^+\cup D}=\unit_{B^-\sqcup D}u\unit_{A^+\cup D}$ for every Borel set $D\subseteq X$ with $D\cap B^-=\emptyset$, and in particular $u\unit_{A^+} = \unit_{B^-}u \unit_{A^+}$,

\item the same as (i) with $u^*$ in place of $u$,

\item $\unit_{A_0\cup B^-}(u-1)=u-1=(u-1)\unit_{A_0\cup B^-}$, and in particular $u1_{C}=1_{A_0\cup B^-\cup C}u1_{C}$ and $1_{C}u=1_{C}u1_{A_0\cup B^-\cup C}$ for every Borel set $C\subseteq X$,
\item $\unit_C u = u\unit_C = \unit_C$ where $C = (A_0\cup B^- )^\comp$.
\end{enumerate}
\end{lemma}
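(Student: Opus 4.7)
The plan is to construct $u$ as a functional-calculus rotation built from the witnesses of the subequivalence $A\prec B$, following the approach of \cite[Lemma~3.2]{LiNiu20}. First I would unpack the hypothesis: by $A\prec B$ there are open sets $U_1,\dots,U_n$ covering $A$ and elements $s_1,\dots,s_n\in G$ such that the $s_iU_i$ are pairwise disjoint subsets of $B$. Using Lemma~\ref{L-subequivalence} together with normality, I would refine these to opens $V_i\subseteq\overline{V_i}\subseteq U_i$ still covering $A$, arrange $\overline{U_i}\subseteq A_0$ (possible since $A\subseteq A_0$), and ensure the closures $\overline{s_iU_i}$ are pairwise disjoint subsets of $B$. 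Set $B^-:=\bigsqcup_i s_i\overline{V_i}$, which is closed and contained in $B$, and take $A^+$ to be an open neighbourhood of $A$ inside $\bigl(\bigcup_iV_i\bigr)\cap(A_0\setminus B^-)$, which contains $A$ because $A\subseteq A_0$ and $A\cap B^-=\emptyset$.

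For each $i$ pick a continuous cutoff $f_i\colon X\to[0,1]$ equal to $1$ on a neighbourhood of $\overline{V_i}$ with $\supp f_i\subseteq U_i$. Set $v_i:=u_{s_i}f_i\in C(X)\rtimes_\lambda G$ and form the self-adjoint element $c_i:=v_i+v_i^*$. The disjointness $U_i\cap s_iU_i=\emptyset$ yields $v_i^2=(v_i^*)^2=0$, $v_i^*v_i=f_i^2$, and $v_iv_i^*=s_i(f_i^2)$, so $c_i^2=f_i^2+s_i(f_i^2)$ lies in $C(X)$. The local rotation is the unitary
\[
u_i:=\exp\bigl(i(\pi/2)c_i\bigr)\in C(X)\rtimes_\lambda G.
\]
Because the even powers of $c_i$ stay in $C(X)$ and the odd powers are of the form (function)$\cdot c_i$, expanding the exponential series shows that $u_i$ is a combination of $u_e$, $u_{s_i}$ and $u_{s_i^{-1}}$ whose coefficient functions (on either side) are supported in $\overline{U_i}\cup\overline{s_iU_i}\subseteq A_0\cup B^-$. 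Hence $r_i:=u_i-1$ lies in the corner $\unit_{A_0\cup B^-}(C(X)\rtimes_\lambda G)\unit_{A_0\cup B^-}$, and since membership in this corner is preserved by products, both (iii) and (iv) hold for any product of the $u_i$. A direct series evaluation on $\unit_y$ at any point $y$ with $f_i(y)=1$ yields $u_i\unit_y=i\unit_{s_iy}u_{s_i}$, so $u_i$ sends mass on $\overline{V_i}$ into $s_i\overline{V_i}\subseteq B^-$; this is the local rotation identity driving (i).

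The global unitary $u$ is then assembled from the $u_i$. In the zero-dimensional (Cantor) setting, I would first refine the $V_i$ to pairwise disjoint clopen sets via \cite[Proposition~3.5]{Ker20}; the $u_i$ then commute, and $u:=u_1u_2\cdots u_n$ does the job, since for $y\in A^+\subseteq\bigsqcup V_i$ the unique factor $u_k$ with $y\in V_k$ rotates $y$ into $B^-$ while the remaining factors act trivially on $y$ and on its image, giving (i); property (ii) follows by applying (i) to $u^*$, which has the same corner support. In the general compact metrizable case, where the $U_i$ may honestly overlap and the cutoffs $f_i$ equal $1$ only on the closed set $\overline{V_i}$, I would proceed inductively as in \cite{LiNiu20}: at stage $i$ apply the single-rotation construction to a closed piece of $A$ not yet moved, chosen inside $U_i$ and disjoint from the targets of earlier stages, with $f_i$ shrunk so that the transitional zone $U_i\setminus\overline{V_i}$ is absorbed into later stages or already into $B^-$. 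The principal obstacle is precisely this assembly in the non-zero-dimensional setting: a naive product of the $u_i$ can drag points in $U_i\setminus\overline{V_i}$ outside $B^-\cup D$ via the continuous partial rotation, and the inductive refinement is needed to ensure that the full rotations happen on $A^+$ while the partial-rotation regions are sequestered — a subtlety that vanishes in the Cantor setting, which is all that is actually needed in the rest of the paper.
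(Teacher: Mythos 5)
Your construction is genuinely different from the paper's. The paper assembles a single element $v=\sum_i h_i^{1/2}u_{s_i}^*$ from a partition of unity subordinate to the cover $\{U_i\}$, arranges $vv^*=h\in C(X)$ and $v^*v\perp vv^*$, and then applies the functional-calculus formula of \cite[Lemma~3.2]{LiNiu20} once; conditions (i)--(iv) then reduce to a handful of clean identities such as $v\unit_{A^+\cup D}=0$ and $\unit_{B^-}v^*=v^*$. You instead build $n$ local exponential rotations $u_i=\exp(i\pi c_i/2)$ and multiply them. The local-rotation idea is viable, and in fact the obstacle you flag at the end is not the real one: because the closures $\overline{s_iU_i}$ are pairwise disjoint, a component sitting in $\overline{s_iU_i}$ can only be moved by $u_j$ ($j\neq i$) further into $\overline{s_jU_j}$ and never back out, so a naive product does not drag mass outside $B^-\cup D$ once $B^-$ is chosen large enough; no inductive refinement is needed.

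That said, the proposal has genuine gaps. First, your $B^-=\bigsqcup_i s_i\overline{V_i}$ is inconsistent with your own support claim: the coefficient functions of $u_i-1$ are supported in $\overline{U_i}\cup s_i\overline{\supp f_i}$, and $\supp f_i$ is strictly larger than $\overline{V_i}$, so the corner containment in $A_0\cup B^-$ needed for (iii)--(iv) fails unless you enlarge $B^-$ to contain $\bigcup_i s_i\overline{\supp f_i}$ (the paper uses the closures of the $s_iU_i$). Relatedly, $v_i^2=0$ requires $\supp f_i\cap s_i\supp f_i=\emptyset$, which is not automatic from $A\cap B=\emptyset$ (only $A$, not $U_i\subseteq A_0$, is assumed disjoint from $B$) and must be arranged by first replacing $U_i$ with $U_i\setminus B^-$. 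Second, condition (i) is quantified over arbitrary Borel sets $D$ disjoint from $B^-$, and your verification consists of a heuristic on point masses plus the clopen case; establishing $\unit_{(B^-\sqcup D)^\comp}\,u\,\unit_{A^+\cup D}=0$ for the product $u=u_1\cdots u_n$ requires an honest bookkeeping of the supports of all cross terms, including the cancellation at points of $A^+$ coming from $\prod_i\phi(k_i(y))=0$, and this is not carried out (nor is (ii), which does not formally ``follow by applying (i) to $u^*$''). Third, and most importantly, you defer the general compact metrizable case to an unspecified inductive refinement and assert that the Cantor case is all that is needed elsewhere. That is not so: Lemma~\ref{L-unitary} is invoked in the proof of Theorem~\ref{T-SR1}, which treats arbitrary compact metrizable $X$ and underpins Theorem~\ref{T-product}; only Theorem~\ref{T-SR1 weak SD} can get by with the clopen shortcut of Remark~\ref{rem:zero-dim-unitary}. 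As written, the proposal therefore does not prove the lemma in the generality in which it is used.
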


\begin{proof}
Since $A\prec B$ we can find open sets $U_1 , \dots , U_n \subseteq X$ and elements $s_1 , \dots , s_n \in G$
such that $A\subseteq \bigcup_{i=1}^n U_i \subseteq A_0$ and the sets $s_i U_i$ for $i=1, \dots , n$ are pairwise
disjoint and all contained in $B$. 
We may assume, by using the compactness of $A$ to shrink the sets $U_i$ slightly, that the closures of the sets $s_iU_i$ for $i=1,\ldots,n$ are pairwise disjoint and contained in $B$. Define $B^-$ to be the union of these closures.

Again using the compactness of $A$ as above, we can find open sets $W_i\subseteq \overline{W_i}\subseteq U_i$ for $i=1,\ldots,n$ such that $A\subseteq \bigcup_{i=1}^n W_i$.
Proceeding as in the usual construction of a partition of unity for the open cover $\{U_1,\ldots, U_n, (X\setminus \bigcup_{i=1}^{n} \overline{W_i})\}$ of $X$, we can 
find $h_1 , \dots , h_n\in C(X,[0,1])$ such that $h_i = 0$ off of $U_i$ and the function $h:=\sum_{i=1}^n h_i$
takes values in $[0,1]$ and is equal to $1$ on $\bigcup_{i=1}^{n} \overline{W_i}$. 

A separation argument applied to the closed disjoint sets $A$ and $B^-$ allows us to find an open set $A^+\subseteq X$ such that $A\subseteq A^+\subseteq \overline{A^+}\subseteq \bigcup_{i=1}^{n}W_i$ and $\overline{A^+}\cap B^-=\emptyset$. 
Taking $f\in C(X,[0,1])$ to be a function with $f|_{B^{-}}=0$ and $f|_{A^+}=1$ and multiplying each $h_i$ with $f$, we may assume that $h|_{B^{-}}=0$ and  $h|_{A^+}=1$.
Note that $s_i h_i=0$ off of $s_iU_i\subseteq B^-$ and therefore $s_i h_i\perp s_j h_j$ for $i\neq j$, 
and $s_i h_i\perp h$ for $i=1,\ldots,n$. 

Set $v = \sum_{i=1}^n h_i^{1/2}u_{s_i}^*$. Then
\begin{align*}
vv^* = \sum_{i,j} h_i^{1/2} u_{s_i}^* u_{s_j} h_j^{1/2} = \sum_{i,j} u_{s_i}^* (s_i h_i^{1/2} ) (s_j h_j^{1/2} )u_{s_j}
= \sum_i u_{s_i}^* (s_i h_i )u_{s_i} = h
\end{align*}
while
\begin{align*}
v^*v = \sum_{i,j} u_{s_i} h_i^{1/2} h_j^{1/2} u_{s_j}^*
= \sum_{i,j} (s_i h_i^{1/2} ) u_{s_i} u_{s_j}^* (s_j h_j^{1/2} ) ,
\end{align*}
from which we see that $vv^*\leq 1$ and $v^* v \perp vv^*$. We are now in the situation of \cite[Lemma~3.2]{LiNiu20},
which shows, writing $w$ for the self-adjoint element $vv^* + v^*v$ and $g$ for the continuous function on $[0,\infty )$ defined by $g(t)=\sin (\pi t/2)/\sqrt{t}$ 
for $t>0$ and $g(0)=0$, that the element
\begin{gather*}
u:= \cos \Big ( \frac{\pi}{2} w\Big) + g(vv^*)v - g(v^* v)v^* 
\end{gather*}
is unitary. 

We now verify (i). Let $D\subseteq X$ be a Borel set satisfying $D\cap B^-=\emptyset$. 
By construction each $s_i h_i^{1/2}$ vanishes off of $B^-$, and so we have 
\begin{align*}
v1_{A^+\cup D}=\sum_i h_i^{1/2}u_{s_i}^*1_{A^+\cup D}=\sum_i u_{s_i}^* (s_i h_i^{1/2})1_{A^+\cup D}=0 .
\end{align*}
The vanishing of each $s_i h_i^{1/2}$ off of $B^-$ also yields
\begin{align*}
1_{B^-}v^*=\sum_i 1_{B^-}u_{s_i}h_i^{1/2}=\sum_i 1_{B^-} (s_i h_i^{1/2})u_{s_i}=v^* 
\end{align*}
and hence $1_{B^-\sqcup D}v^*v=v^*v$, which shows, using that $g(0) = 0$, that
\begin{align*}
1_{B^-\sqcup D}g(v^*v)v^*1_{A^+\cup D}=g(v^*v)v^*1_{A^+\cup D}.
\end{align*}
Finally, since $v^*v1_{A^+\cup D}=0$, $vv^*\perp v^*v$, and $h|_{A^+} = 1$ we have 
\begin{align*}
\cos \Big ( \frac{\pi}{2} w\Big)1_{A^+\cup D}=\cos \Big ( \frac{\pi}{2} vv^*\Big)1_{A^+\cup D}=\cos \Big ( \frac{\pi}{2} h\Big)1_{A^+\cup D}=\cos \Big ( \frac{\pi}{2} h\Big)1_{D} ,
\end{align*}
while the fact that $1_D$ commutes with $\cos ( \pi h/2)$ implies
\begin{align*}
1_{B^-\sqcup D}\cos \Big ( \frac{\pi}{2} w\Big)1_{A^+\cup D}=\cos \Big ( \frac{\pi}{2} w\Big)1_{A^+\cup D}.
\end{align*}
From these computations we obtain (i).

To see that (i) holds also for
\[
u^*=\cos \Big(\frac{\pi}{2}w\Big)+v^*g(vv^*)+vg(v^*v) ,
\]
we observe that $1_{B^-}v^*=v^*$ and hence $1_{B^-\sqcup D}v^*=v^*$ so that
\[
1_{B^-\sqcup D}v^*g(vv^*)1_{A^+\cup D}=v^*g(vv^*)1_{A^+\cup D},
\]
while from the equalities $v^*v1_{A^+\cup D}=0$ and $g(0)=0$ we obtain
\[
1_{B^-\sqcup D}vg(v^*v)1_{A^+\cup D}=0=vg(v^*v)1_{A^+\cup D}.
\]
This yields $1_{B^-\sqcup D}u^*1_{A^+\cup D}=u^*1_{A^+\cup D}$, as desired.

To establish (iii) one first checks that $1_{A_0\cup B^-}v=v$ (using that each $h_i$ vanishes off of $A_0$) and $1_{A_0\cup B^-}v^*=v^*$ (using that each $s_i h_i$ is supported in $B^-$). 
It follows that $1_{A_0\cup B^-}$ acts like a unit on $w$. Since $\cos(0)=1$ we have
\begin{align*}
1_{A_0\cup B^-}\Big( \cos \Big ( \frac{\pi}{2} w\Big) -1\Big)=\cos \Big ( \frac{\pi}{2} w\Big)-1
\end{align*}
and so altogether we obtain $1_{A_0\cup B^-}(u-1)=u-1=(u-1)1_{A_0\cup B^-}$.
In particular, if $C\subseteq X$ is any Borel subset then $1_{A_0\cup B^-\cup C}$ acts like a unit on $u-1$.
Therefore $u= (u-1)1_{A_0\cup B^-\cup C}+1$, and so
$1_{C}u=1_C u1_{A_0\cup B^-\cup C}.$
Similarly, $u1_{C}= 1_{A_0\cup B^-\cup C}u1_C$. This verifies condition~(iii).

It remains to show that $1_Cu=u1_C=1_C$ where $C=(A_0\cup B^-)^c$. Again following the supports of the functions involved, it is immediate that $vv^*\perp 1_C$ and $v^*v\perp 1_C$, from which we obtain $v\perp 1_C$ using the C$^*$-identity. This in turn implies $1_Cu=u1_C=\cos(0)1_C=1_C$.
\end{proof}

We make the following two observations in the framework of Lemma~\ref{L-unitary}.

\begin{remark}\label{rem:zero-dim-unitary}
Suppose that $X$ is zero-dimensional and that $A,B\subseteq X$ are disjoint clopen sets with $A\prec B$. Then 
the picture in Lemma~\ref{L-unitary} becomes much simpler and we can avoid the use of the functional calculus to construct the unitary $u$. 
By \cite[Proposition~3.5]{Ker20} there exist a clopen partition $A=\bigsqcup_{i=1}^{n} A_i$ and elements $s_1,\dots, s_n\in G$ such that $B^- := \bigsqcup_{i=1}^{n}s_iA_i\subseteq B$.
The element
\[u := 1_{X\setminus (A\sqcup B^-)}+\sum_{i=1}^{n}1_{s_iA_i}u_{s_i}+\sum_{i=1}^{n}1_{A_i}u_{s_i}^*\in C(X)\rtimes_\lambda G\]
is a self-adjoint unitary such that 
$u1_{A}=1_{B^{-}}u$ and $1_Cu=u1_C=1_C$ where $C= (A\sqcup B^-)^c$.
\end{remark}

\begin{remark}\label{rem:commutation_relation}
Suppose that $A_1,\dots,A_n \subseteq X$ are closed sets and $A_{1,0} , \dots , A_{n,0} , B_1,\dots,B_n\subseteq X$ are open sets
such that $A_i\prec B_i$ and $A_i \subseteq A_{i,0}$ for all $i$ and $(A_{i,0}\cup B_i)\cap (A_{j,0}\cup B_j)=\emptyset$
for all $i\neq j$. For every $i$ let $u_i$ be a unitary as given by Lemma~\ref{L-unitary} with respect to the 
subequivalence $A_i\prec B_i$, with the set $A_0$ in the statement of the lemma taken to be $A_{i,0}$.
Then $u_iu_j=u_ju_i$, for all $i,j$. Indeed, setting $Y_i=A_{i,0}\cup B_i^-$ we have $Y_i\cap Y_j=\emptyset$ for $i\neq j$,
and by condition~(iv) in Lemma~\ref{L-unitary} we have $u_i=1_{Y_i}u_i1_{Y_i}+1_{Y_i^c}$. One can now easily check that $u_iu_j=1_{Y_i}u_i1_{Y_i}+1_{Y_j}u_j1_{Y_j}+1_{Y_i^c}1_{Y_j^c}=u_ju_i$.
\end{remark}

The following lemmas record a couple of simple facts that will be of use in 
the homotopy constructions in the proof of Theorem~\ref{T-SR1}.

\begin{lemma}\label{lemma:characterCts}
Let $C\subseteq V\subseteq X$ be open sets such that $C$ is relatively clopen in $V$. 
Let $h\in C(X)$ be a function which vanishes off of $V$. Then $h1_{C}\in B(X)$ actually belongs to $C(X)$.
\end{lemma}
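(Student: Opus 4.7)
The plan is to verify that $h1_C$ is continuous at each point of $X$ by exploiting the decomposition $X = C \sqcup (V \setminus C) \sqcup (X\setminus V)$. The first observation is that both $C$ and $V\setminus C$ are open in $X$: the former because $C$ is open in $V$ and $V$ is open in $X$, and the latter because $C$, being clopen in $V$, is relatively closed in $V$, so $V\setminus C$ is open in $V$ and hence in $X$.

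On the open set $C$ the function $h1_C$ coincides with the continuous function $h$, and on the open set $V\setminus C$ it is identically zero. Continuity at every point of $V$ is therefore immediate, since each such point has a small open neighborhood (contained in either $C$ or $V\setminus C$) on which $h1_C$ agrees with a continuous function.

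The only remaining task, and the only one that requires a modicum of care, is continuity at a point $x\in X\setminus V$. Here $h(x)=0$ by the vanishing hypothesis, so $(h1_C)(x)=0$, and the pointwise bound $|h1_C|\le |h|$ combined with continuity of $h$ at $x$ gives continuity of $h1_C$ at $x$: for any $\varepsilon>0$, any neighborhood of $x$ on which $|h|<\varepsilon$ is also one on which $|h1_C|<\varepsilon$. No step presents any serious obstacle; the clopen-in-$V$ hypothesis is precisely what prevents $h1_C$ from having a jump discontinuity internal to $V$, while the vanishing of $h$ off $V$ neutralizes the potential discontinuity coming from $\partial V$.
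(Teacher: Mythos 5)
Your proof is correct and follows essentially the same route as the paper's: a direct pointwise continuity check in which the clopen-in-$V$ hypothesis handles points of $V$ (where $h1_C$ locally agrees with $h$ or with $0$ on the open sets $C$ and $V\setminus C$) and the vanishing of $h$ off $V$ handles the rest. The paper organizes the cases by the position of a limit point relative to $\overline{C}$ and argues with sequences, whereas you partition $X$ as $C\sqcup(V\setminus C)\sqcup(X\setminus V)$ and use the bound $|h1_C|\le|h|$ at points outside $V$; this is a cosmetic difference, and your version is, if anything, slightly cleaner.
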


\begin{proof}
Let $(x_n)_{n\in\mathbb{N}}$ be a sequence in $X$ converging to some point $x\in X$.  If $x\in C$, then since $C$ is open in $X$ one has $x_n\in C$ for all sufficiently large $n$, so that
\[
\lim_{n\to\infty} h(x_n) 1_{C}(x_n)=\lim_{n\to\infty} h(x_n)= h(x)=h(x)1_C(x).
\]
If $x\notin C$ but $x\in \partial C$, then from the fact that $V$ is open and the complement of $C$ is relatively open in $V$ we infer that $x\notin V$, so that
$\lim_{n\to\infty} h(x_n)=h(x)=0$ and hence
$\lim_{n\to\infty} h(x_n) 1_{C}(x_n)=0= h(x)1_C(x)$. Finally, if $x\in X\setminus\overline{C}$ then $x_n\in X\setminus\overline{C}$
for all sufficiently large $n$, in which case $\lim_{n\to\infty} h(x_n) 1_{C}(x_n)=0= h(x)1_C(x)$.
\end{proof}

\begin{lemma}\label{lemma:FuncMatrix}
Let $A$ be a unital C$^*$-algebra such that $C(X)$ is a C$^*$-subalgebra of $A$ sharing the same unit. Let $\{e_{i,j}\}_{i,j=1}^{n}$ be matrix units in $A$ and let $\{g_{i,j}\}_{i,j=1}^{n}$ be functions in $C(X)$ such that $g_{i,j}e_{k,\ell}=e_{k,\ell}g_{i,j}$ for all $i,j,k,\ell=1,\ldots,n$. Suppose that for every $x\in X$ the element $\sum_{i,j=1}^{n}g_{i,j}(x)e_{i,j}$ is a unitary in $D := C^*(\{e_{i,j} \}_{i,j=1}^n) \cong M_n$. 
Then the element $a:=\sum_{i,j=1}^{n}g_{i,j}e_{i,j}$ is a unitary in $C^*(\{g_{i,j}e_{i,j} \}_{i,j=1}^n )$. 
\end{lemma}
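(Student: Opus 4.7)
The plan is to reduce the claim to direct computations of $aa^*$ and $a^*a$, using the commutation hypothesis together with the matrix unit relations $e_{i,j}e_{k,\ell}=\delta_{j,k}e_{i,\ell}$.

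First I would unpack the hypothesis: identifying $D$ with $M_n$ via the given matrix units and letting $p:=\sum_{i=1}^{n}e_{i,i}$ denote the unit of $D$, the assumption that $\sum_{i,j}g_{i,j}(x)e_{i,j}$ is a unitary in $D$ for each $x\in X$ is exactly the statement that the matrix-valued function $x\mapsto(g_{i,j}(x))$ takes values in the unitary group of $M_n$. In particular, $\sum_{j}g_{i,j}(x)\overline{g_{k,j}(x)}=\delta_{i,k}$ and $\sum_{i}\overline{g_{i,j}(x)}g_{i,\ell}(x)=\delta_{j,\ell}$ for all $x\in X$ and all indices $i,j,k,\ell$.

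Next I would compute in $A$. Using the commutation of each $g_{i,j}$ with every $e_{k,\ell}$ together with the matrix unit relations,
\[
aa^{*}=\sum_{i,j,k,\ell}g_{i,j}\overline{g_{k,\ell}}\,e_{i,j}e_{\ell,k}=\sum_{i,k}\Bigl(\sum_{j}g_{i,j}\overline{g_{k,j}}\Bigr)e_{i,k}.
\]
By the first pointwise relation above, the function in parentheses is identically $\delta_{i,k}$, so $aa^{*}=\sum_{i}e_{i,i}=p$. The analogous manipulation using the second relation yields $a^{*}a=p$ as well.

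Finally, since $p=aa^{*}$ already lies in $C^{*}(\{g_{i,j}e_{i,j}\}_{i,j=1}^{n})$, it suffices to check that $p$ acts as a two-sided unit on $a$. This is immediate from the commutation hypothesis and the fact that $e_{i,i}e_{k,\ell}=\delta_{i,k}e_{k,\ell}$: one computes $pa=\sum_{i,k,\ell}g_{k,\ell}e_{i,i}e_{k,\ell}=\sum_{i,\ell}g_{i,\ell}e_{i,\ell}=a$, and symmetrically $ap=a$. Thus $a$ is a unitary in the unital C$^{*}$-algebra $C^{*}(\{g_{i,j}e_{i,j}\}_{i,j=1}^{n})$ with unit $p$. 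There is no real obstacle here; the lemma is essentially the observation that pointwise unitarity of the matrix $(g_{i,j}(x))$ promotes, via the commutation hypothesis, to operator unitarity of $a$ in $A$ over the local unit $p$.
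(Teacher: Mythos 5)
Your proof is correct, and it takes a more elementary route than the paper's. The paper packages the hypothesis into a single function $h\in C(X,D)$ via the identification $C(X)\otimes D\cong C(X,D)$, observes that $h^*h=hh^*$ is the constant function $\sum_i e_{i,i}$, and pushes this identity into $A$ through the $*$-homomorphism $\iota\times\iota:C^*(1,\{g_{i,j}\})\otimes D\to A$ furnished by the commutation hypothesis. You instead unpack the pointwise unitarity into the row/column orthogonality relations $\sum_j g_{i,j}\overline{g_{k,j}}=\delta_{i,k}$ and $\sum_i\overline{g_{i,j}}g_{i,\ell}=\delta_{j,\ell}$ and verify $aa^*=a^*a=p:=\sum_i e_{i,i}$ by direct index manipulation with the matrix unit relations. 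Both computations are the same in substance; yours avoids having to set up the tensor-product homomorphism (and any implicit appeal to the nuclearity of $M_n$ to make that unambiguous), at the cost of slightly heavier bookkeeping. Your closing step --- that $p$ lies in $C^*(\{g_{i,j}e_{i,j}\}_{i,j=1}^n)$ and acts as a two-sided unit on the generators, hence is the unit of that subalgebra --- is a point the paper leaves implicit, so if anything your write-up is marginally more complete on that front.
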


\begin{proof}
Thanks to the commutation relations in the hypotheses, one has a $*$-homomorphism
\[
\iota\times\iota : C^*(1,\{g_{i,j} \}_{i,j=1}^n)\otimes D\to A
\]
defined on elementary tensors by $(\iota\times\iota)(a\otimes b)=ab$. 
Recall that there is a natural $*$-isomorphism
\[
C(X)\otimes D\cong C(X,D)
\]
which maps $f\otimes e_{i,j}$ to the function $x\mapsto f(x)e_{i,j}$ for all $f\in C(X)$ and $i,j=1,\ldots,n$. In particular, the element $\sum_{i,j=1}^{n}g_{i,j}\otimes e_{i,j}$ is mapped to the function $x\stackrel{h}{\mapsto} \sum_{i,j=1}^{n}g_{i,j}(x)e_{i,j}$. By our hypotheses, 
both $h^*h$ and $hh^*$ are equal to the constant function $\sum_{i=1}^{n}e_{i,i}$. It follows that the element $b := \sum_{i,j=1}^{n}g_{i,j}\otimes e_{i,j}$ satisfies 
$b^* b = b b^* = 1\otimes (\sum_{i=1}^{n}e_{i,i} )$, 
and applying the $*$-homomorphism $\iota\times \iota$ to this equation yields
$a^* a = aa^* = \sum_{i=1}^{n}e_{i,i}$,
as desired.
\end{proof}

\begin{theorem}\label{T-SR1}
Let $G\curvearrowright X$ be a topologically free minimal action with $M_G (X) \neq\emptyset$, 
and suppose that it is squarely divisible. 
Then the reduced crossed product $C(X)\rtimes_\lambda G$ has stable rank one.
\end{theorem}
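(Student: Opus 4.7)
The plan is to implement the R{\o}rdam--Li--Niu strategy outlined at the beginning of this section. Fix a non-invertible $a \in C(X)\rtimes_\lambda G$ of norm at most one and $\eps > 0$; we seek an invertible element within $\eps$ of $a$. By density in the reduced crossed product we may assume $a = \sum_{s\in F} f_s u_s$ is a finite sum with $F \subseteq G$ finite and $f_s \in C(X)$. Since topological freeness together with minimality give simplicity of $C(X)\rtimes_\lambda G$ and $M_G(X) \neq \emptyset$ produces a faithful tracial state and hence stable finiteness, the reduction technique of Li and Niu from \cite{LiNiu20} allows us to replace $a$, after a unitary rotation and a perturbation of size less than $\eps/3$, by an element (still called $a$) satisfying $ad = da = 0$ where $d \in C(X)$ is a nonzero positive function supported in some nonempty open set $O \subseteq X$.

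Next I extract a tower. Apply $O$-square divisibility to obtain pairwise disjoint nonempty open sets $O_0 \subseteq O$ and $O_1, O_2 \subseteq X$ with $\overline{O_1}\sqcup\overline{O_2}\prec O_0$. Choose a finite symmetric set $e\in E\subseteq G$ containing $F$, then apply $(O_1, O_2, E)$-square divisibility to obtain the pairwise equivalent and pairwise $E$-disjoint open array $\{V_{i,j}\}_{i,j=1}^n$, the neighbourhood $U$ of $\partial V$ where $V=\bigsqcup_{i,j} V_{i,j}$, and the sets $V_1$, $R$, $B$ of Definition~\ref{D-SD}. The pairwise equivalence, realized through a Borel partition $\{C_k\}_{k\in K}$ of $V_{1,1}$ and group elements $s_{k,(i,j)}$, promotes to a system of matrix units $\{e_{(i,j),(i',j')}\}$ in $B(X)\rtimes_\lambda G$ that is supported in $V$ and commutes with $C(X)$-functions that are constant across the tower levels.

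Applying Lemma~\ref{L-unitary} separately to each of the subequivalences granted by $O$-square divisibility yields unitaries $u_1,\dots,u_n$ (for $\overline{V_{i,1}}\prec O_1\cap\bigsqcup_{j\ge 2}V_{i,j}\cap B^\comp$), $w_R$ (for $R\prec O_2\cap V\cap (V_1\cup B)^\comp$), $w_B$ (for $B\cup(\overline U\cap R)\prec O_2\cap\overline{V\cup U}^\comp$), and $w$ (for $\overline{O_1}\sqcup\overline{O_2}\prec O_0$); by Remark~\ref{rem:commutation_relation} the pairwise disjointness of the closures of the source and target sets ensures that these unitaries commute with one another in the appropriate pairs. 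Combining the $u_i$ with the matrix shift $V_{i,j}\mapsto V_{i,j+1}$ afforded by the matrix units produces a composite ``rotation'' $U_{\rm rot}$ under which $U_{\rm rot}\,a\,U_{\rm rot}^*$ iterates the contribution of $a$ on $V\setminus B$ through the square array (eventually leaving the tower after $n$ iterations), while $w_R$ and $w_B$ push the $R$ and $B$ contributions into the clean region $O_2\cap\overline{V\cup U}^\comp$, which $w$ then rotates into $O_0 \subseteq O$. Because $a$ vanishes on $O$ via $ad = da = 0$, the rotation by $w$ interacts trivially with $a$ on that region. The net effect is an element $\tilde a$ that is nilpotent up to error at most $\eps/3$, so that $\tilde a + \delta\cdot 1$ is invertible for small enough $\delta > 0$. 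Conjugating back by $U_{\rm rot}^*$ delivers an invertible element within $\eps$ of $a$, establishing stable rank one.

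The main obstacle is that the unitaries produced by Lemma~\ref{L-unitary} are built via continuous functional calculus rather than as strict permutations, so they interact with $a$ across the thickened boundary $B\cup \overline U$. The raison d'\^etre of condition (iii) in $(O_1,O_2,E)$-square divisibility is to absorb both $B$ and $\overline U\cap R$ into a single region $O_2\cap\overline{V\cup U}^\comp$ that the unitary $w$ can in turn rotate into $O$ where $a$ vanishes, thereby neutralizing the functional-calculus boundary effects. Verifying that the rotated element is genuinely nilpotent modulo an $\eps/3$ error, and that the matrix-unit structure survives the passage from $B(X)\rtimes_\lambda G$ back to $C(X)\rtimes_\lambda G$---a transition for which Lemmas~\ref{lemma:characterCts} and~\ref{lemma:FuncMatrix} have been expressly prepared---is substantial bookkeeping but follows a routine pattern once the four unitaries and the shift are in hand.
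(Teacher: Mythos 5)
Your proposal follows the paper's proof in all essentials: the R{\o}rdam reduction via \cite[Proposition~6.2]{LiNiu20} to an element vanishing on an open set $O$, the extraction of the square array from $O$-square divisibility, the unitaries of Lemma~\ref{L-unitary} attached to the subequivalences (i)--(iii), the matrix units built from pairwise equivalence, and the homotopy-patched permutation proxies (via Lemmas~\ref{lemma:characterCts} and \ref{lemma:FuncMatrix}) that triangularize the rotated element. The one place where your sketch, taken literally, would break down is the placement of the unitary $w$ associated to $\overline{O_1}\sqcup\overline{O_2}\prec O_0$: you apply it \emph{last}, to push the targets $Y_1\subseteq O_1$ and $Y_2,Y_3\subseteq O_2$ of the tower subequivalences into $O_0\subseteq O$ where $a$ vanishes. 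But the verification that $u^*wauv$ has the block-triangularizable matrix form consists of repeated identities of the form $a\unit_{Y_i}=\unit_{Y_i}a=0$ with $\unit_{Y_i}$ landing immediately adjacent to $a$; since $Y_1,Y_2,Y_3$ sit inside $O_1\sqcup O_2$ (and inside the tower $V$), the unrotated $a$ does not vanish there, and threading $w$ between $a$ and each such indicator would require commutation relations with $u$, $v$, and the boundary unitary that do not hold, because the support of $w$ overlaps $V$. The paper instead conjugates $a$ by this unitary \emph{first} and then re-perturbs, so that the new $a$ lies in the algebraic crossed product, has controlled finite support $E$, and satisfies $a\unit_{\overline{O_1}\sqcup\overline{O_2}}=\unit_{\overline{O_1}\sqcup\overline{O_2}}a=0$; only then is $(O_1,O_2,E)$-square divisibility invoked. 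This also repairs a second bookkeeping slip in your write-up: you fix $E\supseteq F$ with $F$ the support of the \emph{original} $a$, but the Li--Niu rotation (and the rotation by $w$) destroys that support, so the finite set fed into square divisibility must be extracted \emph{after} all rotations and perturbations, which is harmless since square divisibility is available for every finite $E$ but does require reordering your steps.
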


\begin{proof}
By topological freeness and minimality, $C(X)\rtimes_\lambda G$ is simple \cite{ArcSpi94}.
It is also stably finite, and in particular finite, since any invariant Borel probability measure gives rise to a faithful tracial state
via composition with the canonical conditional expectation onto $C(X)$, with faithfulness being a
consequence of simplicity.

Let $a$ be a non-invertible element of $C(X)\rtimes_\lambda G$. 
Following the strategy of R{\o}rdam \cite{Ror91} as described in the beginning of this section, we will first apply
perturbation and unitary rotation to $a$ so as to produce a two-sided zero divisor of a suitable type
and then perform a series of further perturbations and unitary rotations in order to produce a nilpotent element. 
Since a nilpotent element can be approximated arbitrarily well by invertible elements through the addition of 
nonzero scalar multiples of the unit, this will establish stable rank one.

In the general setting of simple unital finite C$^*$-algebras, R{\o}rdam shows that a non-invertible element $b$
can be perturbed and unitarily rotated to an element $b'$ satisfying $b'd = db' = 0$ for some 
nonzero positive element $d$, which is the strengthened form of zero division that is enables one,
under favourable circumstances, to unitarily rotate to a nilpotent element \cite[Proposition~3.2]{Ror91}. 
In our case we need a refinement of this principle for reduced crossed products that holds
under the conditions of finiteness and simplicity that we observed in the first paragraph and allows
us to assume, through perturbation and unitary rotation,
that $a\unit_O =\unit_O a=0$ for some nonempty open set $O\subseteq X$ \cite[Proposition~6.2]{LiNiu20}. 
We note that Proposition~6.2 of \cite{LiNiu20} assumes freeness but the proof still
works if one relaxes this to topological freeness, which is what we need here.

With the set $O$ now at hand, by the definition of square divisibility we can find an open set $O_0\subseteq O$ 
and nonempty open sets $O_1,O_2\subseteq X$ with $\overline{O_1}\cap \overline{O_2}=\emptyset$, $O_0\cap (\overline{O_1}\sqcup\overline{O_2})=\emptyset$, and $\overline{O_1}\sqcup \overline{O_2}\prec O_0$ such that the action is $(O_1,O_2,E)$-squarely divisible for all finite sets 
$e\in E\subseteq G$.

Set $Q=\overline{O_1}\sqcup \overline{O_2}$ for brevity. Since $Q\cap O_0=\emptyset$ we may apply Lemma~\ref{L-unitary} to find a unitary $r\in C(X)\rtimes_\lambda G$, a closed set $O_0^-\subseteq O_0$, and an open set $Q^+\supseteq Q$
such that $r1_{Q^+}=1_{O_0^-}r1_{Q^+}$ and $1_{Q^+}r=1_{Q^+}r1_{O_0^-}$.

Take a $t\in C(X,[0,1])$ such that $t(x)=0$ for all $x\in Q$ and $t(x)=1$ for all $x\in X\setminus Q^+$. Then $rar(1-t)=rar1_{Q^+}(1-t)=ra1_{O_0^-}r1_{Q^+}(1-t)=0$ and, similarly, $(1-t)rar=0$. We may therefore replace $a$ with $rar$ and assume that $a(1-t)=(1-t)a=0$.

Now if $a'$ is any element in the algebraic crossed product $C(X)\rtimes_{\alg}G$ then
the element $a'' := ta't\in C(X)\rtimes_{\mathrm{alg}}G$ satisfies $a''1_Q=1_Qa''=0$ and
\begin{align*}
\|a''-a\|=\|t(a'-a)t\|\leq \|a'-a\| .
\end{align*}
Since we may choose such an $a'$ so that $\|a'-a\|$ is as small as we wish,
by replacing $a$ once again, this time with $a''$, we may assume that it satisfies $a1_Q=1_Qa=0$ and has the form $a=\sum_{s\in E} f_su_s$ for some finite symmetric set $e\in E\subseteq G$. We now aim to show that $a$ can be multiplied on the left and right by suitable unitaries
in $C(X)\rtimes_\lambda G$ so as to obtain a nilpotent element.

By our choice of $O_1$ and $O_2$ as given by the definition of square divisibility, there exist an $n\in\Nb$, a collection $\{ V_{i,j} \}_{i,j=1}^n$ 
of pairwise equivalent and pairwise $E$-disjoint open subsets of $X$, and, writing $V = \bigsqcup_{i,j=1}^n V_{i,j}$,
an open set $U\subseteq X$ with $\partial V \subseteq U$ such that,
defining $V_1 = \bigsqcup_{i=1}^n V_{i,1}$, $R = V^\comp$,
and $B = \overline{V}\cap ((V\cap \overline{U}^\comp )^E )^\comp$, we have
\begin{enumerate}
\item $\overline{V_{i,1}} \prec O_1 \cap \bigsqcup_{j=2}^n V_{i,j} \cap B^\comp$ for every $i=1,\dots ,n$,

\item $R\prec O_2 \cap V \cap (V_1 \cup B)^\comp$, and

\item $B\cup (\overline{U}\cap R)\prec O_2 \cap \overline{V\cup U}^\comp$.
\end{enumerate}
Note that in each of the subequivalences in (i), (ii), and (iii) the set on the left hand side is closed and disjoint from the open set on the right hand side.

Using a separation argument and the fact that the sets $\overline{V_{i,j}}$ are pairwise disjoint, we can find mutually disjoint open neighbourhoods $\mathcal{V}_i \supseteq \overline{V_{i,1}}$ for $i=1,\dots , n$
such that for each $i$ we have $\mathcal{V}_i \subseteq V_{i,1}\cup U$ and $\mathcal{V}_i\cap (\overline{V}\setminus \overline{V_{i,1}})=\emptyset$, that is, $\mathcal{V}_i\subseteq (\overline{V}^c\cap U)\sqcup \overline{V_{i,1}}$. Applying Lemma~\ref{L-unitary} with respect to these neighbourhoods for the subequivalences in (i), for each $i=1,\ldots,n$ we obtain a unitary $u_i\in C(X)\rtimes_\lambda  G$ and a closed set $Y_{i,1}\subseteq O_1 \cap \bigsqcup_{j=2}^n V_{i,j} \cap B^\comp$ such that $u_i1_{V_{i,1}\cup D}=1_{Y_{i,1}\sqcup D}u_i1_{V_{i,1}\cup D}$ whenever $D\subseteq X$ is a Borel set satisfying $D\cap Y_{i,1}=\emptyset$. Moreover, $1_{(\overline{V}^c\cap U)\sqcup \overline{V_{i,1}}\sqcup Y_{i,1}}$ acts like a unit on $u_i-1$, and $1_Cu_i=u_i1_C=1_C$ for Borel sets $C\subseteq (\overline{V}\cup U^c)\cap \overline{V_{i,1}}^c \cap Y_{i,1}^c$.
By the choice of the open neighbourhoods $\mathcal{V}_i \supseteq\overline{V_{i,1}}$ and Remark~\ref{rem:commutation_relation}, we have $u_iu_j=u_ju_i$, for all $1\leq i,j\leq n$. 
Writing $Y_1=\bigsqcup_{i=1}^{n} Y_{i,1}$ and $u=u_1u_2\cdots u_n$, we obtain:
\begin{itemize}
\item[(u.1)] $u1_{V_1\cup D}=1_{Y_1\cup D}u1_{V_1\cup D}$ whenever $D\subseteq X$ is a Borel subset such that $D\cap Y_1=\emptyset$,
\item[(u.2)] $1_{(\overline{V}^c\cap U)\sqcup \overline{V_1}\sqcup Y_1}$ acts like a unit on $u-1$, and
\item[(u.3)] $u1_C=1_C=1_Cu$ where $C= (\overline{V}\cup U^c)\cap \overline{V_1}^c\cap Y_1^c$.
\end{itemize}

In the case of the subequivalence (ii), we apply Lemma~\ref{L-unitary} with respect to some open neighbourhood $R_0$ of $R$ that is contained in both $R\cup U$ and the complement of the closed set $Y_1$, which is disjoint from $R$. We thereby obtain a closed subset $Y_2\subseteq O_2\cap V\cap (V_1\cup B)^c$, an open set $R^+\subseteq X$ satisfying $R\subseteq R^+\subseteq R_0$, and a unitary element $v\in C(X)\rtimes_\lambda G$ such that 
\begin{itemize}
\item[(v.1)] $v1_{{R^+}\cup D}=1_{Y_2\sqcup D}v1_{R^+\cup D}$, whenever $D\subseteq X$ is a Borel set such that $D\cap Y_2=\emptyset$,
\item[(v.2)] $1_{R_0\cup Y_2}$ acts like a unit on $(v-1)$, and
\item[(v.3)] $1_Cv=1_C=v1_C$ where $C=(R_0\cup Y_2)^c$.
\end{itemize}

In the case of the subequivalence (iii), we apply Lemma~\ref{L-unitary} with respect to some open neighbourhood $B_0$ of $B\cup (\overline{U}\cap R)$ that is disjoint from $(Y_1\cup Y_2)$. 
We thereby obtain an open set $B^+$ with $B\cup (\overline{U}\cap R)\subseteq B^+\subseteq B_0$, a closed set $Y_3\subseteq O_2\cap \overline{V\cup U}^c$, and a unitary $w$ such that
\begin{itemize}
\item[(w.1)] $w1_{B^+\cup D}=1_{Y_3\sqcup D}w1_{B^+\cup D}$
and  $1_{B^+\cup D}w=1_{B^+\cup D}w1_{Y_3\sqcup D}$ (since the first equation also holds for $w^*$) whenever $D\subseteq X$ is a Borel set such that $D\cap Y_3=\emptyset$,
\item[(w.2)] $1_{B_0\cup Y_3}$ acts like a unit on $(w-1)$, and
\item[(w.3)] $1_Cw=1_C=w1_C$ where $C=(B_0\cup Y_3)^c$.
\end{itemize}

Define $b = u^* wauv$. 
By partitioning the unit in $B(X)\rtimes_\lambda G$ as $\unit_R + \sum_{i,j=1}^n \unit_{V_{i,j}}$ 
and multiplying $b$ on either side by the projections in this sum we can represent
$b$ as an $(n^2+1)\times (n^2+1)$ ``matrix''.
We will verify that, with respect to this matrix decomposition, $b$ takes the following form 
(assuming $n=3$ for the purpose of illustration), where the first row and column correspond to the set
$R$ and the remaining rows and columns correspond to the sets $V_{i,j}$ as ordered 
lexicographically with respect to the pairs $i,j$:
\begin{gather*}
\mbox{\small 
$\left[
\begin{array}{c | c c c | c c c | c c c}
0 & 0 & * & * & 0 & * & * & 0 & * & * \\
\hline
0 & 0 & 0 & 0 & 0 & 0 & 0 & 0 & 0 & 0 \\
0 & 0 & * & * & 0 & 0 & 0 & 0 & 0 & 0 \\
0 & 0 & * & * & 0 & 0 & 0 & 0 & 0 & 0 \\
\hline
0 & 0 & 0 & 0 & 0 & 0 & 0 & 0 & 0 & 0 \\
0 & 0 & 0 & 0 & 0 & * & * & 0 & 0 & 0 \\
0 & 0 & 0 & 0 & 0 & * & * & 0 & 0 & 0 \\
\hline
0 & 0 & 0 & 0 & 0 & 0 & 0 & 0 & 0 & 0 \\
0 & 0 & 0 & 0 & 0 & 0 & 0 & 0 & * & * \\
0 & 0 & 0 & 0 & 0 & 0 & 0 & 0 & * & * 
\end{array}
\right]$} .
\end{gather*}
A matrix over $\Cb$ of this form can be multiplied on the left and right by permutation
matrices with $1$ in the upper left entry so as to produce a strictly upper triangular (and hence nilpotent) matrix, and indeed
the proof will ultimately be completed by finding proxies
$z_1$ and $z_2$ for these permutation matrices in the unitary group of $C(X)\rtimes_\lambda G$ so that 
$z_1 b z_2$ has a similar strictly upper triangular matrix representation and hence is nilpotent.
Note that if all of the sets in the definition of square divisibility and in the subequivalences and equivalences at play therein
were clopen, then we could directly interpret these permutation matrices as elements of $C(X)\rtimes_\lambda G$
(using the equivalences between the sets $V_{i,j}$, via Remark~\ref{rem:zero-dim-unitary},
to define off-diagonal matrix units in the lower right $n^2 \times n^2$ block, as we will do in the proof
of Theorem~\ref{T-SR1 weak SD}) and thereby avoid
the homotopy constructions that will occupy a large part of the reminder of the proof.

These homotopies, which will be used to define the unitaries $z_1$ and $z_2$ that rotate $b$ to a nilpotent element, 
will need to be ``invisible'' to $b$, and so what we need in fact is a slightly more refined representation
of $b$ that takes into account the left zero division of $b$ across the boundary of $V$,
namely
\begin{gather}\label{E-b}
b=1_Rb1_V+\sum_{\substack{1 \le i,j,j' \le n \\ j,j'\neq 1 }}1_{V_{i,j}\setminus R^+}b1_{V_{i,j'}\setminus R^+} ,
\end{gather}
the point being that the slightly smaller projection $1_{V_{i,j}\setminus R^+}$ appears here instead of $\unit_{V_{i,j}}$.
To verify (\ref{E-b}), we begin by noting, using that $a1_{Y_2}=0$, that
\begin{align*}
auv\unit_{R^+} \stackrel{\text{(v.1)}}{=} au\unit_{Y_2}v\unit_{R^+}\stackrel{\text{(u.3)}}{=} a\unit_{Y_2}v\unit_{R^+}=0 ,
\end{align*}
which shows that $b\unit_{R^+} = 0$. This in particular yields the zero leftmost column in the matrix representation of $b$,
for which we just need $b\unit_R = 0$. 
Using the fact that $V_1\cap Y_2=Y_2\cap Y_1 =\emptyset$ and that $a1_{Y_1\sqcup Y_2}=0$, we also have
\[
auv \unit_{V_1}\stackrel{\text{(v.1)}}{=}au \unit_{V_1\sqcup Y_2} v \unit_{V_1}\stackrel{\text{(u.1)}}{=}a\unit_{Y_1\sqcup Y_2} u \unit_{V_1\sqcup Y_2} v \unit_{V_1}=0,
\]
which accounts for the other $n$ columns of zeros.
Moreover, since $Y_1\subseteq (B_0\cup Y_3)^c$ and $\unit_{Y_1} a = 0$ we see that
\[
\unit_{V_1} u^* wa \stackrel{\text{(u.1)}}{=}\unit_{V_1} u^* \unit_{Y_1} wa \stackrel{\text{(w.3)}}{=} \unit_{V_1} u^* \unit_{Y_1} a  =  0
\]
and hence $\unit_{V_1} b = 0$, which accounts for the $n$ rows of zeros in the lower right $n^2 \times n^2$ submatrix.

Now let $1\leq i,i' \leq n$ with $i\neq i'$ and let $2\leq j,j'\leq n$. Writing $p=(i,j)$ and $q=(i',j')$ with $V_p$ and $V_q$ interpreted accordingly as $V_{i,j}$ and $V_{i',j'}$, let us check that $\unit_{V_p} b \unit_{V_q} = 0$, which will in particular account for the remaining zeros in the
matrix representation of $b$. We do this by verifying the equalities
(\ref{E-equality one}),  (\ref{E-equality two}), (\ref{E-equality three}), and (\ref{E-equality four})
and then applying these in a chain-like way to obtain (\ref{E-combine}).

First observe, since $\unit_{R_0 \cup Y_2\cup V_q}$ acts like a unit on $(v-1)$ by (v.2), that
\begin{align}\label{E-equality one}
v\unit_{V_q} = \unit_{R_0 \cup Y_2\cup V_q} v\unit_{V_q} .
\end{align}

Next, setting $A = R_0\cup Y_2\cup V_q\cup V_{i',1}$ for brevity,
we note that the fact that $1_{(\overline{V}^c\cap U)\sqcup \overline{V_{i',1}}\sqcup Y_{i',1}}$ acts like a unit on $(u_{i'}-1)$ means that so does $1_{A\cup Y_{i',1}}$ due to the inclusion $(\overline{V}^c\cap U)\sqcup \overline{V_{i',1}}\sqcup Y_{i',1}\subseteq A\cup Y_{i',1}$, and thus
\begin{align*}
u_{i'}\unit_{R_0\cup Y_2\cup V_q} = \unit_{A\cup Y_{i',1}} u_{i'}\unit_{R_0\cup Y_2\cup V_q}.
\end{align*}
Given that the sets $Y_{k,1}$ are pairwise disjoint and $Y_{k,1}\cap A=\emptyset $ for all $k\neq i'$ (since $Y_{k,1}\subseteq (V_{k,2}\cup\cdots\cup V_{k,n})\cap B^c$), we therefore obtain
\begin{align*}
u\unit_{R_0\cup Y_2\cup V_q}&=u_1\cdots u_{i'-1} u_{i'+1}\cdots u_{n-1} u_n  \unit_{A\cup Y_{i',1}} u_{i'}\unit_{R_0\cup Y_2\cup V_q}\\
&=u_1\cdots u_{i'-1} u_{i'+1}\cdots u_{n-1} \unit_{A\cup Y_{i',1}\cup Y_{n,1}} u_n  \unit_{A\cup Y_{i',1}} u_{i'}\unit_{R_0\cup Y_2\cup V_q}.
\end{align*}
Proceeding recursively in this way, we conclude that 
\begin{align}\label{E-equality two}
u \unit_{R_0\cup Y_2\cup V_q}=\unit_{A\cup Y_1}u \unit_{R_0\cup Y_2\cup V_q}.
\end{align}

Next we reexpress $a\unit_{A\cup Y_1}$. Note first that since the sets $\{V_{i,j}\}_{i,j=1}^{n}$ are $E$-disjoint, we clearly have $s(V_q\cup V_{i',1})\subseteq R\cup V_q\cup V_{i',1}$ for all $s\in E$. Moreover, $(R\cup B)^c=(V\cap \overline{U}^c)^E\subseteq s(V\cap U^c)$ for all $s\in E$, using that $E=E^{-1}$. In other words, $s(R\cup U)\subseteq R\cup B$ for all $s\in E$. Putting these facts together, and recalling that $R_0\subseteq R\cup U$, we have 
\begin{align*}
s(R_0\cup V_q\cup V_{i',1})\subseteq R\cup B\cup V_q\cup V_{i',1}
\end{align*} 
for all $s\in E$. Since $a1_{Y_i}=1_{Y_i}a=0$ for $i=1,2,3$, it follows that
\begin{align}\label{E-equality three}
a\unit_{A\cup Y_1}= a\unit_{R_0 \cup V_q\cup V_{i',1}}
&= \sum_{s\in E}f_su_s\unit_{R_0 \cup V_q\cup V_{i',1}}\\
&=\sum_{s\in E}\unit_{s(R_0\cup V_q\cup V_{i',1})}f_su_s\notag\\
&= \unit_{R\cup B\cup V_q\cup V_{i',1}}a\unit_{R_0 \cup V_q\cup V_{i',1}}\notag\\
&= \unit_{(R\cup B\cup V_q\cup V_{i',1})\setminus Y_3}a\unit_{A\cup Y_1}.\notag
\end{align}

Finally, we reexpress $u^*\unit_{(R\setminus U)\sqcup V_q\sqcup V_{i',1}}$.
Using (w.1) along with the fact that $(U\cap R)\cup B\subseteq B^+$, and $Y_3\subseteq R\setminus U$, we compute that
\begin{align*}
w \unit_{(R\cup B\cup V_q\cup V_{i',1})\setminus Y_3}&= w \unit_{((R\setminus U) \cup (U\cap R) \cup B\cup V_q\cup V_{i',1})\setminus Y_3}\\
&=\unit_{(R\setminus U)\sqcup V_q\sqcup V_{i',1}}w \unit_{((R\setminus U) \cup (U\cap R) \cup B\cup V_q\cup V_{i',1})\setminus Y_3} .
\end{align*}
Note that for every $k\neq i'$, we have $u_k^* \unit_{V_q\sqcup V_{i',1}} = \unit_{V_q\sqcup V_{i',1}}$. Also, $\unit_{(\overline{V}^c\cap U)\cup \overline{V_{i',1}}\cup Y_{i',1}}$ acts as a unit on $(u_{i'}-1)$, and so does $\unit_{(R\cap U)\cup \bigsqcup_{j=1}^{n}\overline{V_{i',j}}}$ since $(\overline{V}^c\cap U)\cup \overline{V_{i',1}}\cup Y_{i',1}\subseteq (R\cap U)\cup \bigsqcup_{j=1}^{n}\overline{V_{i',j}}$. It follows that
\begin{align*}
u^*\unit_{V_q\cup V_{i',1}}=u_{i'}^*\unit_{V_q\cup V_{i',1}}=\unit_{(R\cap U)\cup \bigsqcup_{j=1}^{n}\overline{V_{i',j}}}u_{i'}^*\unit_{V_q\cup V_{i',1}}.
\end{align*}
On the other hand, since $R\setminus U \subseteq (\overline{V}\cup U^c)\cap \overline{V_1}^c\cap Y_1^c$ condition (u.3) implies that $u^* 1_{R\setminus U} = 1_{R\setminus U}$. In conjunction with the display above, this gives us
\begin{align}\label{E-equality four}
u^*\unit_{(R\setminus U)\sqcup V_q\sqcup V_{i',1}}=\unit_{R\cup \bigsqcup_{j=1}^{n}\overline{V_{i',j}}} u^*\unit_{(R\setminus U)\sqcup V_q\sqcup V_{i',1}}.
\end{align}

We can now apply (\ref{E-equality one}),  (\ref{E-equality two}), (\ref{E-equality three}), and (\ref{E-equality four}) in sequence to obtain
\begin{align}\label{E-combine}
b1_{V_q}&=u^* wauv \unit_{V_q} \\
&=  \unit_{R\cup \bigsqcup_{j=1}^{n}\overline{V_{i',j}}} u^* \unit_{(R\setminus U)\sqcup V_q\sqcup V_{i',1}} w \unit_{(R\cup B\cup V_q\cup V_{i',1})\setminus Y_3} a \unit_{A\cup Y_1} u \unit_{R_0 \cup Y_2\cup V_q} v \unit_{V_q}. \notag
\end{align}
As $V_p\cap (R\cup \bigsqcup_{j=1}^{n}\overline{V_{i',j}})=\emptyset$, we conclude that $1_{V_p}b1_{V_q}=0$.
We have thus verified that the matrix representation of $b$ has the desired form, i.e., that (\ref{E-b}) holds with
$\unit_{V_{i,j}}$ in place of $\unit_{V_{i,j}\setminus R^+}$.

To obtain (\ref{E-b}) itself, it remains now to check, given $1\leq i,j \leq n$ and writing $p = (i,j)$,
that $1_{R^+\cap V_p}b=b 1_{R^+\cap V_p}=0$. The second equality follows from $b 1_{R^+}=0$, 
which we observed at the outset. To verify that $1_{R^+\cap V_p}b = 0$, we will compute that 
$1_{R^+\cap V_p}u^*wa=0$. We saw earlier that 
$\unit_{V_1} b = 0$, and so we may assume $j\neq 1$.
Since $R^+\cap V_p\subseteq (R\cup U)\cap V= U\cap V\subseteq B$ and hence $R^+\cap V_p\subseteq \overline{V}\cap \overline{V_1}^c\cap Y_1^c$, from (u.3) we obtain 
$u 1_{R^+\cap V_p} = 1_{R^+\cap V_p}$ and hence
$1_{R^+\cap V_p}u^*=1_{R^+\cap V_p}$.
By (w.1) we have $1_{B}w=1_{B}w1_{Y_3}$ and thus, since $R^+\cap V_p\subseteq B$ as just observed, 
\begin{align*}
1_{R^+\cap V_p}w=1_{R^+\cap V_p}w1_{Y_3}.
\end{align*}
It follows that
\begin{align*}
1_{R^+\cap V_p}u^*wa=1_{R^+\cap V_p}wa=1_{R^+\cap V_p}w1_{Y_3}a=0,
\end{align*}
as desired.

To set up the homotopies involved in the construction of $z_1$ and $z_2$,
we first define some matrix units $e_{p,q}$ in $B(X)\rtimes_\lambda G$ and bump functions $f$ and $h$ 
in $C(X)\rtimes_\lambda G$.
Set $\Omega = \{ 1,\dots , n\}^2$.
By the equivalence of the sets $V_{i,j}$, which as above we also write as $V_p$ where $p=(i,j)\in \Omega$, 
we can find a finite partition $\{ C_k \}_{k\in K}$ of $V_{1,1}$ which is relatively clopen in $V_{1,1}$
and tuples $s_k = (s_{k,p} )_{p\in \Omega}$ of elements of $G$ for $k\in K$ such that 
$V_p = \bigsqcup_{k\in K} s_{k,p} C_k$ for all $p\in \Omega$, with $s_{k, (1,1)}$ equal to the identity of $G$ for all $k\in K$,
and the closures in $X$ of the sets $s_{k,p}C_k$ for $p\in\Omega$ and $k\in K$ are pairwise disjoint.
We can then find open neighbourhoods $W_{k,p}\supseteq s_{k,p}\overline{C_k}$ with pairwise disjoint closures. For each $k\in K$ define the open set $\tilde{C}_k =\bigcap_{p\in\Omega}s_{k,p}^{-1}W_{k,p}\supseteq \overline{C_k}$. Then the sets $s_{k,p}\tilde{C}_k$ for $k\in K$ and $p\in \Omega$ have pairwise disjoint closures in $X$. Define $\tilde{V}_p = \bigsqcup_{k\in K} s_{k,p}\tilde{C}_k$. Note that this gives a partition of $\tilde{V}_p$ which is clopen in the relative topology on $\tilde{V}_p$. Moreover, the sets $\tilde{V}_p$ for $p\in \Omega$ have pairwise disjoint closures in $X$. In particular, by Lemma~\ref{lemma:characterCts} it follows that
\begin{itemize}
\item[($\bullet$)]if $q\in C(X)$ is any function that vanishes off of $\tilde{V}_{1,1}$ then $q1_{\tilde{C}_k}\in C(X)$ for all $k\in K$.
\end{itemize}

For $p\in \Omega$ define $d_p= \sum_{k\in K} u_{s_{k,p}}1_{\tilde{C}_k}$. Then $d_p^*d_p=1_{\tilde{V}_{1,1}}$ and $d_pd_p^*=1_{\tilde{V}_p}$, so that $d_p$ is a partial isometry in $B(X)\rtimes_\lambda G$. Observe also that $d_p^*d_q=0$ whenever $p\neq q$.

For $p,q\in\Omega$ define $e_{p,q}=d_pd_q^*\in B(X)\rtimes_\lambda G$. Note that $e_{p,q}^*=e_{q,p}$ and $\sum_{p\in \Omega} e_{p,p}=\sum_{p\in\Omega} 1_{\tilde{V}_p}=1_{\tilde{V}}$ where $\tilde{V}=\bigsqcup_{p\in\Omega} \tilde{V}_p$. Moreover, using Kronecker delta notation,
\begin{align*}
e_{p,q}e_{r,s}
= d_pd_q^*d_rd_s^*
=\delta_{q,r}d_pd_r^*d_rd_s^*
=\delta_{q,r}d_p1_{\tilde{V}_{1,1}}d_s^*
=\delta_{q,r}d_pd_s^*
=\delta_{q,r}e_{p,s}.
\end{align*}
We conclude that the elements $e_{p,q}$ for $p,q\in\Omega$ are partial isometries in 
$B(X)\rtimes_\lambda G$ forming a set of matrix units.

Next we use the partial isometries $d_p$ to build a bump function $h$ which will be equal to $1$ on $V$ 
and $0$ off of $\tilde{V}$ and a second bump function $f$ which will be equal to $1$ on $V\setminus R^+$ and $0$
off of $V$. 
To define $z_1$ and $z_2$, we will perform homotopies over the second of these two sets using $f$ 
and then a patching operation over the first one using $h$ to achieve global unitarity.

To construct $h$,
choose an $h_1\in C(X, [0,1])$ satisfying $h_1|_{V_{1,1}}=1$ and $h_1|_{X\setminus \tilde{V}_{1,1}}=0$, which is possible since $\overline{V_{1,1}}\subseteq \tilde{V}_{1,1}$. We claim that $\{d_ph_1d_p^*\}_{p\in\Omega}$ is a collection of pairwise orthogonal functions in $C(X)$. Indeed
\begin{align*}
d_ph_1d_p^*=\sum_{k,j\in K}u_{s_{k,p}}1_{\tilde{C}_k}h_11_{\tilde{C}_j}u_{s_{j,p}}^*=\sum_{k\in K}u_{s_{k,p}}1_{\tilde{C}_k}h_1u_{s_{k,p}}^*=\sum_{k\in K}1_{s_{k,p}\tilde{C}_k}(s_{k,p}h_1 ),
\end{align*}
and the last sum, which is clearly supported on $\tilde{V}_p$, 
belongs to $C(X)$ seeing that $1_{\tilde{C}_k}h_1\in C(X)$ by ($\bullet$). 
Set $h=\sum_{p\in \Omega} d_ph_1d_p^*\in C(X)$, which is a positive contraction supported on $\tilde{V}$. For all $p,q\in \Omega$ we have
\begin{align*}
he_{p,q}=\sum_{r\in\Omega} d_rh_1d_r^* d_pd_q^*
&=d_ph_1d_p^*d_pd_q^* \\
&=d_ph_1d_q^* \\
&= d_pd_q^*d_qh_1d_q^* \\
&= d_pd_q^* \cdot\sum_{r\in\Omega}d_rh_1d_r^* =  e_{p,q}h ,
\end{align*}
and also
\begin{align*}
he_{p,q}=d_ph_1d_q^*=\sum_{k,j\in K} u_{s_{k,p}}1_{\tilde{C}_k}h_11_{\tilde{C}_j}u_{s_{j,q}}^*=\sum_{k\in K} u_{s_{k,p}}1_{\tilde{C}_k}h_1u_{s_{k,q}}^* ,
\end{align*}
which by ($\bullet$) shows that $he_{p,q}$ belongs to $C(X)\rtimes_\lambda G$.
Observe moreover that, for every $x\in V$, taking $p\in \Omega$ and $k\in K$
such that $x\in s_{k,p}C_k$ and using the fact that
$s_{k,p}^{-1}(x)\in C_k\subseteq V_{1,1}$ and $h_1|_{V_{1,1}}=1$, we have 
\[h(x)=(d_ph_1d_p^*)(x)= (s_{k,p} h_1)(x)=1, \]
that is, $h|_V = 1$.

Now we construct $f$.
Let $p\in \Omega$ and $k\in K$. Seeing that $\partial (s_{k,p}C_k)\subseteq \partial V_p\subseteq R^+$, we have
\begin{align*}
\overline{C_k\setminus s_{k,p}^{-1}R^+}
&\subseteq \overline{C_k}\setminus s_{k,p}^{-1}R^+\\
&=s_{k,p}^{-1}(s_{k,p}\overline{C_k}\setminus R^+)\\
&\subseteq s_{k,p}^{-1}(s_{k,p}\overline{C_k}\setminus \partial (s_{k,p}C_k))
=\mathrm{int}(C_k)\subseteq C_k\subseteq V_{1,1}.  
\end{align*}
Thus we can find a function $f_1\in C(X,[0,1])$ such that $f_1|_{C_k\setminus s_{k,p}^{-1}R^+}=1$ for all $p\in\Omega$ and $k\in K$ and $f_1|_{X\setminus V_{1,1}}=0$. 
An argument essentially identical to the one given above for $h_1$ shows that
$\{d_pf_1d_p^*\}_{p\in \Omega}$ is a set of pairwise orthogonal functions in $C(X)$. 
Moreover, since $f_1$ is supported on $V_{1,1}$ the function
$d_pf_1d_p^*=\sum_{k\in K} 1_{s_{k,p}C_k}(s_{k,p} f_1)$ is supported on $V_p$, so that $f=\sum_{p\in\Omega} d_pf_1d_p^*$ is a function in $C(X)$ which is supported on $V$, i.e., $f|_R = 0$. Exactly as was done above for $h$, one can show that for all $p,q\in\Omega$ the function $f$ commutes with every $e_{p,q}$ and the element $fe_{p,q}$ belongs to $C(X)\rtimes_\lambda G$. Observe finally that, for every $x\in V\setminus R^+$, taking $p\in \Omega$ and $k\in K$ such that $x\in s_{k,p}C_k\setminus R^+$ and using that
$s_{k,p}^{-1}(x)\in C_k\setminus s_{k,p}^{-1}R^+$ we have
\[f(x)=(d_pf_1d_p^*)(x)= (s_{k,p} f_1)(x)=1, \] 
that is, $f|_{V\setminus R^+} = 1$.

We are ready to construct $z_1$.
Choose some permutation $\kappa_1$ of $\{ 1,\dots , n^2 \}$ that for each $i=1,\dots ,n$ 
shifts the numbers in the interval $\{ (i-1)(n-1) +2 , (i-1)(n-1) +3 , \dots , (i-1)(n-1)+n\}$ by $i-1$ in the positive direction. Write $S$ for the permutation matrix in $M_{n^2}$ corresponding to $\kappa_1^{-1}$, i.e., the matrix whose $\ell$-th row for a given $1\leq \ell\leq n^2$ is the same as the $\kappa_1(\ell)$-th row of the identity matrix.
We could interpret the $(1 + n^2 )\times (1 + n^2 )$ block diagonal matrix $\diag (1,S)$ 
as an element of $B(X)\rtimes_\lambda G$ using the family of 
Murray--von Neumann equivalences between $\unit_{V_{1,1}}$ and the other indicator functions $\unit_{V_{i,j}}$
induced from our equivalences between the sets $V_{i,j}$. But this element would not 
lie in $C(X)\rtimes_\lambda G$ unless all of the sets at play were clopen, and so it is not directly of use.
The unitary $z_1$ will be a proxy for $\diag (1,S)$ inside the C$^*$-algebra $C(X)\rtimes_\lambda G$ 
such that the product $z_1 b$ has the following matrix form (in the illustrative case $n=3$), which is the same form
as if we multiplied $b$ on the left by $\diag (1,S)$ (interpreted as above as an element of $B(X)\rtimes_\lambda G$) instead of $z_1$:
\begin{gather*}
\mbox{\small 
$\left[
\begin{array}{c | c c c | c c c | c c c}
0 & 0 & * & * & 0 & * & * & 0 & * & * \\
\hline
0 & 0 & 0 & 0 & 0 & 0 & 0 & 0 & 0 & 0 \\
0 & 0 & * & * & 0 & 0 & 0 & 0 & 0 & 0 \\
0 & 0 & * & * & 0 & 0 & 0 & 0 & 0 & 0 \\
\hline
0 & 0 & 0 & 0 & 0 & * & * & 0 & 0 & 0 \\
0 & 0 & 0 & 0 & 0 & * & * & 0 & 0 & 0 \\
0 & 0 & 0 & 0 & 0 & 0 & 0 & 0 & * & * \\
\hline
0 & 0 & 0 & 0 & 0 & 0 & 0 & 0 & * & * \\
0 & 0 & 0 & 0 & 0 & 0 & 0 & 0 & 0 & 0 \\
0 & 0 & 0 & 0 & 0 & 0 & 0 & 0 & 0 & 0 
\end{array}
\right]$} .
\end{gather*}

Since the unitary group $\sU (M_{n^2})$ is path-connected, there is a continuous map $W : [0,1]\to \sU(M_{n^2})$ such that $W(0)=1_{M_{n^2}}$ and $W(1)=S$. 
Let $\beta  : \Omega \to \{ 1,\dots , n^2 \}$ be the bijection $(i,j) \mapsto (i-1)n+j$ giving
the lexicographic ordering of $\Omega$. Define $W^\dagger (t) = \sum_{p,q\in\Omega}W(t)_{\beta(p),\beta(q)}e_{p,q}$, for $t\in [0,1]$. Given that $W^\dagger$ is $W$ composed with the isomorphism $M_{n^2}\cong C^*(\{e_{p,q} \}_{p,q\in\Omega})$ that identifies $e_{p,q}$ with the standard matrix unit $e_{\beta(p),\beta(q)}$ in $M_{n^2}$, we have
\[
W^\dagger(t)W^\dagger (t)^*=\sum_{p\in\Omega}e_{p,p}=\unit_{\tilde{V}} = W^\dagger (t)^*W^\dagger (t)
\] 
for all $t\in [0,1]$.

For all $p,q\in \Omega$ define a map $F_{p,q} : [0,1]\to\mathbb{C}$ by $F_{p,q}(t) = W(t)_{\beta(p),\beta(q)}$ (that is, $F_{p,q}(t)$ is the $(\beta(p),\beta(q))$-coordinate of the matrix $W(t)$). Note that $F_{p,q}$ is continuous. Since $f\in C(X)$ is a positive contraction, we may apply the functional calculus to define $g_{p,q} := F_{p,q}(f)\in C(X)$. Since $g_{p,q}$ belongs to $C^*(f,1)$ it commutes with $e_{r,s}$ for all $r,s\in \Omega$, and we have $g_{p,q}(x)=W(f(x))_{\beta(p),\beta(q)}$ for all $x\in X$. 
Define 
\[z_1 = (1-h)+h\cdot \sum_{p,q\in \Omega} g_{p,q} e_{p,q}\in C(X)\rtimes_\lambda G.\]
Since $C(X)\rtimes_\lambda G$ is stably finite, to show that $z_1$ is unitary it suffices to verify that it is an isometry. This we do as follows, using in the third step the equality $(1-h)1_R=(1-h)$ (which follows from $h|_{V}=1$) and an application of Lemma~\ref{lemma:FuncMatrix} (given the unitarity of $\sum_{p,q\in\Omega}g_{p,q}(x)e_{p,q}=W^\dagger(f(x))$ in the C$^*$-subalgebra $C^*(\{e_{p,q} \}_{p,q\in\Omega})$ for all $x\in X$), in the fourth step
that  $g_{p,q}1_{R}=\delta_{p,q}1_R$ (since $f|_R=0$), and in the fifth step
that $h1_{\tilde{V}}=h$ and $(1-h)1_R=(1-h)$:
\begin{align*}
z_1^*z_1&=\Big((1-h)+h\cdot \sum_{p,q\in \Omega} g_{p,q} e_{p,q}\Big)^*\Big((1-h)+h\cdot \sum_{p,q\in \Omega} g_{p,q} e_{p,q}\Big)\\
&=(1-h)^2+(1-h)h\Big(\sum_{p,q\in\Omega}g_{p,q}e_{p,q}+\sum_{p,q\in\Omega}\overline{g_{p,q}}e_{q,p}\Big) \\
&\hspace*{35mm} \ +h^2\Big(\sum_{p,q\in\Omega}g_{p,q}e_{p,q}\Big)^*\Big(\sum_{p,q\in\Omega}g_{p,q}e_{p,q}\Big)\\
&=(1-h)^2+(1-h)h1_R\Big(\sum_{p,q\in\Omega}g_{p,q}e_{p,q}+\sum_{p,q\in\Omega}\overline{g_{p,q}}e_{q,p}\Big)+h^21_{\tilde{V}}\\
&=(1-h)^2+(1-h)h1_{R}\Big(2\cdot\sum_{p\in\Omega}e_{p,p}\Big)+h^2\\
&=(1-h)^2+2(1-h)h+h^2\\
&=1.
\end{align*}

Next we construct $z_2$.
Choose a permutation $\kappa_2$ of $\{ 1,\dots , n^2 \}$ that for each $i=1,\dots ,n$ 
shifts the numbers in the interval $\{ (i-1)n +2 , (i-1)n +3 , \dots , in\}$ by $n-i$ in the positive direction. Write $T$ for the permutation matrix in $M_{n^2}$ corresponding to $\kappa_2$.
The unitary $z_2$ will be a proxy
for $\diag(1,T^*)$ inside $C(X)\rtimes_\lambda G$ such that, in the illustrative case $n=3$, the product $z_1 b z_2$ takes the matrix form
\begin{gather*}
\mbox{\small 
$\left[
\begin{array}{c | c c c | c c c | c c c}
0 & * & * & * & * & * & * & * & * & * \\
\hline
0 & 0 & 0 & 0 & 0 & 0 & 0 & 0 & 0 & 0 \\
0 & 0 & 0 & 0 & * & * & 0 & 0 & 0 & 0 \\
0 & 0 & 0 & 0 & * & * & 0 & 0 & 0 & 0 \\
\hline
0 & 0 & 0 & 0 & 0 & 0 & * & * & 0 & 0 \\
0 & 0 & 0 & 0 & 0 & 0 & * & * & 0 & 0 \\
0 & 0 & 0 & 0 & 0 & 0 & 0 & 0 & * & * \\
\hline
0 & 0 & 0 & 0 & 0 & 0 & 0 & 0 & * & * \\
0 & 0 & 0 & 0 & 0 & 0 & 0 & 0 & 0 & 0 \\
0 & 0 & 0 & 0 & 0 & 0 & 0 & 0 & 0 & 0 
\end{array}
\right]$} .
\end{gather*}
For general $n$ the matrix will have a similar strictly upper triangular form.

Take a continuous map $Z : [0,1]\to \sU (M_{n^2})$ such that $Z(0)=1_{M_{n^2}}$ and $Z(1)=T^*$. As before, for all $p,q\in \Omega$ define $H_{p,q} : [0,1]\to\mathbb{C}$ by $H_{p,q}(t)= Z(t)_{\beta(p),\beta(q)}$ and set $w_{p,q}= H_{p,q}(f)\in C(X)$. Then $w_{p,q}$ commutes with the matrix units $e_{r,s}$ and $w_{p,q}(x)=Z(f(x))_{\beta(p),\beta(q)}$ for all $x\in X$. 
Set
\[
z_2= (1-h)+h\cdot \sum_{p,q\in \Omega} w_{p,q} e_{p,q}\in C(X)\rtimes_\lambda G.
\]
By the same arguments as for $z_1$, the element $z_2$ is a unitary.

Let us now finally verify that the matrix representation of $z_1 b z_2$ is strictly upper diagonal.
We write $z_1 b z_2 = \unit_R z_1 b z_2 + \unit_V z_1 b z_2$ and show separately that these two summands 
take a certain form.

We show that $1_R z_i= 1_R=z_i 1_R$ for $i=1,2$.
Since $\unit_R g_{p,q} = \delta_{p,q} \unit_R$ for all $p,q\in\Omega$ and $h1_{\tilde{V}} = h$, 
we have
\begin{align*}
\unit_R z_1 &= \unit_R (1-h)+h\cdot\sum_{p,q\in\Omega} \unit_R g_{p,q} e_{p,q} \\
&=\unit_R(1-h)+h\cdot\sum_{p\in\Omega} \unit_R e_{p,p}\\
&=(1-h)\unit_R+h1_R \unit_{\tilde{V}}
=1_R .
\end{align*}
Since $z_1$ is a unitary, it follows that $1_Rz_1=1_R=z_11_R$. A similar argument shows that $1_Rz_2=1_R=z_21_R$.
We also observe that $z_2$ commutes with $1_V$ given that for all $p,q\in\Omega$ we have
\begin{align*}
e_{p,q}1_{V}&=d_pd_q^*1_{\tilde{V}_q}1_{V}
=d_pd_q^*1_{V_q}\\
&=\Big(\sum_{k\in K}u_{s_{k,p}}1_{\tilde{C}_k}u_{s_{k,q}}^*\Big)1_{V_q} \\
&=\Big(\sum_{k\in K}u_{s_{k,p}}u_{s_{k,q}}^*1_{s_{k,q}\tilde{C}_k}\Big)1_{V_q}\\
&=\sum_{k\in K}u_{s_{k,p}}u_{s_{k,q}}^*1_{s_{k,q}C_k} \\
&=\sum_{k\in K}1_{s_{k,p}C_k}u_{s_{k,p}}u_{s_{k,q}}^* 
=1_{V_p}e_{p,q}
=1_Ve_{p,q} .
\end{align*}
A similar argument shows that $z_1$ commutes with $1_{V}$.
Since $b\unit_R =  0$ and hence $b = b \unit_V$, we thereby obtain
\begin{align}\label{E-one}
\unit_R z_1 b z_2 = \unit_R b z_2 = \unit_R b \unit_V z_2 = \unit_R b z_2 \unit_V .
\end{align}

Next we compute $\unit_V z_1 b z_2$. 
Given $r=(i,j)\in\Omega$ with $j\neq 1$ we have, 
applying the fact that $g_{p,q}$ and $h$ commute with the matrix units $e_{p,q}$ and using the
equalities $h1_V=1_V$ and $g_{p,q}1_{V\setminus R^+}=S_{\beta(p),\beta(q)}1_{V\setminus R^+}$,
\begin{align}\label{E-z1}
z_1 1_{V_r\setminus R^+}
&=(1-h)1_R 1_{V_r\setminus R^+}+\Big(h\cdot \sum_{p,q\in \Omega} g_{p,q} e_{p,q}\Big)  1_{V_r\setminus R^+}\\
&=\Big(h\cdot \sum_{p,q\in \Omega} g_{p,q} e_{p,q}\Big) 1_{V_r\setminus R^+}\notag\\
&=\Big(\sum_{p,q\in \Omega} S_{\beta(p),\beta(q)} e_{p,q}\Big) 1_{V_r\setminus R^+}\notag\\
&=\Big(\sum_{p,q\in \Omega} S_{\beta(p),\beta(q)} e_{p,q}\Big) 1_{\tilde{V}_r} 1_{V_r\setminus R^+}\notag\\
&=\Big(\sum_{p,q\in \Omega} S_{\beta(p),\beta(q)} e_{p,q}\Big) e_{r,r} 1_{V_r\setminus R^+}\notag\\
&=\Big(\sum_{p\in \Omega} S_{\beta(p),\beta(r)} e_{p,r}\Big) 1_{V_r\setminus R^+}.\notag
\end{align}
Set $\Upsilon = \{ ((i,j),(i,j'))\in\Omega\times\Omega : 1\leq i\leq n,\ 2\leq j,j'\leq n \}$. Combining with (\ref{E-b}), it follows that
\begin{align*}
\unit_V z_1 b =z_1 \unit_V b= \sum_{p\in\Omega ,\hspace*{0.4mm} (q,r)\in\Upsilon}S_{\beta(p),\beta(q)}e_{p,q}1_{V_q\setminus R^+}b1_{V_r\setminus R^+}.
\end{align*}
Now given $r=(i,j')\in\Omega$ with $j'\neq 1$, a computation similar to (\ref{E-z1}) shows that
\begin{align*}
1_{V_r \setminus R^+}z_2=\sum_{t\in\Omega}T^*_{\beta(r),\beta(t)}1_{V_r\setminus R^+}e_{r,t}=\sum_{t\in\Omega}T_{\beta(t),\beta(r)}1_{V_r\setminus R^+}e_{r,t},
\end{align*}
and so combining this with the previous display we obtain
\begin{align*}
\unit_V z_1bz_2
&=\sum_{p,t\in\Omega ,\hspace*{0.4mm} (q,r)\in\Upsilon}S_{\beta(p),\beta(q)}e_{p,q}1_{V_q\setminus R^+}b1_{V_r\setminus R^+}T_{\beta(t),\beta(r)}e_{r,t} .
\end{align*}
Since $T_{\beta(t),\beta(r)}=1$ if $\beta(t)=\kappa_2(\beta(r))$ and is $0$ otherwise,
and $S_{\beta(p),\beta(q)}=1$ if $\beta(p)=\kappa_1^{-1}(\beta(q))$ and is $0$ otherwise,
this formula can be rewritten as
\begin{align}\label{E-two}
\unit_V z_1bz_2
= \sum_{(q,r)\in\Upsilon}e_{(\beta^{-1}\circ \kappa_1^{-1}\circ \beta )(q),q}1_{V_q\setminus R^+}b1_{V_r\setminus R^+}e_{r,(\beta^{-1}\circ \kappa_2\circ\beta )(r)} . 
\end{align}

Formulas (\ref{E-one}) and (\ref{E-two}), together with the fact that $1_{V_p}e_{p,q}=e_{p,q}1_{V_q}$ for all $p,q\in\Omega$,
show us that we can write
\[
z_1bz_2
= 1_Rw1_V+\sum_{(q,r)\in\Upsilon}1_{V_{(\beta^{-1}\circ \kappa_1^{-1}\circ \beta )(q)}}y_{q,r}1_{V_{(\beta^{-1}\circ \kappa_2\circ \beta )(r)}} 
\]
for some elements $w,y_{q,r}\in B(X)\rtimes_\lambda G$. In the matrix representation of $z_1 b z_2$, 
the term $1_Rw1_V$ accounts for the top row of possibly nonzero terms 
excluding the top left diagonal entry (which is zero), while the sum over $\Upsilon$ produces a $n^2 \times n^2$ submatrix
which is strictly upper diagonal seeing that 
$\kappa_1^{-1}\circ\beta(q)< \kappa_2\circ \beta(r)$ for all $(q,r)\in\Upsilon$.
Thus the matrix representation of 
$z_1bz_2$ is strictly upper diagonal, and so $(z_1bz_2 )^{n^2+1} = 0$,
which establishes the desired nilpotence.
\end{proof}

\begin{theorem}\label{T-SR1 weak SD}
Suppose that $G$ is countably infinite.
Let $G\curvearrowright X$ be a topologically free minimal action on the Cantor set
with $M_G (X) \neq\emptyset$, and suppose that it is weakly squarely divisible. 
Then the reduced crossed product $C(X)\rtimes_\lambda G$ has stable rank one.
\end{theorem}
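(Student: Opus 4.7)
The plan is to follow the overall strategy of the proof of Theorem~\ref{T-SR1}, leveraging the Cantor setting to replace all functional-calculus and homotopy constructions by direct clopen manipulations (Remark~\ref{rem:zero-dim-unitary}, Proposition~\ref{P-SD0Dim}), while carefully threading the quantifier order inherent in Definition~\ref{D-weak SD}. As at the outset of Theorem~\ref{T-SR1}, topological freeness, minimality, and the existence of an invariant Borel probability measure make $C(X)\rtimes_\lambda G$ simple and stably finite. Starting from a non-invertible $a\in C(X)\rtimes_\lambda G$, the first move is to apply \cite[Proposition~6.2]{LiNiu20} to perturb and unitarily rotate $a$ so that $a\unit_O=\unit_O a=0$ for some nonempty clopen $O\subseteq X$. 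Since $\unit_{X\setminus O}\in C(X)$, approximating $a$ by elements of the algebraic crossed product and cutting down by $\unit_{X\setminus O}$ on both sides further reduces to the case that $a=\sum_{s\in E}f_s u_s$ for some finite symmetric set $e\in E\subseteq G$, with the annihilation relations preserved.

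At this point I would invoke $(O,E)$-square divisibility to produce pairwise disjoint nonempty clopen sets $O_0,O_1,O_2\subseteq X$ with $O_0\subseteq O$ and a finite symmetric set $e\in F\subseteq G$ such that $O_1\sqcup O_2\prec_F O_0$ and the action is $(O_1,O_2,FEF)$-squarely divisible. The subequivalence $O_1\sqcup O_2\prec_F O_0$ yields via Remark~\ref{rem:zero-dim-unitary} a self-adjoint unitary $r\in C(X)\rtimes_\lambda G$ built from indicator functions and unitaries $u_s$ with $s\in F$. Using the relation $r\unit_Q=\unit_{B^-}r$ (where $Q:=O_1\sqcup O_2$) together with the fact that the clopen set $B^-$ supplied by Remark~\ref{rem:zero-dim-unitary} lies in $O_0\subseteq O$, a short computation shows that $\tilde{a}:=rar$ is a finite sum whose group-element support is contained in $FEF$ and which satisfies $\unit_Q\tilde{a}=\tilde{a}\unit_Q=0$. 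This is the crucial coupling step: the enlargement $E\mapsto FEF$ built into Definition~\ref{D-weak SD} pays exactly for the unitary rotation needed to migrate the annihilation ideal from $O$ to the larger set $Q$ required by the tower structure to come.

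From here I would replay the main body of the proof of Theorem~\ref{T-SR1} with $\tilde{a}$ in place of $a$ and $FEF$ in place of $E$. By Proposition~\ref{P-SD0Dim} the auxiliary thickening $U$ is not needed, so the tower $\{V_{i,j}\}_{i,j=1}^n$, remainder $R$, and boundary $B$ coming from $(O_1,O_2,FEF)$-square divisibility are all clopen, and conditions (i)--(iii) are subequivalences between clopen sets. Replacing Lemma~\ref{L-unitary} by Remark~\ref{rem:zero-dim-unitary} throughout, one builds commuting self-adjoint unitaries $u=u_1\cdots u_n$, $v$, and $w$ (with commutativity from Remark~\ref{rem:commutation_relation}) and forms $b:=u^*w\tilde{a}uv$. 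The matrix computation of Theorem~\ref{T-SR1} then shows that $b$ has the same block vanishing pattern relative to the clopen partition $\{R\}\cup\{V_{i,j}\}$, with no refinement of the form~(\ref{E-b}) involving $R^+$ required in the Cantor setting; the $FEF$-disjointness of the $V_{i,j}$ together with $\supp\tilde{a}\subseteq FEF$ are precisely what make the key off-diagonal terms vanish.

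For the final rotation to a nilpotent element, the pairwise clopen equivalence of the $V_{i,j}$ supplies outright partial isometries $d_p=\sum_k u_{s_{k,p}}\unit_{C_k}\in C(X)\rtimes_\lambda G$ and hence genuine matrix units $e_{p,q}=d_pd_q^*$ supported in $V$, so the entire homotopy apparatus of Theorem~\ref{T-SR1} (the bump functions $h,f$ and the paths $W,Z$ in $\sU(M_{n^2})$) is bypassed: one simply sets $z_1:=\unit_R+\sum_{p,q}S_{\beta(p),\beta(q)}e_{p,q}$ and $z_2:=\unit_R+\sum_{p,q}T^*_{\beta(p),\beta(q)}e_{p,q}$, which are directly unitaries in $C(X)\rtimes_\lambda G$. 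The argument of Theorem~\ref{T-SR1} then shows that $z_1bz_2$ is strictly upper-triangular relative to $\{R\}\cup\{V_{i,j}\}$ and hence nilpotent, so adding a small scalar multiple of $1$ produces an invertible element which, after rotating back by the various unitaries, approximates the original $a$ arbitrarily well. The hard part here is not any individual step but the bookkeeping around the quantifier order in Definition~\ref{D-weak SD}: one must first produce the finite-support approximation of the already-rotated $a$, then apply $(O,E)$-square divisibility to the specific $E$ one has obtained, and then absorb the resulting enlargement $E\mapsto FEF$ via the extra conjugation by $r$ before the tower-based rotation argument can be applied.
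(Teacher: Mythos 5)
Your proposal is correct and follows essentially the same route as the paper's proof: the same initial reduction via \cite[Proposition~6.2]{LiNiu20}, the same conjugation by the self-adjoint unitary coming from $O_1\sqcup O_2\prec_F O_0$ to migrate the annihilating projection while enlarging the support from $E$ to $FEF$, and the same clopen replacements (Remark~\ref{rem:zero-dim-unitary}, Proposition~\ref{P-SD0Dim}, direct matrix units and permutation unitaries) for the functional-calculus and homotopy machinery of Theorem~\ref{T-SR1}. Your identification of the quantifier order and the role of $FEF$ as the crux of the argument matches the paper exactly.
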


\begin{proof}
Let $a$ be a non-invertible element of $C(X)\rtimes_\lambda G$. 
By \cite[Proposition~6.2]{LiNiu20} (which assumes freeness although the proof also works for topologically free actions) we may assume, by rotating $a$ by suitable unitaries and perturbing, 
that $a\unit_O =\unit_O a=0$ for some nonempty clopen set $O\subseteq X$ and that $a$ has the form $\sum_{s\in E} g_s u_s$ for some finite symmetric set $e\in E\subseteq G$
and functions $g_s\in C(X)$. 

As in the proof of Theorem~\ref{T-SR1}, we now aim to show that $a$ can be multiplied on the left and right
by suitable unitaries in $C(X)\rtimes_\lambda G$ so as to obtain a nilpotent element.

Since the action is weakly squarely divisible there are pairwise disjoint
clopen sets $O_0, O_1 , O_2 \subseteq X$ with $O_0\subseteq O$ and a finite symmetric set $e\in F\subseteq G$ such that
$O_1 \sqcup O_2 \prec_F O_0$ and the action is $(O_1 , O_2 , FEF)$-squarely divisible. 
The subequivalence $O_1\sqcup O_2 \prec_F O_0$ means that we can find a partition of $O_1\sqcup O_2$ 
into clopen sets $A_s$ for $s\in F$ such that the sets $sA_s$ for $s\in F$ are pairwise disjoint 
and contained in $O_0$.
Writing $C$ for the complement of $(O_1 \sqcup O_2)\sqcup \bigsqcup_{s\in F} sA_s$ in $X$,
we can then define, by Remark~\ref{rem:zero-dim-unitary}, a self-adjoint unitary in $C(X)\rtimes_\lambda G$ by
\[
u = \unit_C + \sum_{s\in F} u_s \unit_{A_s} + \sum_{s\in F} u_{s^{-1}} \unit_{sA_s} .
\]
Setting $a' = uau^{-1}=uau$ we have $a' \unit_{O_1 \sqcup O_2} = \unit_{O_1 \sqcup O_2} a' = 0$,
and so replacing $a$ by $a'$ we may assume that $a$ has the form $\sum_{s\in FEF} f_s u_s$
for some functions $f_s \in C(X)$ (using the fact that $e\in F= F^{-1}$)
and satisfies $a\unit_{O_1 \sqcup O_2} = \unit_{O_1 \sqcup O_2} a = 0$.

Since the action is $(O_1 , O_2 , FEF)$-squarely divisible and $FEF$ contains the support of $a$
as an element of the algebraic crossed product, we can now proceed as in the proof of Theorem~\ref{T-SR1} 
to find unitaries $z_1 , z_2\in C(X)\rtimes_\lambda G$ such that $z_1 az_2$ is nilpotent (this is the key point where the difference between
definitions of square divisibility and weak square divisibility plays out and why we need to work in the Cantor setting here,
where we can control the support of $a$ when replacing it with $a'$).
In fact we can construct $z_1$ and $z_2$ much more easily than in the proof of Theorem~\ref{T-SR1} by proceeding as follows 
using the zero-dimensional characterization of $(O_1 , O_2 , FEF)$-squarely divisible given by
Proposition~\ref{P-SD0Dim}, which yields an $n\in\Nb$ and a collection $\{ V_{i,j} \}_{i,j=1}^n$ 
of pairwise equivalent and pairwise $FEF$-disjoint clopen subsets of $X$
such that, defining $V = \bigsqcup_{i,j=1}^n V_{i,j}$, $V_1 = \bigsqcup_{i=1}^n V_{i,1}$, $R = V^\comp$,
and $B = V\cap (V^{FEF} )^\comp$, one has the following:
\begin{enumerate}
\item $V_{i,1} \prec O_1 \cap \bigsqcup_{j=2}^n V_{i,j} \cap B^\comp$ for every $i=1,\dots ,n$,

\item $R\prec O_2 \cap V \cap (V_1 \cup B)^\comp$, and

\item $B\prec O_2 \cap R$.
\end{enumerate}
In all of the above subequivalences the two clopen sets are disjoint, and so by Remark~\ref{rem:zero-dim-unitary} we 
can construct associated (involutive) unitaries $u_i$ for $i=1,\dots , n$, $v$, and $w$ in $C(X)\rtimes_\lambda G$.
By Remark~\ref{rem:commutation_relation}, the unitaries $u_1, \dots , u_n$ pairwise commute, and 
also commute with $v$. 
Then there exist clopen sets $Y_{i,1} \subseteq O_1 \cap \bigsqcup_{j=2}^n V_{i,j} \cap B^\comp$ for $i=1,\ldots,n$,
$Y_2 \subseteq O_2 \cap V\cap (V_1\cup B)^\comp$, and $Y_3 \subseteq O_2 \cap R$
such that 
\begin{itemize}
\item $u_i\unit_{V_{i,1}} u_i^* = \unit_{Y_{i,1}}$ and $u_i \unit_{C}=\unit_{C}u_i=\unit_C$ for $C=(V_{i,1}\sqcup Y_{i,1})^\comp$ with $1\leq i\leq n$,
\item $v\unit_R v^* = \unit_{Y_2}$ and  $v \unit_{C}=\unit_{C}v=\unit_C$ for $C=(R\sqcup Y_2)^\comp$,
\item $w\unit_B w^* = \unit_{Y_3}$ and $w \unit_{C}=\unit_{C}w=\unit_C$ for $C=(B\sqcup Y_3)^\comp$.
\end{itemize}
Set $u = u_1 \cdots u_n$ and $Y_1=\bigsqcup_{i=1}^{n}Y_{i,1}$. Then $u\unit_{V_1}u^*=1_{Y_1}$ and $u\unit_C=\unit_Cu=\unit_C$
for $C=(V_1\sqcup Y_1)^\comp$.

Define $b = u^* wauv$. 
By partitioning the unit in $C(X)\rtimes_\lambda G$ as $\unit_R + \sum_{i,j=1}^n \unit_{V_{i,j}}$ 
and multiplying $b$ on either side by the projections in this sum we represent
$b$ as an $(n^2+1)\times (n^2+1)$ matrix, which we will now argue takes the form 
(assuming $n=3$ for the purpose of illustration)
\begin{gather*}
\mbox{\small 
$\left[
\begin{array}{c | c c c | c c c | c c c}
0 & 0 & * & * & 0 & * & * & 0 & * & * \\
\hline
0 & 0 & 0 & 0 & 0 & 0 & 0 & 0 & 0 & 0 \\
0 & 0 & * & * & 0 & 0 & 0 & 0 & 0 & 0 \\
0 & 0 & * & * & 0 & 0 & 0 & 0 & 0 & 0 \\
\hline
0 & 0 & 0 & 0 & 0 & 0 & 0 & 0 & 0 & 0 \\
0 & 0 & 0 & 0 & 0 & * & * & 0 & 0 & 0 \\
0 & 0 & 0 & 0 & 0 & * & * & 0 & 0 & 0 \\
\hline
0 & 0 & 0 & 0 & 0 & 0 & 0 & 0 & 0 & 0 \\
0 & 0 & 0 & 0 & 0 & 0 & 0 & 0 & * & * \\
0 & 0 & 0 & 0 & 0 & 0 & 0 & 0 & * & * 
\end{array}
\right]$},
\end{gather*}
where the first row and column correspond to the set
$R$ and the remaining rows and columns correspond to the sets $V_{i,j}$ as ordered 
lexicographically with respect to the pairs $i,j$.

Since $Y_2$ is disjoint from $V_1 \sqcup Y_1$ and contained in $O_2$, we have
\[
auv\unit_R = au\unit_{Y_2} v = a\unit_{Y_2} v =0 ,
\]
so that $b\unit_R = 0$,
which accounts for the zero leftmost column in the matrix representation of $b$.

Next, since $V_1$ is disjoint from $R\sqcup Y_2$ and $Y_1$ is a subset of $O_1$, we also have
\[
auv \unit_{V_1} = au\unit_{V_1} = a\unit_{Y_1} u = 0,
\]
so that $b\unit_{V_1}=0$, which accounts for the other $n$ columns of zeros.

Since $Y_1$ is disjoint from $B\sqcup Y_3$ and contained in $O_1$, we have
\[
\unit_{V_1} u^* wa = u^* \unit_{Y_1} wa= u^*\unit_{Y_1}a=0
\]
and hence $\unit_{V_1} b = 0$, which accounts for the $n$ rows of zeros in the lower right $n^2 \times n^2$ submatrix.

Now let $2\leq i,i' \leq n$ with $i \neq i'$ and let $1\leq j,j' \leq n$. Writing $p = (i,j)$ and $q=(i',j')$
with $V_p$ and $V_q$ interpreted accordingly as $V_{i,j}$ and $V_{i',j'}$,
let us check that $\unit_{V_p} u^* wauv \unit_{V_q} = 0$, which will account for the remaining zeros in the
matrix representation of $b$.
Since $R\setminus Y_3$ is disjoint from the supports of $u$ and $w$, $Y_1$ is contained in $O_1$, 
and $V_p$ is disjoint from $R$, we have
\begin{align*}
\unit_{V_p}u^* w\unit_R a
= \unit_{V_p}u^*w \unit_{R\setminus Y_3}a
=\unit_{V_p}u^*\unit_{R\setminus Y_3}a
= \unit_{V_p}\unit_{R\setminus Y_3}a
= 0 .
\end{align*}
On the other hand, since $Y_3$ is disjoint from the support of $u$ and $V_p$ is disjoint from $Y_3$ we have
\begin{align*}
\unit_{V_p} u^* w\unit_B a
= \unit_{V_p} u^* \unit_{Y_3} wa
= \unit_{V_p} \unit_{Y_3} wa
= 0 .
\end{align*}
For every $s\in FEF$, using that $FEF$ is symmetric we have $V^{FEF}\subseteq sV$ so that $sR\subseteq B\sqcup R$.
It follows that $a\unit_R = \unit_{R\sqcup B} a\unit_R$ by the representation of $a$ via the set $FEF$. The above two facts yield
\begin{align*}
\unit_{V_p} u^* wa\unit_R 
= \unit_{V_p} u^* w(\unit_R + \unit_B) a\unit_R 
= 0 .
\end{align*}
Since $v$ is self-adjoint and $R$ is disjoint from the support of $u$, it follows that
\begin{align}\label{E-intersection}
\unit_{V_p} u^* wauv \unit_{V_q \cap Y_2} 
= \unit_{V_p} u^* wau\unit_{R} v \unit_{V_q \cap Y_2} 
= \unit_{V_p} u^* wa\unit_{R} v\unit_{V_q \cap Y_2} 
= 0 .
\end{align}
Next, set $Z = \bigsqcup_{k=1}^n V_{i',k}$.
By the $FEF$-disjointness of the sets $V_{i,j}$, 
we have that $sZ\subseteq R\sqcup Z$ for every $s\in FEF$.
The representation of $a$ as a sum indexed
by $FEF$, along with the inclusion $Y_3 \subseteq O_2$,
then implies
\[
a\unit_{Z} = \unit_{R\sqcup Z} a\unit_{Z} = \unit_{(R\setminus Y_3 )\sqcup Z} a\unit_{Z} ,
\]
and since the support of $w$ is $B\sqcup Y_3$ we have 
\[
w \unit_{(R\setminus Y_3 )\sqcup Z} = \unit_{R\sqcup Z} w\unit_{(R\setminus Y_3 )\sqcup Z}.
\]
Moreover $u^* \unit_{R\sqcup Z} = \unit_{R\sqcup Z} u^*\unit_{R\sqcup Z}$
by the definition of $u$ and the fact that it is self-adjoint.
Putting these facts together, and using the disjointness of $V_p$ and $R\sqcup Z$, we obtain
\begin{align*}
\unit_{V_p} u^* wa\unit_{Z} 
&= \unit_{V_p} u^* w\unit_{(R\setminus Y_3 )\sqcup Z} a\unit_{Z} \\
&= \unit_{V_p} u^* \unit_{R\sqcup Z} w\unit_{(R\setminus Y_3 )\sqcup Z} a\unit_{Z} \\
&= \unit_{V_p} \unit_{R\sqcup Z} u^* \unit_{R\sqcup Z} w\unit_{(R\setminus Y_3 )\sqcup Z} a\unit_{Z} \\
&= 0
\end{align*}
and thus, since $V_q \setminus Y_2$ is disjoint from the support of $v$ and 
$u\unit_{V_q \setminus Y_2} = \unit_{Z} u\unit_{V_q \setminus Y_2}$ by the definition of $u$ and the fact that it is self-adjoint,
\begin{align*}
\unit_{V_p} u^* wauv \unit_{V_q \setminus Y_2}
= \unit_{V_p} u^* wau\unit_{V_q \setminus Y_2} 
= \unit_{V_p} u^* wa\unit_{Z} u\unit_{V_q \setminus Y_2} 
&= 0 .
\end{align*}
Combining with (\ref{E-intersection}) then yields
\begin{align*}
\unit_{V_p} u^* wauv \unit_{V_q}
= \unit_{V_p} u^* wauv (\unit_{V_q \setminus Y_2} + \unit_{V_q \cap Y_2} ) 
= 0 .
\end{align*}
We have thus verified that the matrix representation of $b$ has the desired form.

Set $\Omega = \{ 1,\dots , n\}^2$.
By the equivalence of the sets $V_{i,j}$, which as above we also write as $V_p$ where $p=(i,j)\in \Omega$, 
we can find a clopen partition $\{ C_k \}_{k\in K}$ of $V_{1,1}$ 
and tuples $s_k = (s_{k,p} )_{p\in \Omega}$ of elements of $G$ for $k\in K$ such that 
$V_p = \bigsqcup_{k\in K} s_{k,p} C_k$ for all $p\in \Omega$.
For $p,q\in\Omega$ we define the partial isometry 
\[
e_{p,q} = \sum_{k\in K} \unit_{s_{k,p} C_k} u_{s_{k,p}} u_{s_{k,q}}^{-1} \unit_{s_{k,q}C_k} 
\]
These define matrix units in $C(X)\rtimes_\lambda G$. 
The diagonal matrix units $e_{p,p}$ are the indicator functions $\unit_{V_p}$, with the identity 
matrix in the resulting copy of $M_{n^2}$ in $C(X)\rtimes_\lambda G$ equal to $\unit_V$. 

Let $\beta : \Omega \to \{ 1,\dots , n^2 \}$ be the bijection $(i,j) \mapsto (i-1)n+j$ giving the lexicographic ordering of $\Omega$.
Choose some permutation $\kappa_1'$ of $\{ 1,\dots , n^2 \}$ that for each $i=1,\dots ,n$ 
shifts the numbers in the interval $\{ (i-1)(n-1) +2 , (i-1)(n-1) +3 , \dots , (i-1)(n-1)+n\}$ by $i-1$ in the positive direction.
Choose a permutation $\kappa_2'$ of $\{ 1,\dots , n^2 \}$ that for each $i=1,\ldots,n$ shifts the numbers in the interval $\{(i-1)n+2,(i-1)n+3,\ldots,in\}$ by $n-i$ in the 
positive direction. Define the permutations $\kappa_1 = \beta^{-1} \circ \kappa_1' \circ \beta$
and $\kappa_2 = \beta^{-1} \circ \kappa_2' \circ \beta$ of $\Omega$.

We now define the two ``permutation matrices''
\begin{align*}
z_1 = \unit_R + \sum_{p\in \Omega} e_{p , \kappa_1 (p)}  \hspace*{7mm} \text{and} \hspace*{7mm}
z_2 = \unit_R + \sum_{p\in \Omega} e_{p, \kappa_2 (p) } .
\end{align*}
In the illustrative case $n=3$, multiplying $b$ by $z_1$ on the left yields a matrix of the form
\begin{gather*}
\mbox{\small 
$\left[
\begin{array}{c | c c c | c c c | c c c}
0 & 0 & * & * & 0 & * & * & 0 & * & * \\
\hline
0 & 0 & 0 & 0 & 0 & 0 & 0 & 0 & 0 & 0 \\
0 & 0 & * & * & 0 & 0 & 0 & 0 & 0 & 0 \\
0 & 0 & * & * & 0 & 0 & 0 & 0 & 0 & 0 \\
\hline
0 & 0 & 0 & 0 & 0 & * & * & 0 & 0 & 0 \\
0 & 0 & 0 & 0 & 0 & * & * & 0 & 0 & 0 \\
0 & 0 & 0 & 0 & 0 & 0 & 0 & 0 & * & * \\
\hline
0 & 0 & 0 & 0 & 0 & 0 & 0 & 0 & * & * \\
0 & 0 & 0 & 0 & 0 & 0 & 0 & 0 & 0 & 0 \\
0 & 0 & 0 & 0 & 0 & 0 & 0 & 0 & 0 & 0 
\end{array}
\right]$}
\end{gather*}
and then multiplying by $z_2$ on the right converts this into a matrix of the form
\begin{gather*}
\mbox{\small 
$\left[
\begin{array}{c | c c c | c c c | c c c}
0 & * & * & * & * & * & * & * & * & * \\
\hline
0 & 0 & 0 & 0 & 0 & 0 & 0 & 0 & 0 & 0 \\
0 & 0 & 0 & 0 & * & * & 0 & 0 & 0 & 0 \\
0 & 0 & 0 & 0 & * & * & 0 & 0 & 0 & 0 \\
\hline
0 & 0 & 0 & 0 & 0 & 0 & * & * & 0 & 0 \\
0 & 0 & 0 & 0 & 0 & 0 & * & * & 0 & 0 \\
0 & 0 & 0 & 0 & 0 & 0 & 0 & 0 & * & * \\
\hline
0 & 0 & 0 & 0 & 0 & 0 & 0 & 0 & * & * \\
0 & 0 & 0 & 0 & 0 & 0 & 0 & 0 & 0 & 0 \\
0 & 0 & 0 & 0 & 0 & 0 & 0 & 0 & 0 & 0 
\end{array}
\right]$} .
\end{gather*}
For general $n$ we similarly obtain a strictly upper triangular matrix, as is readily checked,
and so $ (z_1 b z_2 )^{n^2+1} = 0$, yielding the desired nilpotence.
\end{proof}

\section{Square divisibility and amenable groups}\label{S-amenable}

Our aim here is to establish Theorem~\ref{T-amenable}. 

\begin{lemma}\label{L-invariance}
Let $E$ be a nonempty finite subset of $G$ containing $e$ and let $0 < \delta \leq 2$. 
Let $F$ be an $(E,\delta /(2|E|))$-invariant finite subset of $G$. Then every set $F_0 \subseteq F$
with $|F_0 | \geq (1-\delta /(4|E|) )|F|$ is $(E,\delta )$-invariant.
\end{lemma}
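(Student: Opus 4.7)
The plan is to bound $|F_0 \setminus F_0^E|$ directly (noting that $E \cup \{e\} = E$ since $e \in E$, so $F_0^{E \cup \{e\}} = F_0^E$ and similarly for $F$), and show it is at most $\delta |F_0|$. The main idea is that $F_0^E$ can only fail to contain a point of $F_0$ for two reasons: either translating by some $s \in E$ moves the point outside $F$, or it moves it into $F \setminus F_0$. The first contribution is controlled by the hypothesized invariance of $F$, and the second by the hypothesis that $F_0$ is a large subset of $F$.

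Concretely, I would first write
\[
|F_0 \setminus F_0^E| \leq |F \setminus F_0^E| \leq \sum_{s \in E} |F \setminus s^{-1} F_0|
\]
using that $F_0^E = \bigcap_{s \in E} s^{-1} F_0$. For each $s \in E$, decomposing $F \setminus s^{-1} F_0 \subseteq (F \setminus s^{-1} F) \cup s^{-1}((sF \cap F) \setminus F_0)$ yields $|F \setminus s^{-1} F_0| \leq |F \setminus s^{-1} F| + |F \setminus F_0|$. Since $F^E \subseteq s^{-1} F$ for every $s \in E$, we get $|F \setminus s^{-1} F| \leq |F \setminus F^E|$. Plugging in the two hypotheses $|F \setminus F^E| \leq (\delta/(2|E|))|F|$ and $|F \setminus F_0| \leq (\delta/(4|E|))|F|$ and summing over $s \in E$ produces
\[
|F_0 \setminus F_0^E| \leq |E|\bigg(\frac{\delta}{2|E|} + \frac{\delta}{4|E|}\bigg)|F| = \frac{3\delta}{4}|F|.
\]

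To finish, I would observe that $|F_0| \geq (1 - \delta/(4|E|))|F| \geq (3/4)|F|$, provided $\delta \leq |E|$, so that $|F_0 \setminus F_0^E| \leq (3\delta/4)|F| \leq \delta |F_0|$, as required. The constraint $\delta \leq |E|$ covers the interesting case $|E| \geq 2$ since $\delta \leq 2$; when $|E| = 1$ one has $E = \{e\}$, and the statement is trivial since $F_0^E = F_0$. There is no real obstacle beyond this bookkeeping: the estimate is elementary, and the factor $1/(2|E|)$ in the invariance hypothesis is calibrated precisely so that the sum over $|E|$ translates of $F$ absorbs the loss from restricting to $F_0$.
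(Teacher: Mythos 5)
Your argument is correct and is essentially the same elementary counting as the paper's: both decompose the failure of a point to lie in $F_0^E$ into the two sources $F\setminus F^E$ (controlled by the invariance of $F$) and $F\setminus F_0$ (controlled by the size of $F_0$). The only cosmetic difference is that your per-element union bound over $s\in E$ loses a factor of $|E|$ on the first term, whereas the paper uses the exact identity $F_0^E = F^E\setminus\bigcup_{s\in E}s^{-1}(F\setminus F_0)$ and compares against $|F_0|$ a little more carefully; this is why you need the (trivial) case split at $E=\{e\}$, but the conclusion is unaffected.
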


\begin{proof}
We have $F_0^E = F^E \setminus \bigcup_{s\in E} s^{-1}(F\setminus F_0 )$ and so, using the fact that $\delta\leq 2$ to obtain the last inequality below,
\begin{align*}
|F_0^E|
\geq |F^{E}| - |E||F\setminus F_0|
&\geq \bigg(1-\frac{\delta}{2|E|}\bigg)|F| - \frac{\delta}{4} |F| \\
&\geq \bigg(1-\frac{\delta}{2|E|} - \frac{\delta }{4(1-\delta /(4|E|))}\bigg)|F_0 | \\
&\geq (1-\delta )|F_0 | . \qedhere
\end{align*}
\end{proof}

We need the following lemma to get the $E$-disjointness in Lemma~\ref{L-tiling}. 

\begin{lemma}\label{L-E disjoint}
Let $e\in E$ be a finite subset of $G$ and $\delta > 0$. 
Then for every $(E^2 ,\delta )$-invariant finite set $K\subseteq G$ the set $K^E$ 
is $(E,\delta )$-invariant. 
\end{lemma}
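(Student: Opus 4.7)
The plan is to unwind both $(E,\delta)$-invariance conditions using the identity $E\cup\{e\}=E$ (valid since $e\in E$) and then observe that $(K^E)^E$ coincides with $K^{E^2}$ on the nose.

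More precisely, I would first note that since $e\in E$ one has $E\cup\{e\}=E$, so that $(E,\delta)$-invariance of $K^E$ is simply the statement $|(K^E)^E|\geq (1-\delta)|K^E|$, while the $(E^2,\delta)$-invariance hypothesis on $K$ reads $|K^{E^2}|\geq (1-\delta)|K|$ (using $e\in E^2$ to absorb $\{e\}$). Next I would compute directly from the definition
\[
(K^E)^E \,=\, \bigcap_{s\in E} s^{-1} \bigcap_{t\in E} t^{-1} K \,=\, \bigcap_{s,t\in E}(ts)^{-1}K \,=\, \bigcap_{r\in E^2} r^{-1}K \,=\, K^{E^2}.
\]
Then, since $e\in E$ forces the inclusion $K^E\subseteq K$ and hence $|K^E|\leq |K|$, combining this with the hypothesis yields
\[
|(K^E)^E| \,=\, |K^{E^2}| \,\geq\, (1-\delta)|K| \,\geq\, (1-\delta)|K^E|,
\]
which is exactly what is required.

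There is no real obstacle here; the statement is essentially a bookkeeping exercise once one recognises the set-theoretic identity $(K^E)^E=K^{E^2}$. The only potential trap is forgetting that $e\in E$ (which both collapses $E\cup\{e\}$ to $E$ and gives $K^E\subseteq K$); missing either of these would leave an extra $\{e\}$ term or prevent the final inequality from closing up.
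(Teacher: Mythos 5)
Your proof is correct and follows essentially the same route as the paper's: the key identity $(K^E)^E = K^{E^2}$ together with the inclusion $K^E\subseteq K$ (both using $e\in E$) yields $|(K^E)^E| = |K^{E^2}| \geq (1-\delta)|K| \geq (1-\delta)|K^E|$. Your write-up is in fact a slightly more explicit version of the paper's one-line chain of inequalities.
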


\begin{proof}
If $K$ is a $(E^2 ,\delta )$-invariant finite subset of $G$ then 
\begin{gather*}
|K^E | \geq |(K^E )^E| = |K^{E^2}| \geq (1-\delta )|K|. \qedhere
\end{gather*}
\end{proof}

\begin{definition}\label{D-Tiling}
Let $\sT$ be a collection of finite subsets of $G$. Let $\eps > 0$ and let $E$ be a finite subset of $G$ containing $e$. 
An {\it $(\eps ,E)$-tiling} of a finite set $K\subseteq G$ by sets in $\sT$ is a collection $\{ T_i \}_{i\in I}$
of right translates of sets in $\sT$ such that
\begin{enumerate}
\item the sets $ET_i$ for $i\in I$ are pairwise disjoint subsets of $K$, and

\item $|\bigsqcup_{i\in I} T_i | \geq (1-\eps )|K|$.
\end{enumerate}
\end{definition}

We need the following version of the Ornstein--Weiss quasitiling theorem. 

\begin{lemma}\label{L-tiling}
Suppose that $G$ is amenable. Let $E$ be a finite subset of $G$ containing $e$ and let $0<\eps \leq 1$. Then there
is a finite collection $\sT$ of $(E,\eps )$-invariant finite subsets of $G$, a finite set $D\subseteq G$, and a $\beta > 0$ 
such that for every $(D,\beta )$-invariant finite set $K\subseteq G$ there is an $(\eps ,E)$-tiling of $K$ 
by sets in $\sT$.
\end{lemma}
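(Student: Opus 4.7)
The plan is to apply the standard Ornstein--Weiss quasitiling theorem with strengthened invariance parameters on the tiles, and then pass from each tile $S$ to its $E$-interior $S^E$. This passage converts plain pairwise disjointness of the original tiling into the $E$-disjointness required by Definition~\ref{D-Tiling}, while the small accompanying losses in invariance and covering fraction are absorbed via Lemma~\ref{L-E disjoint} and a judicious choice of parameters.

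Concretely, I would first fix $\eta\in(0,\eps/2]$ small enough that $(1-\eta)^2\geq 1-\eps$, and invoke a standard form of the Ornstein--Weiss quasitiling theorem applied to the set $E^2$ and the tolerance $\eta$. This produces a finite collection $\sT'=\{S_1,\dots,S_n\}$ of $(E^2,\eta)$-invariant finite subsets of $G$, together with a finite set $D\subseteq G$ and $\beta>0$, such that every $(D,\beta)$-invariant finite set $K\subseteq G$ admits a family $\{S_{j(i)}t_i\}_{i\in I}$ of pairwise disjoint right translates of members of $\sT'$ contained in $K$ and satisfying $|\bigsqcup_{i\in I}S_{j(i)}t_i|\geq (1-\eta)|K|$. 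If the version of the theorem one starts from only produces an $\eta$-disjoint family rather than a disjoint one, I would first trim overlaps to obtain disjointness; Lemma~\ref{L-invariance} ensures that the trimmed tiles remain sufficiently invariant for the purposes of the argument below.

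I would then declare $\sT:=\{S^E : S\in\sT'\}$. By Lemma~\ref{L-E disjoint} each element of $\sT$ is $(E,\eta)$-invariant and therefore $(E,\eps)$-invariant, so $\sT$ satisfies the invariance requirement. Given $K$ and a tiling $\{S_{j(i)}t_i\}_{i\in I}$ as above, I would set $T_i:=S_{j(i)}^E t_i$. Since $rA^E\subseteq A$ for every $r\in E$, one has $ET_i\subseteq S_{j(i)}t_i\subseteq K$, so the $ET_i$ are pairwise disjoint subsets of $K$, which is condition~(i) of Definition~\ref{D-Tiling}. Moreover, since each $S_{j(i)}$ is $(E,\eta)$-invariant one has $|T_i|\geq (1-\eta)|S_{j(i)}t_i|$, and summing over $i$ yields
\[
\Big|\bigsqcup_{i\in I}T_i\Big|\geq (1-\eta)\Big|\bigsqcup_{i\in I}S_{j(i)}t_i\Big|\geq (1-\eta)^2|K|\geq (1-\eps)|K|,
\]
which is condition~(ii).

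The only real conceptual obstacle is the gap between plain disjointness, which is what the Ornstein--Weiss theorem natively delivers, and the $E$-disjointness demanded here; the passage $S\mapsto S^E$ is precisely engineered to close this gap, since $E$-thickening a subset of $S^E$ lands back inside $S$. Everything else amounts to parameter bookkeeping, ensuring that the two small losses---one in tile invariance coming from Lemma~\ref{L-E disjoint}, the other in the covering fraction coming from shrinking each tile by a $(1-\eta)$-factor---are together swallowed by $\eps$.
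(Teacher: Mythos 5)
Your proposal follows the same route as the paper: apply the Ornstein--Weiss quasitiling theorem to $E^2$ with strengthened parameters, pass from each tile $S$ to its $E$-interior $S^E$ to convert plain disjointness into the required $E$-disjointness (via the observation $E\,S^E\subseteq S$ and Lemma~\ref{L-E disjoint}), and absorb the two small losses in the covering fraction by a choice of $(1-\eta)$-powers. The bounds you give are correct as far as they go.

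There is, however, one concrete gap, and it sits exactly in the case you relegate to a parenthetical. The version of Ornstein--Weiss cited in the paper (Theorem~6 of \cite{OW87}, Theorem~4.36 of \cite{KerLi16}) does \emph{not} produce pairwise disjoint right translates of the prototiles; it produces an $\eta$-disjoint (or nearly covering, overlapping) family, which one must then trim: each translate $S$ gets replaced by a subset $\tilde S\subseteq S$ with $|\tilde S|\geq(1-\eta)|S|$ so that the $\tilde S$ are pairwise disjoint. You acknowledge this and correctly note that Lemma~\ref{L-invariance} preserves enough invariance of the trimmed tiles. But after trimming, the tiles actually used are the sets $\tilde S^E$, and these are in general \emph{not} right translates of any $S^E$ with $S\in\sT'$, because $\tilde S$ is an essentially arbitrary large subset of a translate of a prototile. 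Since Definition~\ref{D-Tiling} requires the $T_i$ to be right translates of members of the fixed finite collection $\sT$, your declaration $\sT:=\{S^E : S\in\sT'\}$ does not cover the tiles your construction produces. The repair is the one the paper makes: enlarge $\sT$ to the collection of all sets $T^E$ where $T$ ranges over the subsets of a prototile $T_0\in\sT'$ satisfying $|T|\geq(1-\eta)|T_0|$. This collection is still finite (each $T_0$ is finite and $\sT'$ is finite), each of its members is $(E,\eps)$-invariant by Lemmas~\ref{L-invariance} and \ref{L-E disjoint}, and it contains every trimmed tile $\tilde S^E$ up to right translation. With $\sT$ so enlarged and a third factor of $(1-\eta)$ budgeted for the trimming loss, your argument goes through and coincides with the paper's proof.
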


\begin{proof}
Take an $\eps' > 0$ such that $(1-\eps' )^3 \geq 1-\eps$. Set $\eps'' = \eps' /(2|E^2|)$.
By the Ornstein--Weiss quasitiling theorem (see Theorem~6 in \cite{OW87} and Theorem~4.36 in \cite{KerLi16}) 
there are a finite collection $\sT_0$ of $(E^2,\eps'' )$-invariant finite subsets of $G$, a finite set $D\subseteq G$, 
and a $\beta > 0$ (more precisely, one can take $\beta=\eps''/8$) such that for every $(D,\beta )$-invariant finite set $K\subseteq G$ 
there is a finite collection
$\sS$ of right translates of members of $\sT_0$ satisfying $\bigcup_{S\in\sS} S \subseteq K$ and $|\bigcup_{S\in\sS} S | \geq (1-\eps''/2 )|K|$
and pairwise disjoint sets $\tilde{S} \subseteq S$ for $S\in\sS$ such that $|\tilde{S} | \geq (1-\eps'' /2)|S|$ for every $S\in\sS$.
Given such $K$ and $\sS$, by Lemma~\ref{L-invariance} the sets $\tilde{S}$ for $S\in\sS$ are $(E^2,\eps' )$-invariant. 
Then, by Lemma~\ref{L-E disjoint}, for
every $S\in\sS$ the set $\tilde{S}^E$ 
satisfies $|\tilde{S}^E | \geq (1-\eps' )|\tilde{S}|$.
Using this in the first step and the fact that $|\tilde{S}|\geq (1-\eps')|S|$ for every $S\in \sS$ at the second step, we have
\begin{align*}
\bigg|\bigsqcup_{s\in\sS} \tilde{S}^E \bigg|
\geq (1-\eps' )\bigg|\bigsqcup_{S\in\sS} \tilde{S} \bigg| 
&\geq (1-\eps' )^2\Big|\bigcup_{S\in \sS} S\Big| \\
&\geq (1-\eps' )^3 |K| 
\geq (1-\eps )|K|.
\end{align*}
Thus $\{ \tilde{S}^E \}_{s\in\sS}$ forms an $(\eps ,E)$-tiling of $K$ by sets in $\sT$, where
\[\sT:=\{T^E: T\subseteq T_0 \text{ for some } T_0\in \sT_0 \text{ for which } |T | \geq (1-\eps'' /2)|T_0| \}. \]
It remains to show that $\sT$ consists of $(E,\eps)$-invariant sets. 
Let $T\subseteq T_0$ for some $T_0\in \sT_0$ for which $|T|\geq (1-\eps'' /2)|T_0|$. 
Since $T_0$ is $(E^2,\eps'')$-invariant, Lemma~\ref{L-invariance} implies
that $T$ is $(E^2,\eps')$-invariant, and in particular $(E^2,\eps)$-invariant. 
By Lemma~\ref{L-E disjoint} the set $T^E$ is $(E,\eps)$-invariant.
\end{proof}

\begin{lemma}\label{L-open set density}
Let $G\curvearrowright X$ be a minimal action and let
$\{O_i\}_{i\in I}$ be a finite collection of nonempty open subsets of $X$. Then there are a finite set $E\subseteq G$ and $0<\theta \leq 1/2$ such that 
every Borel tower $(S,V)$ with an $(E,1/2 )$-invariant shape $S$ satisfies
\[
\mu (O_i\cap SV) \geq \theta \mu (SV)
\]
for every $i\in I$ and $\mu\in M_G(X)$.
\end{lemma}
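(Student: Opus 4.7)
The strategy is to reduce the measure-theoretic inequality to a uniform pointwise combinatorial bound on the shape $S$, by exploiting $G$-invariance of $\mu$ to rewrite $\mu(O_i\cap SV)$ as an integral of a counting function. First, by minimality of $G\curvearrowright X$ together with compactness, for each $i\in I$ there exists a finite set $F_i\subseteq G$ such that $X=\bigcup_{t\in F_i}t^{-1}O_i$; equivalently, for every $x\in X$ some $t\in F_i$ sends $x$ into $O_i$. Set $E=\bigcup_{i\in I}F_i$ and $\theta=\min_{i\in I}\frac{1}{2|F_i|}$, which is automatically at most $1/2$ since each $|F_i|\geq 1$.

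Let $\mu\in M_G(X)$ and let $(S,V)$ be a Borel tower with $S$ being $(E,1/2)$-invariant. Using that $SV=\bigsqcup_{s\in S}sV$ together with the $G$-invariance of $\mu$, we have
\[
\mu(O_i\cap SV)=\sum_{s\in S}\mu(s^{-1}O_i\cap V)=\int_V \big|\{s\in S:sx\in O_i\}\big|\,d\mu(x),
\]
so it suffices to establish the uniform pointwise bound $|\{s\in S:sx\in O_i\}|\geq |S|/(2|F_i|)$ for every $x\in X$. Writing $C_x=\{s\in G:sx\in O_i\}$, the covering property applied to each point $rx$ (with $r\in G$) shows that every right translate $F_i r$ meets $C_x$. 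On the other hand, since $F_i\subseteq E$, the paper's definition of $(E,1/2)$-invariance (in terms of $E\cup\{e\}$) directly implies that the subset $S_0=\{r\in S:F_ir\subseteq S\}$ has cardinality at least $|S|/2$.

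A standard double-counting finishes the proof: for each $r\in S_0$ pick some $\phi(r)\in F_ir\cap C_x$, which automatically lies in $S\cap C_x$; an element $s\in S\cap C_x$ can equal $\phi(r)$ only for those $r$ of the form $t^{-1}s$ with $t\in F_i$, so each element of $S\cap C_x$ has at most $|F_i|$ preimages under $\phi$. Hence $|S\cap C_x|\geq |S_0|/|F_i|\geq |S|/(2|F_i|)$, and substituting this into the integral representation above (together with $\mu(SV)=|S|\mu(V)$) yields $\mu(O_i\cap SV)\geq \theta\mu(SV)$ for every $i\in I$. There is no real obstacle here beyond a minor bookkeeping check that the paper's invariance convention (with $E\cup\{e\}$) is compatible with the $F_i$-containment condition defining $S_0$.
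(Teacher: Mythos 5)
Your proposal is correct and follows essentially the same route as the paper: both use minimality plus compactness to cover $X$ by finitely many translates $t^{-1}O_i$, use the $(E,1/2)$-invariance of the shape to ensure that for at least half the elements $r\in S$ the whole translate $F_i r$ stays inside $S$, and then divide by the multiplicity $|F_i|$ (the paper phrases this as a sum of measures of tower levels with a counting function $n(s)\le |E|$, while you integrate the pointwise count $|\{s\in S: sx\in O_i\}|$ over $V$, which is the same computation). The only cosmetic difference is that the paper first reduces to a single open set $O$ and takes the minimum of the resulting $\theta_i$ at the end, whereas you carry the sets $F_i$ along simultaneously.
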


\begin{proof}
It suffices to prove the lemma for one nonempty open set $O\subseteq X$, since then for every $i\in I$
we can find a $0<\theta_i\leq 1/2$ and a finite set $E_i\subseteq G$ satisfying the conditions in the lemma for $O_i$, in which case we may take $\theta =\min_{i\in I} \theta_i$ and $E=\bigcup_{i\in I} E_i$.

By minimality and the compactness of $X$ there is a finite set $E\subseteq G$ 
such that $\bigcup_{t\in E} t^{-1} O = X$. Set $\theta = 1/(2|E|)$. 

Let $(S,V)$ be a Borel tower whose shape $S$ is $(E,1/2 )$-invariant and let $\mu\in M_G(X)$.
For every $s\in S$ set $n(s)=|\{(t,r)\in E\times S^E: tr=s\}|$ and note that $0\leq n(s)\leq |E|$.
We have
\begin{align*}
\frac12 |S| \mu(V)\leq \big|S^E\big|\mu(V)= \sum_{r\in S^E} \mu(rV)&\leq\sum_{r\in S^E}\sum_{t\in E}\mu(rV\cap t^{-1}O)\\
&=\sum_{r\in S^E}\sum_{t\in E}\mu(trV\cap O)\\
&= \sum_{s\in E} n(s)\mu(sV\cap O)\\
&\leq |E|\mu(SV\cap O)
\end{align*}
Dividing by $|E|$ we obtain $\mu(SV\cap O)\geq \theta |S|\mu(V)=\theta\mu(SV)$, as desired.
\end{proof}

The following lemma is a consequence of the portmanteau theorem (see \cite[Proposition~3.4]{KerSza20}).
\begin{lemma}\label{L-nbhd}
Let $G\curvearrowright X$ be an action, $A\subseteq X$ a closed set, and $\delta > 0$.
Then there is an open set $U\supseteq A$ such that 
$\sup_{\mu\in M_G (X)} \mu (U) < \sup_{\mu\in M_G (X)} \mu (A) + \delta$.
\end{lemma}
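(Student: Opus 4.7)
The plan is to use the weak$^*$ compactness of $M_G(X)$ together with the upper semicontinuity of $\mu\mapsto \mu(F)$ on closed sets $F$ (which is the closed-set form of the portmanteau theorem), arguing by contradiction. Write $c = \sup_{\mu\in M_G(X)} \mu(A)$ for brevity. Fix a compatible metric $d$ on $X$ and, for each $n\in\Nb$, let
\[
U_n = \{ x\in X : d(x,A) < 1/n \} ,
\]
so that $(U_n)_{n\in\Nb}$ is a decreasing sequence of open neighbourhoods of $A$ whose closures satisfy $\overline{U_n}\subseteq \{x : d(x,A)\leq 1/n\}$, and hence $\bigcap_{n\in\Nb} \overline{U_n} = A$.

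Suppose for contradiction that no open neighbourhood $U$ of $A$ satisfies $\sup_{\mu\in M_G(X)}\mu(U) < c+\delta$. Then in particular for every $n\in\Nb$ there exists $\mu_n\in M_G(X)$ with $\mu_n(U_n)\geq c+\delta$. Since $M_G(X)$ is weak$^*$ compact, we may pass to a subsequence and assume that $\mu_n\to\mu$ weak$^*$ for some $\mu\in M_G(X)$. For fixed $m$ and all $n\geq m$ we have $U_n\subseteq U_m\subseteq \overline{U_m}$, so
\[
\mu_n(\overline{U_m}) \geq \mu_n(U_n) \geq c+\delta .
\]
Applying the portmanteau theorem to the closed set $\overline{U_m}$ yields
\[
\mu(\overline{U_m}) \geq \limsup_{n\to\infty}\mu_n(\overline{U_m}) \geq c+\delta .
\]

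Since the sets $\overline{U_m}$ decrease to $A$ and $\mu$ is a finite measure, continuity from above gives $\mu(A) = \lim_{m\to\infty}\mu(\overline{U_m}) \geq c+\delta$. However $\mu\in M_G(X)$, so $\mu(A)\leq c$ by definition of $c$, a contradiction. Therefore some $U_n$ is the desired open neighbourhood. The only step that requires care is arranging a sequence of neighbourhoods whose closures still shrink to $A$ (which is why we take closures before invoking portmanteau); the rest is a routine diagonal compactness argument.
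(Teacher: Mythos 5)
Your proof is correct and follows the same route the paper indicates: the paper disposes of this lemma by simply citing the portmanteau theorem (via Proposition~3.4 of \cite{KerSza20}), and your argument is precisely the standard weak$^*$-compactness-plus-portmanteau proof behind that citation. One cosmetic point: since the supremum in the negated conclusion need not be attained, you should extract $\mu_n$ with $\mu_n(U_n) > c+\delta - 1/n$ rather than $\mu_n(U_n)\geq c+\delta$; the limit computation $\mu(\overline{U_m})\geq \limsup_n \mu_n(\overline{U_m}) \geq c+\delta$ is unaffected.
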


\begin{theorem}\label{T-amenable}
Suppose that $G$ is infinite and amenable. Let $G\curvearrowright X$ be a minimal action 
that has the URP and comparison.
Let $O_1$ and $O_2$ be nonempty open subsets of $X$ and $E$ a finite subset of $G$ containing $e$. 
Then the action is $(O_1 , O_2 , E)$-squarely divisible. In particular, the action is squarely divisible.
\end{theorem}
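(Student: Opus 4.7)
The plan is to combine the URP with the Ornstein--Weiss quasitiling theorem (Lemma~\ref{L-tiling}) to produce an open castle whose levels are organised into an $n\times n$ array of pairwise equivalent, pairwise $E$-disjoint open sets $V_{i,j}$, and then to verify the three subequivalences in Definition~\ref{D-SD} by invoking comparison to reduce each of them to a uniform measure inequality over $M_G(X)$. The ``in particular'' statement then follows as in Proposition~\ref{P-stronger}: given any nonempty open $O\subseteq X$, the infiniteness of $G$ together with minimality precludes isolated points in $X$, so one can use comparison to find disjoint open sets $O_0\subseteq O$ and $O_1,O_2\subseteq X$ with $\overline{O_1}\sqcup\overline{O_2}\prec O_0$, and then invoke the first part of the theorem.

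For the setup, I apply Lemma~\ref{L-open set density} to $\{O_1,O_2\}$ to obtain a density constant $\theta>0$ and a finite symmetric set $E_0\supseteq E\cup\{e\}$ such that every Borel tower with $(E_0,1/2)$-invariant shape meets each $O_i$ with $\mu$-density at least $\theta$, uniformly in $\mu$. I choose $n\in\Nb$ with $n>1+2/\theta$ and a nonempty open $O''\subseteq O_2$ with $\overline{O''}\subseteq O_2$; by minimality and lower semicontinuity on the compact set $M_G(X)$ we have $\gamma:=\inf_\mu\mu(O'')>0$, and $\gamma$ can be taken very small by shrinking $O''$ if necessary. I then fix $0<\eps\ll\delta$, much smaller than both $\gamma$ and $\theta/n^2$. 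Using Lemma~\ref{L-tiling} in a nested fashion, I produce a finite $(E_0,\eps)$-invariant set $T\subseteq G$ together with a larger F{\o}lner set $S\subseteq G$ that contains $n^2$ right translates $T\sigma_{i,j}$ whose $E_0$-neighbourhoods are pairwise disjoint inside $S$.

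Next I apply the URP with shape invariance adapted to $S$ and remainder measure $<\delta'\ll\delta$, and refine the resulting castle by an Ornstein--Weiss tiling of each shape first by translates of $S$ and then by the $n^2$ translates of $T$. I replace each base $V_k$ of the URP castle by $V_k\setminus\bigcup_{s\in S_k}s^{-1}\overline{O''}$, which forces all castle levels to lie in $X\setminus\overline{O''}$ at the cost of enlarging the remainder by at most $\sum_k|S_k|^2\mu(O'')$; taking $\gamma$ small relative to the maximal shape size keeps the total remainder below $\delta$. Setting $V_{i,j}:=\bigsqcup_\alpha T\tau_{\alpha,i,j}W_\alpha$, with $\alpha$ indexing the $n^2$-groups of refined sub-towers, yields open sets that are pairwise equivalent, pairwise $E$-disjoint, and whose union $V$ satisfies $\overline{V}\cap O''=\emptyset$. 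Via Lemma~\ref{L-nbhd} I choose an open thickening $U\supseteq\partial V$ with $\sup_\mu\mu(\overline{U})$ arbitrarily small and $\overline{U}\cap\overline{O''}=\emptyset$, which is possible since $\partial V$ lies at positive distance from $\overline{O''}$.

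For condition (i), $\mu(\overline{V_{i,1}})\leq n^{-2}\mu(V)$, whereas Lemma~\ref{L-open set density} applied to the sub-towers comprising $\bigsqcup_{j\geq 2}V_{i,j}$ bounds the measure of its $O_1$-intersection below by $\theta(n-1)n^{-2}\mu(V)$ minus the small contribution of $B$; this exceeds $\mu(\overline{V_{i,1}})$ by the choice of $n$, so comparison delivers the subequivalence. Condition (ii) is handled analogously, with $\mu(R)<\delta$ while the target has measure roughly $\theta(1-1/n)\mu(V)$. For condition (iii), the construction forces $O''\subseteq O_2\cap\overline{V\cup U}^\comp$, so the target has measure at least $\gamma$, exceeding $\mu(B\cup(\overline{U}\cap R))\leq\eps+(|E|+2)\sup_\mu\mu(\overline{U})$ by our parameter choice; comparison again gives the subequivalence. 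The hardest point will be the quantitative nesting of invariance parameters: the modification of the URP bases to force $O''$ into the remainder must be compatible with the Ornstein--Weiss tiling structure while the smallness requirements for (i) and (ii) remain intact, demanding a careful balancing of $\eps$, $\delta$, $\gamma$, and the URP shape sizes.
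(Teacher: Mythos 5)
Your overall skeleton (URP castle, Ornstein--Weiss quasitiling, Lemma~\ref{L-open set density}, comparison) matches the paper's, but there are two places where the argument as written does not go through.

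First, the construction of the array $\{V_{i,j}\}$ via a ``nested'' tiling is structurally problematic. You tile each URP shape by translates of a super-tile $S$ and then place, inside each copy of $S$, only the $n^2$ designated translates $T\sigma_{i,j}$. Unless these $n^2$ translates of the \emph{single} set $T$ cover all but an $\eps$-proportion of $S$, the union $V=\bigsqcup_{i,j}V_{i,j}$ occupies only the fraction $\approx n^2|T|/|S|$ of the castle, so $R=V^{\comp}$ has measure bounded away from $0$ and condition~(ii) fails. The Ornstein--Weiss theorem does not supply, for a general amenable $G$, a F{\o}lner set $S$ almost exactly tiled by precisely $n^2$ translates of one prescribed invariant tile $T$ (this is essentially the monotileability issue, compounded by the requirement of a fixed tile count). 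The paper avoids this entirely: each URP shape $S_k$ is quasitiled by a finite \emph{collection} $\sT$ of tiles so that the tiles exhaust $(1-\eps)|S_k|$, and then, for each $(k,T)$, the set of translation centres $C_{k,T}'$ is partitioned into $n^2$ subsets of equal cardinality (after a discrete intermediate-value argument reserving a controlled proportion $C_{k,T}\setminus C_{k,T}'$ for condition~(iii)). Pairwise equivalence of the $V_{i,j}$ then comes from matching translates centre-by-centre, not from a nested tile geometry, and $V$ automatically covers all but $O(\eps)$ of the space.

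Second, your mechanism for condition~(iii) --- excising a fixed open set $O''\subseteq O_2$ from the castle so that it lands in $\overline{V\cup U}^{\comp}$ --- has a quantifier error. The enlargement of the remainder caused by removing $\bigcup_{s\in S_k}s^{-1}\overline{O''}$ from each base is controlled by $\sup_{\mu\in M_G(X)}\mu(\overline{O''})$ (times the shape cardinalities), whereas the quantity you shrink is $\gamma=\inf_\mu\mu(O'')$; making the infimum small does nothing to bound the supremum, and the remainder estimate must hold uniformly over $M_G(X)$ for comparison to apply in condition~(ii). One can repair this (all invariant measures are atomless by minimality and infiniteness of $G$, so a Dini argument for the upper semicontinuous functions $\mu\mapsto\mu(\overline{B(x,1/m)})$ gives $\sup_\mu\mu(\overline{O''})$ arbitrarily small while $\inf_\mu\mu(O'')>0$), but this is not in your text; likewise your claim that $\partial V$ lies at positive distance from $\overline{O''}$ does not follow from removing only $\overline{O''}$ itself --- you need a two-layer separation $\overline{O''}\subseteq W$ and must excise $\overline{W}$. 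The paper sidesteps all of this by taking the target for condition~(iii) to be the reserved tiles $T(C_{k,T}\setminus C_{k,T}')V_k$ inside the castle, whose measure is bounded below by $\theta\eps/2-\mu(\overline{U})$ via Lemma~\ref{L-open set density}, with no modification of the castle needed.
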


\begin{proof}
By Lemma~\ref{L-open set density} there exist a finite set $F\subseteq G$ and $0<\theta\leq 1/2$ such that 
for every Borel tower $(S,V)$ with $(F,1/2 )$-invariant shape $S$ one has, for $i=1,2$,
\begin{align}\label{E-O}
\mu (O_i \cap SV) \geq \theta \mu (SV),
\end{align}
for every $\mu\in M_G(X)$. Let $n$ be an integer 
greater than $2/\theta + 1$. Set $\eps = \theta /(20n^2)$.
 
By Lemma~\ref{L-tiling} there are a finite collection $\sT$ of $(EF\cup E,\theta\eps/4 )$-invariant finite subsets of $G$,
a finite set $D\subseteq G$, and a $\beta > 0$ 
such that for every $(D,\beta )$-invariant finite set $K\subseteq G$ there is an $(\eps ,E)$-tiling
of $K$ by sets in $\sT$. 

Since the action has the URP, there exists
an open castle $\{ (S_k , V_k ) \}_{k\in K}$ with $(D,\beta )$-invariant
shapes such that, writing $R_0 = X\setminus \bigsqcup_{k\in K} S_k V_k$ for the remainder,
\begin{gather}\label{E-R0}
\sup_{\mu\in M_G(X)} \mu (R_0) < \frac{\theta\eps}{12n^2}.
\end{gather}
We will moreover assume without loss of generality that the sets $\{s\overline{V_k} : s\in S_k, k\in K\}$ are pairwise disjoint (see for example Lemma~6.3 in \cite{GarGefNarVac24}) and that $e\in S_k$ for all $k\in K$ (if $e\notin S_k$ then we can pick a $t\in S_k$ and replace $S_k$ by $S_kt^{-1}$ and $V_k$ by $t^{-1}V_k$).
Since $G$ is infinite, by improving the almost invariance that we require of the shapes we may assume, for every $k\in K$, that
\begin{gather}\label{E-cardS_k}
\eps |S_k|> n^2 |\sT| \max_{T\in\sT} |T|.
\end{gather}
Using Lemma~\ref{L-nbhd} and the inequality (\ref{E-R0}) together with the fact that $\partial R_0 \subseteq R_0$, 
we can find an open set $U\supseteq \partial R_0$ satisfying
\begin{gather}\label{E-min}
\sup_{\mu\in M_G(X)} \mu (\overline{U}) <  \frac{\theta\eps}{12n^2}.
\end{gather}
Since $\partial R_0=\bigsqcup_{k\in K}\bigsqcup_{s\in S_k} s\partial V_k$, by shrinking $U$ appropriately we may assume that it is the union of sets 
$sU_k$ for $s\in S_k$ and $k\in K$ with pairwise disjoint closures where $U_k$ is some open set containing $\partial V_k$. Moreover, we may assume that 
$s\overline{U_k} \cap t\overline{V_\ell}=\emptyset$
whenever $(s,k)\neq (t,\ell)$ for $s\in S_k$, $t\in S_\ell$ and $k, \ell\in K$.
Indeed, since $\{s\overline{V_k}: s\in S_k,\, k\in K\}$ is a disjoint collection we can use a separation argument 
to find open sets $Q_{s,k}\supseteq s\overline{V_k}$ for $s\in S_k$ and $k\in K$ with pairwise disjoint closures,
in which case the sets $U_k:=\bigcap_{s\in S_k} s^{-1}(U\cap Q_{s,k})$ for $k\in K$ satisfy the required properties. These properties in particular imply that
\begin{gather}\label{Uclosure}
\overline{U}^c\cap sV_k = s(\overline{U}^c \cap V_k) 
\end{gather}
for every $s\in S_k$ and $k\in K$ (using that $e\in S_k$ for every $k\in K$).

Let $k\in K$. By our choice of $D$ and $\beta$ there exist finite sets $C_{k,T} \subseteq G$ for $T\in\sT$ such that the sets $ETc$ for $T\in\sT$ and $c\in C_{k,T}$ are
pairwise disjoint subsets of $S_k$ and $ |\bigsqcup_{T\in\sT}\bigsqcup_{c\in C_{k,T}}Tc|\geq (1-\eps)|S_k|$, in which case
\begin{gather}\label{E-inequC_k,T}
(1-\eps)|S_k|\leq \sum_{T\in\sT} |T| |C_{k,T}|\leq |S_k|.
\end{gather}
We claim that the inequality $\eps|S_k|>\max_{T\in\sT} |T|$ guaranteed by (\ref{E-cardS_k}) permits us 
to find sets $C_{k,T}' \subseteq C_{k,T}$ for $T\in\sT$ satisfying
\begin{gather}\label{E-eps_k}
(1-3\eps)|S_k|\leq \sum_{T\in\sT}  |T||C_{k,T}' | \leq (1-2\eps )|S_k |.
\end{gather}
To see this, first write $\sT=\{T_1,\ldots, T_m\}$. Finding  sets $C_{k,T_i}' \subseteq C_{k,T_i}$ for $i=1,\ldots, m$ which satisfy (\ref{E-eps_k}) is equivalent to 
finding integers $0\leq r_i\leq |C_{k,T_i}|$ (specifying the number of elements we will remove from $C_{k,T_i}$) for $i=1,\ldots,m$ so that 
\[\sum_{i=1}^{m} |T_i||C_{k,T_i}|-(1-2\eps)|S_k|\leq \sum_{i=1}^{m}|T_i|r_i\leq \sum_{i=1}^{m} |T_i||C_{k,T_i}|-(1-3\eps)|S_k|.\]
Set $L_1=\sum_{i=1}^{m} |T_i||C_{k,T_i}|-(1-2\eps)|S_k|$ and $L_2= \sum_{i=1}^{m} |T_i||C_{k,T_i}|-(1-3\eps)|S_k|$, and note that $L_2-L_1=\eps|S_k|$.
Define $\mathcal{A}=\big\{\sum_{i=1}^{m}|T_i|r_i : 0\leq r_i\leq |C_{k,T_i}|\big\}$, which is a finite set of nonnegative integers. 
Our goal is to find an element $a\in \mathcal{A}$ with $L_1\leq a\leq L_2$.

Pick a maximal element $a\in\mathcal{A}$ satisfying $a\leq L_1$.
If $a= L_1$, then the proof of the claim is complete. 
We therefore may assume that $a< L_1$.
Then there must exist some $i\in\{1,\ldots, m\}$ with $r_i<|C_{k,T_i}|$.
Now $a+|T_i|$ is an element of $\mathcal{A}$ and by the choice of $a$ we have $a+|T_i|>L_1$.
Using that $|T_i|<\eps|S_k|$ we also have $a+|T_i|<L_1+\eps|S_k|=L_2$, which completes the proof of the claim.

Now define
\begin{align*}
Z = \bigsqcup_{k\in K} \bigsqcup_{T\in\sT} TC_{k,T} V_k \hspace*{4mm}\text{and}\hspace*{4mm}
Z' = \bigsqcup_{k\in K} \bigsqcup_{T\in\sT} TC_{k,T}' V_k .
\end{align*}
Then, for $\mu\in M_G(X)$,
\begin{align*}
\mu(Z)
=\sum_{k\in K} \sum_{T\in\sT} |T||C_{k,T} | \mu (V_k ) 
\hspace*{4mm}\text{and}\hspace*{4mm}
\mu (Z')
= \sum_{k\in K} \sum_{T\in\sT} |T||C_{k,T}' | \mu (V_k ). 
\end{align*}
In view of (\ref{E-eps_k}) we conclude that
\begin{align*}
(1-\eps )\mu (R_0^c )\leq \mu (Z ) \leq \mu(R_0^c)
\hspace*{4mm}\text{and}\hspace*{4mm}
(1-3\eps )\mu (R_0^c )\leq \mu (Z' ) \leq (1-2\eps)\mu(R_0^c).
\end{align*}
Thus
\begin{align}\label{E-lower}
\mu (Z\setminus Z' ) 
= \mu (Z) - \mu (Z' ) \geq \eps \mu (R_0^c ) 
\stackrel{(\ref{E-R0})}{>} \frac{\eps}{2} .
\end{align}

For each $k\in K$ and $T\in \sT$ choose pairwise disjoint sets $C_{k,T,i,j} \subseteq C_{k,T}'$ of equal cardinality 
indexed by $i,j=1,\dots , n$ so that their union has cardinality greater than $|C_{k,T}'| - n^2$ (for instance, take each $C_{k,T,i,j}$ to have cardinality $\lfloor |C_{k,T}'|/n^2\rfloor$). Then we have
\begin{align}\label{E-double union}
\bigg| \bigsqcup_{T\in\sT} \bigsqcup_{i,j=1}^n TC_{k,T,i,j} \bigg| 
&= \sum_{T\in\sT} \sum_{i,j=1}^n |T||C_{k,T,i,j} | \\
&> \sum_{T\in\sT} |T|(|C_{k,T}'| - n^2 ) \notag \\
&\geq  \sum_{T\in\sT} |T||C_{k,T}'| - n^2 |\sT |\max_{T\in\sT} |T| \notag \\
&\leftstackrel{(\ref{E-eps_k}), (\ref{E-cardS_k})}{>}  (1-4\eps )|S_k |. \notag
\end{align}
For $i,j=1,\dots ,n$ we now define 
\[
V_{i,j} = \bigsqcup_{k\in K} \bigsqcup_{T\in\sT} TC_{k,T,i,j} V_k . 
\]
Note that the sets $V_{i,j}$ are $E$-disjoint due to the fact that the sets $ETc$ for $T\in\sT$ and $c\in C_{k,T}$
are pairwise disjoint and contained in $S_k$.
The fact that for each $k\in K$ and $T\in\sT$ the sets $C_{k,T,i,j}$ for $i,j=1,\dots , n$ have equal
cardinality guarantees that the sets $TC_{k,T,i,j} V_k$ for $i,j=1,\dots , n$ are pairwise equivalent.
It follows that the sets $V_{i,j}$ are pairwise equivalent.
Write $V=\bigsqcup_{i,j=1}^{n}V_{i,j}$. 
Since $\partial V\subseteq \partial R_0\subseteq U$,
it remains to check that the set $\{V_{i,j}\}_{i,j=1}^{n}$ together with
$R := X\setminus V$ and $B:=\overline{V} \cap ((V\cap \overline{U}^c )^E )^c$
satisfy the three itemized conditions in the definition of square divisibility.

For $\mu\in M_G(X)$ we have, using the bound $\mu(R_0)\leq\eps\leq \eps /(1-4\eps )$ guaranteed by (\ref{E-R0}) 
to get the last inequality,
\begin{align}\label{E-footprint}
\mu \bigg(\bigsqcup_{i,j=1}^n V_{i,j} \bigg)
\stackrel{(\ref{E-double union})}{\geq} (1-4\eps )\sum_{k\in K} |S_k |\mu (V_k ) 
= (1-4\eps )\mu (R_0^c )
\geq 1-5\eps. 
\end{align}
The pairwise equivalence of the sets $\{V_{i,j}\}_{i,j=1}^{n}$ in conjunction with (\ref{E-footprint}) yields, for all $i,j=1,\dots ,n$, 
\begin{align}\label{E-n squared}
\mu (V_{i,j} ) \geq \frac{1-5\eps}{n^2}\geq \frac{1}{2n^2} .
\end{align}
Letting $1\leq i,j \leq n$ and using (\ref{E-n squared}) we observe that 
\begin{align}\label{E-U}
\mu (\overline{U}) \stackrel{(\ref{E-min})}{<} \frac{\eps}{2n^2} \leq \eps \mu (V_{i,j} ) .
\end{align}
For a set $\Omega\subseteq \{(i,j): 1\leq i,j\leq n\}$ we 
let $V_\Omega=\bigsqcup_{(i,j)\in \Omega}V_{i,j}$.
Using that the sets $\{V_{i,j}\}_{i,j=1}^{n}$ are $E$-disjoint, it is straightforward to verify that 
\begin{gather}\label{E-VOmega}
V_\Omega\cap B^c=(V_{\Omega}\cap \overline{U}^c)^E.
\end{gather}
We also have that
\begin{align}\label{E-Vomega}
(V_\Omega\cap \overline{U}^c)^E\supseteq \bigsqcup_{(i,j)\in\Omega}( V_{i,j}\cap \overline{U}^c)^E \;\; &\leftstackrel{(\ref{Uclosure})}{=}
\bigsqcup_{(i,j)\in \Omega} \Big( \bigsqcup_{k\in K}\bigsqcup_{T\in\sT}TC_{k,T,i,j}(V_k\cap \overline{U}^c) \Big)^{\!E}\\ 
&\supseteq \bigsqcup_{(i,j)\in\Omega} \bigsqcup_{k\in K}\bigsqcup_{T\in\sT} T^EC_{k,T,i,j}(V_k\cap \overline{U}^c).\notag
\end{align}
Together with the $(E,\theta\eps/4)$-invariance of the sets $T\in\sT$ and equation~(\ref{Uclosure}), 
this yields, for all $\mu\in M_G(X)$,
\begin{align}\label{E-(1-eps)}
\mu\big((V_\Omega\cap \overline{U}^c)^E\big)\geq (1-\theta\eps/4)\mu(V_\Omega\cap \overline{U}^c).
\end{align}

For $i=1,2$ we have, using at the second step that for every $T\in\sT$ the set $T^E$ is $(F,1/2)$-invariant
(by the $(EF,\eps)$-invariance of $T$) and at the second to last step
that every $T\in \sT$ is $(E,\eps)$-invariant,
\begin{align}\label{VOmegaO}
\mu(O_i\cap(V_\Omega\cap\overline{U}^c)^E)\;\;&\leftstackrel{(\ref{E-Vomega})}{\geq}
\sum_{(i,j)\in\Omega}\sum_{k\in K}\sum_{T\in\sT}\sum_{c\in C_{k,T,i,j}} \mu(O_i\cap T^Ec(V_k\cap\overline{U}^c))\\ 
&\leftstackrel{(\ref{E-O})}{\geq} \theta \sum_{(i,j)\in\Omega}\sum_{k\in K}\sum_{T\in\sT}\sum_{c\in C_{k,T,i,j}} \mu( T^Ec(V_k\cap\overline{U}^c))\notag\\ 
&= \theta \sum_{(i,j)\in\Omega}\sum_{k\in K}\sum_{T\in\sT} |T^E||C_{k,T,i,j}|\mu(V_k\cap\overline{U}^c)\notag\\ 
&\geq \theta (1-\eps)\sum_{(i,j)\in\Omega}\sum_{k\in K}\sum_{T\in\sT} |T||C_{k,T,i,j}|\mu(V_k\cap\overline{U}^c)\notag\\ 
&\leftstackrel{(\ref{Uclosure})}{=} \theta (1-\eps) \mu (V_\Omega\cap \overline{U}^c).\notag
\end{align}
Moreover, since $\overline{V_{i,j}}\subseteq \overline{U}\cup (V_{i,j}\cap \overline{U}^c)$ and $\mu(\overline{U})<\eps /(2n^2)\stackrel{(\ref{E-n squared})}{\leq}\eps \mu(\overline{V_{i,j}})$ we have 
\[\mu(V_{i,j}\cap \overline{U}^c)\geq \mu(\overline{V_{i,j}})-\mu(\overline{U})\geq (1-\eps)\mu(\overline{V_{i,j}}) \] 
and hence
\begin{gather}\label{E-V_OmegaUcomplement}
\mu(V_\Omega\cap \overline{U}^c)\geq (1-\eps)\mu(\overline{V_\Omega}).
\end{gather}

Next let $1\leq i\leq n$. Set $W_i = \bigsqcup_{j=2}^n V_{i,j}$ and
$Y_i = O_1 \cap W_i\cap B^c \stackrel{(\ref{E-VOmega})}{=} O_1 \cap (W_i \cap \overline{U}^c )^E$.
Since $(1-\eps)^2\geq 1/2$, for $\mu\in M_G(X)$ we then have
\begin{align*}
\mu (Y_i ) 
\stackrel{(\ref{VOmegaO})}{\geq} \theta (1-\eps)\mu(W_i\cap \overline{U}^c)\stackrel{(\ref{E-V_OmegaUcomplement})}{\geq}\theta(1-\eps)^2\mu(\overline{W_i})\geq \frac{\theta}{2}\mu(\overline{W_i}),
\end{align*}
and thus, since the sets $\overline{V_{i,j}}$ are pairwise disjoint and pairwise Borel equivalent, and $n > 2/\theta + 1$,
\begin{align*}
\mu (\overline{V_{i,1}}) 
= \frac{1}{n-1} \mu (\overline{W_i} ) 
\leq \frac{2}{\theta (n-1)} \mu (Y_i ) 
< \mu (Y_i ) .
\end{align*}
By hypothesis the action has comparison, and so we infer that $\overline{V_{i,1}} \prec Y_i$, for each $i=1,\ldots,n$,
which verifies condition (i) in the definition of square divisibility.

We next show that $R\prec O_2\cap V\cap V_1^c\cap B^c$, which we will again do via comparison.
If $\mu\in M_G(X)$ and $\Omega=\{(i,j): 1\leq i\leq n, 2\leq j\leq n\}$, then
$V_\Omega=V\cap V_1^c$ and we have, using at the last step below that 
$\eps=\theta /(20n^{2})\leq \theta(n^2-n)/(20n^2)$,
\begin{align*}
\mu (O_2 \cap V \cap V_1^c \cap B^c) \;\;
&\leftstackrel{(\ref{E-VOmega})}{=} \mu (O_2 \cap (V_\Omega \cap \overline{U}^c )^E )\\
&\leftstackrel{(\ref{VOmegaO}),(\ref{E-V_OmegaUcomplement})}{\geq} \theta(1-\eps)^2\mu(V_\Omega)\\
&\geq \frac{\theta}{2}\mu(V_\Omega)\\
&=\frac{\theta}{2}(n^2-n)\mu(V_{i,j})\\
&\leftstackrel{(\ref{E-n squared})}{\geq} \frac{\theta}{4n^2}(n^{2}-n)\\
&>5\eps.
\end{align*}
Since $\mu(R)\leq 5\eps$ by (\ref{E-footprint}),
comparison then yields condition (ii) in the definition
of square divisibility.

Finally we verify, again using comparison, that
$B\cup (\overline{U}\cap R)\prec O_2\cap \overline{V\cup U}^c$.
Since $\overline{V}$ is equal to the disjoint union of $\overline{V} \cap ((V\cap \overline{U}^c )^E )^c$
and $(V\cap \overline{U}^c )^E$, given $\mu\in M_G (X)$ we have
\begin{align*}
\mu (B) 
=\mu (\overline{V} \cap ((V\cap \overline{U}^c )^E )^c )
&= \mu (\overline{V} ) - \mu ((V\cap \overline{U}^c )^E ) \\
&\leftstackrel{(\ref{E-(1-eps)})}{\leq} \mu (\overline{V} ) - (1-\theta\eps/4 )\mu (V\cap \overline{U}^c ) \\
&= (1-\theta\eps/4 )(\mu (\overline{V} ) - \mu (V\cap \overline{U}^c )) + \frac{\theta\eps}{4} \mu (\overline{V} ) \\
&=(1-\theta\eps/4 )\mu (\overline{V} \cap \overline{U}) +  \frac{\theta\eps}{4} \mu (\overline{V} ) \\
&\leq \mu (\overline{U}) +  \frac{\theta\eps}{4} ,
\end{align*}
and therefore $\mu(B\cup (\overline{U}\cap R))\leq 2\mu(\overline{U})+\theta\eps/4$.
On the other hand, 
\[\overline{V\cup U}=\overline{V}\cup \overline{U}\subseteq \bigg(\bigsqcup_{k\in K}\bigsqcup_{T\in\sT}\bigsqcup_{c\in C_{k,T}\setminus C_{k,T}'} TcV_k\bigg)^{\!\!c} \cup \overline{U}, \]
so that, combining with the equation (\ref{Uclosure}), 
\[\overline{V\cup U}^c \supseteq \bigg(\bigsqcup_{k\in K}\bigsqcup_{T\in\sT}\bigsqcup_{c\in C_{k,T}\setminus C_{k,T}'} Tc(V_k\cap \overline{U}^c)\bigg). \]
We thus have, using the $(F,1/2)$-invariance of the sets in $\sT$ for the second inequality,
\begin{align*}
\mu (O_2 \cap \overline{V\cup U}^c )
&\geq \sum_{k\in K} \sum_{T\in\sT} \sum_{c\in C_{k,T} \setminus C_{k,T}'} \mu (O_2 \cap Tc(V_k\cap \overline{U}^c) ) \\
&\leftstackrel{(\ref{E-O})}{\geq} \theta \sum_{k\in K} \sum_{T\in\sT} \sum_{c\in C_{k,T} \setminus C_{k,T}'} \mu (Tc(V_k\cap \overline{U}^c) ) \\
&= \theta \sum_{k\in K}\sum_{T\in\sT}|T|(|C_{k,T}|-|C_{k,T}'|)(\mu(V_k)-\mu(V_k\cap\overline{U}))\\
&\geq \theta \mu (Z\setminus Z' ) -\mu(\overline{U})\stackrel{(\ref{E-lower})}{>} \frac{\theta\eps}{2} -\mu(\overline{U}).
\end{align*}
Since $\mu(\overline{U})< \theta\eps/12$, we conclude that
\[\mu(B\cup (\overline{U}\cap R))\leq 2\mu(\overline{U})+\frac{\theta\eps}{4}\leq \frac{\theta\eps}{2}-\mu(\overline{U})<\mu(O_2\cap \overline{V\cup U}^c), \]
in which case comparison gives
$B\cup (\overline{U}\cap R)\prec O_2 \cap \overline{V\cup U}^c$, yielding (iii) in the definition of square divisibility.

The last statement in the theorem follows from Proposition~\ref{P-stronger}.
\end{proof}

\section{Square divisibility and product actions}\label{S-product}

\begin{theorem}\label{T-SD product}
Let $G\curvearrowright X$ and $H\curvearrowright Y$ be minimal actions 
with $G$ infinite, and suppose that the first action has the URP and comparison.
Then the product action $G\times H\curvearrowright X\times Y$
given by $(g,h)(x,y) = (gx,hy)$ is squarely divisible.
\end{theorem}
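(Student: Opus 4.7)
The plan is to leverage Theorem~\ref{T-amenable} in the strong form it provides for $G\curvearrowright X$, namely $(W_1,W_2,E_G)$-square divisibility for every pair of nonempty open $W_1,W_2\subseteq X$ and every finite $e\in E_G\subseteq G$ (the URP on $G\curvearrowright X$ forces $G$ to be amenable). I then lift this $X$-side structure to the product in the most naive way, namely by taking product cells of the form $V_{i,j}\times Y$, and handle the $Y$-direction using a routing trick that exploits the minimality of $H\curvearrowright Y$ to cover $Y$ by finitely many $H$-translates of any prescribed nonempty open $P\subseteq Y$.

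Given an open $O\subseteq X\times Y$, the first step is to reduce to the case of a rectangular subset $W'\times P'\subseteq O$ and to construct the sets $O_0, O_1, O_2$ witnessing $O$-square divisibility. I pick pairwise disjoint nonempty open $W_0,W_1,W_2\subseteq X$ with $\overline{W_0}\subseteq W'$, choosing $W_1,W_2$ small enough that comparison on $X$ delivers $\overline{W_1}\sqcup\overline{W_2}\prec W_0$ (possible because invariant measures on the infinite minimal system $G\curvearrowright X$ are non-atomic and $M_G(X)$ is weak$^*$-compact). I then choose nested open $P_1,P_2\subseteq P_0\subseteq P'$ in $Y$ with $\overline{P_1},\overline{P_2}\subseteq P_0$ and $\overline{P_0}\subseteq P'$, and set $O_0:=W_0\times P_0\subseteq O$ and $O_i:=W_i\times P_i$ for $i=1,2$. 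The pairwise disjointness of the closures is immediate from the $X$-side, and the subequivalence $\overline{O_1}\sqcup\overline{O_2}\prec O_0$ follows by lifting the $X$-subequivalence $\overline{W_1}\sqcup\overline{W_2}\prec W_0$, implemented by open sets $\{A_k\}$ and group elements $\{s_k\}$, through the $(s_k,e_H)$-translates of the open sets $A_k\times P_0$.

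For the $(O_1,O_2,E)$-square divisibility itself, let $E\subseteq G\times H$ be a finite set containing the identity, enlarged to $E_G\times E_H$ with $E_G\subseteq G$ and $E_H\subseteq H$ finite. Minimality of $H\curvearrowright Y$ together with compactness produce finite $L_1,L_2\subseteq H$ with $Y=\bigcup_{h\in L_i}h^{-1}P_i$ for $i=1,2$. I apply Theorem~\ref{T-amenable} to $G\curvearrowright X$, noting that its proof allows its internal parameters to be tuned so that the resulting $n$ is much larger than $|L_1|$ and $|L_2|$. Setting $V'_{p,q}:=V_{p,q}\times Y$ and $U':=U\times Y$, the pairwise equivalence, the pairwise $E$-disjointness, and the identities $\partial V'\subseteq U'$, $R'=R\times Y$, $B'=B\times Y$ all transfer directly. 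The three product subequivalences (i)--(iii) are then established by a common routing trick: for (i), partition $\{2,\ldots,n\}$ into pairwise disjoint groups $\{J_h\}_{h\in L_1}$ of comparable sizes and use comparison on $X$ (with the measure estimates from the proof of Theorem~\ref{T-amenable}) to obtain, for each $h\in L_1$, a subequivalence $\overline{V_{i,1}}\prec Y_{i,h}:=W_1\cap\bigsqcup_{j\in J_h}V_{i,j}\cap B^\comp$ witnessed by an open cover $\{U_k^{(h)}\}$ and group elements $\{s_k^{(h)}\}\subseteq G$; covering $\overline{V_{i,1}}\times Y$ by the open sets $\{U_k^{(h)}\times h^{-1}P_1\}_{k,h\in L_1}$ and applying the translates $(s_k^{(h)},h)\in G\times H$ produces images $s_k^{(h)}U_k^{(h)}\times P_1\subseteq Y_{i,h}\times P_1$ that are pairwise disjoint, since the $Y_{i,h}$ are disjoint across $h$ and the $s_k^{(h)}U_k^{(h)}$ are disjoint for fixed $h$. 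Conditions (ii) and (iii) follow by entirely analogous arguments using $L_2$ in place of $L_1$, exploiting that $W_2\cap V\cap (V_1\cup B)^\comp$ and $W_2\cap\overline{V\cup U}^\comp$ both admit natural tower-induced decompositions into $|L_2|$ pairwise disjoint open pieces amenable to the same comparison-based routing.

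The main obstacle is the quantitative book-keeping: one must select $n$ and $\eps$ in the application of Theorem~\ref{T-amenable} so that, after paying the $|L_i|$-fold multiplicative cost of splitting the $X$-side targets, all three underlying comparison inequalities still hold simultaneously. This is achievable because the measure bounds $\mu(V_{i,1})\leq 1/n^2$, $\mu(R)\leq 5\eps$, and $\mu(B\cup(\overline{U}\cap R))\leq 2\mu(\overline{U})+\theta\eps/4$ derived inside the proof of Theorem~\ref{T-amenable} can all be driven as small as needed by shrinking $\eps$ and inflating $n$ in tandem, but it does require reopening that proof rather than treating it as a black box.
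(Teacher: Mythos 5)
Your overall strategy---lift the square divisibility of $G\curvearrowright X$ supplied by Theorem~\ref{T-amenable} to the product and use minimality of $H\curvearrowright Y$ to route in the $Y$-direction---is the right one, but your choice of witnessing sets creates both unnecessary work and a genuine quantitative gap. The paper takes $O_1' = O_1\times Y$ and $O_2' = O_2\times Y$, i.e., sets that are \emph{full} in the $Y$-direction. With that choice, conditions (i)--(iii) of $(O_1',O_2',K\times L)$-square divisibility follow by simply crossing every set in the $X$-side subequivalences with $Y$ (translating by elements $(s,e)$), with no routing, no splitting of targets, and no need to reopen the proof of Theorem~\ref{T-amenable}. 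The only place the $Y$-direction enters is the single subequivalence $\overline{O_1'}\sqcup\overline{O_2'}\prec O_0'$, which is handled by covering $Y$ by sets $V_h$ with $hV_h\subseteq O_Y$ and translating $O_0\times V_h$ by $(g_h,h)$, where the $g_h$ move a neighbourhood of a point with trivial stabilizer to disjoint pieces of $O_X$; no comparison is needed there. This is precisely the flexibility that the remark after Definition~\ref{D-SD} says was built into condition (iv) for the sake of this theorem. By instead taking $O_i = W_i\times P_i$ with $P_i$ a proper subset of $Y$, you force the $Y$-routing into all three subequivalences (i)--(iii).

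The concrete gap is in your treatment of condition (iii). After splitting the target $W_2\cap\overline{V\cup U}^\comp$ into $|L_2|$ pieces, each piece has $\mu$-measure roughly $(\theta\eps/2 - \mu(\overline{U}))/|L_2|$, while the source $B\cup(\overline{U}\cap R)$ has measure bounded by $2\mu(\overline{U})+\theta\eps/4$. Both the dominant term of the source bound ($\theta\eps/4$, coming from the $(EF\cup E,\theta\eps/4)$-invariance of the tiles) and the dominant term of the target bound ($\theta\eps/2$, coming from the proportion $\eps$ of the castle withheld into $Z\setminus Z'$) scale linearly in $\eps$, so their ratio is unaffected by ``shrinking $\eps$ and inflating $n$ in tandem,'' and the required inequality fails as soon as $|L_2|>2$. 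The repair is to decouple the two parameters inside the proof of Theorem~\ref{T-amenable}---demand tile invariance of order $\theta\eps/(C|L_2|)$ while still withholding a proportion $\eps$ of the castle---but your stated recipe does not do this, and as written the argument for (iii) does not close. (Conditions (i) and (ii) do survive your tuning, since there the source bounds $1/n^2$ and $5\eps$ can be driven down independently of the target bounds.) Given that the much simpler choice $O_i' = O_i\times Y$ avoids the entire issue, I would recommend restructuring along those lines rather than patching the book-keeping.
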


\begin{proof}
Let $O_X \subseteq X$ and $O_Y \subseteq Y$ be nonempty open sets. It is enough to verify that 
the product action is $O_X \times O_Y$-squarely divisible.

Since $H\curvearrowright Y$ is minimal, for every $y\in Y$ there are an open neighbourhood $V_y\subseteq Y$ of $y$
and an $h\in H$ such that $hV_y \subseteq O_Y$. A compactness argument then shows that 
there is a finite open cover $\{ V_h\}_{h\in F}$ of $Y$ indexed by a finite set $F\subseteq H$
such that $hV_h \subseteq O_Y$ for every $h\in F$.

Since $G\curvearrowright X$ has the URP it is essentially free, i.e., for every $\mu \in M_G(X)$ 
the action $G\curvearrowright (X,\mu)$ is free. 
The minimality of $G\curvearrowright X$ then implies that $G\curvearrowright X$ is topologically free.
There thus exists an $x\in X$ such that $sx\neq tx$ for all distinct $s,t\in G$.
Since $X$ has no isolated points (as ensues from the minimality and topological freeness of the action 
of $G$ together with the infiniteness of $G$) and the action of $G$ is minimal, it follows that we can find  
elements $g_h\in G\setminus \{e\}$ indexed by $h\in F$ such that the points $g_hx$ for $h\in F$ are distinct
and contained in $O_X$. A separation argument
then yields an open neighbourhood $O_0$ of $x$ such that the sets
$\{g_h O_0\}_{h\in F}$ are pairwise disjoint subsets of $O_X$ with $\overline{O_0}\cap g_h O_0=\emptyset$ for every $h\in F$.

Again using the nonexistence of isolated points in $X$, we can find nonempty open sets
$O_1 , O_2 \subseteq X$ such that $\overline{O_1} \cap \overline{O_2} = \emptyset$
and $\overline{O_1} \sqcup \overline{O_1} \subseteq O_0$.
Set $O_0' = (O_X\setminus \overline{O_0})  \times O_Y$ and $O_i' = O_i \times Y$ for $i=1,2$. 
Clearly $O_0', O_1'$, and $O_2'$ are nonempty disjoint open subsets of $X\times Y$ with 
$\overline{O_1'}\cap \overline{O_2'}=\emptyset$ and $O_0'\cap (\overline{O_1'}\sqcup \overline{O_2'})=\emptyset$.

To conclude that the product action is $O_X\times O_Y$-squarely divisible it remains to show that $\overline{O_1'}\sqcup \overline{O_2'}\prec O_0'$ and that $G\times H\curvearrowright X\times Y$ is $(O_1',O_2', E)$-squarely divisible for a given finite set $e\in E\subseteq G\times H$, which we may assume to be a product set $K\times L$.
Since
\begin{align*}
\overline{O_1'} \sqcup \overline{O_2'} = (\overline{O_1} \sqcup \overline{O_2}) \times Y
\subseteq \bigcup_{h\in F} (O_0\times V_h)
\end{align*}
we have
\begin{align*}
\overline{O_1'} \sqcup \overline{O_2'}
\prec \bigsqcup_{h\in F} (g_h, h)( O_0\times V_h ) 
\subseteq (O_X\setminus \overline{O_0}) \times O_Y  = O_0' .
\end{align*}
On the other hand, by Theorem~\ref{T-amenable} the action $G\curvearrowright X$ is $(O_1 , O_2 , K)$-squarely divisible,
and if we consider all of the sets that appear in conditions (i)--(iii) in Definition~\ref{D-SD} as witnesses to this 
$(O_1 , O_2 , K)$-square divisibility and take their products with $Y$, then it is readily checked
that these product sets witness the desired $(O_1' , O_2' , E)$-squarely divisibility of the product action.
\end{proof}

\section{A diagonal action machine}\label{S-diagonal machine}

Our goal here is to develop a mechanism for constructing diagonal actions of free products
satisfying certain properties that will be employed in later sections to various ends
and with varying degrees of power,
specifically in the proofs of Propositions~\ref{P-top free} and \ref{P-trivial factor},
Lemma~\ref{L-SD free product} (on the way to Theorem~\ref{T-SD free products}), 
and Theorem~\ref{T-SD free groups}.
First we carry out a series of lemmas that help us build a machine, namely Proposition~\ref{P-min WM}, 
that outputs diagonal actions that are minimal and weakly mixing
(in the case of Theorem~\ref{T-SD free groups} we will actually 
only rely separately on some of these lemmas). The specific inputs to this machine
that we will need in our applications are furnished by Proposition~\ref{P-extending}
and involve local freeness and (as is relevant 
to Lemma~\ref{L-SD free product}) square divisibility. 

We will require some tools from ergodic theory for this program. 
By the notation $(Z,\zeta )$ we always mean
an atomless standard probability space, of which there is only one up to measure isomorphism.
We write $\Act (G,Z,\zeta )$ for the space of all measure-preserving 
actions $G\curvearrowright (Z,\zeta )$ equipped with the Polish topology that has as a basis the open sets
\[
U_{\alpha , \Omega , E,\delta}
= \{ \beta \in\Act (G,Z,\zeta ) : \mu (\beta_s A \Delta \alpha_s A ) < \delta\text{ for all } s\in E \text{ and } A\in\Omega \}
\]
where $\alpha$ is an element of $\Act (G,Z,\zeta )$, 
$\Omega$ is a finite collection of measurable subsets of $Z$, $E$ is a finite subset of $G$, and $\delta > 0$.
For definitions and background on the aspects of ergodic theory at play in this and the next section,
including weak mixing, the Koopman representation, and disjointness,
see \cite{KerLi16} and \cite{Gla03}. Although we use the terminology ``weak mixing'' and ``disjoint''
in both the measure-dynamical and topological-dynamical senses, the context will always make it clear which is intended.

Recall that two actions $G\curvearrowright X$ and $G\curvearrowright Y$ are {\it disjoint} if there is no proper nonempty closed subset $Z\subseteq X\times Y$ which is invariant under the diagonal action
$s(x,y) = (sx,sy)$ and satisfies $\pi_X(Z)=X$ and $\pi_Y(Z)=Y$ for the canonical projection maps onto $X$ and $Y$, respectively.

\begin{lemma}\label{L-min disjoint}
Suppose that $H$ is a subgroup of $G$. Let $G \stackrel{\alpha}{\curvearrowright} X$ be a minimal action and 
$G\stackrel{\beta}{\curvearrowright} Y$ an action whose restriction $H\stackrel{\tilde{\beta}}{\curvearrowright} Y$ is minimal.
Suppose there exists a nonempty closed $H$-invariant set $A\subseteq X$
such that the action $H \curvearrowright A$ restricting $\alpha$ is minimal and disjoint from $\tilde{\beta}$.
Then the diagonal action $G \stackrel{\alpha\times\beta}{\curvearrowright} X\times Y$ is minimal. In particular, $\alpha$ and $\beta$ are disjoint.
\end{lemma}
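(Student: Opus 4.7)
The plan is to verify minimality of $\alpha\times\beta$ by taking an arbitrary nonempty closed $(\alpha\times\beta)$-invariant set $Z\subseteq X\times Y$ and showing $Z=X\times Y$, with the final ``in particular'' clause then being immediate since minimality of the diagonal action is a stronger condition than disjointness (disjointness rules out proper closed invariant subsets only among those projecting onto both factors).

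First I would use the minimality of $\alpha$ together with compactness to pull a point of $Z$ into the strip $A\times Y$. Namely, pick any $(x,y)\in Z$; by minimality of $\alpha$ there is a net $(g_i)$ in $G$ with $\alpha_{g_i}x\to x_0$ for some $x_0\in A$, and by compactness of $Y$ we may pass to a subnet so that $\beta_{g_i}y\to y_0$ for some $y_0\in Y$. Then $(x_0,y_0)\in Z\cap(A\times Y)$, so this intersection is a nonempty closed set invariant under the $H$-action given by the restriction of $\alpha\times\beta$ (using that $A$ is $H$-invariant).

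Next I would argue that $Z\cap(A\times Y)$ projects onto both $A$ and $Y$. The projection $\pi_A(Z\cap(A\times Y))$ is a nonempty closed $H$-invariant subset of $A$, so equals $A$ by minimality of $H\curvearrowright A$; similarly $\pi_Y(Z\cap(A\times Y))=Y$ by minimality of $\tilde\beta$. By the disjointness of $H\curvearrowright A$ and $\tilde\beta$ applied to the closed $H$-invariant set $Z\cap(A\times Y)\subseteq A\times Y$ with full projections, we obtain $Z\cap(A\times Y)=A\times Y$, i.e.\ $A\times Y\subseteq Z$.

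Finally, $G$-invariance of $Z$ gives $\alpha_g A\times Y=(\alpha\times\beta)_g(A\times Y)\subseteq Z$ for every $g\in G$, so $\bigl(\bigcup_{g\in G}\alpha_g A\bigr)\times Y\subseteq Z$. The set $\bigcup_{g\in G}\alpha_g A$ is a nonempty $G$-invariant subset of $X$, hence dense by minimality of $\alpha$. Taking closures and using that $Z$ is closed yields $X\times Y\subseteq Z$, so $Z=X\times Y$. The only step requiring any care is the passage into $A\times Y$ in the first paragraph and the verification that the projections are full; the rest is essentially bookkeeping. The main conceptual point is that disjointness at the level of $H$, combined with the fact that $\alpha$-orbits pile up on all of $X$, propagates $H$-disjointness into full $G$-minimality of the product.
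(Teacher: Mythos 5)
Your proof is correct, and it follows the same overall strategy as the paper's --- land in the strip $A\times Y$, exploit the $H$-level disjointness there, then spread out over $X$ using minimality of $\alpha$ --- but the execution is genuinely different. The paper works with the orbit-density characterization of minimality: given $(x,y)$ and open $U\times V$, it first observes that minimality and disjointness of $H\curvearrowright A$ and $\tilde\beta$ make the product $H\curvearrowright A\times Y$ minimal, covers $A\times Y$ by finitely many $H$-translates of a translate of $U\times V$, and then invokes the tube lemma to thicken $A$ to an open set $O$ so that minimality of $\alpha$ can steer $\gamma_k(x,y)$ into $O\times Y$ and hence, after two more translations, into $U\times V$. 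You instead work with the complementary characterization via closed invariant sets: a net/compactness argument places a point of $Z$ in $A\times Y$, the definition of disjointness is applied directly to the closed $H$-invariant set $Z\cap(A\times Y)$ (whose projections are full by minimality of the two $H$-actions), and the saturation $\bigcup_{g}\alpha_g A\times Y\subseteq Z$ finishes by density. Your route avoids both the tube lemma and the intermediate step of establishing minimality of $H\curvearrowright A\times Y$; the paper's route is more constructive in that it exhibits the explicit group element $s^{-1}t_0k$ carrying $(x,y)$ into $U\times V$. All the steps you flag as needing care (closedness and $H$-invariance of the projections, the identity $\overline{S\times Y}=\overline{S}\times Y$) do go through, since projections from the compact set $A\times Y$ are closed maps and are $H$-equivariant.
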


\begin{proof}
For brevity set $\gamma = \alpha\times\beta$.
Let $(x,y)\in X\times Y$ and let $U\subseteq X$ and $V\subseteq Y$ be nonempty open sets.
It suffices to show the existence of an $r\in G$ such that $\gamma_r (x,y) \in U\times V$.

By the minimality of $\alpha$ there exists an $s\in G$ such that $\alpha_s U \cap A \neq\emptyset$,
in which case $\gamma_s (U\times V) \cap (A\times Y) \neq\emptyset$. 
Since the actions $H \stackrel{\alpha}{\curvearrowright} A$ and $H\stackrel{\tilde{\beta}}{\curvearrowright} Y$ are minimal and disjoint, their product $H\curvearrowright A\times Y$ is minimal. It follows, using the 
compactness of $A\times Y$, that there are finitely many $H$-translates of the open set $\gamma_s(U\times V)$ which cover $A\times Y$, which means that we can find a finite collection $\{ W_t \}_{t\in F}$ of open 
subsets of $X\times Y$ indexed by some finite set $F\subseteq H$ such that 
$A\times Y\subseteq \bigcup_{t\in F} W_t$ 
and $\gamma_t W_t \subseteq \gamma_s (U\times V)$ for every $t\in F$.
By the tube lemma and the compactness of $A$ we can then find an open set $O\subseteq X$ such that $A\subseteq O$
and $O\times Y \subseteq \bigcup_{t\in F} W_t$.

Again using the minimality of $\alpha$, there exists a $k\in G$ such that $\alpha_k x \in O$.
Then $\gamma_k (x,y) \in O\times Y$, and so
there is a $t_0 \in F$ for which $\gamma_k (x,y) \in W_{t_0}$.
It follows that
\[
\gamma_{s^{-1} t_0 k} (x,y)
\in \gamma_{s^{-1}} \gamma_{t_0} W_{t_0}
\subseteq \gamma_{s^{-1}} \gamma_s (U\times V)
= U\times V ,
\]
which completes the proof.
\end{proof}

Recall that a {\it joining} of two p.m.p.\ actions $G\curvearrowright(Z_1,\zeta_1)$ and $G\curvearrowright (Z_2,\zeta_2)$ is a probability measure $\zeta$ on $Z_1\times Z_2$ which is invariant under
the diagonal action $G\curvearrowright Z_1\times Z_2$, given by $s(z_1 , z_2 ) = (sz_1 , sz_2 )$, and projects factorwise onto $\zeta_1$ and $\zeta_2$. The p.m.p.\ actions $G\curvearrowright(Z_1,\zeta_1)$ and $G\curvearrowright (Z_2,\zeta_2)$ are {\it disjoint} if their only joining is the product measure $\zeta_1\times\zeta_2$.

\begin{lemma}\label{L-product disjoint}
Suppose $G$ is amenable. Let $G\curvearrowright X$ and $G\curvearrowright Y$ be minimal actions
such that $M_G (Y)$ contains a unique measure $\nu$.
Suppose that there exists an ergodic $\mu\in M_G (X)$ such that the p.m.p.\ actions
$G\curvearrowright (X,\mu )$ and $G\curvearrowright (Y,\nu )$ are disjoint. 
Then the diagonal action $G\curvearrowright X\times Y$ is minimal.
In particular, the actions $G\curvearrowright X$ and $G\curvearrowright Y$ are disjoint.
\end{lemma}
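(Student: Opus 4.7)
The plan is to prove the stronger topological statement directly: every nonempty closed $G$-invariant $Z\subseteq X\times Y$ equals $X\times Y$. The minimality of the diagonal action follows from this, and the ``in particular'' assertion is then immediate since any topological joining surjects onto both factors. By the minimality of the two factor actions, the continuous images $\pi_X(Z)$ and $\pi_Y(Z)$ must equal $X$ and $Y$ respectively. The idea is then to place an ergodic invariant probability measure on $Z$ whose marginals are precisely $\mu$ and $\nu$; the p.m.p.\ disjointness hypothesis will force it to equal $\mu\times\nu$, whose support is all of $X\times Y$.

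First I would construct an invariant measure on $Z$ with $X$-marginal $\mu$. Via a standard measurable selection applied to the continuous surjection $\pi_X|_Z\colon Z\to X$ of compact metric spaces (the sections $Z_x$ being compact), one obtains a $\mu$-measurable $\sigma\colon X\to Y$ with $(x,\sigma(x))\in Z$ for $\mu$-a.e.\ $x$, and sets $\eta_0=(\id,\sigma)_*\mu\in M(Z)$. The set
\[
K=\{\eta\in M(Z):(\pi_X)_*\eta=\mu\}
\]
is then nonempty, convex, weak$^*$-compact, and $G$-invariant (using the invariance of both $Z$ and $\mu$), so amenability of $G$ yields a $G$-fixed point $\eta\in M_G(Z)$ still satisfying $(\pi_X)_*\eta=\mu$.

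Next I would pass to the ergodic decomposition $\eta=\int \eta_\omega\,dm(\omega)$. For $m$-a.e.\ $\omega$ the measure $\eta_\omega$ is concentrated on the closed set $Z$ and $G$-ergodic. Unique ergodicity of $G\curvearrowright Y$ forces $(\pi_Y)_*\eta_\omega=\nu$, while $(\pi_X)_*\eta_\omega$ is ergodic on $X$ (as a factor of an ergodic system) and averages to $\mu$; since $\mu$ is ergodic, this ergodic decomposition of $\mu$ is trivial, so $(\pi_X)_*\eta_\omega=\mu$ for $m$-a.e.\ $\omega$. Each such $\eta_\omega$ is thus a joining of $(X,\mu)$ with $(Y,\nu)$, and the p.m.p.\ disjointness hypothesis forces $\eta_\omega=\mu\times\nu$. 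Since $\mu$ and $\nu$ have full support by minimality, $\supp(\eta_\omega)=X\times Y$; but $\supp(\eta_\omega)\subseteq Z$ because $Z$ is closed, and hence $Z=X\times Y$.

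The main obstacle is arranging for the invariant measure on $Z$ to have $X$-marginal \emph{exactly} $\mu$ rather than some other ergodic measure on $X$: this is what allows the disjointness hypothesis to be invoked. Both the amenability of $G$ (to produce the $G$-fixed point in $K$) and the ergodicity of $\mu$ (to transfer the equality $(\pi_X)_*\eta_\omega=\mu$ to almost every ergodic component) are crucial in this step, and together they reduce the topological question to the measure-theoretic disjointness assumption.
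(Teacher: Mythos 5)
Your argument is correct, but it follows a genuinely different route from the paper's. The paper works pointwise: it fixes a tempered F{\o}lner sequence, uses the Lindenstrauss pointwise ergodic theorem to produce a $\mu$-generic point $x\in X$, finds a companion $y$ with $(x,y)$ in the given orbit closure, invokes the Oxtoby-type convergence of empirical averages coming from the unique ergodicity of $G\curvearrowright Y$, and then takes a weak$^*$ cluster point of the empirical measures along the orbit of $(x,y)$ to obtain a joining of $(X,\mu)$ and $(Y,\nu)$ supported on the orbit closure. You instead argue softly: a measurable selection (or, alternatively, the surjectivity of the affine map $(\pi_X)_*\colon M(Z)\to M(X)$) shows that the compact convex set $K$ of measures on $Z$ with $X$-marginal $\mu$ is nonempty, and the amenability fixed-point property produces an invariant element of $K$, which is then a joining supported on $Z$. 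Both proofs use amenability (the paper via F{\o}lner averaging, you via the fixed-point property on compact convex sets) and both conclude identically from disjointness and fullness of support. Your route avoids the pointwise ergodic theorem and temperedness entirely, at the cost of a selection argument; it also makes clear that the ergodic decomposition step you include is actually dispensable, since the invariant measure $\eta\in K$ already has marginals $\mu$ and $\nu$ (the latter by unique ergodicity of $G\curvearrowright Y$) and is therefore itself a joining, to which the disjointness hypothesis applies directly. The paper's more hands-on construction has the virtue of exhibiting the joining as a limit of empirical measures along a single orbit, which is in the spirit of the classical $\Zb$ argument it generalizes, but your proof is a valid and arguably cleaner alternative.
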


\begin{proof}
Let $(x_0,y_0) \in X\times Y$, and let $A$ denote the orbit closure of $(x_0,y_0)$ under $G\curvearrowright X\times Y$. We will show that $A = X\times Y$.

By the pointwise ergodic theorem \cite{Shu88, Tem92} (see also \cite[Theorem~1.2]{Lin01}), 
if we fix a tempered F{\o}lner sequence $(F_n )_{n\in\Nb}$ for $G$ (temperedness meaning that $|\bigcup_{k < n} F_k^{-1}F_n| \le C|F_n|$ for all $n\in \Nb$ where $C>0$ is a constant, a property that can be arranged by passing to a suitable subsequence of any given F{\o}lner sequence), there exists an $x\in X$ such that 
\begin{equation}
\int_X f\, d\mu = \lim_{n\to \infty} \frac{1}{|F_n|} \sum_{s\in F_n} f(s^{-1}x)  \label{E-X marginal}
\end{equation}
for all $f\in C(X)$
(actually this holds for $\mu$-a.e.\ $x\in X$). Since $G\curvearrowright X$ is minimal, the projection of $A$ onto $X$, being $G$-equivariant, is surjective. Thus there exists a $y\in Y$ such that $(x,y) \in A$. 

Since $\nu$ is the unique measure in $M_G(Y)$, we have 
\begin{equation}
\int_Y f\,d\nu = \lim_{n\to\infty} \frac{1}{|F_n|}\sum_{s\in F_n} f(s^{-1}y)  \label{E-Y marginal}
\end{equation}
for all $f\in C(Y)$, a well-known phenomenon that was originally observed by Oxtoby \cite{Oxt52} in the case $G = \Zb$. 
Indeed, if this were not true then one could construct an invariant measure $\rho \in M(Y)$ as a weak$^*$ cluster point of point mass averages $\nu_n := |F_n|^{-1}\sum_{s\in F_n} \delta_{s^{-1}y}$ for which $\int_Y f\,d\rho$ is bounded away from $\int_Y f\,d\nu$, in which case $\rho\ne \nu$.

Applying the same convergence-to-invariance principle using the F{\o}lnerness of the sets $F_n$, 
the point mass averages
$\lambda_n := |F_n|^{-1} \sum_{s\in F_n} \delta_{s^{-1}(x,y)} \in M(X\times Y)$
have a weak$^*$ cluster point $\lambda\in M_G (X\times Y)$.
By passing to a subsequence of $( F_n )_{n\in\Nb}$ and relabelling, we may assume that $\int_{X\times Y} f\,d\lambda = \lim_{n\to\infty} |F_n|^{-1}\sum_{s\in F_n} f(s^{-1}x,s^{-1}y)$ for all $f\in C(X\times Y)$. Since $(x,y) \in A$ and $A$ is closed and $G$-invariant, each $\lambda_n$ is supported on $A$ and hence $\lambda$ is supported on $A$. By (\ref{E-X marginal}) and (\ref{E-Y marginal}), $\lambda$ projects factorwise onto $\mu$ and $\nu$ hence by the disjointness of $G\curvearrowright (X,\mu)$ and $G\curvearrowright (Y,\nu)$
we infer that $\lambda = \mu \times \nu$. In particular, $\mu \times \nu$ is supported on $A$. By the minimality of $G\curvearrowright X$ and $G\curvearrowright Y$, each of $\mu$ and $\nu$ have full support and hence $A = X\times Y$.
\end{proof}

A unitary representation $\pi : G \to \cB(\cH)$ is {\it weakly mixing} if it has no nonzero finite-dimensional invariant subspaces, or, equivalently, if for every finite set $\Omega\subseteq \cH$ and $\eps>0$ there exists  $s\in G$ such that $|\langle\pi(s)\xi,\zeta\rangle|<\eps$ for all $\xi,\zeta\in \Omega$ (see \cite[Theorem~2.23]{KerLi16}).

\begin{lemma}\label{L-atomless}
The set of atomless measures in $M(\Tb )$ is a dense $G_\delta$.
\end{lemma}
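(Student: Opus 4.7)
The plan is to exhibit the set of atomless measures as a countable intersection of open sets and then show it is dense.

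For the $G_\delta$ property, I would for each $n \in \Nb$ define
\[
A_n = \big\{ \mu \in M(\Tb) : \mu(\{x\}) < 1/n \text{ for every } x \in \Tb \big\},
\]
so that the atomless measures form precisely $\bigcap_{n \in \Nb} A_n$. The main step is therefore to verify that the complement $A_n^\comp$ is closed in the weak$^*$ topology. Suppose $\mu_k \to \mu$ weakly with $\mu_k(\{x_k\}) \geq 1/n$ for some sequence $(x_k)$ in $\Tb$. By compactness I extract a subsequence with $x_k \to x$. For any closed neighbourhood $\overline{I}$ of $x$, eventually $x_k \in \overline{I}$, so by the portmanteau theorem
\[
\mu(\overline{I}) \geq \limsup_{k \to \infty} \mu_k(\overline{I}) \geq 1/n.
\]
Letting $\overline{I}$ shrink down to $\{x\}$ and using countable additivity, $\mu(\{x\}) \geq 1/n$, so $\mu \in A_n^\comp$.

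For density, given $\mu \in M(\Tb)$ and $\eps > 0$ I would produce an atomless measure close to $\mu$ by convolution. Let $\lambda_\eps$ denote the normalized Lebesgue measure on the arc $[-\eps, \eps] \subset \Tb$. The convolution $\mu * \lambda_\eps$ is absolutely continuous with respect to Haar (Lebesgue) measure on $\Tb$ and is therefore atomless. As $\eps \to 0$, a standard computation using uniform continuity of any fixed $f \in C(\Tb)$ shows that $\int f\, d(\mu * \lambda_\eps) \to \int f\, d\mu$, so $\mu * \lambda_\eps \to \mu$ in the weak$^*$ topology. Since $\mu$ was arbitrary, the atomless measures are dense.

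Neither step presents any real obstacle; the only point that requires a bit of care is the upper semicontinuity argument for the closedness of $A_n^\comp$, where one must combine compactness of $\Tb$ with the portmanteau theorem to pass the mass concentration at $x_k$ through the weak$^*$ limit.
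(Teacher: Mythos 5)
Your proof is correct, but it takes a genuinely different route from the paper's. You characterize atomlessness directly in terms of point masses, writing the atomless measures as $\bigcap_n A_n$ with $A_n = \{\mu : \mu(\{x\}) < 1/n \text{ for all } x\}$, and establish openness of each $A_n$ by the compactness-plus-portmanteau argument (which is sound: $M(\Tb)$ is weak$^*$ metrizable, so sequential closedness of $A_n^\comp$ suffices, and shrinking closed neighbourhoods of the limit point $x$ gives $\mu(\{x\})\geq 1/n$ by continuity from above). Your density argument via convolution with normalized Lebesgue measure on a small arc is also clean and correct. The paper instead encodes atomlessness spectrally: it writes the atomless measures as a countable intersection of open sets $W_{F,\delta}$ defined by the near-vanishing of some Fourier-type coefficient $\int_\Tb z^m f\bar{g}\,d\mu$, using the equivalence between atomlessness of $\mu$ and weak mixing of the multiplication unitary $M_z$ on $L^2(\Tb,\mu)$, and proves density by approximating $\mu$ with measures that are piecewise Lebesgue on arcs. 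The paper's formulation is heavier for this lemma in isolation, but it matches the spectral viewpoint (maximal spectral types, weak mixing of Koopman operators) that drives the rest of Section~\ref{S-diagonal machine}, whereas your argument is more elementary and self-contained. Either proof serves the statement.
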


\begin{proof}
For a finite set $F\subseteq C(\Tb)$ and $\delta>0$ write $W_{F,\delta}$ for the set of all $\mu\in M(\Tb )$
such that there exists an $m\in\Zb$ for which $|\int_{\Tb} z^m f\bar{g} \, d\mu| < \delta$ for all $f,g\in F$. This set
is weak$^*$ open. Indeed if $\mu_n\notin W_{F,\delta}$ and $\mu_n$ converges to $\mu$ 
in the weak$^*$ topology then for every $m\in\Zb$ we can find $n_1<n_2<n_2<\ldots$ and $f,g\in F$ so that $|\int_{\Tb} z^m f\bar{g} \, d\mu_{n_k}|\geq\delta$ for all $k\in\Nb$. 
Convergence gives $|\int_{\Tb} z^m f\bar{g} \, d\mu|\geq\delta$, so that $\mu\notin W_{F,\delta}$.
To see that $W_{F,\delta}$ is dense, let $\mu\in M(\Tb )$. For each $k\in\Nb$ define $\mu_k$ to be the measure 
in $M(\Tb )$ which for each $j=0,\ldots , k-1$ is equal on the half-open arc $I_{j,k}$ from 
$e^{2\pi i j/k}$ (inclusive) to $e^{2\pi i (j+1)/k}$ (exclusive) to Lebesgue measure,
normalized to have the same mass on the arc as $\mu$. 

We check that $\mu_k \to \mu$ as $k\to\infty$. Let $h\in C(\Tb)$ and $\eps>0$. By uniform continuity, there exists $k_0\in\Nb$ such that $|h(z)-h(w)|<\eps /2$ whenever $z,w\in I_{j,k}$ for some $k\geq k_0$ and $0\leq j \leq k-1$. Let $k\geq k_0$ and pick an $x_{j,k}\in I_{j,k}$ for each $j=0,\ldots,k-1$. Then for each $j=0,\ldots, k-1$ we have
\begin{align*}
\left|\int_{I_{j,k}} h \,d\mu-\int_{I_{j,k}} h\, d\mu_k\right|&\leq \left|\int_{I_{j,k}} (h-h(x_{j,k}))\, d\mu\right|+\left|\int_{I_{j,k}} (h(x_{j,k})-h)\, d\mu_k\right|\\
&\leq \frac{\eps}{2}(\mu(I_{j,k})+\mu_k(I_{j,k}))=\eps\mu(I_{j,k})
\end{align*}
and hence
\begin{align*}
\left|\int_\Tb h\, d\mu-\int_\Tb h\, d\mu_k\right|&\leq \sum_{j=0}^{k-1}\left|\int_{I_{j,k}} h\, d\mu-\int_{I_{j,k}} h\, d\mu_k\right|\\
&\leq \sum_{j=0}^{k-1}\eps \mu(I_{j,k})=\eps.
\end{align*}
This proves that $\mu_k\to \mu$ as $k\to \infty$.
For $f,g\in C(\Tb )$, uniform continuity and the fact that $\int_{I_{j,k}}z^k\,dz=0$ for all $k\in\Nb$ and $j=0,\ldots,k-1$ imply that $\int_{\Tb} z^k f\bar{g} \, d\mu_k \to 0$ as $k\to\infty$.
This convergence shows that $\mu_k$ is eventually always in $W_{F,\delta}$, and so we 
obtain the density of $W_{F,\delta}$.

Since $C(\Tb )$ is separable, it has a countable dense subset $\Upsilon$. 
Set $W = \bigcap_{F\in\mathcal{P}_\mathrm{fin}(\Upsilon)} \bigcap_{n\in\Nb} W_{F,1/n}$, where $\mathcal{P}_\mathrm{fin}(\Upsilon)$ denotes the countable collection of all finite subsets of $\Upsilon$. By the Baire category theorem,
the set $W$ is a dense $G_\delta$. Since for every $\mu\in M(\Tb )$ the elements of $\Upsilon$ 
modulo $\mu$-a.e.\ equality form a dense subset of $L^2 (\Tb , \mu )$, 
a simple approximation argument
shows that $\mu$ belongs to $W$ precisely when the associated unitary representation
of $\Zb$ on $L^2 (\Tb , \mu )$ by the multiplication operators $z^n$ is weakly mixing.
Weak mixing is equivalent in this case to the atomlessness of the measure $\mu$, as is 
well known and easy to see.
\end{proof}

Recall that two positive bounded Borel measures $\mu$ and $\nu$ on $X$ are said to be \emph{mutually singular}, written $\mu\perp \nu$, if there exists a Borel set $A\subseteq X$ for which $\mu(X\setminus A)=0$ and $\nu(A)=0$.

Let $K$ be a compact group and let $\mu$ and $\nu$ be positive bounded Borel measures on $K$. 
The \emph{convolution} of $\mu$ and $\nu$ is defined by $(\mu*\nu )(E)=\int_K \mu(Ez^{-1})\,d\nu(z)$ for Borel sets $E\subseteq K$. The convolution of $n$ copies of $\mu$ is written $\mu^{*n}$.
Note that if $\mu=\sum_{i=1}^{n}\alpha_i\delta_{x_i}$ and $\nu=\sum_{j=1}^{m}\beta_j\delta_{y_j}$ are atomic measures then $\mu*\nu=\sum_{i,j}\alpha_i\beta_j\delta_{x_iy_j}$.

The \emph{conjugate} $\overline{\nu}$ of a measure $\nu\in M(\Tb )$ is defined
by $\overline{\nu}(B)=\nu(\{z\in\Tb: \overline{z}\in B\})$ for Borel sets $B\subseteq \Tb$.

\begin{lemma}\label{L-disjoint powers}
Let $\mu\in M(\Tb )$. Then the set of all $\nu\in M(\Tb )$ such that $(\nu + \overline{\nu} )^{*n}\perp \mu$
for all $n\in\Nb$ is a dense $G_\delta$.
\end{lemma}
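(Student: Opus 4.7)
The plan is to express the set in question as a countable intersection of sets $A_n := \{\nu \in M(\Tb) : (\nu + \overline{\nu})^{*n} \perp \mu\}$, verify that each $A_n$ is $G_\delta$, and then show that each is dense, so that Baire's theorem delivers the conclusion.

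For the $G_\delta$ property, I would use the standard variational characterization of mutual singularity: two positive finite Borel measures $\sigma, \mu$ on $\Tb$ satisfy $\sigma \perp \mu$ if and only if for every $k \in \Nb$ there exists $f \in C(\Tb, [0,1])$ with $\int (1-f)\, d\sigma + \int f\, d\mu < 1/k$ (the forward direction uses outer regularity and Urysohn's lemma; the reverse follows by extracting a pointwise a.e.\ convergent subsequence). Conjugation $\nu \mapsto \overline{\nu}$ is weak$^*$ continuous, and convolution on $M(\Tb)$ is jointly weak$^*$ continuous (standard for measures on a compact group), so $\nu \mapsto (\nu + \overline{\nu})^{*n}$ is continuous; hence for each $k$, $n$, and $f$ the condition $\int (1-f)\, d(\nu + \overline{\nu})^{*n} + \int f\, d\mu < 1/k$ defines a weak$^*$ open subset of $M(\Tb)$. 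Thus each $A_n$ is $G_\delta$, and so is $\bigcap_n A_n$.

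For density of each $A_n$, fix $n \in \Nb$, a weak$^*$ neighborhood $W$ of some $\nu_0 \in M(\Tb)$, and decompose $\mu = \mu_a + \mu_c$ into its atomic and continuous parts. The idea is to produce a continuous probability measure $\nu \in W$ supported on a perfect compact set $K \subseteq \Tb$ for which (a) $\nu$ is continuous, which automatically forces $(\nu + \overline{\nu})^{*n}$ to be continuous (since the convolution of a continuous measure with any finite measure is continuous), hence singular to $\mu_a$; and (b) the $n$-fold product set $(K \cup K^{-1})^n$ is $\mu_c$-null, forcing the support of $(\nu + \overline{\nu})^{*n}$ to lie in a $\mu_c$-null set, hence singular to $\mu_c$. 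The construction of $K$ proceeds by a Cantor-style induction: at each stage $m$ one selects a finite family $K_m$ of closed arcs refining the family at stage $m{-}1$ so that (i) the nested intersection $K = \bigcap_m K_m$ is perfect and supports continuous measures approximating $\nu_0$ in $W$, via distributing the level-$m$ arcs across $\Tb$ in proportion to $\nu_0$'s local mass and assigning weights accordingly, and (ii) the $n$-fold product set $(K_m \cup K_m^{-1})^n$ has $\mu_c$-measure less than $1/m$. Since $(K \cup K^{-1})^n = \bigcap_m (K_m \cup K_m^{-1})^n$ for nested compact $K_m$, monotone convergence yields $\mu_c((K \cup K^{-1})^n) = 0$.

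The main obstacle is arranging (ii) inductively while maintaining (i). Although only finitely many product configurations of the stage-$m$ arcs are relevant, their number multiplies rapidly with $m$, and controlling their total $\mu_c$-mass requires carefully placing each stage's arcs within the previous stage's enclosures so that every product $\prod z_{i_l}^{\eps_l}$ lands in a region where $\mu_c$ is thin at the relevant scale. This uses the continuity of $\mu_c$: for any prescribed $\eps > 0$ and finite list of product configurations, small perturbations of the arc centers within the stage-$(m{-}1)$ enclosures can always be chosen so that the resulting product arcs have total $\mu_c$-mass below $\eps$, since $\mu_c$ assigns arbitrarily small mass to small enough arcs, and the map sending arc parameters to product-arc locations is continuous. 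Combining the two conditions through the induction produces the required $K$ and hence the desired $\nu \in W \cap A_n$.
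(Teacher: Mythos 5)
Your proof is correct, but it reaches both halves of the statement by a genuinely different route than the paper. For the $G_\delta$ part, you show directly that each $A_n=\{\nu:(\nu+\overline{\nu})^{*n}\perp\mu\}$ is $G_\delta$ by pulling back the variational (Urysohn) characterization of mutual singularity along the weak$^*$ continuous map $\nu\mapsto(\nu+\overline{\nu})^{*n}$; the paper instead introduces auxiliary open sets $Q_{\delta,n}$ (existence of an open $V$ with $\mu(V)<\delta$ and $(\nu+\overline{\nu})^{*j}(\Tb\setminus V)<\delta$ for $j\le n$) and must then prove, in a separate two-inclusion argument, that $\bigcap_m Q_{1/m,m}$ coincides with the target set. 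Your version is arguably cleaner here. For density, the paper approximates by \emph{finitely supported} measures whose finite product sets $F^a\overline{F}^b$ avoid the countable set of atoms of $\mu$ (found via a positive-Lebesgue-measure argument in the configuration space of supports, the bad configurations lying in countably many hyperplanes); a finite set off the atoms is automatically $\mu$-null, so the continuous part of $\mu$ never enters. You go the opposite way: atomless measures on a Cantor set $K$ with $(K\cup K^{-1})^n$ being $\mu_c$-null, so the atomic part of $\mu$ is handled for free by continuity of the convolution powers, at the cost of having to control $\mu_c$ on the product set. Both work; the paper's choice dodges your inductive construction but forces the $Q_{\delta,n}$ machinery, since an atomic approximant is never literally carried by a $\mu$-null open set. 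One small correction to your inductive step: perturbing the arc \emph{centers} does not by itself control the $\mu_c$-mass of the product arcs (take $\mu_c$ to be Lebesgue measure, where every arc of length $n\ell$ has mass $n\ell$ regardless of position). What actually closes the argument is shrinking the arc \emph{lengths}: since $\mu_c$ is atomless, for every $\eps>0$ there is a $\delta>0$ such that every arc of length less than $\delta$ has $\mu_c$-mass less than $\eps$ (a compactness argument), so once the number $N_m$ of stage-$m$ arcs is fixed one simply takes their common length $\ell_m$ small enough that the at most $(2N_m)^n$ product arcs, each of length $n\ell_m$, have total $\mu_c$-mass below $1/m$; no careful placement is needed. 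You do invoke smallness of $\mu_c$ on short arcs, so the right ingredient is present, but the emphasis on repositioning is a red herring.
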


\begin{proof}
Let $\omega\in M(\Tb )$, let $\Upsilon$ be a finite subset of $C(\Tb )$, and let $\eps > 0$. Let $m\in\Nb$ and $\eps' > 0$, to be determined. 
For $k=1,\dots , m$ write $I_{k,m}$ for the half-open arc from
$e^{2\pi i(k-1)/m}$ (inclusive) to $e^{2\pi i k/m}$ (exclusive).
For every $k=1,\ldots,m$ find $p_k,q_k\in \Nb\cup \{0\}$ such that 
$\left|\omega(I_{k,m})-p_k /q_k\right|<\eps'$ and $\sum_{k=1}^{m} p_k /q_k=1$. Take a set
$F\subseteq \Tb$, to be further specified below, 
whose intersection with the arc $I_{k,m}$ for a given $1\leq k\leq m$ 
contains exactly $p_k\cdot\prod_{j\neq k}q_j$ points.
Letting $\kappa\in M(\Tb)$ be the uniform probability measure supported on $F$, one can verify that
\begin{enumerate}
\item $|\omega (I_{k,m}) - \kappa (I_{k,m})| < \eps'$ for every $k=1,\dots ,m$.
\end{enumerate}
Write $C$ for the (countable) set
of all $z\in\Tb$ such that $\mu ( \{ z \} ) > 0$. 
We claim that $F$ can be chosen so that, writing $\overline{F}=\{ \overline{z} : z\in F \}$,
\begin{enumerate}
\item[(ii)] the sets $F^a\overline{F}^b$
are disjoint from $C$ for all integers $a,b\geq 0$.
\end{enumerate}
Set $N_k=p_k\cdot\prod_{j\neq k}q_j$. Then the configuration space $U$ for the possible sets $F$ 
satisfying our original requirement that $\left|F\cap I_{k,m}\right|=p_k\cdot\prod_{j\neq k}q_j$ 
for every $k=1,\ldots,m$ can be parameterized as $\prod_{k=1}^{m} (I_{k,m}^{N_k}\setminus \Delta_{k,m} )\subseteq \prod_{k=1}^{m}\Tb^{N_k}$,
where $\Delta_{k,m}:=\{(x_1,\ldots,x_{N_k})\in \Tb^{N_k}:x_i=x_j \text{ for some } i\neq j\}$. Note that $\lambda(U)=\prod_{k=1}^{m} m^{-N_k}>0$, where $\lambda$ denotes normalized Lebesgue measure, since each $\Delta_{k,m}$ can be viewed as a subset of a finite union of hyperplanes in $\mathbb{R}^{N_k}$ via the identification of $\Tb$ with $[0,1)$ and hence has measure zero.
On the other hand, writing $N=N_1+\ldots+N_m$, the ``bad'' set is contained in the countable union
\begin{align*}
A:=\bigcup_{c\in C}\,\bigcup_{a,b=0}^{\infty}\,\bigcup_{\pi\in\{1,\ldots,N\}^a} \,\bigcup_{\sigma\in\{1,\ldots,N\}^b} A_{c,a,b,\pi , \sigma}
\end{align*}
where for $\pi = (i_1,\ldots,i_a)$ and $\sigma = (j_1,\ldots,j_b)$ the set $A_{c,a,b,\pi ,\sigma}$
is defined as
\begin{align*}
\bigg\{(p_1,\ldots,p_{N})\in \prod_{k=1}^{m}\mathbb{T}^{N_k}: p_{i_1}\cdots p_{i_a}\cdot\overline{p}_{j_1}\cdots\overline{p}_{j_b}=c\bigg\}.
\end{align*}
Again identifying $\Tb$ with $[0,1)$, the set $A_{c,a,b,\pi ,\sigma}$ can be viewed 
as a subset of a finite union of hyperplanes in $\prod_{k=1}^{m}\mathbb{R}^{N_k}$, namely the
intersection of $\prod_{k=1}^{m}[0,1)^{N_k}$ with
\begin{align*}
\bigcup_{\ell=-b-1}^{a-1}\bigg\{(p_1,\ldots,p_{N})\in \prod_{k=1}^{m}\mathbb{R}^{N_k}: 
\sum_{k=1}^{a}p_{i_k}-\sum_{k=1}^{b}p_{j_k}=c+\ell\bigg\} ,
\end{align*}
and hence has measure zero. We conclude that $U\setminus A$ has nonzero measure, 
and so $F$ can be chosen to satisfy condition (ii) above.

By the uniform continuity of functions in $C(\Tb )$, we can take $m$ large enough and $\eps'$ small enough so that 
condition (i) implies that $\kappa$ belongs
to the weak$^*$ open neighbourhood 
\begin{equation}
W_{\omega , \Upsilon , \eps} := \bigg\{ \nu\in M(\Tb ) : \bigg|\int_{\mathbb{T}} f\, d\omega - \int_{\mathbb{T}} f\, d\nu \bigg| < \eps \text{ for every }f\in\Upsilon \bigg\}. \label{E-nbhd}
\end{equation}
of $\omega$.

Now let $\delta > 0$ and $n\in\Nb$. Define $Q_{\delta,n}$ to be the set of all $\nu\in M(\Tb)$ for which there exists an open set $V\subseteq \Tb$ such that $\mu(V)<\delta$ and $(\nu+\overline{\nu})^{*j}(\Tb\setminus V)<\delta$ for every $j=1,\ldots,n$.
By the portmanteau theorem, if $\rho_n \to \rho$ weak$^*$ inside $M(Z)$ for some compact metrizable space 
$Z$ then $\limsup_{n\to\infty} \rho_n(Y)\geq \rho(Y)$ for every closed set $Y\subseteq Z$,
a fact which, when combined with the weak$^*$ continuity of addition and convolution of bounded positive measures on compact groups \cite[Theorem~1.2.2]{Hey77}, shows that $Q_{\delta,n}$
is open.

We will also verify that $Q_{\delta, n}$ is dense. 
Since the sets $W_{\omega,\Upsilon,\eps}$ defined in (\ref{E-nbhd}) form a basis for the weak$^*$ topology on $M(\Tb)$, it suffices to show that each of these sets intersects $Q_{\delta,n}$. For given
$\Upsilon$ and $\eps$ we have our $F\subseteq\Tb$ and $\kappa\in W_{\omega,\Upsilon,\eps}$ as above.
We claim that $\kappa$ belongs to $Q_{\delta,n}$. By the disjointness condition in (ii) we can find, for every $z\in \bigcup_{j=1}^{n}\bigcup_{k=0}^{j} F^k \overline{F}^{j-k}$, a small enough open arc
$U_z$ in $\Tb$ containing $z$ so that the union $U = \bigcup_{j=1}^{n}\bigcup_{k=0}^{j}\bigcup_{z\in F^k\overline{F}^{j-k}} U_z$ satisfies $\mu (U) < \delta$. On the other hand, for every $j=1,\ldots,n$ the atomic measure $(\kappa+\overline{\kappa})^{*j}$ is supported on $\bigcup_{k=0}^{j} F^k \overline{F}^{j-k}$. Thus $(\kappa+\overline{\kappa})^{*j}(\Tb\setminus U)=0$ for all $j=1,\ldots,n$, which shows that $\kappa\in Q_{\delta,n}$ and hence that $W_{\omega,\Upsilon,\eps}$ intersects $Q_{\delta,n}$.

Now $\bigcap_{m=1}^\infty Q_{1/m,m}$ is a dense $G_\delta$ set,
and we claim that it is equal to the set in the statement of the proposition.
First we let $\nu \in M(\Tb)$ be such that $(\nu+\overline{\nu})^{*n}\perp \mu$ for all $n\in\Nb$ and check that $\nu\in \bigcap_{m=1}^\infty Q_{1/m,m}$. Given $m\in\Nb$, we want to find an open set $V\subseteq \Tb$ such that $\mu(V)<1/m$ and $(\nu+\overline{\nu})^{*j}(\Tb\setminus V)<1/m$ for every $j=1,\ldots,m$. 
Since $\sum_{j=1}^{m}(\nu+\overline{\nu})^{*j}\perp \mu$ from our assumption,
there is a Borel set $B\subseteq \Tb$ such that $\mu(B)=0$ and $(\nu+\overline{\nu})^{*j}(\Tb\setminus B)=0$ for every $j=1,\ldots,m$. It follows by the regularity of $\mu$ that there is an open set $V\subseteq \Tb$ satisfying $V\supseteq B$ and $\mu(V)<1/m$. Then $(\nu+\overline{\nu})^{*j}(\Tb\setminus V)=0$ for every $j=1,\ldots,m$ and so $\nu$ belongs to $\bigcap_{m=1}^\infty Q_{1/m,m}$.

For the reverse inclusion, let $\nu \in \bigcap_{m=1}^{\infty} Q_{1/m,m}$.
For every $m\in\Nb$ we can find an open set $V_m\subseteq \Tb$ for which $\mu(V_m)<1/m$ and $(\nu+\overline{\nu})^{*j}(\Tb\setminus V_m)<1/m$ for every $j=1,\ldots,m$. 
Consider the Borel set $V=\bigcap_{k=1}^{\infty}\bigcup_{m=k}^{\infty}V_{m^2}$. We have $\mu(V)\leq \sum_{m=k}^{\infty}\mu(V_{m^2})<\sum_{m=k}^{\infty} m^{-2}$ for every $k\in\Nb$, and hence $\mu(V)=0$. Finally, writing $\Tb \setminus V = \bigcup_{k=1}^\infty W_k$ where $W_k = \bigcap_{m=k}^\infty \Tb \setminus V_{m^2}$, we have $(\nu+\overline{\nu})^{*n}(W_k)\leq (\nu+\overline{\nu})^{*n}(\Tb\setminus V_{m^2})<1/m^2$ for all $m\ge \max\{k,\sqrt{n}\}$. Hence $(\nu+\overline{\nu})^{*n}(W_k)=0$ for all $k\in\Nb$, which yields $(\nu+\overline{\nu})^{*n}(\Tb\setminus V)=0$.
Thus $(\nu + \overline{\nu})^{*n} \perp \mu$ for all $n\in \Nb$.
\end{proof}

\begin{notation}
Given an action $\alpha\in \Act (G,X)$,
for a pair of open sets $U_1 , U_2\subseteq X$ we write $N_\alpha (U_1 , U_2 )$ for the set of all $s\in G$
such that $\alpha_s U_1 \cap U_2 \neq\emptyset$. 
Given an action $\alpha\in\Act(G,Z,\zeta)$, for a pair of measurable sets $U, V \subseteq Z$ 
and $\eps > 0$ 
we write $N_{\alpha ,\eps} (U , V )$ for the set of all $s\in G$ such that 
$\zeta (\alpha_s U \cap V ) > \zeta (U ) \zeta (V ) - \eps$. 
\end{notation}

For convenient reference we record the following well-known fact. One applies
the Ornstein--Weiss quasitower theorem (see \cite[Theorem~4.45]{KerLi16})
to deduce that for infinite amenable $G$ the conjugacy class of any free action in $\Act (G,Z,\zeta )$ is dense 
in the set of all free actions in $\Act (G,Z,\zeta )$, which in turn is dense in $\Act (G,Z,\zeta )$ by a result of 
Glasner and King \cite[Appendix (2)]{GlaKin98}.

\begin{lemma}\label{L-conj class}
Suppose that $G$ is infinite and amenable. Then the conjugacy class of every free action in $\Act (G,Z,\zeta )$ is dense.
\end{lemma}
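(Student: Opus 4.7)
The plan is to decompose the claim along the lines indicated by the remark preceding the statement. First, by the result of Glasner and King already cited, the set of free actions is dense in $\Act(G,Z,\zeta)$, so it suffices to show that the conjugacy class of any fixed free action $\alpha$ is dense in the set of free actions. This second reduction is where the Ornstein--Weiss quasitower theorem enters.

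To establish this reduction, I would fix a free action $\beta \in \Act(G,Z,\zeta)$ together with a basic neighbourhood $U_{\beta, \Omega, E, \delta}$, and construct a measure-preserving automorphism $\varphi$ of $(Z,\zeta)$ with $\varphi\alpha\varphi^{-1} \in U_{\beta, \Omega, E, \delta}$. The idea is to build compatible Rokhlin castles for $\alpha$ and $\beta$ and conjugate them level-by-level. Pick a finite measurable partition $\sP$ refining $\{\beta_s A : s \in E \cup \{e\},\ A \in \Omega\}$. Applying the Ornstein--Weiss quasitower theorem to $\beta$ with a small invariance parameter $\eta \ll \delta$ and a sufficiently large finite invariance set containing $E$, refined so that each level is essentially $\sP$-monochromatic, produces a castle $\{(S_i, V_i)\}_{i \in I}$ whose shapes are $(E,\eta)$-invariant and whose remainder has $\zeta$-measure less than $\delta / 4$. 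Applying the same theorem to $\alpha$, but prescribing the same list of shapes $S_i$ and base measures $\zeta(V_i)$ (achievable because $(Z,\zeta)$ is atomless standard and the quasitower construction has enough flexibility in the choice of base), yields a castle $\{(S_i, V_i')\}_{i \in I}$ for $\alpha$ with matching structural data.

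The conjugating map $\varphi$ is then assembled by choosing for each $i$ a measure-preserving bijection $V_i' \to V_i$ compatible with the $\sP$-monochromaticity condition on levels, and extending equivariantly across each tower via $\varphi(\alpha_s x) = \beta_s \varphi(x)$ for $x \in V_i'$ and $s \in S_i$, which is well-defined and measure-preserving by freeness of the actions and the disjointness of the levels in each castle. One fills in $\varphi$ over the two small remainders by any measure-preserving bijection. On the $(E, \eta)$-invariant interior of each tower one then has $\varphi\alpha_s\varphi^{-1} = \beta_s$ for every $s \in E$; combining this identity with the $\sP$-monochromaticity of the levels and the smallness of the remainder, a routine estimate gives $\zeta(\varphi\alpha_s\varphi^{-1} A \,\triangle\, \beta_s A) < \delta$ for all $s \in E$ and $A \in \Omega$, as required. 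The main obstacle is the bookkeeping required to align shapes and base measures across the two castles; this relies on the atomless standardness of $(Z,\zeta)$ to prescribe base measures precisely, so that the two castles can be matched by a global measure-preserving bijection.
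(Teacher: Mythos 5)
Your top-level decomposition (Glasner--King to reduce to density within the free actions, then the Ornstein--Weiss quasitower theorem) is exactly the route the paper indicates, and the pieces surrounding the castle construction are fine: the equivariant extension $\varphi(\alpha_s x)=\beta_s\varphi(x)$ over matched towers, the observation that $\varphi\alpha_s\varphi^{-1}=\beta_s$ on the $(E,\eta)$-invariant core, and the final estimate all work (in fact the $\sP$-monochromaticity is superfluous: if two measure-preserving maps agree off a set of measure $\eps$ then $\zeta(TA\,\triangle\,SA)\le 2\eps$ for \emph{every} measurable $A$, so pointwise agreement off a small set already suffices). The problem is the step you yourself flag as the main obstacle: producing a castle for $\alpha$ with the same shapes $S_i$ \emph{and the same base measures} $\zeta(V_i)$ as the castle for $\beta$. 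The Ornstein--Weiss quasitower theorem gives you the freedom to reuse the same shapes (they come from a quasitiling of the group), but it controls only the total coverage $\sum_i|S_i|\zeta(V_i)>1-\eps$, not the individual base measures; atomlessness of $(Z,\zeta)$ lets you shrink bases but gives no way to enlarge the mass carried by a given shape. The naive repair --- matching shape-$i$ towers via bases of measure $\min(\zeta(V_i),\zeta(V_i'))$ --- can lose essentially all the mass (e.g.\ two shapes, with $\alpha$'s castle concentrated on one and $\beta$'s on the other), and matching an $S_i$-tower of $\alpha$ equivariantly with an $S_j$-tower of $\beta$ for $i\ne j$ requires a translate $c$ with $|S_i\cap c^{-1}S_j|$ large, which is unavailable for shapes of very different sizes. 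Prescribing base measures exactly is an Alpern-type exact multi-tower Rokhlin lemma; this is available for $\Zb$ (and is used for exactly this purpose in Section~10 of the paper), but it is not a consequence of the quasitower theorem for a general amenable group, where multiple incommensurable shapes are genuinely needed.

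The standard way to close this gap is to make the argument one-sided: build a Rokhlin castle only for $\alpha$ (the action whose conjugates you are using), with shapes invariant enough that, by the mean ergodic theorem along F{\o}lner sets applied to $\beta$, most $\beta$-orbit segments of those shapes carry $E$-local $\sP$-statistics close to the true ones. Paint such a typical $\beta$-pattern onto each $\alpha$-tower to produce a partition $\{B_A\}_{A\in\sP}$ of $Z$ whose joint distribution under $(\alpha_s)_{s\in E}$ approximates that of $\sP$ under $(\beta_s)_{s\in E}$; then use atomlessness to correct the measures so that $\zeta(B_A)=\zeta(A)$ exactly, and conjugate by any measure-preserving bijection carrying $B_A$ to $A$. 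Since $\sP$ refines $\{\beta_sA : s\in E\cup\{e\},\,A\in\Omega\}$, closeness of the joint distributions yields $\zeta(\varphi\alpha_s\varphi^{-1}A\,\triangle\,\beta_sA)<\delta$. This avoids ever having to align two castles, which is the step your argument cannot currently justify.
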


By definition, an action $\alpha\in \Act (G,X)$ is weakly mixing if and only if the intersection of $N_\alpha (U_1 , U_2 )$ and $N_\alpha (V_1 , V_2 )$ is nonempty for all nonempty open sets $U_1 , U_2 ,V_1 , V_2 \subseteq X$. 

\begin{lemma}\label{L-Gamma}
Suppose $G$ is infinite and amenable. Let $L$ be an infinite subset of $G$.
Let $\Gamma$ be the set of all $\alpha\in\Act (G, Z,\zeta )$ such that 
$N_{\alpha ,\eps} (A_1 , A_2 ) \cap N_{\alpha ,\eps} (B_1 , B_2 ) \cap L \neq\emptyset$ for all measurable sets $A_1 , A_2 , B_1 , B_2 \subseteq Z$ and $\eps > 0$. 
Then $\Gamma$ contains a dense $G_\delta$ subset of $\Act(G,Z,\zeta)$.
\end{lemma}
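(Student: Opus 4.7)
The plan is to pin down an explicit $G_\delta$ subset $\Gamma_0\subseteq\Gamma$ and verify that it is dense. First I would fix a countable family $\sF$ of measurable subsets of $Z$ which is dense with respect to the symmetric difference pseudometric (this exists since $(Z,\zeta)$ is standard). For each $A_1,A_2,B_1,B_2\in\sF$ and rational $\eps>0$, consider
\[
W(A_1,A_2,B_1,B_2,\eps) := \bigcup_{s\in L}\bigl\{\alpha\in\Act(G,Z,\zeta)\colon s\in N_{\alpha,\eps}(A_1,A_2)\cap N_{\alpha,\eps}(B_1,B_2)\bigr\}.
\]
For each fixed $s$ the bracketed condition is open in $\alpha$, since the defining neighbourhood relation $\zeta(\beta_sA\Delta\alpha_sA)<\delta$ in the topology on $\Act(G,Z,\zeta)$ immediately yields continuity of $\alpha\mapsto\zeta(\alpha_sA\cap B)$ for any fixed measurable $A,B$; hence each $W(A_1,A_2,B_1,B_2,\eps)$ is open, and the intersection
\[
\Gamma_0:=\bigcap_{A_1,A_2,B_1,B_2\in\sF,\ \eps\in\Qb_{>0}}W(A_1,A_2,B_1,B_2,\eps)
\]
is a $G_\delta$ subset of $\Act(G,Z,\zeta)$ since $\sF$ and $\Qb_{>0}$ are countable.

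The containment $\Gamma_0\subseteq\Gamma$ would follow by a routine approximation argument: given arbitrary measurable $A_1,A_2,B_1,B_2\subseteq Z$ and $\eps>0$, choose $A_i',B_i'\in\sF$ within sufficiently small symmetric difference of $A_i,B_i$, and apply the defining condition of $\Gamma_0$ at the primed sets with a suitably small rational $\eps'$; the triangle inequalities for $|\zeta(\alpha_sA_1\cap A_2)-\zeta(\alpha_sA_1'\cap A_2')|$ and $|\zeta(A_1)\zeta(A_2)-\zeta(A_1')\zeta(A_2')|$ (both controlled by sums of the symmetric differences) transport the resulting $s\in L$ into the intersection $N_{\alpha,\eps}(A_1,A_2)\cap N_{\alpha,\eps}(B_1,B_2)$.

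Density of $\Gamma_0$ I would obtain by exhibiting a dense family of actions inside it, with mixing serving as the operative property. Any mixing $\alpha\in\Act(G,Z,\zeta)$ belongs to $\Gamma_0$: for any $A_1,A_2,B_1,B_2\in\sF$ and $\eps>0$, mixing furnishes a finite $F\subseteq G$ off of which both $|\zeta(\alpha_sA_1\cap A_2)-\zeta(A_1)\zeta(A_2)|<\eps$ and $|\zeta(\alpha_sB_1\cap B_2)-\zeta(B_1)\zeta(B_2)|<\eps$, and the infinitude of $L$ forces $L\setminus F\neq\emptyset$. Since $G$ is infinite amenable, Bernoulli shifts $G\curvearrowright(Y_0^G,\nu_0^G)$ over any nontrivial standard probability space $(Y_0,\nu_0)$ provide free mixing elements of $\Act(G,Z,\zeta)$ after measure isomorphism with $(Z,\zeta)$, and Lemma~\ref{L-conj class} asserts that the conjugacy class of any free action is dense. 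Mixing being a conjugation invariant, this dense conjugacy class lies inside $\Gamma_0$, proving density. There is no genuine obstacle in the argument; the only delicate point is keeping the approximation bookkeeping consistent between the countable family $\sF$ and arbitrary measurable sets.
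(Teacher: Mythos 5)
Your proposal is correct and follows essentially the same route as the paper: a countable dense family of measurable sets, open sets indexed by tuples from that family and a countable set of tolerances, density via mixing actions together with the density of conjugacy classes of free actions (Lemma~\ref{L-conj class}), and a triangle-inequality approximation to pass from the countable family back to arbitrary measurable sets. The only cosmetic difference is your use of rational $\eps$ in place of the paper's $1/n$.
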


\begin{proof}
As explained in the proof of Theorem~1.7 in \cite{KerLi16}, we can find a countable collection $\sD$ of measurable subsets of $Z$ so that for every $\eta>0$ and every measurable set $A\subseteq Z$ there exists $\tilde{A}\in \sD$ with $\zeta(A\Delta \tilde{A})<\eta$. For a tuple $T = (A_1 , A_2 ,B_1 , B_2 )\in\sD^4$
and $\eps > 0$ write $\sW_{T,\eps}$ for the set of all $\alpha\in \Act (G, Z,\zeta )$ such that 
$N_{\alpha ,\eps} (A_1 , A_2 ) \cap N_{\alpha ,\eps} (B_1 , B_2 ) \cap L \neq\emptyset$.
Let $\alpha\in \sW_{T,\eps}$ and choose an $s\in N_{\alpha ,\eps} (A_1 , A_2 ) \cap N_{\alpha ,\eps} (B_1 , B_2 ) \cap L$. It is straightforward to verify that there exists a sufficiently small $\eps'>0$ so that whenever $\beta\in U_{\alpha, \{A_1,A_2,B_1,B_2\}, s,\eps'}$, then $\beta$ also belongs to  $\sW_{T,\eps}$, and so $\sW_{T,\eps}$ is open.

There exist free mixing actions in $\Act (G,Z,\zeta )$, e.g., the Bernoulli action $G\curvearrowright (Y^G,\nu^G)$ 
for some nontrivial standard probability space $(Y,\nu)$ (see Section~2.3.1 of \cite{KerLi16}), identifying
$(Z,\zeta)$ with $(Y^G,\nu^G)$. Now since the conjugacy class of any free action in $\Act (G,Z,\zeta )$ is dense
by Lemma~\ref{L-conj class}, the set of mixing actions in $\Act (G,Z,\zeta )$ is dense.
Since $L$ is infinite, every mixing action in $\Act (G,Z,\zeta )$ is contained in $\sW_{T,\eps}$,
and so $\sW_{T,\eps}$ is dense in $\Act (G,Z,\zeta )$. It follows that the set 
$\sW := \bigcap_{T\in\sD^4} \bigcap_{n=1}^\infty \sW_{T,1/n}$ is a dense $G_\delta$.

To complete the proof we will verify that $\sW\subseteq\Gamma$. Let $\alpha\in\sW$ and $\eps > 0$,
and let $A_1 , A_2 ,B_1 , B_2$ be measurable subsets of $Z$. By the density of $\sD$ we can find
$\tilde{A}_1 , \tilde{A}_2 , \tilde{B}_1 , \tilde{B}_2 \in\sD$ such that 
$\zeta (\tilde{A}_1 \Delta A_1 )$, $\zeta (\tilde{A}_2 \Delta A_2 )$, $\zeta (\tilde{B}_1 \Delta B_1 )$, and
$\zeta (\tilde{B}_2 \Delta B_2 )$ are all less than $\eps /8$.
By the definition of $\sW$ there exists 
an $s\in N_{\alpha ,\eps /2} (\tilde{A}_1 , \tilde{A}_2 ) \cap N_{\alpha ,\eps /2} (\tilde{B}_1 , \tilde{B}_2 ) \cap L$.
We then have
\begin{align*}
|\zeta (A_1 ) \zeta (A_2 ) - \zeta (\tilde{A}_1 )\zeta (\tilde{A}_2 )|
&\leq |\zeta (A_1 ) - \zeta (\tilde{A}_1 )|\zeta (A_2 ) + \zeta (\tilde{A}_1 )|\zeta (A_2 ) - \zeta (\tilde{A}_2 )| \\
&<\frac{\eps}{8} + \frac{\eps}{8} = \frac{\eps}{4}
\end{align*}
and hence, using the fact that $A\cap B\subseteq (C\cap D)\cup (A\Delta C) \cup (B\Delta D)$ for any sets $A,B,C,D$, 
\begin{align*}
\zeta (\alpha_sA_1 \cap A_2 )
&\geq \zeta (\alpha_s\tilde{A}_1 \cap \tilde{A}_2 ) - \zeta (\alpha_s(A_1 \Delta \tilde{A}_1 )) - \zeta (A_2 \Delta \tilde{A}_2 ) \\
&> \bigg( \zeta (\tilde{A}_1 ) \zeta (\tilde{A}_2 ) - \frac{\eps}{2} \bigg) - \frac{\eps}{8} - \frac{\eps}{8} \\
&\geq \zeta (A_1 ) \zeta (A_2 ) - \eps.
\end{align*}
Similarly, 
\[
\zeta (\alpha_sB_1 \cap B_2 ) > \zeta (B_1 ) \zeta (B_2 ) - \eps .
\]
Thus $s\in N_{\alpha ,\eps} (A_1 , A_2 ) \cap N_{\alpha ,\eps} (B_1 , B_2 ) \cap L$, showing that $\alpha\in\Gamma$.
\end{proof}

\begin{lemma}\label{L-WM coset}
Let $G\stackrel{\alpha}{\curvearrowright} X$ be an action for which $M_\alpha (X)$ contains a measure of full support. Let $H$ be an infinite subgroup of $G$.
Let $U_1 , U_2 , V_1 , V_2 \subseteq X$ be nonempty open sets and 
let $s\in N_\alpha (U_1 , U_2 ) \cap N_\alpha (V_1 , V_2 )$. Then 
the set $Hs \cap N_\alpha (U_1 , U_2 ) \cap N_\alpha (V_1 , V_2 )$ is infinite.
\end{lemma}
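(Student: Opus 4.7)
The plan is to reformulate the conclusion on the right coset $Hs$ as a statement about $H$: for each $h\in H$, the element $hs$ lies in $N_\alpha(U_1,U_2)\cap N_\alpha(V_1,V_2)$ exactly when $\alpha_h A_1\cap A_2\neq\emptyset$ and $\alpha_h B_1\cap B_2\neq\emptyset$, where $A_1=\alpha_sU_1$, $A_2=U_2$, $B_1=\alpha_sV_1$, and $B_2=V_2$. Since the translation $h\mapsto hs$ is injective, it suffices to show that the set of $h\in H$ simultaneously satisfying both conditions is infinite. The hypothesis on $s$ says precisely that $A_1\cap A_2$ and $B_1\cap B_2$ are nonempty open subsets of $X$, corresponding to the case $h=e$.

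The key idea is to handle the two conditions in tandem by passing to $X\times X$ with the product measure, and then running a standard Poincar\'e-style recurrence argument there. Fix $\mu\in M_\alpha(X)$ of full support. Then $\mu\times\mu$ is an $H$-invariant Borel probability measure on $X\times X$ of full support under the diagonal action $h\cdot(x,y)=(\alpha_hx,\alpha_hy)$. The nonempty open set $E:=(A_1\cap A_2)\times(B_1\cap B_2)$ consequently satisfies $(\mu\times\mu)(E)>0$. Whenever $h\in H$ satisfies $(\mu\times\mu)\bigl(E\cap(\alpha_h\times\alpha_h)(E)\bigr)>0$, projecting this nonempty intersection onto the two coordinates yields both $\alpha_hA_1\cap A_2\neq\emptyset$ and $\alpha_hB_1\cap B_2\neq\emptyset$ at once. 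So it is enough to show that
\[
\Phi:=\bigl\{h\in H:(\mu\times\mu)\bigl(E\cap(\alpha_h\times\alpha_h)(E)\bigr)>0\bigr\}
\]
is infinite.

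To see this, I would argue by contradiction: suppose $\Phi$ is finite. I would then inductively construct an infinite set $S\subseteq H$ such that $s_1^{-1}s_2\notin\Phi$ for all distinct $s_1,s_2\in S$; at each stage, having chosen finitely many $s_1,\dots,s_n\in S$, the set $\bigcup_{i=1}^ns_i\Phi\subseteq H$ is finite while $H$ is infinite, so a suitable $s_{n+1}$ can always be selected. For distinct $s_1,s_2\in S$, the invariance of $\mu\times\mu$ under the diagonal action gives
\[
(\mu\times\mu)\bigl((\alpha_{s_1}\times\alpha_{s_1})(E)\cap(\alpha_{s_2}\times\alpha_{s_2})(E)\bigr)=(\mu\times\mu)\bigl(E\cap(\alpha_{s_1^{-1}s_2}\times\alpha_{s_1^{-1}s_2})(E)\bigr)=0,
\]
so the sets $\{(\alpha_s\times\alpha_s)(E)\}_{s\in S}$ would be pairwise almost disjoint and each would have positive measure $(\mu\times\mu)(E)$, contradicting the finiteness of $\mu\times\mu$. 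The only genuinely creative move is the passage to the product space $X\times X$ in order to treat both conditions simultaneously; once this is set up, the remainder is a routine diagonal pigeonhole, so I do not anticipate any serious obstacle.
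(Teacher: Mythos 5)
Your proof is correct and follows essentially the same route as the paper: both pass to the diagonal $H$-action on $(X\times X,\mu\times\mu)$ and apply Poincar\'e recurrence to the nonempty open set $(\alpha_sU_1\cap U_2)\times(\alpha_sV_1\cap V_2)$, then translate the returns back by $s$. The only difference is that the paper cites the infinite-recurrence theorem as a black box while you reprove it via the pigeonhole/almost-disjointness argument, which is fine.
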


\begin{proof}
Let $\mu\in M_\alpha (X)$ be of full support.
Then the product measure $\mu\times\mu$ on $X\times X$ has full support.
By assumption the open sets $\alpha_s U_1 \cap U_2$ and $\alpha_s V_1 \cap V_2$ are nonempty,
and so by Poincar{\'e} recurrence (see for example \cite[Theorem~2.10]{KerLi16}) applied to the diagonal
action $H \curvearrowright (X\times X,\mu \times\mu )$ we deduce that the set of all $h\in H$
such that 
\[
(\alpha_h (\alpha_s U_1 \cap U_2 ) \times \alpha_h (\alpha_s V_1 \cap V_2)) \cap 
((\alpha_s U_1 \cap U_2 ) \times (\alpha_s V_1 \cap V_2)) \neq\emptyset
\] 
is infinite. For such $h$ we have in particular $\alpha_{hs} U_1 \cap U_2 \neq\emptyset$
and $\alpha_{hs} V_1 \cap V_2 \neq\emptyset$, i.e., 
$hs \in N_\alpha (U_1 , U_2 ) \cap N_\alpha (V_1 , V_2 )$.
\end{proof}

A unitary operator $U\in \cB(\cH)$ is said to be {\it weakly mixing} if the associated unitary representation $\mathbb{Z}\to \cB(\cH)$ given by $n\mapsto U^n$ is weakly mixing.
This is equivalent to the nonexistence of eigenvectors, i.e., the nonexistence 
of nonzero $\xi\in \cH$ such that $U\xi=\lambda\xi$ for some $\lambda\in \mathbb{C}$.

We additionally require some facts from harmonic analysis which we summarize here. 
Let $\cH$ be a separable Hilbert space.
Given a unitary operator $U\in \cB(\cH)$, its \textit{spectral measures} are finite Borel measures $\{\sigma_\xi\}_{\xi\in \cH}$ on $\mathbb{T}$ with the property that 
$\langle U^n\xi,\xi\rangle = \int_{\mathbb{T}}z^n d\sigma_\xi(z)$, for all $n\in\Zb$.
Moreover, there exists a $\sigma_U\in M(\mathbb{T})$, uniquely determined up to membership in the same measure class,
with the property that $\sigma_\xi\ll\sigma_U$ for all $\xi\in \cH$ and for every finite Borel measure $\nu$ on $\mathbb{T}$ with $\nu\ll\sigma_U$ there exists $\xi\in \cH$ for which $\nu\sim \sigma_\xi$. 
This measure, or more accurately its measure class, is called the \textit{maximal spectral type} of $U$.
For $\nu\in M(\Tb)$, the multiplication operator $M_z\in \cB(L^2(\Tb,\nu))$, given by $f\mapsto zf$, has $\nu$ as its maximal spectral type. The maximal spectral type of a tensor product $U_1\otimes \cdots\otimes U_n$ of unitary operators is the convolution measure $\sigma_{U_1}*\cdots*\sigma_{U_n}$. Finally, the maximal spectral type of a countable direct sum $\bigoplus_{n=1}^{\infty} U_n$ of unitary operators is given by $\sum_{n=1}^{\infty}c_n\sigma_{U_n}$, where $c_n$ are any positive numbers satisfying $\sum_{n=1}^{\infty} c_n=1$.

For a transformation $T\in \Aut(Z,\zeta)$ we consider its \textit{reduced maximal spectral type} $\sigma_{T,0}\in M(\Tb)$, defined as the maximal spectral type of the unitary operator $U_{T,0}\in \cB(L_0^2(Z,\zeta))$ that we obtain by restricting the Koopman operator 
$U_T \xi = \xi\circ T^{-1}$ on $L^2(Z,\zeta)$ to the closed invariant subspace $L_0^2(Z,\zeta) = L^2(Z,\zeta)\ominus \Cb1$. Given $\gamma\in \Act(\Zb,Z,\zeta)$ we denote by $\sigma_{\gamma,0}\in M(\Tb)$ the reduced maximal spectral type of the generating transformation. 
Note also that $\sigma_{T^{-1},0}=\overline{\sigma_{T,0}}$ (and the same holds for the ``usual'', i.e., non-reduced maximal spectral type). Two transformations $T,S\in \Aut(Z,\zeta)$ are \textit{spectrally disjoint} if  $\sigma_{T,0}\perp \sigma_{S,0}$, a property known to imply disjointness (e.g., see \cite[Theorem~6.28]{Gla03}).

Let $T_1\in \Aut(Z_1,\zeta_1)$ and $T_2\in \Aut(Z_2,\zeta_2)$ be two transformations and consider the associated
Koopman operators $U_{T_1}\in \cB(L^2(Z_1,\zeta_1))$ and $U_{T_2}\in \cB(L^2(Z_2,\zeta_2))$. 
Using that unitarily equivalent unitary operators have the same spectral measure, one checks that the reduced maximal spectral type $\sigma_{T_1\times T_2,0}$ is equivalent to $\sigma_{T_1,0} + \sigma_{T_2,0} + (\sigma_{T_1,0} * \sigma_{T_2,0})$. More generally, the reduced maximal spectral type of a (possibly infinite) product is equivalent to the (normalized) sum of all possible convolutions of reduced maximal spectral types of the factors in the product.

For a unitary operator $U\in \cB(\cH)$, we denote by $\overline{U}$ the induced unitary operator 
on the conjugate Hilbert space $\overline{\cH}$. For $\nu\in M(\mathbb{T})$ we have a unitary isomorphism from $\overline{L^2(\mathbb{T},\nu)}$ to $L^2(\mathbb{T},\overline{\nu})$ given by $f\mapsto [z\mapsto \overline{f(\bar{z})}]$, which sends $\overline{M_z}$ to $M_z$.

Finally, we recall the construction of  symmetric tensor products. Let $\cH$ be a Hilbert space, let $n\in\Nb$, and let $\sigma\in \mathrm{Sym}(n)$. Write
$U_\sigma\in \cB(\cH^{\otimes n})$ for the unitary operator determined by
$U_\sigma(\xi_1\otimes\cdots\otimes \xi_n)=\xi_{\sigma(1)}\otimes\cdots\otimes\xi_{\sigma(n)}$ on elementary tensors.
The \textit{$n$-fold symmetric tensor product} of $\cH$ is the closed Hilbert subspace
\[\cH^{\odot n}:=\{\xi\in \cH^{\otimes n}: U_\sigma(\xi)=\xi \text{ for all } \sigma\in \mathrm{Sym}(n)\}. \]
Every unitary operator $U\in \cB(\cH)$ determines a unitary operator $U^{\odot n} \in \cB(\cH^{\odot n})$ via the restriction of $U^{\otimes n}$ to $\cH^{\odot n}$.

\begin{lemma}\label{L-nontorsion}
Suppose that $G$ is amenable and contains a normal infinite cyclic subgroup. Let $\Omega$ be a finite subset of $\Act (G, Z,\zeta )$. 
Then the set of all free weakly mixing actions in $\Omega^\perp$ (i.e., those that are disjoint from all actions in $\Omega$) is a dense $G_\delta$ subset of $\Act (G,Z,\zeta )$.
\end{lemma}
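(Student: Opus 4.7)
The plan is to reduce the property in question to spectral disjointness of the generator $\alpha_t$ of the restricted $\Zb$-action from each $\omega_t$ for $\omega\in\Omega$, where $t$ generates the given normal infinite cyclic subgroup. Spectral disjointness $\sigma_{\alpha_t,0}\perp\sigma_{\omega_t,0}$ implies disjointness of the restricted $\Zb$-actions (by the fact recalled in Section~\ref{S-preliminaries}), and since any joining of the $G$-actions $\alpha$ and $\omega$ restricts to a joining of the $\Zb$-actions on $\langle t\rangle$, this forces disjointness of $\alpha$ and $\omega$ as $G$-actions; similarly, atomlessness of $\sigma_{\alpha_t,0}$ gives weak mixing of $\alpha|_{\langle t\rangle}$ and hence of the $G$-action $\alpha$. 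I will therefore show that the set
\[P:=\{\alpha\in\Act(G,Z,\zeta):\alpha\text{ free},\;\sigma_{\alpha_t,0}\text{ atomless},\;\sigma_{\alpha_t,0}\perp\sigma_{\omega_t,0}\text{ for every }\omega\in\Omega\}\]
is a dense $G_\delta$ subset of $\Act(G,Z,\zeta)$, from which the lemma follows because $P$ is contained in the set described there.

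For $P$ being $G_\delta$: freeness is well known to be $G_\delta$; atomlessness of $\sigma_{\alpha_t,0}$ is equivalent to the absence of eigenvectors of the Koopman operator $U_{\alpha_t}$ in $L_0^2(Z,\zeta)$, a $G_\delta$ condition on $\alpha$; and for each $\omega$, mutual singularity uses the characterization that $\rho\perp\eta$ in $M(\Tb)$ iff for every $\eps>0$ there is an open $V\subseteq\Tb$ with $\rho(V)<\eps$ and $\eta(\Tb\setminus V)<\eps$, combined with the weak$^*$ continuity of the map $\alpha\mapsto\sigma_{\xi,\alpha_t}$ for each fixed $\xi\in L_0^2(Z,\zeta)$ (via $\int z^n\,d\sigma_{\xi,\alpha_t}=\langle U_{\alpha_t}^n\xi,\xi\rangle$) and the reduction of singularity of the maximal type to singularity of $\sigma_{\xi_n,\alpha_t}$ for a countable dense sequence $\{\xi_n\}\subseteq L_0^2(Z,\zeta)$, altogether yielding a countable intersection of $G_\delta$ sets.

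For density, by Lemma~\ref{L-conj class} the conjugacy class of every free action in $\Act(G,Z,\zeta)$ is dense, and all properties defining $P$ are conjugacy invariant, so it suffices to exhibit a single $\alpha^*\in P$. Apply Lemma~\ref{L-disjoint powers} with $\mu:=\sum_{\omega\in\Omega}\sigma_{\omega_t,0}$ and Lemma~\ref{L-atomless} to choose an atomless $\nu\in M(\Tb)$ with $(\nu+\bar\nu)^{*n}\perp\mu$ for all $n\geq 1$. Let $T$ be the Gaussian $\Zb$-action arising from the multiplication operator $M_z$ on $L^2(\Tb,\nu+\bar\nu)$ with the natural real structure $(Jf)(z)=\overline{f(\bar z)}$, built on a standard atomless probability space $(Y,\rho)$; since $\nu+\bar\nu$ is atomless, $T$ is free and weakly mixing, with $\sigma_{T,0}\ll\sum_{m\geq 1}(\nu+\bar\nu)^{*m}$. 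Now form the induced measure-preserving $G$-action $\tilde T$ on $(Y^{G/\langle t\rangle},\rho^{G/\langle t\rangle})$: choose coset representatives $\{s_k\}_{k\in G/\langle t\rangle}$, define $n(g,k)\in\Zb$ by $s_k^{-1}gs_{g^{-1}k}=t^{n(g,k)}$ (well-defined since $\langle t\rangle$ is normal), and set $(\tilde T(g)f)(k):=T^{n(g,k)}f(g^{-1}k)$. Identifying $(Y^{G/\langle t\rangle},\rho^{G/\langle t\rangle})$ with $(Z,\zeta)$ as atomless standard probability spaces, put $\alpha^*:=\tilde T$. Freeness of $\alpha^*$ holds because for $g\in\langle t\rangle\setminus\{e\}$ each coordinate is acted on by a nontrivial iterate of the free transformation $T$ (null fixed set), while for $g\notin\langle t\rangle$ every $\langle g\rangle$-orbit on $G/\langle t\rangle$ has size at least $2$ by normality, with the fixed-point set of $\tilde T(g)$ on each such orbit parameterized by a single coordinate of the atomless space $Y$, yielding measure zero in the product. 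For the spectral type, the decomposition $L_0^2(Y^{G/\langle t\rangle})=\bigoplus_{S\subseteq G/\langle t\rangle\text{ finite nonempty}}\bigotimes_{k\in S}L_0^2(Y)$ shows that $\tilde T_t$ acts as $\bigotimes_{k\in S}U_T^{\epsilon(s_k)}$ on each $S$-summand, where $\epsilon(s_k)\in\{\pm 1\}$ records the conjugation character of $\langle t\rangle$; by self-conjugacy of $\sigma_T$, the spectral type on the $S$-summand lies in the class of $\sigma_T^{*|S|}$, and summing over all $S$ places $\sigma_{\tilde T_t,0}$ in the class of $\sum_{M\geq 1}(\nu+\bar\nu)^{*M}$, which is atomless and mutually singular to $\mu$, so $\alpha^*\in P$. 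The main obstacle will be the combinatorial analysis of the induced action: normality of $\langle t\rangle$ is essential both for ensuring every $\langle g\rangle$-orbit on $G/\langle t\rangle$ has size at least $2$ (so that orbit constraints cut out measure-zero slices in the atomless product) and for producing the tensor product decomposition of the Koopman at $t$ via the $\{\pm 1\}$-valued conjugation character.
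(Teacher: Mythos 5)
Your density argument follows essentially the same route as the paper's: choose an atomless $\nu$ with $(\nu+\overline{\nu})^{*n}$ singular to the relevant reduced spectral types via Lemmas~\ref{L-atomless} and \ref{L-disjoint powers}, build the Gaussian $\Zb$-action, coinduce to $G$ (your ``induced'' action on $(Y^{G/\langle t\rangle},\rho^{G/\langle t\rangle})$ is exactly the coinduction the paper uses), exploit normality of $\langle t\rangle$ to see that the restriction to $\langle t\rangle$ is a diagonal product of copies of $T^{\pm 1}$ whose reduced maximal spectral type stays absolutely continuous with respect to $\sum_n 2^{-n}(\nu+\overline{\nu})^{*n}$, and finish with the density of conjugacy classes of free actions (Lemma~\ref{L-conj class}). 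Two small points there: your claim that for $g\notin\langle t\rangle$ \emph{every} $\langle g\rangle$-orbit on $G/\langle t\rangle$ has size at least $2$ is false in general (a coset $k\langle t\rangle$ is fixed iff $k^{-1}gk\in\langle t\rangle$, which can happen for some $k$); what is true, and suffices for freeness, is that at least one coset is moved, namely $e\langle t\rangle$. Also, when passing from $\sigma_{\alpha^*_t,0}\perp\sigma_{\omega_t,0}$ to disjointness of the restricted $\Zb$-actions you should note, as the paper does, that $\omega_t$ need not be ergodic, so $\sigma_{\omega_t,0}$ may have an atom at $1$; the argument goes through because $\alpha^*_t$ is ergodic.

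The genuine gap is in the $G_\delta$ half. You prove that a spectrally defined set $P$ is a dense $G_\delta$ contained in the set $\sF$ of free weakly mixing actions in $\Omega^\perp$. That shows $\sF$ is comeagre, but the lemma asserts that $\sF$ \emph{itself} is a dense $G_\delta$, and containing a dense $G_\delta$ does not imply being one ($P$ is a proper subset of $\sF$ in general, since spectral disjointness is strictly stronger than disjointness). You still need to show that $\sF$ is a $G_\delta$; the paper does this by citing del Junco's result that $\{\alpha\}^\perp$ is a $G_\delta$ for each $\alpha\in\Omega$, together with the standard facts that the free actions and the weakly mixing actions each form $G_\delta$ sets in $\Act(G,Z,\zeta)$. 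This is an easy repair, but as written your proof establishes a strictly weaker statement. (For what it is worth, your $G_\delta$ argument for $P$ itself is salvageable but also needs care: $\{\rho:\rho(V)<\eps\}$ is not weak$^*$ open for $V$ open, so in the singularity criterion you must put the open-set condition on the \emph{fixed} measure $\sigma_{\omega_t,0}$ and the closed-set condition on the varying spectral measures $\sigma_{\xi,\alpha_t}$, exactly as in the definition of the sets $Q_{\delta,n}$ in the proof of Lemma~\ref{L-disjoint powers}, and then justify that checking $\sigma_{\xi_n,\alpha_t}\perp\sigma_{\omega_t,0}$ on a countable dense set of vectors $\xi_n$ suffices.)
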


\begin{proof}
By a result of \cite{Jun81} (which is stated and proved there for ergodic actions of $G=\Zb$, although the argument 
works more generally) the set $\{\alpha\}^\perp$ is a $G_\delta$ for every $\alpha\in\Omega$.
The last paragraph in the proof of Theorem~5.21 in \cite{KerLi16} shows that the set of all weakly mixing actions
in $\Act (G,Z,\zeta )$ is a $G_\delta$, while \cite[Appendix (2)]{GlaKin98} shows that set of all free actions
in $\Act (G,Z,\zeta )$ is a $G_\delta$.
The intersection of all of these $G_\delta$ sets is again a $G_\delta$ set,
and so we conclude that the set of all free weakly mixing actions in $\Omega^\perp$ 
is a $G_\delta$ subset of $\Act (G,Z,\zeta )$. We note that the set of all free weakly mixing actions
in $\Act (G,Z,\zeta )$ is actually a dense $G_\delta$, as can be seen by Lemma~\ref{L-conj class} 
and the existence of Bernoulli actions, which are weakly mixing, or by applying the general result in \cite{KerPic08}.
It would thus now be sufficient to show that $\Omega^\perp$ contains at least one free action, 
since again using Lemma~\ref{L-conj class} we could then conclude that $\Omega^\perp$ is dense.
However, the disjointness argument below is already contingent on weak mixing via the atomlessness of $\nu$,
and so we will get weak mixing anyway, in addition to freeness.

By assumption $G$ has a normal infinite cyclic subgroup $H$, which we identify with $\Zb$.  First we show the existence of a free  weakly mixing action in $\Omega^\perp$.
For $\alpha \in \Omega$, let $\sigma_{\alpha',0} \in M(\Tb)$ denote the reduced maximal spectral type of the restriction 
$\alpha'$ of $\alpha$ to $H$.
By Lemmas~\ref{L-atomless} and \ref{L-disjoint powers} there is an
atomless measure $\nu\in M(\Tb )$ such that
$(\nu + \overline{\nu})^{*n} \perp \sigma_{\alpha',0}$ for all $\alpha\in\Omega$ and $n\in \Nb$.
Let $\pi : \Zb \to \cB (L^2 (\Tb , \nu ))$ be the unitary representation
associated to the multiplication operator $M_z\in \cB (L^2 (\Tb , \nu ))$ and let $\Zb \stackrel{\gamma}{\curvearrowright} (M,\mu)$ denote its associated Gaussian action, in which case $(M,\mu )$ is standard and atomless (see Appendix~E of \cite{KerLi16}).
Since the measure $\nu$ is atomless the representation $\pi$ is weakly mixing, which implies that
the action $\gamma$ is weakly mixing \cite[Theorem~2.38]{KerLi16}. 
The transformation obtained by restricting $\gamma$ to the generator $1\in\Zb$ gives rise to
a Koopman operator $U_\gamma \in\cB(L^2(M,\mu))$ along with its restriction $U_{\gamma,0} \in\cB(L_0^2(M,\mu))$. 
As shown in Theorem~E.19 in \cite{KerLi16}, there exists a unitary isomorphism $L^2(M,\mu)\simeq \bigoplus_{n=0}^{\infty} (L^2(\Tb,\nu)\oplus \overline{L^2(\Tb,\nu)})^{\odot n}$ (with the direct summand for $n=0$ equal to the complex numbers) which carries the unitary operator $U_\gamma$ to the unitary operator $\bigoplus_{n=0}^{\infty}(M_z\oplus \overline{M_z})^{\odot n}$.
Since $\gamma$ is weakly mixing, $U_{\gamma,0}$ is weakly mixing (see \cite[Proposition~2.7]{KerLi16}), so that $L_0^2(M,\mu)$ is exactly the orthogonal complement of vectors fixed by $U_\gamma$. Since $\nu$ is atomless, the unitary operators $(M_z\oplus \overline{M_z})^{\otimes n}$ for $n\in \Nb$ are weakly mixing (see \cite[Theorem~2.23]{KerLi16}), and so likewise $\bigoplus_{n=1}^{\infty} (L^2(\Tb,\nu)\oplus \overline{L^2(\Tb,\nu)})^{\odot n}$ is the orthogonal complement of vectors fixed by $\bigoplus_{n=0}^{\infty}(M_z\oplus \overline{M_z})^{\odot n}$. It follows that the isomorphism $L^2(M,\mu)\simeq \bigoplus_{n=0}^{\infty} (L^2(\Tb,\nu)\oplus \overline{L^2(\Tb,\nu)})^{\odot n}$ restricts to a unitary isomorphism $L_0^2(M,\mu)\simeq \bigoplus_{n=1}^{\infty} (L^2(\Tb,\nu)\oplus \overline{L^2(\Tb,\nu)})^{\odot n}$. Consequently, $U_{\gamma,0}$ and $\bigoplus_{n=1}^{\infty}(M_z\oplus \overline{M_z})^{\odot n}$ are unitarily equivalent via this isomorphism.

For each $n\in\mathbb{N}$, the unitary operator $(M_z\oplus \overline{M_z})^{\odot n}$ is the restriction of the unitary operator $(M_z\oplus \overline{M_z})^{\otimes n}$ to an invariant subspace. 
Hence, the maximal spectral type of the former is absolutely continuous with respect to the maximal spectral type of $(M_z\oplus \overline{M_z})^{\otimes n}$, which, by the discussion preceding this lemma, is exactly $(\nu+\overline{\nu})^{*n}$.
Combining these facts, we conclude that the reduced maximal spectral type $\sigma_{\gamma,0}\in M(\Tb)$ of $\gamma$ is absolutely continuous with respect to $\sum_{n=1}^\infty 2^{-n} (\nu+\overline{\nu})^{*n}$.

Suppose for a moment that $G$ is equal to the subgroup $H\cong\mathbb{Z}$. Then $\mathbb{Z}\stackrel{\gamma}{\curvearrowright}(M,\mu)$ (identifying $(M,\mu)\simeq (Z,\zeta)$ via a Borel isomorphism) would be the desired free weakly mixing action in $\Omega^\perp$. Indeed, since $\mathbb{Z}\stackrel{\gamma}{\curvearrowright}(M,\mu)$ is an ergodic action of $\Zb$ it must be free, and since $\sigma_{\gamma,0}\perp \sigma_{\alpha,0}$ for all $\alpha\in\Omega$ it follows from the discussion before this lemma that $\gamma$ is disjoint from each $\alpha\in\Omega$.

Returning to the general case, our aim is to coinduce $\gamma$ to an action of $G$ that preserves the desired properties. Let $h\in G$ denote a generator for $H$. Take a transversal $e\in T\subseteq G$ for $G/H$, i.e., a set such that $G=\bigsqcup_{t\in T}tH$, and consider the action $G\stackrel{\rho}{\curvearrowright} (M^T,\mu^T )$ coinduced from $\gamma$, 
defined for $s\in G$ and $t\in T$ and $x\in M^T$ by $(\rho_sx )(t) = \gamma_{k^{-1}} (x(r))$ where $k$ and $r$ are the unique elements
of $H$ and $T$, respectively, that satisfy $rk=s^{-1}t$.

Since $\gamma$ is free and weakly mixing, so is $\rho$ (see Lemmas~2.1 and 2.2 in \cite{Ioa11}, respectively).
Note that since $H=\langle h\rangle$ is normal in $G$, we have $(\rho_hx)(t) = \gamma_{t^{-1}ht} (x(t))$ for $x\in X^T$ and $t\in T$.
Since the conjugation actions $\Ad t: H\to H$ for $t\in G$ are automorphisms, they must map the generator $h$ to $h$ or to $h^{-1}$. Hence the restriction $\rho'$ of the coinduced action $\rho$ to $H$ is generated by the diagonal product automorphism of $(M^T,\mu^T)$ given by either $\gamma_h$ or $\gamma_{h^{-1}}$ at each coordinate.
Given the facts summarized before the lemma, it follows that the reduced maximal spectral type $\sigma_{\rho',0}$ is absolutely continuous with respect to the (countable) infinite normalized sum of all possible convolutions of $\sigma_{\gamma,0}$ and $\overline{\sigma_{\gamma,0}}$.
Write $\eta=\sum_{n=1}^{\infty}2^{-n}(\nu+\bar{\nu})^{*n}$. Then $\bar{\eta}=\eta$ (since the convolution operation commutes with the conjugation operation for measures, as follows directly from the definitions) and $\eta^{*m}\ll \eta$ for all $m\in\Nb$. Combining these observations together with the fact that $\sigma_{\gamma,0}\ll \eta$, we deduce that $\sigma_{\rho',0}\ll \eta$, i.e., $\sigma_{\rho',0}\ll \sum_{n=1}^{\infty}2^{-n}(\nu+\bar{\nu})^{*n}$.

We have thus shown that for every $\alpha\in \Omega$ the reduced spectral types $\sigma_{\alpha',0}$ and $\sigma_{\rho',0}$ are mutually singular.
Therefore the actions $\alpha'$ and $\rho'$ are (spectrally) disjoint
(note that $\alpha'$ need not
be ergodic, and thus $\sigma_{\alpha',0}$ can have an atom at $1$, but 
since $\rho'$ is ergodic, being a product of weakly mixing actions, its reduced maximal spectral type $\sigma_{\rho',0}$ does not have an atom at $1$). The disjointness of $\alpha'$ and $\rho'$ implies that
$\alpha$ and $\rho$ are themselves disjoint (we identify $\rho$ as an action on $(Z,\zeta)$ using a Borel isomorphism $(M^T,\mu^T)\simeq (Z,\zeta)$). Since $\rho$ is free, by Lemma~\ref{L-conj class} its conjugacy class is dense in $\Act (G,Z,\zeta )$. 
Since weak mixing and disjointness are preserved under conjugacy, we conclude that the free weakly mixing actions which belong to $\Omega^\perp$ form a dense $G_\delta$ subset of $\Act(G,Z,\zeta)$.
\end{proof}

\begin{proposition}\label{P-min WM}
Suppose that $H$ is an amenable subgroup of $G$ that contains a normal infinite cyclic subgroup. 
Let $G \stackrel{\alpha}{\curvearrowright} X$ be a weakly mixing minimal action on the Cantor set $X$ with $M_\alpha (X)\neq\emptyset$. 
Then there is a free strictly ergodic action $H \stackrel{\gamma}{\curvearrowright} X$ 
such that if $G \stackrel{\beta}{\curvearrowright} X$ 
is any action extending $\gamma$ then the diagonal action $G \stackrel{\alpha\times\beta}{\curvearrowright} X\times X$
is minimal and weakly mixing.
\end{proposition}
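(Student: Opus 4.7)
The plan is to produce $\gamma$ as a strictly ergodic model on the Cantor set of a carefully chosen free weakly mixing p.m.p.\ action of $H$ on $(Z,\zeta)$, so that Lemma~\ref{L-min disjoint} (fed by Lemma~\ref{L-product disjoint}) delivers minimality and so that a generic return-time property coming from Lemma~\ref{L-Gamma} combines with the weak mixing of $\alpha$ to yield weak mixing of $\alpha\times\beta$. The key observation enabling the latter is that, via Lemma~\ref{L-WM coset}, every witness to the weak mixing of $\alpha$ can be refined along any infinite coset of $H$ using the fact that $M_\alpha(X)$ contains a measure of full support.

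First I would apply Zorn's lemma to $\alpha|_H\colon H\curvearrowright X$ to obtain a nonempty closed $H$-invariant subset $A\subseteq X$ on which $H$ acts minimally, and, using the amenability of $H$, choose an ergodic $\mu_A\in M_H(A)$. Via a Borel isomorphism $(A,\mu_A)\cong(Z,\zeta)$, transport the p.m.p.\ action $H\curvearrowright(A,\mu_A)$ to an action $\tilde\alpha\in\Act(H,Z,\zeta)$. Let $\sB$ denote the countable family of nonempty clopen subsets of $X$, and for each quadruple $(A_1,A_2,C_1,C_2)\in\sB^4$ and each $s\in N_\alpha(A_1,A_2)\cap N_\alpha(C_1,C_2)$ consider
\[
L(A_1,A_2,C_1,C_2,s)=\{h\in H:hs\in N_\alpha(A_1,A_2)\cap N_\alpha(C_1,C_2)\},
\]
which is infinite by Lemma~\ref{L-WM coset}; this gives a countable family of infinite subsets of $H$. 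By Lemma~\ref{L-nontorsion} applied to $H$ with $\Omega=\{\tilde\alpha\}$, the free weakly mixing actions in $\Act(H,Z,\zeta)$ disjoint from $\tilde\alpha$ contain a dense $G_\delta$; by Lemma~\ref{L-Gamma}, for each $L$ in our countable family the set of actions satisfying the corresponding return-time intersection condition along $L$ also contains a dense $G_\delta$. Intersecting, the Baire category theorem produces a $\gamma'\in\Act(H,Z,\zeta)$ satisfying all these properties. By the Jewett--Krieger--Weiss theorem for free ergodic p.m.p.\ actions of countable amenable groups, $\gamma'$ admits a strictly ergodic free model $\gamma\colon H\curvearrowright X$ on the Cantor set whose unique invariant measure $\nu$ corresponds to $\zeta$ under a measure conjugacy.

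Now let $\beta$ be any action of $G$ extending $\gamma$. For minimality: since $\gamma'$ and $\tilde\alpha$ are p.m.p.-disjoint, so are $H\curvearrowright(X,\nu)$ and $H\curvearrowright(A,\mu_A)$; Lemma~\ref{L-product disjoint} (applied with $H$ in place of $G$) then says the diagonal $H$-action on $A\times X$ is minimal, i.e., the actions $H\curvearrowright A$ (via $\alpha$) and $H\curvearrowright X$ (via $\gamma=\beta|_H$) are topologically disjoint, and Lemma~\ref{L-min disjoint} upgrades this to minimality of $\alpha\times\beta$. For weak mixing: given basic clopen rectangles $A_i\times B_i, C_i\times D_i\subseteq X\times X$ ($i=1,2$), pick $s_0\in N_\alpha(A_1,A_2)\cap N_\alpha(C_1,C_2)$ from the weak mixing of $\alpha$, set $L=L(A_1,A_2,C_1,C_2,s_0)$, and let $E=\beta_{s_0}B_1$, $F=\beta_{s_0}D_1$, which are nonempty clopens of positive $\nu$-measure by the full support of $\nu$. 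The return-time property of $\gamma$ (transferred from $\gamma'$ under the measure conjugacy), applied to the measurable sets $E,B_2,F,D_2$ with $\eps<\min(\nu(E)\nu(B_2),\nu(F)\nu(D_2))$, yields an $h\in L$ with $\gamma_hE\cap B_2\neq\emptyset$ and $\gamma_hF\cap D_2\neq\emptyset$. Since $\beta_{hs_0}=\gamma_h\circ\beta_{s_0}$, this places $hs_0$ in $N_\beta(B_1,B_2)\cap N_\beta(D_1,D_2)$, and by the defining property of $L$ also in $N_\alpha(A_1,A_2)\cap N_\alpha(C_1,C_2)$, confirming weak mixing of $\alpha\times\beta$. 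The main obstacle to navigate is the simultaneous enforcement of all the required generic properties on $\gamma'$, and this succeeds precisely because the family $\{L(A_1,A_2,C_1,C_2,s)\}$ is countably indexed, which in turn leans on $X$ being the Cantor set.
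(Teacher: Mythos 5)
Your overall strategy coincides with the paper's: find a minimal $H$-subsystem $A$ of $\alpha|_H$ with an ergodic invariant measure, use Lemmas~\ref{L-WM coset}, \ref{L-Gamma}, and \ref{L-nontorsion} plus Baire category to produce a free weakly mixing p.m.p.\ action of $H$ that is disjoint from $H\curvearrowright(A,\mu_A)$ and satisfies the return-time conditions along the countably many sets $H_T$, pass to a strictly ergodic Cantor model via Jewett--Krieger, and then run Lemma~\ref{L-product disjoint} followed by Lemma~\ref{L-min disjoint} for minimality and the return-time conditions for weak mixing. The minimality and weak mixing verifications are correct and essentially identical to the paper's.

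There is, however, one genuine gap: you silently assume that $(A,\mu_A)$ is Borel isomorphic to the atomless standard probability space $(Z,\zeta)$. This fails when the $H$-minimal set $A$ is finite, in which case the ergodic measure $\mu_A$ is atomic. Nothing in the hypotheses rules this out --- the proposition does not even assume topological freeness of $\alpha$, and the restriction of a minimal weakly mixing $G$-action to the subgroup $H$ can perfectly well have a finite minimal set (e.g.\ a global $H$-fixed point). In that case your transported action $\tilde\alpha$ does not live in $\Act(H,Z,\zeta)$, so Lemma~\ref{L-nontorsion} cannot be invoked with $\Omega=\{\tilde\alpha\}$, and the disjointness input to Lemma~\ref{L-product disjoint} is not produced by your argument. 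The fix is short but requires an extra idea: replace $H\curvearrowright(A,\mu_A)$ by its diagonal product with the trivial action on $(X,\rho)$ for some atomless $\rho$, apply the whole argument to $H\curvearrowright(A\times X,\mu_A\times\rho)$ (which is atomless), and observe that disjointness from this product implies disjointness from the factor $H\curvearrowright(A,\mu_A)$. This is exactly how the paper closes the case; your proof as written does not address it.
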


\begin{proof}
Writing $H \stackrel{\alpha'}{\curvearrowright} X$ for the restriction of $\alpha$ to $H$,
 by Zorn's lemma there exists a nonempty closed set $A\subseteq X$ such that the restriction 
$\tilde{\alpha}$ of $\alpha'$ to $A$ is minimal. 
By the amenability of $H$ there exists a $\mu\in M_H (A)$, which we may take to be ergodic.
This $\mu$ is either atomic (which, in view of minimality, occurs precisely when $A$ is finite) or atomless.

Suppose first that $\mu$ is atomless. 
Write $\sC$ for the countable collection of nonempty clopen subsets of $X$.
Since $\alpha$ is minimal, each measure in $M_\alpha(X)$ has full support.
Since $\alpha$ is weakly mixing, 
it follows by Lemma~\ref{L-WM coset} that for every 
$T = (U_1 , U_2 , V_ 1, V_2 )\in\sC^4$ there exists an $s_T\in G$ such that the set $H_T$ of all $h\in H$ for which $hs_T\in N_\alpha (U_1 , U_2 ) \cap N_\alpha (V_1 , V_2 )$ is infinite.
Note that $\sC^4$ is countable and a countable intersection of dense $G_\delta$ sets in a Polish space is again a dense $G_\delta$.
With $(Z,\zeta )$ denoting as usual a fixed standard atomless probability space,
Lemmas~\ref{L-Gamma} and \ref{L-nontorsion} imply that the set of free ergodic p.m.p.\ actions $\gamma\in\Act(H,Z,\zeta)$ that are disjoint 
from $H\stackrel{\tilde{\alpha}}{\curvearrowright} (A,\mu )$ and satisfy 
$N_{\gamma , \eps} (W_1 , W_2 ) \cap N_{\gamma , \eps} (Z_1 , Z_2 ) \cap H_T \neq\emptyset$
for all nonnull measurable sets $W_1,W_2,Z_1,Z_2\subseteq A$, $\eps>0$, and $T = (U_1 , U_2 , V_ 1, V_2 ) \in\sC^4$ contains a dense $G_\delta$ set (the weak mixing aspect of Lemma~\ref{L-nontorsion} is not needed here but will be used
later in Section~\ref{S-SD II}). In particular there exists such an action $H\stackrel{\gamma}{\curvearrowright} (Z,\zeta )$.
By the Jewett--Krieger theorem \cite{Ros87,Wei25} we may assume
that $Z$ is equal to our Cantor set $X$ and that $\gamma$ is a free strictly ergodic action on $X$ with $\zeta$ being the unique element in $M_\gamma (X)$ (note that the compact metrizable space produced by the construction in \cite{Wei25} is zero-dimensional and by the infiniteness of $H$ cannot have isolated points, so that it must be the Cantor set). This implies by Lemma~\ref{L-product disjoint} that the actions $H\stackrel{\tilde{\alpha}}{\curvearrowright} A$ and $H\stackrel{\gamma}{\curvearrowright} X$
are disjoint.
It then follows by Lemma~\ref{L-min disjoint} that if $G \stackrel{\beta}\curvearrowright X$ is any action
extending $\gamma$ then the diagonal action $G\stackrel{\alpha\times \beta}{\curvearrowright} X\times X$ 
is minimal. 

Let us also check that such a diagonal action $\alpha\times\beta$ with $\beta$ extending $\gamma$ is weakly mixing.
Let $U_1 , U_2 , V_1 , V_2, W_1 , W_2 , Z_1 , Z_2 \subseteq X$
be nonempty clopen sets. 
Write $T=(U_1,U_2,V_1,V_2)$. By our choice of $\gamma$ we can find an $h\in H_T$, i.e., $hs_T\in N_\alpha(U_1,U_2)\cap N_\alpha(V_1,V_2)$, such that for all $\eps >0$ we have
$h\in N_{\gamma,\eps}(\beta_{s_T}W_1,W_2)\cap N_{\gamma,\eps}(\beta_{s_T}Z_1,Z_2)$, 
that is, $\zeta(\beta_{hs_T}W_1\cap W_2)>\zeta(\beta_{s_T}W_1)\zeta(W_2)-\eps$ and $\zeta(\beta_{hs_T}Z_1\cap Z_2)>\zeta(\beta_{s_T}Z_1)\zeta(Z_2)-\eps$.
Since the measure $\zeta$ has full support by the minimality of $\gamma$, 
it follows that $hs_T\in N_\beta(W_1,W_2)\cap N_\beta(Z_1,Z_2)$. Hence
$hs_T\in N_{\alpha\times \beta}(U_1\times W_1, U_2\times W_2)\cap N_{\alpha\times \beta}(V_1\times Z_1, V_2\times Z_2)$.
Since products of clopen sets form a basis for the topology on $X\times Y$, we conclude that $\alpha\times\beta$ is weakly mixing.

Suppose now that $\mu$ is atomic, in which case $A$ is finite. We can no longer
apply the above argument to $\tilde{\alpha}$ itself, but we can apply it instead to 
the diagonal action $H\stackrel{\tilde{\alpha}\times \mathrm{tr}}{\curvearrowright} (A\times X, \mu\times \rho)$, where $\mathrm{tr}$ denotes the trivial action and $X$ is equipped with some atomless Borel probability measure $\rho$. This yields a free strictly ergodic action $H\stackrel{\gamma}{\curvearrowright} X$ which, with respect to its unique invariant Borel probability measure $\zeta$, is disjoint from $H\stackrel{\tilde{\alpha}\times \mathrm{tr}}{\curvearrowright} (A\times X, \mu\times \rho)$ 
and hence also from $H\stackrel{\tilde{\alpha}}{\curvearrowright} (A, \mu)$, and, with $H_T$ as above, 
satisfies $N_{\gamma , \eps} (W_1 , W_2 ) \cap N_{\gamma , \eps} (Z_1 , Z_2 ) \cap H_T \neq\emptyset$
for all nonnull measurable sets $W_1,W_2,Z_1,Z_2\subseteq X$, $\eps>0$, and $T = (U_1 , U_2 , V_ 1, V_2 ) \in\sC^4$.
We can now conclude, arguing as before, that for every action $G \stackrel{\beta}\curvearrowright X$ 
extending $\gamma$ the diagonal action $G\stackrel{\alpha\times \beta}{\curvearrowright} X\times X$ is minimal
and weakly mixing.
\end{proof}

We remark that the action $\beta$ extending $\gamma$ that is universally quantified in Proposition~\ref{P-min WM} need not 
admit any invariant Borel probability measures, even though $\gamma$ does admit one.
In our applications in Proposition~\ref{P-top free} and Lemma~\ref{L-SD free product}
we will be handling $\beta$ that do satisfy $M_\beta (X)\neq\emptyset$,
specifically ones that will be given to us through the use of Proposition~\ref{P-extending}.
In this case $M_{\alpha\times\beta }(X\times X)$ will be nonempty as it will contain at least one product measure.
What is interesting in the proof of Proposition~\ref{P-min WM}, especially in the context of these applications,
is the auxiliary nature of the use of p.m.p.\ ergodic theory
via the the amenability of the subgroup $H$. It could very well happen that the $\alpha'$-minimal set 
$A$ on which we apply this ergodic theory is null for all $\alpha$-invariant 
Borel probability measures. 

If $H$ is itself an infinite cyclic subgroup of $G$ in Proposition~\ref{P-min WM}, for example if we replace
the original $H$ with the hypothesized normal infinite cyclic subgroup, then the conclusion is valid with
no extra assumptions on $H$. One may wonder whether this special form of the proposition is sufficient 
for all practical purposes, but the problem in our applications in Sections~\ref{S-spaces of actions} and \ref{S-SD I}
is that we do not have a mechanism for extending an
action of the normal infinite cyclic subgroup of $H$ to all of $G$, or more specifically of first extending the action to $H$.
So the general version will be necessary.

We now turn to the proof of Proposition~\ref{P-extending}, which as mentioned above provides the dynamical data
that will be fed into Proposition~\ref{P-min WM} in subsequent sections.
We will need the following asymptotic freeness result involving random permutation matrices
due to Collins and Dykema \cite{ColDyk11}. It has the consequence that if
we have actions of two groups $G$ and $H$ on a common finite set of large cardinality 
then we can randomly conjugate one of the actions to obtain, with high probability, an action of $G*H$ that is almost
free on a given set of words whose individual letters act almost freely. 
This principle also works more generally for sofic approximations, in which the axioms for a group action are 
allowed to be corrupted on a proportionally small part of the set. In fact the original motivation in \cite{ColDyk11} was to establish
the soficity of certain amalgamated free products.
Our application in Lemma~\ref{L-RandMatApp} below has a similar sofic spirit.

\begin{theorem}\cite[Theorem~2.1]{ColDyk11}\label{T-CD}
Write $S_n$ for the set of $n\times n$ permutation matrices in $M_n$ and $\tr$ for the 
normalized trace on $M_n$. Then for all $n,d\in \Nb$ 
and $A_1 , \dots , A_{2d} \in S_n$ one has
\begin{align*}
\lefteqn{\frac{1}{n!} \sum_{U\in S_n}
\tr (A_1 (UA_2 U^* ) A_3 (U A_4 U^* )\cdots A_{2d-1} (U A_{2d} U^* ))} \hspace*{50mm} \\
&\hspace*{10mm} < r_d \max_{1\leq i\leq 2d} \tr (A_i ) + \frac{t_d}{n} .
\end{align*}
where $r_d , t_d > 0$ are constants depending only on $d$.
\end{theorem}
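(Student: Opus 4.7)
The plan is to convert the trace into a weighted count of fixed points and then carry out a combinatorial estimate. Each $A_i$ is a permutation matrix and so implements some $\sigma_i\in\Sym(n)$ by $A_ie_j=e_{\sigma_i(j)}$, while $U$ implements a uniformly random $\pi\in\Sym(n)$; the matrix $UA_iU^*$ then implements $\pi\sigma_i\pi^{-1}$, so that the product inside the trace is the permutation matrix of $\tau_\pi:=\sigma_1(\pi\sigma_2\pi^{-1})\sigma_3\cdots\sigma_{2d-1}(\pi\sigma_{2d}\pi^{-1})$. Since the normalized trace of a permutation matrix is the proportion of its fixed points, the left-hand side of the inequality equals $\frac{1}{n}\sum_{i=1}^{n}\Pr_\pi(\tau_\pi(i)=i)$, where $\Pr_\pi$ denotes probability under the uniform measure on $\Sym(n)$.

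For each $i$ I would unwind the condition $\tau_\pi(i)=i$ by tracing through the alternating word and introducing intermediate indices $(y_k,z_k,a_k)$ for $k=1,\ldots,d$: the condition becomes a system consisting of the $\sigma$-equalities $z_k=\sigma_{2k}(y_k)$ and certain relations among the $a_k$'s and $i$ involving the odd-indexed $\sigma_j$'s, together with $2d$ equations of the form $\pi(\text{input})=\text{output}$. Once the free variables $(i,y_1,\ldots,y_d,a_2,\ldots,a_d)$ are fixed, the average over $\pi$ of the indicator that all $2d$ $\pi$-constraints hold equals $(n-p)!/n!=n^{-p}(1+O(1/n))$ when the constraints are mutually consistent (with $p$ the number of distinct $\pi$-inputs they determine), and vanishes otherwise. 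Grouping tuples by their coincidence pattern, each pattern contributes $O(n^{q-p-1}\prod_j\tr(A_j)^{m_j})$ to $\frac{1}{n}\sum_i\Pr_\pi(\tau_\pi(i)=i)$, where $n^q$ is the order of the number of admissible tuples realizing that pattern (in its purely index-theoretic aspect) and the multiplicities $m_j$ record how many times the pattern restricts a free variable to lie in $\Fix(\sigma_j)$.

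The finishing step is to argue that every $O(1)$ pattern forces the chain of relations to collapse at some intermediate step to a fixed-point condition of the form $\sigma_j(y)=y$ for a free variable $y$, so that $\sum_jm_j\ge 1$ and the pattern's total contribution is bounded by $\prod_j\tr(A_j)^{m_j}\le\max_k\tr(A_k)$; bounding the number of such leading patterns by a constant $r_d$ depending only on $d$ gives the main term, while the sum of contributions from all remaining patterns is $O(1/n)$ and produces the $t_d/n$ error. The main obstacle is this last combinatorial step: showing rigorously that the interleaved structure $\sigma_1\cdot(\pi\sigma_2\pi^{-1})\cdot\sigma_3\cdots$ precludes any leading-order pattern from closing up without producing at least one $\tr(A_j)$ factor. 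The cleanest route is probably to reinterpret the expectation as a Weingarten-style sum over $S_n$ indexed by pair partitions of the $2d$ occurrences of $\pi^{\pm 1}$ and to read off the claimed bound from the noncrossing-partition structure of the lattice, though a direct induction on $d$ tracking the reductions in the alternating word is also feasible.
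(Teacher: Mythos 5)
First, a point of order: the paper does not prove this statement. It is imported verbatim as \cite[Theorem~2.1]{ColDyk11} and used as a black box in Lemma~\ref{L-RandMatApp}, so there is no in-paper proof to compare your argument against; the comparison can only be with the moment-method argument in Collins--Dykema itself, which your sketch does follow in outline.

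Your setup is correct: identifying the left-hand side with $\frac{1}{n}\sum_{i=1}^n \Pr_\pi(\tau_\pi(i)=i)$ for $\tau_\pi=\sigma_1(\pi\sigma_2\pi^{-1})\sigma_3\cdots\sigma_{2d-1}(\pi\sigma_{2d}\pi^{-1})$, unwinding the fixed-point condition into a trajectory with $2d$ constraints of the form $\pi(u)=v$, and using $(n-p)!/n!=n^{-p}(1+O(1/n))$ for $p$ mutually consistent constraints. The pattern-by-pattern bookkeeping is also sound in principle, since the number of coincidence patterns depends only on $d$. But the decisive step --- that every coincidence pattern contributing at order $\Theta(1)$ must force at least one free variable into $\Fix(\sigma_j)$ for some $j$, so that its contribution is bounded by a product of normalized traces containing at least one factor --- is precisely the content of the theorem, and you explicitly leave it unproved. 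This is not a routine verification: one has to show that each unit gain in the exponent $q-p$ over the generic (no-coincidence) pattern, which contributes only $O(1/n)$, is paid for by a constraint of the form $\sigma_j(y)=y$, and that the interleaved word structure prevents the trajectory from closing up at leading order by any other mechanism (e.g.\ by coincidences that only involve the odd-indexed $\sigma$'s, or by an input of one $\pi$-constraint colliding with an output of another). Without an induction on the word length, or an explicit classification of the admissible coincidence patterns, the inequality is not established.

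The proposed fallback via a ``Weingarten-style sum over pair partitions'' does not rescue this: the Weingarten calculus for the symmetric group is governed by all set partitions rather than pair partitions, its asymptotics are not controlled by a noncrossing-partition lattice in the way the unitary case is, and invoking it here would require developing essentially the same combinatorics you are trying to avoid. So the proposal identifies the right strategy and the right obstacle, but the obstacle is the theorem, and it remains open in your write-up.
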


Note that if we view an $n\times n$ permutation matrix $A$ as a permutation of a set with $n$ elements then its normalized trace 
is exactly the number of fixed points of this permutation divided by $n$.

\begin{lemma}\label{L-RandMatApp}
Suppose that $G$ is residually finite and that $H$ is countably infinite and amenable.
Let $\eps>0$ and let $F\subseteq G*H$ be a nonempty finite set. 
Define $F_H$ to be the set of all elements in $H$ (including $e\in H$) that appear as letters among 
the reduced words in $G*H$ that belong to $F^{-1}F$. Then there exist a $(F_H,\eps)$-invariant finite set $K\subseteq H$, an action $G\stackrel{\rho}{\curvearrowright}K$, and permutations $\{\sigma_s\}_{s\in F_H}$ and $\tau$ of $K$
such that if $w=h_1g_1\cdots h_kg_k$ is a nontrivial reduced word in $G*H$ that belongs to $F^{-1}F$ then
\[
\tr(\sigma_{h_1}(\tau\rho_{g_1}\tau^{-1})\sigma_{h_2}(\tau\rho_{g_2}\tau^{-1})\cdots\sigma_{h_k}  (\tau\rho_{g_k}\tau^{-1}))<\eps.
\]
The same conclusion holds for words in $F^{-1}F$ that start and end with elements both in $G$, both in $H$,
or in $G$ and $H$, respectively.
\end{lemma}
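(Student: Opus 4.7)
The plan is to combine the residual finiteness of $G$, the amenability of $H$, and the asymptotic freeness estimate of Collins and Dykema (Theorem~\ref{T-CD}). Write $F_G \subseteq G\setminus\{e\}$ for the set of $G$-letters appearing in reduced words in $F^{-1}F$, let $L$ denote the maximum length of such a word, and let $F_G^*$ be the finite set of nontrivial products of at most $L$ elements from $F_G \cup F_G^{-1}$. Using residual finiteness of $G$ I would choose a finite-index normal subgroup $N \leq G$ disjoint from $F_G^*$, so that the coset action $G \curvearrowright G/N$ is fixed-point free on every element of $F_G^*$.

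Using amenability of $H$ I would next pick an $(F_H^L,\eps')$-invariant finite subset $K \subseteq H$ for a sufficiently small $\eps' > 0$, where $F_H^L$ denotes the set of products of at most $L$ elements of $F_H$; enlarging $K$ slightly if necessary, I arrange $n := |K|$ to be divisible by $|G/N|$. Partitioning $K$ into $n / |G/N|$ blocks of size $|G/N|$, I define $\rho : G \curvearrowright K$ block-diagonally by the coset action on $G/N$. For each $s \in F_H$ I let $\sigma_s$ be any permutation of $K$ that agrees with left-translation $k \mapsto sk$ on $K^{F_H^L}$; the extension off $K^{F_H^L}$ plays no role in what follows.

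Given a nontrivial reduced word $w \in F^{-1}F$ I would then cyclically reduce it in stages. Each $G$-cancellation is exact at the operator level because $\rho$ is an honest action: a product $\rho_{g_1}\rho_{g_k}$ becomes $\rho_{g_1 g_k}$, which collapses to the identity when $g_1 g_k = e$. Each $H$-cancellation replaces $\sigma_{h_k}\sigma_{h_1}$ by its true left-translation approximation: on $K^{F_H^L}$ the two agree, so the discrepancy is a matrix supported on a set of cardinality at most $O(\eps' n)$, whose contribution to the normalized trace of the entire product is $O(\eps')$. Iterating yields a cyclically reduced nontrivial word $w'$ with $|\tr(M_w) - \tr(M_{w'})| = O(L\eps')$. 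A cyclic rotation then puts $M_{w'}$ into the alternating form required by Theorem~\ref{T-CD} (or, in the degenerate case $|w'|=1$, directly produces a bound $\tr(M_{w'}) \le \eps'$). The non-conjugated inputs are either single $\sigma_s$ with $s \ne e$ (trace at most $\eps'$, since left-translation by a nontrivial element fixes nothing on $K^{F_H^L}$) or products $\sigma_h \sigma_{h'}$ with $hh' \ne e$ (same bound); the conjugated inputs are $\rho_g$ for $g \in F_G$ or $\rho_{g_1 g_2}$ for $g_1 g_2 \in F_G^*$, all of trace zero by the choice of $N$. Theorem~\ref{T-CD} then bounds the expected normalized trace of $M_{w'}$ over uniform $\tau \in S_n$ by $r_d \eps' + t_d / n$. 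For $\eps'$ small and $n$ large, a Markov plus union bound argument over the finitely many reduced words in $F^{-1}F$ extracts a single $\tau \in S_n$ satisfying all the required trace bounds simultaneously.

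The main obstacle is the bookkeeping of the cyclic reduction and the control of the error from approximate $H$-cancellations $\sigma_{h^{-1}}\sigma_h \approx I$, which do not become exact identities as their $G$-counterparts do. This forces $K$ to satisfy F{\o}lner invariance relative to the power $F_H^L$ rather than $F_H$ itself, and requires a careful estimate showing that the error contributed by up to $L/2$ nested $H$-cancellations accumulates only linearly in $L$. All three cases of the lemma (start and end both in $G$, both in $H$, or mixed) are handled uniformly by the same cyclic-reduction scheme, since after reduction and rotation one always lands in the alternating form suitable for Theorem~\ref{T-CD}; the only minor subtlety is the degenerate case where $w'$ has length one, in which the trace bound is immediate from $\tr \sigma_s \le \eps'$ or $\tr \rho_g = 0$ without invoking Theorem~\ref{T-CD}.
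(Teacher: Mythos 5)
Your proposal is correct and follows essentially the same route as the paper: residual finiteness gives a coset action $\rho$ with $\tr(\rho_g)=0$ off $N$, a F{\o}lner set in $H$ gives approximate-translation permutations $\sigma_s$ of small trace, the Collins--Dykema estimate is applied to the (cyclically) reduced alternating word, and a Markov/union bound extracts a single $\tau$. The only cosmetic differences are that the paper needs only $(F_H^2,\cdot)$-invariance and disjointness of $N$ from $F_G^2$ (since cyclic reduction performs at most one nontrivial boundary combination), and it makes the $H$-boundary cancellations exact by arranging $\sigma_{s^{-1}}=\sigma_s^{-1}$ rather than tracking an $O(L\eps')$ error.
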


\begin{proof}
We assume, without loss of generality, that $\eps < 1/|F^{-1}F|$. Write $F_G$ for the set of all elements of $G$ (including $e\in G$) that appear as letters in reduced words in $G*H$ that belong to $F^{-1}F$. Since $G$ is residually finite, there exists a finite-index normal subgroup $N$ of $G$ that does not contain any element of $F_G^2\setminus \{e\}$.
Let $\ell(w)$ denote the length of a reduced word $w$ in $G*H$. For $d\in \Nb$ let $r_d,t_d\ge 1$ be the constants given by Theorem~\ref{T-CD}, and define $L=\max_{w\in F^{-1}F}\lceil \ell(w)/2 \rceil$, $R=\max_{1\leq d\leq L} r_d$, and $T=\max_{1\leq d\leq L} t_d$.
Since $H$ is amenable and infinite, there exists a finite set $K\subseteq H$ such that
\begin{enumerate}
\item $K$ is $(F_H^2,\eps^2 /2R)$-invariant, and in particular $(F_H,\eps)$-invariant,
\item $|K| > 2T/\eps^2$,
\item $|K|! > |F^{-1}F|/(1 - |F^{-1}F|\eps )$, and
\item $m := |K|/[G : N]$ is a positive integer.
\end{enumerate} 
Consider the diagonal action $G\curvearrowright G/N \times \{ 1,\dots , m\}$ 
coming from left translation on the first factor and the trivial action on the second.
By (iv) there is a bijection $\theta: K\to G/N \times \{ 1,\dots , m\}$.
Define an action $G\stackrel{\rho}{\curvearrowright} K$ by $\rho_sh = \theta^{-1}s\theta h$ for all $s\in G$ and $h\in K$.
By our choice of $N$, we conclude that $\tr(\rho_s)=0$ whenever $s\notin N$.
For every $s\in F_H$ define a permutation
$\sigma_s$ of $K$ by setting $\sigma_sh = sh$ for $h\in s^{-1} K \cap K$ and on $K\setminus s^{-1} K$
taking $\sigma_s$ to be an arbitrary bijection with $K\setminus sK$ that agrees with $\sigma_{s^{-1}}^{-1}$ if $s^{-1}\in F_H$ and $\sigma_{s^{-1}}$ was already defined (this can be guaranteed if the maps $\sigma_s$ are defined recursively, since $F_H$ is a finite set). 
By (i), for all $s\in F_H\setminus \{e\}$ and $s_1,s_2 \in F_H$ such that $\sigma_{s_1}\sigma_{s_2} \ne \id_K$ we have 
$\tr(\sigma_s) \le |K\setminus s^{-1}K|/|K| < \eps^2/2R$ and
\begin{equation}
\tr(\sigma_{s_1}\sigma_{s_2})\leq \frac{|K\setminus \bigcap_{s\in (F_H)^2} s^{-1}K|}{|K|} < \frac{\eps^2}{2R}. \label{E-trace prod}
\end{equation}

Let $w=h_1g_1\cdots h_kg_k\in F^{-1}F$ be a nontrivial word in its reduced form. Identifying the permutations $\sigma_s$ with elements in the set $S_K$ of permutation matrices in the matrix algebra $M_K$ with entries indexed by pairs in $K$,
we apply Theorem~\ref{T-CD} to get
\[
\frac{1}{|K|!} \sum_{U\in S_K}
\tr (\sigma_{h_1} (U\rho_{g_1} U^* ) \sigma_{h_2} (U \rho_{g_2} U^* )\cdots \sigma_{h_k} (U \rho_{g_k} U^* )) < r_k \frac{\eps^2}{2R} + \frac{t_k}{|K|} \stackrel{{\rm (ii)}}{<} \eps^2. 
\]
Since we are averaging positive real numbers, an easy combinatorial argument shows that for at least $\lfloor |K|!(1-\eps) \rfloor$ elements $U\in S_K$ one has that
\[
\tr (\sigma_{h_1} (U\rho_{g_1} U^* ) \sigma_{h_2} (U \rho_{g_2} U^* )\cdots \sigma_{h_k} (U \rho_{g_k} U^* ))  <\eps. 
\]
If $w=g_1h_1\cdots g_kh_k\in F^{-1}F$ is a nontrivial word in its reduced form, then analogously we can find at least $\lfloor |K|!(1-\eps)\rfloor$ elements $U\in S_K$ for which
\[
\tr (\sigma_{h_k} (U\rho_{g_1} U^* ) \sigma_{h_1} (U \rho_{g_2} U^* )\cdots \sigma_{h_{k-1}} (U \rho_{g_k} U^* ))  <\eps,
\] 
and hence also
\[\tr ((U\rho_{g_1} U^* ) \sigma_{h_1} (U \rho_{g_2} U^* )\cdots \sigma_{h_{k-1}} (U \rho_{g_k} U^*) \sigma_{h_k} )  <\eps.\]
If $w\in F^{-1}F\setminus\{e\}$ has reduced form $w=g_1h_1\cdots g_{k-1}h_{k-1}g_{k}$ and $g_kg_1\neq e$, then $g_kg_1\in (F_G)^2\setminus\{e\}$ and so does not belong to $N$. Hence
\begin{align*}
\lefteqn{\frac{1}{|K|!} \sum_{U\in S_K}
\tr ((U\rho_{g_1} U^* ) \sigma_{h_1} (U \rho_{g_2} U^* )\cdots (U \rho_{g_{k-1}} U^* )\sigma_{h_{k-1}} (U \rho_{g_k} U^* ))} \hspace*{10mm} \\
\hspace*{10mm} &=\frac{1}{|K|!} \sum_{U\in S_K}
\tr ((U\rho_{g_kg_1} U^* ) \sigma_{h_1} (U \rho_{g_2} U^* )\cdots (U \rho_{g_{k-1}} U^* )\sigma_{h_{k-1}}) \\
&< r_{k-1}\frac{\eps^2}{2R} + \frac{t_{k-1}}{|K|}\stackrel{\text{(ii)}}{<}\eps^2.
\end{align*}
We conclude again that for at least $\lfloor |K|!(1-\eps) \rfloor$ elements $U\in S_K$ one has that
\[
\tr ((U\rho_{g_1} U^* ) \sigma_{h_1} (U \rho_{g_2} U^* )\cdots \sigma_{h_{k-1}} (U \rho_{g_k} U^* ))  <\eps. 
\]
If $g_kg_1=e$, then we need to bound an expression of the form
\begin{align*}
\frac{1}{|K|!} \sum_{U\in S_K}
\tr (\sigma_{h_1} (U \rho_{g_2} U^* )\cdots (U \rho_{g_{k-1}} U^* )\sigma_{h_{k-1}}).
\end{align*}
It will become clear how to do this once we treat the case of nontrivial words in $F^{-1}F$ that have reduced form that begins and ends with elements of $H$, i.e., $w\in F^{-1}F$ of the form $w=h_1g_1\cdots h_{k-1}g_{k-1}h_k$. In this scenario, if it happens that $\sigma_{h_k}\sigma_{h_1} \ne \id_K$ then we treat $\sigma_{h_k}\sigma_{h_1}$ as a single permutation and apply (\ref{E-trace prod}) and 
Theorem~\ref{T-CD} to obtain
\begin{align*}
\lefteqn{\frac{1}{|K|!} \sum_{U\in S_K}
\tr ( \sigma_{h_1} (U \rho_{g_1} U^* )\cdots \sigma_{h_{k-1}} (U \rho_{g_{k-1}} U^* )\sigma_{h_k})} \hspace*{10mm} \\
\hspace*{10mm} &=\frac{1}{|K|!} \sum_{U\in S_K}
\tr (\sigma_{h_k}\sigma_{h_1} (U \rho_{g_1} U^* )\cdots \sigma_{h_{k-1}} (U \rho_{g_{k-1}} U^* )) \\
&< r_{k-1}\frac{\eps^2}{2R} + \frac{t_{k-1}}{|K|}<\eps^2.
\end{align*}
If on the other hand $\sigma_{h_k}\sigma_{h_1}=\id_K$, then we have fewer (i.e., $2k-3$) permutations at play
in the application of Theorem~\ref{T-CD} to a word beginning and ending with elements of $G$. 
We can continue in this way recursively, with a smaller amount of permutations at each step until either ``cancellation'' of the above nature does not occur, or we end up with a single permutation (some $\rho_g$ with $g\in F_G$ or some $\sigma_h$ with $h\in F_H$), in which case the inequality is clearly satisfied.

Now for each $w\in F^{-1}F\setminus \{e\}$ we found at least $\lfloor |K|!(1-\eps) \rfloor$ elements $U\in S_K$ that satisfy a certain desired inequality. 
We claim that there exists $\tau\in S_K$ that achieves this simultaneously for all $w\in F^{-1}F\setminus \{e\}$.
This is a consequence of the following counting argument: for each nontrivial $w\in F^{-1}F$ there are at most $|K|!-\lfloor |K|!(1-\eps) \rfloor$ elements in $S_K$ that do not satisfy the desired inequality associated with $w$. However, from (iii) it follows that $|F^{-1}F|\cdot (|K|!-\lfloor |K|!(1-\eps)\rfloor ) <|K|!$.
As there are $|K|!$ permutations in $S_K$, we obtain the existence of
a $\tau\in S_K$ satisfying the condition in the corollary statement.
\end{proof}

Since an action of an infinite amenable group on the Cantor set has the URP if and only if it is essentially free (see \cite[Theorem~3.6]{GarGefGesKopNar24}), the following is a strengthening of Theorem~\ref{T-amenable} in the case of the Cantor set. 

\begin{proposition}\label{P-extending}
Suppose that $X$ is the Cantor set, $G$ is residually finite, $H$ is a countably infinite amenable group, and $H\curvearrowright X$ an essentially free minimal action that has comparison.
Let $F$ be a finite subset of $G*H$. Then there is an action  
$G*H\curvearrowright X$ extending $H\curvearrowright X$ (viewing $H$ as a subgroup of $G*H$) such that
\begin{enumerate}
\item the action is $(O_1 , O_2 , E)$-squarely divisible for all nonempty clopen sets $O_1 , O_2 \subseteq X$
and finite sets $E\subseteq G*H$ containing $e$,

\item $M_{G*H} (X) = M_H (X)$, and

\item there exists a nonempty clopen set $A\subseteq X$ such that $(F,A)$ is a tower.
\end{enumerate}
\end{proposition}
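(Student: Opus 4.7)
The plan is to construct a $G$-action $\alpha$ on $X$ such that, combined with the given $H$-action, it yields a $G*H$-action satisfying (i)--(iii). The action $\alpha$ will permute the levels of a clopen $H$-castle all of whose towers share a common shape $K$, where $K$ and the relevant permutations come from Lemma~\ref{L-RandMatApp}.

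Fix $\eps>0$ small, with in particular $\eps<1/|F^{-1}F|$, and apply Lemma~\ref{L-RandMatApp} to the pair $(F,\eps)$ to obtain a finite $(F_H,\eps)$-invariant set $K\subseteq H$, a $G$-action $\rho$ on $K$, permutations $\{\sigma_s\}_{s\in F_H}$ of $K$, and a permutation $\tau$ of $K$ satisfying the stated trace bounds for all nontrivial reduced words in $F^{-1}F$; set $\rho'=\tau\rho\tau^{-1}$. Since $H\curvearrowright X$ is essentially free, minimal, and has comparison, it has the URP by the characterization in \cite[Theorem~3.6]{GarGefGesKopNar24}. Combining this with the quasi-tiling theorem (Lemma~\ref{L-tiling}), one may refine a URP-castle with very F{\o}lner shapes by translates of $K$, producing a clopen castle $\{(K,B_\beta)\}_{\beta\in J}$ with all tower shapes equal to $K$ and remainder $R=X\setminus\bigsqcup_\beta KB_\beta$ satisfying $\sup_{\mu\in M_H(X)}\mu(R)<\eta$ for a small parameter $\eta$ to be chosen. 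Define $\alpha_g$ on each tower by $\alpha_g(hy)=\rho'(g)(h)y$ for $g\in G$, $h\in K$, $\beta\in J$, and $y\in B_\beta$, and let $\alpha_g$ fix every point of $R$. Together with the given $H$-action, this produces a $G*H$-action on $X$ extending $H\curvearrowright X$.

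Property (ii) is immediate, since $\alpha_g$ permutes equal-measure levels in each tower and fixes $R$ pointwise, hence preserves every $H$-invariant measure. For (iii), the trace bound implies that for each nontrivial $w=h_1g_1\cdots h_kg_k\in F^{-1}F$ the permutation $\sigma_{h_1}\rho'_{g_1}\cdots\sigma_{h_k}\rho'_{g_k}$ of $K$ has fewer than $\eps|K|$ fixed points; for levels $h\in K$ lying in the ``deep interior''---the set of levels for which every prefix of every $w\in F^{-1}F$ keeps the chain inside $K$, which can be arranged to have positive density in $K$ by strengthening the invariance of $K$ in Lemma~\ref{L-RandMatApp}---the on-tower chain produced by $w$ is given exactly by this permutation. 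Summing over $w\in F^{-1}F\setminus\{e\}$ and using $\eps<1/|F^{-1}F|$, one locates a deep interior level $h_0\in K$ that is not fixed by any nontrivial $w\in F^{-1}F$; a nonempty clopen $A\subseteq h_0B_\beta$ then yields the tower $(F,A)$.

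The hard part will be verifying (i). The natural strategy is to inherit $(O_1,O_2,E)$-square divisibility from the $H$-square divisibility supplied by Theorem~\ref{T-amenable}, applied with a sufficiently large symmetric $E_H\subseteq H$ containing all $H$-letters appearing in words in $E$. The witnessing sets $V_{i,j}$ produced by this $H$-square divisibility are $E_H$-disjoint under the $H$-action but one needs them to be $E$-disjoint under $G*H$; the difficulty is that a reduced word in $E$ acting on some $V_{i,j}$ can bounce between the tower region and its complement, invoking $\rho'$ on each visit. The remedy would be to choose $\eta$ very small and to localize the $V_{i,j}$ inside the complement of an $E$-thickening of the tower: since $G$ acts trivially on $R$, for such localized witnesses each $E$-translate reduces to an $H$-translate and the required $E$-disjointness follows from $E_H$-disjointness. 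Realizing this localization---and in particular ensuring that the $E$-thickening of the tower still has small measure and that its complement can accommodate the square divisibility structure---is the principal technical task.
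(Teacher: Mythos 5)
Your construction of the extension (permuting the levels of a clopen $H$-tower structure of shape $K$ via the conjugated permutation action from Lemma~\ref{L-RandMatApp}, fixing everything else) is in the same spirit as the paper's, and your treatments of (ii) and (iii) are essentially correct in outline. The paper actually uses a single small tower $(K,A)$ obtained from topological freeness rather than a castle covering all but $\eta$ of $X$, but that difference is not where the problem lies.

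The genuine gap is in (i), which you leave explicitly unfinished, and the remedy you sketch would fail. You propose to localize the witnessing sets $V_{i,j}$ inside the complement of an $E$-thickening of the tower region. But in your construction the castle covers all but measure $\eta$ of $X$, so that complement has measure less than $\eta$; meanwhile condition (ii) of square divisibility forces $V=\bigsqcup V_{i,j}$ to have measure close to $1$ (its complement must be subequivalent to a subset of $V$). So the witnesses cannot be accommodated there. The missing idea --- which makes the ``bouncing'' problem you worry about evaporate --- is that for \emph{every} $x\in X$, whether in the tower region or not, one has $gx\in KK^{-1}x$ (computed via the $H$-action): either $x=h_1y$ lies in a tower level and $gx=h_2h_1^{-1}x$ with $h_1,h_2\in K$, or $x$ is fixed. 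Moreover $\rho_g=\id_K$ for $g$ in the finite-index normal subgroup $N$, so the action factors through $D*H$ with $D=G/N$ finite; writing $E\subseteq (D\cup L)^n$ for a finite $L\subseteq H$ and setting $W=(KK^{-1}\cup L)^n\subseteq H$, one gets $Ex\subseteq Wx$ for all $x$, hence $EY\subseteq WY$ and $Y^W\subseteq Y^E$ for every $Y\subseteq X$. Then $(O_1,O_2,W)$-square divisibility of the $H$-action, supplied by Theorem~\ref{T-amenable}, transfers verbatim (same witnesses $V_{i,j}$, same $U$, and a boundary set that only shrinks) to $(O_1,O_2,E)$-square divisibility of the $G*H$-action. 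No localization away from the tower is needed.
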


\begin{proof}
Let $e\in F_G\subseteq G$ and $e\in F_H\subseteq H$ be the finite sets as defined in the statement of 
Lemma~\ref{L-RandMatApp} and the beginning of its proof. Applying Lemma~\ref{L-RandMatApp}, we find a $(F_H,\eps)$-invariant finite set $K\subseteq H$, an action $G\stackrel{\rho}{\curvearrowright}K$, and permutations $\{ \sigma_s \}_{s\in F_H}$ 
and $\tau$ of $K$ such that if $w=h_1g_1\cdots h_kg_k$ is a nontrivial reduced word in $F^{-1}F$ then the permutation
\begin{gather}\label{E-word}
\rho_w := \sigma_{h_1} (\tau\rho_{g_1}\tau^{-1} ) \sigma_{h_2} (\tau\rho_{g_2}\tau^{-1} )\cdots \sigma_{h_k} (\tau\rho_{g_k}\tau^{-1} )
\end{gather}
satisfies $\tr(\rho_w)<\eps$ for some $\eps<1/(|F^{-1}F|+L)$ where $L=\max_{w\in F^{-1}F} \ell(w)$ and $\ell(w)$ denotes the length of the reduced word $w\in F^{-1}F$.
The same conclusion holds for words in $F^{-1}F$ that start and end with elements both in $G$, both in $H$, or in $G$ and $H$, respectively.
We also recall from the proof of Lemma~\ref{L-RandMatApp} that $\sigma_sh=sh$ whenever $s\in F_H$ and $h\in s^{-1}K\cap K$, and that for some finite-index normal subgroup $N$ of $G$ one has $\rho_g=\id_K$ for every $g\in N$.

For $w=h_1g_1\cdots h_kg_k\in F^{-1}F\setminus \{e\}$ and $1\leq p\leq k$ define the permutation 
$\varphi_{w,p}=(\tau\rho_{g_p}\tau^{-1})\sigma_{h_{p+1}}(\tau\rho_{g_{p+1}}\tau^{-1})\cdots \sigma_{h_k}(\tau\rho_{g_k}\tau^{-1})$ of $K$.
It is straightforward to check that if $h\in \bigcap_{p=1}^{k}\varphi_{w,p}^{-1}(K^{F_H})$, then
\[
\rho_wh=h_1(\tau\rho_{g_1}\tau^{-1})h_2(\tau\rho_{g_2}\tau^{-1})\cdots h_k(\tau\rho_{g_k}\tau^{-1})h,
\]
where by $h_1 , \dots , h_k$ we mean the action of these elements by left translation.
One may define similarly $\varphi_{w,p}$ for any $w\in F^{-1}F\setminus \{e\}$, with the range of $p$ depending on the length of $w$ but uniformly bounded by $L$. Using $\Fix (\cdot )$ to denote the fixed-point set, consider the intersection
\[ 
W:= \bigg(\bigcap_{w\in F^{-1}F}\,\bigcap_{p=1}^k \varphi_{w,p}^{-1} (K^{F_H})\bigg) 
\cap \bigg(\bigcap_{w\in F^{-1}F\setminus\{e\}} \Fix (\rho_w )^c \bigg).
\]
Observe that, for every $h\in W$, when we apply the permutations defining $\rho_w$ in sequence,
each of the permutations $\sigma_{h_i}$ acts as translation by $h_i$. Moreover, since the $\varphi_{w,p}$ are permutations and $K$ is $(F_H,\eps)$-invariant, the complement of $W$ consists of at most $|L||F^{-1}F|\eps|K|+|F^{-1}F|\eps|K|$ elements, which, by our choice of $\eps$, is less than $|K|$. Hence $W$ is nonempty and we may choose an $h_0\in W$.

By the topological freeness of the action $H\curvearrowright X$ we can find a nonempty clopen set $A\subseteq X$ such that $(K,A)$ is a tower.
Define an action $G*H\curvearrowright X$ extending the $H$-action by 
declaring it on an element $g\in G$ to be given by
\[
ghx = \tau\rho_g\tau^{-1}hx
\]
for all $x\in A$ and $h\in K$ and $gx = x$ for all $x\in X\setminus KA$.
Thus the elements of $G$ act on $KA$ by permuting 
the levels of the tower $(K,A)$ in a way that preserves the $K$-orbits of points in $A$, i.e., $g K x = K x$ for all $x\in A$. This has the effect $M_{G*H}(X)=M_{H}(X)$. Indeed if we are given a $\mu\in M_H (X)$, a $g\in G$, and a Borel set $B\subseteq X$
then for every $k\in K$ we have $g (B\cap k A)=\tau\rho_g\tau^{-1}(k) (k^{-1}B\cap A)$ and hence,
using the $H$-invariance of $\mu$,
\[\mu(g (B\cap k A))=\mu(\tau\rho_g\tau^{-1}(k) (k^{-1}B\cap A))=\mu(k^{-1}B\cap A)=\mu(B\cap k A) ,\] 
so that writing $B=(B\setminus K A)\sqcup (\bigsqcup_{k\in K} B\cap k A)$
and noting that $g (B\setminus K A)=B\setminus K A$ we obtain $\mu(gB) = \mu(B)$.
It is moreover straightforward to verify that our choice of $h_0$ means that for every nontrivial element $w\in G*H$ belonging to $F^{-1}F$ and every $x\in A$ we have
\[w(h_0x)=\rho_{w}(h_0)x.\]
This, together with the fact that $h_0\in \bigcap_{w\in F^{-1}F\setminus \{e\}} \Fix (\rho_w )^c$, implies that $(F,h_0 A)$ forms a tower. Thus conditions (ii) and (iii) of the lemma statement are fulfilled. 

It remains to verify that the action $G*H\curvearrowright X$ satisfies condition (i). For this purpose we may regard
it as an action of $D*H$ where $D$ is the finite group $G/N$ (since $\rho_g=\id_K$ for $g\in N$). Let $E$ be a finite subset of $D*H$ containing $e$.
Then there are a finite set $L\subseteq H$ containing $e$ and an $n\in\Nb$ such that
$E \subseteq (D\cup L)^n$. Set $W = (KK^{-1}  \cup L)^n$.

Let $x\in X$. If $x\in h_1 A$ for some $h_1 \in K$ then 
for every $s\in D$ there exists an $h_2\in K$ (namely $\tau\rho_{s}\tau^{-1}(h_1)$) such that $sx = h_2 h_1^{-1} x$, and if $x\notin KA$ then $sx = x$ for every $s\in D$. 
Therefore $Dx \subseteq KK^{-1} x$ and hence $Ex \subseteq Wx$.
This means that for any set $Y\subseteq X$ one has
\begin{gather}\label{E-EYWY}
EY\subseteq WY \hspace*{4mm}\text{and}\hspace*{4mm} Y^W\subseteq Y^E.
\end{gather}
By Theorem~\ref{T-amenable}, the action $H\curvearrowright X$ is $(O_1,O_2,W)$-squarely divisible.
Let $\{V_{i,j}\}_{i,j=1}^{n}$ and $U$ be the sets witnessing this square divisibility (with associated boundary set $B$ as in Definition~\ref{D-SD}).
Then the sets $V_{i,j}$ will automatically be $E$-disjoint by equation~(\ref{E-EYWY}).
Again by equation~(\ref{E-EYWY}), the associated boundary set $B=\overline{V}\cap ((V\cap \overline{U}^c)^W)^c$ contains the set $\overline{V}\cap ((V\cap \overline{U}^c)^E)^c$.
It clearly follows that the sets $\{V_{i,j}\}_{i,j=1}^{n}$ also witness $(O_1,O_2,E)$-squarely divisibility for $D*H\curvearrowright X$. 
\end{proof}

\section{Spaces of actions}\label{S-spaces of actions}

We introduce here certain subspaces of $\Act (G,X)$ 
that will play a central role in subsequent sections. 
Recall that $\Act (G,X)$ denotes the space of all actions $G\curvearrowright X$ with the topology of elementwise compact-open convergence.

In the case that $X$ is the Cantor set, which will be the main focus of the rest of the paper, 
we make the following series of observations about $\Act (G,X)$ that will help us establish Proposition~\ref{P-Polish}.
Write $\sC$ for the countable collection 
of nonempty clopen subsets of $X$.

First we remark that set of minimal actions in $\Act(G,X)$ is a $G_\delta$. Indeed for every $A\in\sC$ consider the set $\sW_{A}$
of all $\alpha\in \Act (G,X)$ for which there exists a finite set $F\subseteq G$ such that $\bigcup_{s\in F} \alpha_s A = X$. Then $\sW_{A}$ is open and the set of minimal actions is equal to 
$\bigcap_{A\in\sC} \sW_{A}$, which is a $G_\delta$.

As observed in \cite[Section~4]{ConJacKerMarSewTuc18}, the set of free actions in $\Act (G,X)$ is a $G_\delta$. 
The same is true for the set of topologically free actions:
fixing an enumeration $s_1 , s_ 2, \ldots$ of $G\setminus \{ e \}$, we observe that 
for every $n\in\Nb$ and $A\in\sC$
the set $\sW_{n,A}$ of all actions $\alpha\in\Act (G,X)$ such that there exists an $x\in A$ for which $\alpha_{s_n} x \neq x$
is open, and taking the intersection $\bigcap_{n\in\Nb} \bigcap_{A\in\sC} \sW_{n,A}$ we obtain
precisely the set of all topologically free actions.

To see that the set of weak mixing actions in $\Act (G,X)$ is also a $G_\delta$, for every $A_1,A_2,B_1,B_2\in\sC$ write $\sW_{A_1,A_2,B_1,B_2}$ for the set of all $\alpha\in \Act(G,X)$ for which there exists $s\in G$ such that $\alpha_sA_1\cap A_2\neq\emptyset$ and $\alpha_sB_1\cap B_2\neq \emptyset$. Then $\sW_{A_1,A_2,B_1,B_2}$ is open, and the set of all weak mixing actions is equal to $\bigcap_{A_1,A_2,B_1,B_2\in\sC}\sW_{A_1,A_2,B_1,B_2}$, which is a $G_\delta$.

Finally we note that the set of actions $\alpha\in \Act(G,X)$ for which $M_\alpha(X)\neq\emptyset$ is closed in $\Act(G,X)$, and hence also a $G_\delta$. To verify this, suppose that $(\alpha_j)_{j\in\mathbb{N}}$ is a sequence of elements in $\Act(G,X)$ which each admit an invariant Borel probability measure and converge to some $\alpha\in \Act(G,X)$. We must show that $M_\alpha(X)\neq\emptyset$. For each $j$ pick a $\mu_j\in M_{\alpha_j}(X)$ and 
take a weak$^*$ cluster point $\mu\in M(X)$ of the sequence $(\mu_j)_{j\in\mathbb{N}}$, which exists
by weak* compactness. We may assume, by passing to a subsequence if necessary, that $\mu_j$ converges to $\mu$. We will show that $\mu\in M_\alpha(X)$. Let $A\in \sC$, where $\sC$ is as above, and let $s\in G$. Since the Borel $\sigma$-algebra of $X$ is generated by $\sC$, it suffices to show that $\mu(\alpha_sA)=\mu(A)$. Since $\alpha_j$ eventually belongs to 
the basic open set $U_{\alpha,\{s\},\sP}$, where $\sP$ is any clopen partition containing $A$, we may assume that $\alpha_{j,s}A=\alpha_sA$ for all $j\in\mathbb{N}$. Given that $1_A\in C(X)$ we now have
\begin{align*}
\lim_j \mu_j(A)=\lim_j \int_X 1_A d\mu_j=\int_X 1_A d\mu=\mu(A)
\end{align*}
while
\begin{align*}
\lim_j \mu_j(A)=\lim_j \mu_j(\alpha_{j,s}A)
&=\lim_j \mu_j(\alpha_{s}A)\\
&=\lim_j \int_X 1_{\alpha_sA} d\mu_j=\int_X 1_{\alpha_sA} d\mu= \mu(\alpha_sA).
\end{align*}
This implies $\mu(\alpha_sA)=\mu(A)$, as desired.

\begin{definition}\label{D-spectrally aperiodic}
A transformation $T:X\to X$ is {\it spectrally aperiodic} if there is no clopen subset $A\subseteq X$ and $n\in\Nb$ such that the sets $A,TA , \dots ,T^{n} A$ partition $X$.
\end{definition}

\begin{definition}\label{D-spaces}
Write $\WA (G,X)$ for the set of all minimal topologically free actions in $\Act (G,X)$ that are weakly mixing
and satisfy $M_G (X) \neq\emptyset$. For $d\in\Nb$ write $\Astar (F_d ,X)$ for the set of all topologically free
actions in $\Act (F_d ,X)$ with $M_G(X)\ne \emptyset$ that are strictly ergodic and spectrally aperiodic on each standard generator.
\end{definition}

\begin{remark}
Note that every $\alpha \in \Astar(F_d,X)$ is itself strictly ergodic. 
\end{remark}

\begin{proposition}\label{P-Polish}
Suppose that $G$ is infinite and $X$ is the Cantor set. Then $\WA (G,X)$ and $\Astar (F_d , X)$ for $d\in\Nb$
are a nonempty $G_\delta$ subsets of the respective spaces $\Act (G,X)$ and $\Act (F_d , X)$. 
In particular, they are Polish spaces.
\end{proposition}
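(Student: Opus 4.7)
The plan is to verify the $G_\delta$ claim for each of the two sets as the intersection of countably many $G_\delta$ conditions, most of which are already recorded in the discussion preceding Definition~\ref{D-spaces}, and then to exhibit elements in each. The Polish conclusion will then be automatic from Alexandrov's theorem. For $\WA(G,X)$ I can simply cite: the four defining properties---topological freeness, minimality, topological weak mixing, and nonemptiness of $M_\alpha(X)$---were already shown to be $G_\delta$ before Definition~\ref{D-spaces} (the last in fact closed), so their intersection $\WA(G,X)$ is $G_\delta$. For nonemptiness I would appeal to the construction in Example~\ref{E-notSD}, which produces, for every countably infinite $G$, a free minimal topologically mixing action on the Cantor set admitting an invariant Borel probability measure; this lies in $\WA(G,X)$ since topological mixing implies topological weak mixing and freeness implies topological freeness.

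For $\Astar(F_d,X)$ the same preliminaries handle topological freeness, minimality, and nonemptiness of $M_\alpha(X)$, so the work is to establish that unique ergodicity and spectral aperiodicity on each of the $d$ standard generators are $G_\delta$ conditions. For unique ergodicity I plan to fix a compatible metric $\vartheta$ on the compact metrizable space $M(X)$ and verify that for every $n\in\Nb$ the set $\{\alpha : \diam_\vartheta M_\alpha(X) < 1/n\}$ is open. If $\alpha_k\to\alpha$ with $\mu_k,\nu_k\in M_{\alpha_k}(X)$ and $\vartheta(\mu_k,\nu_k)\geq 1/n$, then precisely the weak$^*$ compactness and eventual equality of $\alpha_k$ and $\alpha$ on clopen sets used before Definition~\ref{D-spaces} to prove $\{\alpha: M_\alpha(X)\neq \emptyset\}$ closed produces, after passing to subsequences, weak$^*$ limits $\mu,\nu\in M_\alpha(X)$ still at distance at least $1/n$. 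Intersecting over $n$ exhibits $|M_\alpha(X)|\leq 1$ as $G_\delta$, and combining with the closed condition $M_\alpha(X)\neq\emptyset$ gives unique ergodicity.

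For spectral aperiodicity of a standard generator $a_i$, I would fix $n\in\Nb$ and a clopen $A\subseteq X$ and observe that the map $\alpha\mapsto (\alpha_{a_i^j}A)_{j=0}^n$ from $\Act(F_d,X)$ into the countable discrete set of $(n+1)$-tuples of clopen subsets of $X$ is locally constant: on the basic open neighbourhood $U_{\alpha,\{a_i^j\}_{j=0}^n,\sP}$ for any clopen partition $\sP$ refining $\{A,X\setminus A\}$ it is constant, since any $\beta$ there satisfies $\beta_{a_i^j}A=\alpha_{a_i^j}A$ for $j=0,\dots,n$. The condition that $\{\alpha_{a_i^j}A\}_{j=0}^n$ is a partition of $X$ depends only on this tuple and so is clopen in $\Act(F_d,X)$; taking the union over the countably many pairs $(A,n)$ exhibits the failure of spectral aperiodicity on $a_i$ as $F_\sigma$, so spectral aperiodicity on $a_i$ is $G_\delta$, and intersecting over $i=1,\dots,d$ preserves this. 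Nonemptiness of $\Astar(F_d,X)$ will follow from the concrete examples constructed in Section~\ref{S-examples}. The main technical subtlety in the entire argument is the unique ergodicity step, where one must navigate the fact that $\alpha\mapsto M_\alpha(X)$ varies only upper-semicontinuously; fortunately that is precisely what is needed.
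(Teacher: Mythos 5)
Your proof is correct and follows the same overall decomposition as the paper's: both reduce everything to countably many $G_\delta$ conditions established (or establishable) in the discussion preceding Definition~\ref{D-spaces}, pull the generator-wise conditions back along the continuous restriction maps $\Act(F_d,X)\to\Act(\Zb,X)$, and settle nonemptiness by citing Example~\ref{E-notSD} and Section~\ref{S-examples}. The one genuine divergence is the treatment of unique ergodicity on generators: the paper outsources the fact that the uniquely ergodic transformations form a $G_\delta$ in $\Act(\Zb,X)$ to Hochman \cite{Hoc08}, whereas you prove it directly by showing that $\{\alpha : \diam_\vartheta M_\alpha(X)\ge 1/n\}$ is sequentially closed, reusing the weak$^*$ compactness argument that the paper employs to show $\{\alpha: M_\alpha(X)\ne\emptyset\}$ is closed. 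Your argument is sound (the witnesses $\mu_k,\nu_k$ exist because $M_{\alpha_k}(X)$ is weak$^*$ compact, their limits are $\alpha$-invariant by the clopen-partition stability of the topology, and $\vartheta$ is continuous), and it has the virtue of keeping the proof self-contained; the citation route is shorter. Two cosmetic points: your union witnessing failure of spectral aperiodicity is in fact open (a countable union of open sets), not merely $F_\sigma$, so spectral aperiodicity on a generator is closed, exactly as the paper observes; and strict ergodicity on a generator also requires minimality of that generator's restriction, which is not literally covered by the preliminaries on minimality of the full $F_d$-action but follows by the same restriction-map pullback you already use for the other generator-wise conditions.
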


\begin{proof}
The fact that $\WA (G,X)$ is a $G_\delta$ follows from the discussion before Definition~\ref{D-spectrally aperiodic}.

By the discussion at the beginning of the section, the minimal transformations form a $G_\delta$ set in $\Act(\Zb,X)$. So do the uniquely ergodic actions in $\Act(\Zb,X)$, as shown in \cite{Hoc08} (see Theorem~1.3 and the last part of the proof of Proposition~6.7 therein). Thus the set of
strictly ergodic actions in $\Act(\Zb,X)$ is a $G_\delta$. 
Moreover, the set of transformations $T\in\Act(\Zb,X)$ for which there exist a clopen set $A\subseteq X$ and an $n\in\Nb$ so that the sets $A,TA , \dots ,T^{n} A$ partition $X$ is clearly open, and the set of spectrally aperiodic actions is its complement and hence is a $G_\delta$. Since the restriction map 
$\Act(F_d,X)\to \Act(\langle a \rangle,X) = \Act(\Zb,X)$ is continuous for each of the standard generators 
$a$ of $F_d$, we can intersect the inverse images of the above $G_\delta$ sets under these maps
and then further intersect this with the set of topologically free actions in $\Act (F_d , X)$ satisfying $M_{F_d} (X)\neq\emptyset$ 
(which is a $G_\delta$ by the discussion before Definition~\ref{D-spectrally aperiodic})
so as to express $\Astar (F_d , X)$ as the intersection of finitely many $G_\delta$ sets, which is again 
a $G_\delta$ set.

As explained in Example~\ref{E-notSD}, every countably infinite group admits
a free minimal mixing action on the Cantor set with $M_G (X)\neq\emptyset$, and so $\WA (G,X)$ is nonempty.
The examples in Section~\ref{S-examples} show that $\Astar (F_d , X)$ is nonempty for $d\in\Nb$ 
(the construction in Section~\ref{S-examples} is done in the case $d=2$ but can be carried out similarly for other $d$).
\end{proof}

\begin{proposition}\label{P-top free}
Suppose $X$ is the Cantor set.
Suppose that $G$ is residually finite and that $H$ is amenable and contains a normal infinite cyclic subgroup.
Then $\WA(G*H,X)$ is a dense $G_\delta$ set inside the space $\sM$ of all weakly mixing minimal actions with $M_{G*H} (X) \neq\emptyset$. 
\end{proposition}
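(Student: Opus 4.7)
The $G_\delta$ assertion will follow immediately from the observations laid out at the beginning of Section \ref{S-spaces of actions}: topological freeness, minimality, weak mixing, and nonemptiness of $M_{G*H}(X)$ are each $G_\delta$ conditions in $\Act(G*H,X)$, so $\WA(G*H,X)$ is a $G_\delta$ in $\Act(G*H,X)$ and hence its trace in $\sM$ is a $G_\delta$ in $\sM$. For density I will invoke the Baire category theorem in $\sM$: letting $\sC$ denote the countable collection of nonempty clopen subsets of $X$, topological freeness is the intersection over $s\in(G*H)\setminus\{e\}$ and $A\in\sC$ of the open sets
\[
\sW_{s,A}=\{\alpha\in\Act(G*H,X):\alpha_s x\ne x\text{ for some }x\in A\},
\]
so it will suffice to show that $\sM\cap\sW_{s,A}$ is dense in $\sM$ for every such pair $(s,A)$.

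Fix $\alpha\in\sM$, a pair $(s,A)$, and a basic neighborhood $U_{\alpha,E,\sP}$ of $\alpha$; after enlarging $E$ and refining $\sP$ I may assume $s\in E$, $e\in E$, and $\sP$ refines $\{A,X\setminus A\}$. My plan is a two-stage product construction driven by Section \ref{S-diagonal machine}. First, applying Proposition \ref{P-min WM} with $G*H$ in the role of $G$ and $H$ in the role of $H$, I will obtain a free strictly ergodic action $\gamma:H\curvearrowright X$ such that every extension $\tilde\beta\in\Act(G*H,X)$ of $\gamma$ yields a minimal weakly mixing diagonal action $\alpha\times\tilde\beta$ on $X\times X$. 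Second, I will apply the construction in the proof of Proposition \ref{P-extending} to $\gamma$ with $F:=E$; inspection of that proof shows that conclusions (ii) and (iii) there are established without ever using the comparison hypothesis (which is invoked only at the very end to secure (i) via Theorem \ref{T-amenable}), so essential freeness and minimality of the strictly ergodic $\gamma$ already suffice. This yields an extension $\tilde\beta$ of $\gamma$ with $M_{\tilde\beta}(X)=M_\gamma(X)=\{\zeta\}$ a singleton, along with a nonempty clopen $A'\subseteq X$ such that $(E,A')$ is a $\tilde\beta$-tower.

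The action $\alpha\times\tilde\beta$ on $X\times X$ will then be minimal, weakly mixing, and admit the invariant measure $\mu\times\zeta$ for any $\mu\in M_\alpha(X)$, and $\tilde\beta_s A'\cap A'=\emptyset$ will force $(\alpha\times\tilde\beta)_s$ to have no fixed point in the nonempty open set $A\times A'$. To transport this back to $X$ I will use Proposition \ref{P-extension approximation} with $h:X\times X\to X$ the projection onto the first coordinate: it supplies, for each sufficiently fine clopen partition $\sP_i$ of $X$, a homeomorphism $g_i:X\times X\to X$ with $g_i(h^{-1}(B))=B$ for all $B\in\sP_i$, and the conjugates $\beta_i:=g_i\circ(\alpha\times\tilde\beta)\circ g_i^{-1}$ converge to $\alpha$. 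For $\sP_i$ refining $\sP$ one has $\beta_i\in U_{\alpha,E,\sP}$, while the nonempty open set $g_i(A\times A')\subseteq g_i(h^{-1}(A))=A$ consists entirely of non-fixed points of $\beta_{i,s}$, placing $\beta_i\in\sW_{s,A}$; since $\beta_i$ is conjugate to $\alpha\times\tilde\beta$ by a homeomorphism, it inherits minimality, weak mixing, and the existence of an invariant measure, so $\beta_i\in\sM$. The main delicacy is orchestrating $\tilde\beta$ to deliver simultaneously (a) a minimal weakly mixing diagonal with $\alpha$, (b) an invariant measure compatible with $\alpha$'s, and (c) the tower structure on $E$ producing non-fixed points of $s$; once this is secured through the interplay of Propositions \ref{P-min WM} and \ref{P-extending}, together with the observation that comparison is not needed for the features of Proposition \ref{P-extending} that I use, the remainder of the argument is routine.
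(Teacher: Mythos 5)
Your proposal is correct and follows essentially the same route as the paper: the same $G_\delta$ observations, the same Baire-category reduction to density of the open sets witnessing topological freeness on a fixed clopen set for a fixed group element, and the same two-stage construction combining Proposition~\ref{P-min WM} with Proposition~\ref{P-extending} followed by Proposition~\ref{P-extension approximation}. The only cosmetic difference is that the paper justifies invoking Proposition~\ref{P-extending} by noting the action from Proposition~\ref{P-min WM} automatically has comparison (via \cite{Nar24} and \cite{KerSza20}), whereas you re-inspect its proof to see that comparison is only needed for the square-divisibility conclusion; both are legitimate.
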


\begin{proof}
The space $\sM$ is a $G_\delta$ subset of $\Act (G*H,X)$ by the discussion preceding 
Definition~\ref{D-spectrally aperiodic}, and by definition $\WA(G*H,X)$ is the set of all actions in $\sM$
that satisfy the additional requirement of topological freeness.

As we did at the beginning of the section, fix an enumeration $s_1 , s_ 2, \ldots$ of $G*H\setminus \{ e \}$ and for every $n\in\Nb$ and
$A$ in the countable collection $\sC$ of nonempty clopen subsets of $X$ write $\sW_{n,A}$ for the set of 
all actions $\alpha\in\sM$ such that there exists an $x\in A$ for which $\alpha_{s_n} x \neq x$.
Then $\sW_{n,A}$ is an open set in $\sM$ and $\WA(G*H,X)=\bigcap_{n\in\Nb}\bigcap_{A\in\sC}\sW_{n,A}$. We will show that each  $\sW_{n,A}$ is dense in $\sM$, and 
the density of $\WA(G*H,X)$ in $\sM$ will then be a consequence of the Baire category theorem.

To verify the density of $\sW_{n,A}$ in $\sM$, let $\alpha\in\sM$. 
Take a free minimal action $H\curvearrowright  X$ as given by Proposition~\ref{P-min WM}. 
This action has comparison by \cite[Theorem~A]{Nar24} together with \cite[Theorem~6.1]{KerSza20}.
By Proposition~\ref{P-extending} (ignoring the first condition in its statement concerning square divisibility) there is an action
$G*H\stackrel{\beta}{\curvearrowright} X$ extending $H\curvearrowright X$ such that
$M_{\beta} (X)$ is equal to $M_H (X)$ (which is nonempty by the amenability of $H$)
and such that there exists a $y\in X$ with $\beta_{s_n}y\neq y$.

The diagonal action $G*H\stackrel{\alpha\times\beta}{\curvearrowright} X\times X$
is minimal and weakly mixing by our choice of $H\curvearrowright X$ via Proposition~\ref{P-min WM}. 
By assumption $M_\alpha(X)\neq \emptyset$, and so $M_{\alpha\times \beta} (X\times X)$
contains at least one product measure and in particular is nonempty.

By Proposition~\ref{P-extension approximation} (applied with respect to the canonical projection $h: X\times X\to X$ onto the first coordinate) we can conjugate $\alpha\times\beta$ 
to an action $\rho=g\circ(\alpha\times\beta)\circ g^{-1}$ on $X$ which is as close as we wish to $\alpha$ in such a way that the image of $A\times X$
under the conjugacy $g$ is equal to $A$. Since any point $x\in g(A\times \{ y \})\subseteq A$ 
has the property that $\rho_{s_n} x \neq x$, we have $\rho\in\sW_{n,A}$, establishing density.
\end{proof}

In contrast to actions in $\Astar (F_d , X)$, which are minimal on generators by definition,
an action in $\WA (F_d ,X)$, despite being minimal, is often very far from being minimal on generators,
as demonstrated by the following proposition.

\begin{proposition}\label{P-trivial factor}
Suppose $X$ is the Cantor set. Let $d\geq 2$. 
The set $\sV$ of all actions $F_d = \langle a_1 , \dots , a_d \rangle\curvearrowright X$ in $\WA (F_d ,X)$ such that 
each of the restriction actions $\langle a_k \rangle \curvearrowright X$ for $k=1,\dots ,d$ factors onto
the trivial action on the Cantor set is comeagre.
\end{proposition}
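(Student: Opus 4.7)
For each $k \in \{1, \dots, d\}$ let $\sV_k$ denote the set of $\alpha \in \WA(F_d, X)$ such that $\alpha_{a_k}$ factors onto the trivial action on the Cantor set. Since $\sV = \bigcap_{k=1}^d \sV_k$ is a finite intersection, it suffices to show that each $\sV_k$ is a dense $G_\delta$ subset of $\WA(F_d, X)$.

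I would first observe that a self-homeomorphism $T$ of the Cantor set factors onto the trivial action on a Cantor set if and only if for every $n \in \Nb$ it admits $n$ pairwise disjoint non-empty invariant clopen subsets of $X$. The forward direction is the pullback of a clopen partition of the target. For the converse, the Boolean algebra of $T$-invariant clopen subsets is infinite, and a standard dichotomy (it either has infinitely many atoms, or else the complement of the join of its finitely many atoms carries an atomless nontrivial subalgebra from which one extracts an antichain by successive bisection) produces an infinite pairwise disjoint family $\{A_i\}_{i \in \Nb}$ of non-empty invariant clopen sets. Setting $C_m := \bigcup \{A_i : \text{the } m\text{-th binary digit of } i \text{ is } 1\}$ then defines a continuous $T$-invariant map $\pi : X \to \{0,1\}^\Nb$ whose image is closed and meets every cylinder, hence is all of $\{0,1\}^\Nb$. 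Consequently $\sV_k = \bigcap_n W_{k,n}$, where $W_{k,n}$ denotes the set of $\alpha \in \WA(F_d,X)$ such that $\alpha_{a_k}$ admits $n$ pairwise disjoint non-empty invariant clopen subsets. Each $W_{k,n}$ is open: if the sets $A_1, \dots, A_n$ witness $\alpha \in W_{k,n}$ and $\sP$ is a clopen partition of $X$ containing them, then every $\beta \in U_{\alpha, \{a_k\}, \sP}$ satisfies $\beta_{a_k} A_i = A_i$ for all $i$.

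The main task is to establish density of $W_{k,n}$ via the diagonal-action machinery from Section~\ref{S-diagonal machine}. Given $\alpha \in \WA(F_d, X)$ and a basic neighborhood $U_{\alpha, E, \sP}$, I would pick some $j_0 \in \{1,\dots,d\} \setminus \{k\}$ (possible since $d \geq 2$) and decompose $F_d = G_0 * H$ with $G_0 = \langle a_j : j \ne j_0\rangle \cong F_{d-1}$ (residually finite as a free group) and $H = \langle a_{j_0}\rangle \cong \Zb$ (amenable, with itself as a normal infinite cyclic subgroup), so that $a_k \in G_0$. Proposition~\ref{P-min WM}, applied with ambient group $F_d$ and subgroup $H$, produces a free strictly ergodic action $\gamma : H \curvearrowright X$ such that every $F_d$-action extending $\gamma$ has minimal and weakly mixing diagonal with $\alpha$. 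Since $\gamma$ is a free strictly ergodic $\Zb$-action on the Cantor set, it is essentially free, minimal, and has comparison. Proposition~\ref{P-extending} (applied with its $G$ taken to be $G_0$, its $H$ taken to be $H$, its action being $\gamma$, and the trivial finite set $F = \{e\}$) then extends $\gamma$ to an action $\beta : G_0 * H = F_d \curvearrowright X$ with $M_\beta(X) = M_H(X) \ne \emptyset$. From the explicit construction in the proof of Proposition~\ref{P-extending}, every $a \in G_0$ acts on $X$ by a finite-order permutation of the levels of a tower $(K, A)$ and as the identity off the tower; in particular $\beta_{a_k}$ is a periodic homeomorphism of $X$, and the quotient of $X$ by the cyclic group $\langle \beta_{a_k}\rangle$ is a compact, totally disconnected, metrizable space with no isolated points, hence homeomorphic to the Cantor set. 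Therefore $\beta_{a_k}$ factors onto the trivial action on the Cantor set.

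The diagonal $\alpha \times \beta$ on $X \times X$ then lies in $\WA(F_d, X \times X)$: topological freeness descends from $\alpha$ (the fixed-point set of $(\alpha \times \beta)_s$ is contained in the nowhere dense set $\Fix(\alpha_s) \times X$ for $s \ne e$), minimality and weak mixing come from Proposition~\ref{P-min WM}, and product measures supply invariance. The transformation $(\alpha \times \beta)_{a_k}$ factors onto the trivial Cantor action via the second-coordinate projection composed with the Cantor factor of $\beta_{a_k}$. Applying Proposition~\ref{P-extension approximation} with the first-coordinate projection $h : X \times X \to X$ yields, for a sufficiently fine clopen partition of $X$, a homeomorphism $g : X \times X \to X$ such that $\beta' := g \circ (\alpha \times \beta) \circ g^{-1}$ lies in $U_{\alpha, E, \sP}$; conjugation preserves both membership in $\WA(F_d, X)$ and the property that $\beta'_{a_k}$ factors onto the trivial Cantor action, so $\beta' \in \sV_k \cap U_{\alpha, E, \sP} \subseteq W_{k,n}$. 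The main delicacy is orchestrating Propositions~\ref{P-min WM} and \ref{P-extending} in tandem: the $\gamma$ produced by the former must be fed as the $H$-action into the latter, so that the single $\beta$ so constructed simultaneously extends this $\gamma$ (guaranteeing the $\WA$-properties of the diagonal with $\alpha$) and carries the tower-based periodic structure on $a_k$ needed for the Cantor factor.
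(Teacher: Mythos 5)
Your reduction of $\sV_k$ to the $G_\delta$ set $\bigcap_{n}W_{k,n}$ is where the argument breaks down: the asserted equivalence ``$T$ factors onto the trivial action on the Cantor set if and only if $T$ admits $n$ pairwise disjoint nonempty invariant clopen sets for every $n$'' is false. In your proof of the converse the sets $C_m$ are infinite unions of pairwise disjoint clopen sets, hence open but in general not closed, so the map $\pi$ need not be continuous; and no repair of that step is possible, because the equivalence itself fails. For a concrete counterexample, let $T_0$ be a minimal homeomorphism of the Cantor set $C$ and let $T = T_0\times\id$ on $X = C\times(\Nb\cup\{\infty\})$, which is again a Cantor set. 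Every $T$-invariant clopen set has the form $C\times S$ with $S$ clopen in $\Nb\cup\{\infty\}$, so the invariant clopen algebra is the finite--cofinite algebra: it contains arbitrarily large antichains, but it is superatomic and therefore has no nontrivial atomless subalgebra, so $T$ has no trivial Cantor factor (a continuous invariant surjection onto a Cantor set would embed the countable atomless Boolean algebra into the invariant clopen algebra). Thus $\bigcap_n W_{k,n}$ can strictly contain $\sV_k$, and proving that it is a dense $G_\delta$ does not prove that $\sV_k$ is comeagre. The condition you need (and the one the paper uses) is that the algebra of $\alpha_{a_k}$-invariant clopen sets is \emph{atomless}: for every clopen $A$ with $\alpha_{a_k}A=A$, the set $A$ splits into two nonempty invariant clopen pieces. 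This is again a countable intersection of open conditions, and a nontrivial countable atomless Boolean algebra is the clopen algebra of the Cantor set, which produces the desired factor via Stone duality.

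Your density argument, by contrast, is essentially correct and takes a genuinely different route from the paper's. The paper sets $\beta_{a_{j_0}}=\gamma$ and $\beta_{a_j}=\id$ for $j\neq j_0$ and verifies the splitting condition directly from the product structure (writing $g(A\times X)=g(A\times C)\sqcup g(A\times(X\setminus C))$ for a proper nonempty clopen $C$), running the construction twice to cover all generators; you instead feed the $\gamma$ produced by Proposition~\ref{P-min WM} into Proposition~\ref{P-extending} so that $\beta_{a_k}$ becomes a finite-order homeomorphism whose quotient space is a Cantor set. Both constructions place a member of $\sV_k$ in any prescribed basic neighbourhood, and your perturbations in fact also land in the corrected (atomless) $G_\delta$ set, since the invariant clopen algebra of a finite-order homeomorphism of the Cantor set is atomless. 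So the proof is salvageable by replacing your characterization of $\sV_k$ with the atomlessness condition, but as written the first step is a genuine error.
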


\begin{proof}
As usual denote by $\sC$ the countable collection of nonempty clopen subsets of $X$. For each $k=1,\ldots ,d$ and $A\in\sC$ write $\sW_{k,A}$ for the set of all actions $\alpha$ in $\WA (F_d ,X)$ such that if
$\alpha_{a_k} A = A$ then $A$ can be partitioned into two clopen subsets that are fixed by $\alpha_{a_k}$, i.e., there exist disjoint sets $A_1,A_2\in \sC$ such that $A=A_1\sqcup A_2$ and $\alpha_{a_k} A_i=A_i$, for $i=1,2$ 
(if $\alpha_{a_k}A\neq A$ then $\alpha$ automatically belongs to $\sW_{k,A}$).
The set $\sW_{k,A}$ is clearly open by the definition of the topology on $\WA (F_d ,X)$. The set $\sW := \bigcap_{k=1}^d\bigcap_{A\in\sC} \sW_{k,A}$ is a $G_\delta$. 
We show that $\sW\subseteq \sV$. Let $\alpha\in\sW$. Given $1\leq k\leq d$, we set $\sP_{k,0}=\{X\}$ and 
for $n\in\Nb$ recursively find a partition $\sP_{k,n}$ of cardinality $2^n$ refining $\sP_{k,n-1}$ by partitioning 
every set in $\sP_{k,n-1}$ into two disjoint nonempty clopen sets each of which is fixed by $\alpha_{a_k}$.
Take the C$^*$-subalgebra $A_k \subseteq C(X)$ generated 
by the indicator functions of the members of the partitions $\sP_{k,n}$ over all $n\in\Nb$. Set $A_{k,n}=C^*(\{1_{U}: U\in \sP_{k,n}\})$. Then $A_k=\varinjlim_{n}A_{k,n}$, with the canonical inclusions as connecting maps.
It is then straightforward to check that $A_{k,n}\cong \mathbb{C}^{2^n}$, and, under a suitable isomorphism, the inclusion maps correspond to the injective maps $\varphi_{n+1,n}: \mathbb{C}^{2^n}\to \mathbb{C}^{2^{n+1}}$ given by $z\mapsto (z,z)$.
Thus $A_k\cong \varinjlim_n(\mathbb{C}^{2^n}, \varphi_{n+1,n})\cong C(X)$, see \cite[Example~III.2.5]{Dav96}.
From this we obtain an equivariant factor map $(X,\alpha_{a_k})\to (X,\tr)$. It follows that $\sW\subseteq \sV$. 

It remains to show that for every $A\in \sC$, the set $\sW_A=\bigcap_{k=1}^{d}\sW_{k,A}$ is dense in $\WA (F_d ,X)$. By the Baire category theorem it would then follow that $\sW$ is a dense $G_\delta$ set contained in $\sV$, so that $\sV$ is comeagre. Let $A\in\sC$ and let $\alpha\in\WA (F_d ,X)$. We will first construct an action
$\alpha^{(1)} \in\bigcap_{k=2}^d \sW_{k,A}$ that is as close to $\alpha$ as we wish. 
As $\langle a_1 \rangle \cong\Zb$ is amenable,
by Proposition~\ref{P-min WM} there is an action $\langle a_1 \rangle\stackrel{\gamma}{\curvearrowright} X$ such that,
letting $F_d \stackrel{\beta}{\curvearrowright} X$ be the action determined by $\beta_{a_1} x = \gamma_{a_1} x$ for all $x\in X$
and $\beta_{a_k} x = x$ for all $x\in X$ and $k=2,\dots , d$, the diagonal action $\alpha\times\beta$ of $F_d$ on $X\times X$
is minimal and weakly mixing. Since $\alpha$ is topologically free, so is $\alpha\times\beta$. Since every
Borel probability measure on $X$ that is $\gamma$-invariant is also $\beta$-invariant, $M_{\alpha\times\beta} (X\times X)$
contains at least one product measure and hence is nonempty. Thus $\alpha\times\beta$ belongs to 
$\WA (F_d ,X\times X)$.

Let $g: X\times X\to X$ be any homeomorphism which satisfies $g(A\times X)=A$. 
We claim that the conjugated action $g\circ(\alpha\times\beta)\circ g^{-1}$ on $X$ lies in $\bigcap_{k=2}^d \sW_{k,A}$. Indeed, for $2\leq k\leq d$, assume that 
$g\circ (\alpha\times \beta)_{a_k} \circ g^{-1}(A)=A$. We must show that $A$ partition nontrivially into two disjoint clopen sets each of which is fixed by $g\circ (\alpha\times \beta)_{a_k} \circ g^{-1}$. Taking $C$ to be any proper nonempty clopen subset of $X$, it is easy to see that $A = g(A\times C)\sqcup g(A\times (X\setminus C))$ is partition of the desired type.

By Proposition~\ref{P-extension approximation} (applied with respect to the canonical projection $X\times X\to X$ onto the first coordinate) we can find a conjugate $\alpha^{(1)}$
of $\alpha\times\beta$  as close as we wish to $\alpha$, via a conjugation map which satisfies the properties as stated for $g$ above.

Now repeat the above construction substituting $a_2$ for $a_1$
and $\alpha^{(1)}$ for $\alpha$. Then we obtain an action $\alpha^{(2)}\in\WA(F_d,X)$ that in particular lies in $\sW_{1,A}$ (even lies in $\sW_{k,A}$ for all $k\neq 2$)
and is as close as we wish to $\alpha^{(1)}$. Note that in our construction $\alpha^{(2)}$ was achieved as a conjugate (via a homeomorphism which maps $A\times X$ to $A$) of a product of $\alpha^{(1)}$ with another action.
Since $\alpha^{(1)}\in \bigcap_{k=2}^d \sW_{k,A}$, it follows by the nature of the product construction that $\alpha^{(2)}$ must also belong to $\bigcap_{k=2}^d \sW_{k,A}$. Therefore $\alpha^{(2)}$ belongs to $\sW_A = \bigcap_{k=1}^d \sW_{k,A}$,
establishing the desired density.
\end{proof}

\section{Generic square divisibility I}\label{S-SD I}

The proof of our first genericity result, Theorem~\ref{T-SD free products}, rests largely on the density statement for $O$-square divisibility captured in Lemma~\ref{L-SD free product}. The main technical tools have already been developed 
in Section~\ref{S-diagonal machine} in the form of Propositions~\ref{P-min WM} and \ref{P-extending}, which permit us to construct a diagonal action 
that inherits the properties of minimality, weak mixing, and the existence of an invariant Borel probability measure from one of the factors while also possessing the desired degree of square divisibility. 
With such diagonal actions at hand we can then appeal 
to Proposition~\ref{P-extension approximation} to clinch the desired density.
Without the requirements of weak mixing and the existence of an invariant Borel 
probability measure the construction would be much simpler, as we could pass to a minimal
subsystem after taking a product, an operation that preserves neither of the two properties in general.

\begin{lemma}\label{L-SD free product}
Suppose that $G$ is residually finite and that $H$ is infinite and amenable and contains a normal infinite cyclic subgroup. 
Suppose $X$ is the Cantor set, and let $O$ be a nonempty clopen subset of $X$.
Then the set of $O$-squarely divisible actions in $\WA (G*H,X)$ is dense.
\end{lemma}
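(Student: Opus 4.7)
The plan is to approximate a given $\alpha\in\WA(G*H,X)$ by conjugates of a diagonal product $\alpha\times\beta$ on $X\times X$ for a cleverly constructed $\beta\in\Act(G*H,X)$, and then to transfer $O$-square divisibility to the conjugate via Proposition~\ref{P-extension approximation}. First I would apply Proposition~\ref{P-min WM} to the amenable subgroup $H$ of $G*H$ to produce a free strictly ergodic action $\gamma\colon H\curvearrowright X$ with unique invariant measure $\zeta$ such that every extension $\beta\in\Act(G*H,X)$ of $\gamma$ yields a diagonal product $\alpha\times\beta$ which is minimal and weakly mixing. Since $\gamma$ is free and minimal with $H$ amenable, it has the URP \cite[Corollary~E]{Nar24b} and comparison (through almost finiteness, \cite{Ker20}), so Proposition~\ref{P-extending} furnishes an extension $\beta\in\Act(G*H,X)$ that is $(O_1^\beta,O_2^\beta,E)$-squarely divisible for all nonempty clopen $O_1^\beta,O_2^\beta\subseteq X$ and all finite $E\ni e$, with $M_\beta(X)=M_\gamma(X)=\{\zeta\}$. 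Consequently $\alpha\times\beta\in\WA(G*H,X\times X)$ (topological freeness is inherited from $\alpha$, invariant probability measures come from products), and $\beta$ itself inherits comparison from $\gamma$ since $M_\beta=M_\gamma$ and every $\gamma$-subequivalence is a $\beta$-subequivalence.

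Next I would verify that $\alpha\times\beta$ is $(O\times X)$-squarely divisible. Pick nonempty clopen $O_1^\beta,O_2^\beta\subseteq X$ with disjoint closures and small $\zeta$-measure, a clopen $Q_0\subseteq X\setminus(O_1^\beta\cup O_2^\beta)$ of large $\zeta$-measure, and a nonempty clopen $O_0\subseteq O$; set $O_1':=X\times O_1^\beta$, $O_2':=X\times O_2^\beta$, and $O_0':=O_0\times Q_0\subseteq O\times X$. These three sets are pairwise disjoint clopen. For each finite $E\ni e$, $(O_1',O_2',E)$-square divisibility of $\alpha\times\beta$ follows by lifting the $(O_1^\beta,O_2^\beta,E)$-square divisibility of $\beta$ through $V'_{i,j}:=X\times V_{i,j}$: pairwise equivalence and $E$-disjointness transfer because diagonal group elements preserve the first factor, and every $\beta$-subequivalence $A\prec_\beta B$ lifts verbatim to $X\times A\prec_{\alpha\times\beta}X\times B$ via the map $P\mapsto X\times P$, so all three defining subequivalences in Proposition~\ref{P-SD0Dim} carry over.

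The hard part will be the remaining subequivalence $X\times(\overline{O_1^\beta}\sqcup\overline{O_2^\beta})\prec O_0\times Q_0$ under $\alpha\times\beta$, since the source has full first coordinate while the target is confined to $O_0\subsetneq X$ there, ruling out any single group element transporting the whole source in one shot. My strategy is to combine $\alpha$-minimality, $\beta$-comparison, and the minimality of $\alpha\times\beta$: by $\alpha$-minimality and compactness of $X$ one obtains a finite clopen partition $X=\bigsqcup_a X_a$ with elements $t_a\in G*H$ sending each $X_a$ into $O_0$; by unique ergodicity and $\beta$-comparison, once $\zeta(O_1^\beta\sqcup O_2^\beta)$ is small enough relative to $\zeta(Q_0)$ divided by the combinatorial bounds at play, $Q_0$ accommodates all the prescribed $\beta$-destination subslots pairwise disjointly; and by minimality of $\alpha\times\beta$ together with a compactness-and-clopen-refinement argument (using that the Cantor set admits arbitrarily many disjoint nonempty clopen sub-pieces), for each piece $X_a\times(O_1^\beta\sqcup O_2^\beta)$ and each designated subslot one finds diagonal elements $u$ that simultaneously send a clopen piece of the source into $O_0$ in the first coordinate and into the designated subslot in the second, yielding pairwise disjoint images across the whole index set.

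Finally I would apply Proposition~\ref{P-extension approximation} with the first-coordinate projection $h\colon X\times X\to X$ and clopen partitions of $X$ refining $\{O,X\setminus O\}$ to obtain homeomorphisms $g\colon X\times X\to X$ with $g(O\times X)=O$ such that the conjugates $\rho:=g\circ(\alpha\times\beta)\circ g^{-1}$ converge to $\alpha$ in $\Act(G*H,X)$. Since $g^{-1}(O)=O\times X$ and square divisibility passes through conjugation, $\rho$ is $O$-squarely divisible. Conjugation also preserves minimality, weak mixing, topological freeness, and existence of an invariant Borel probability measure, so each such $\rho$ lies in $\WA(G*H,X)$, yielding the claimed density.
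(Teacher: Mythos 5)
Your overall architecture matches the paper's: Proposition~\ref{P-min WM} to get a strictly ergodic $H$-action $\gamma$, Proposition~\ref{P-extending} to extend it to $\beta$, lifting of the $(O_1,O_2,E)$-square divisibility of $\beta$ to the diagonal action via $C\mapsto X\times C$, and Proposition~\ref{P-extension approximation} to conjugate back to a neighbourhood of $\alpha$. The first, second, and fourth paragraphs are essentially correct. The gap is in your third paragraph, and you have correctly located where the difficulty sits but not resolved it. The subequivalence $\overline{O_1'}\sqcup\overline{O_2'}\prec_{\alpha\times\beta} O_0'$ requires, by definition, a single cover of the source by open sets together with single group elements whose \emph{diagonal} images are pairwise disjoint and land in the target. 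None of the three ingredients you invoke supplies this: $\alpha$-minimality controls only the first coordinate of the chosen elements, $\beta$-comparison produces subequivalences via elements over whose $\alpha$-behaviour you have no control, and minimality of $\alpha\times\beta$ only tells you that each orbit visits each nonempty open set --- it cannot force the images of the (a priori unboundedly many) pieces of a compactness cover to be pairwise disjoint. Indeed, if transitivity alone yielded $A\prec B$ for arbitrary closed $A$ and nonempty open $B$, this would violate the inequality $\nu(A)\le\nu(B)$ for invariant measures; and the greedy ``assign each piece its own subslot of $Q_0$'' scheme is circular, since the number of pieces needed depends on the size of the subslots, which depends on the number of pieces. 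There is no comparison hypothesis available for the diagonal action $\alpha\times\beta$ itself, so this step cannot be completed as described.

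The paper's resolution is to reverse the order of construction and to use conclusion~(iii) of Proposition~\ref{P-extending}, which you never invoke and which exists precisely for this purpose. One first uses minimality of $\alpha$ and compactness to produce a clopen partition $\{W_s\}_{s\in F}$ of $X$ indexed by a finite set $F\subseteq (G*H)\setminus\{e\}$ with $\alpha_sW_s\subseteq O$, and only then feeds this $F$ into Proposition~\ref{P-extending}, whose third conclusion provides a nonempty clopen $A\subseteq X$ such that $(F\sqcup\{e\},A)$ is a tower for $\beta$. Choosing $O_1,O_2$ to be disjoint clopen subsets of $A$, the sets $\beta_s(O_1\sqcup O_2)$ for $s\in F$ are automatically pairwise disjoint and disjoint from $O_1\sqcup O_2$, so the \emph{same} elements $s\in F$ that carry $W_s$ into $O$ under $\alpha$ simultaneously carry $O_1\sqcup O_2$ to disjoint locations under $\beta$; hence
\[
X\times(O_1\sqcup O_2)=\bigsqcup_{s\in F}W_s\times(O_1\sqcup O_2)\prec_{\alpha\times\beta}\bigsqcup_{s\in F}\alpha_sW_s\times\beta_s(O_1\sqcup O_2)\subseteq O\times(X\setminus(O_1\sqcup O_2)),
\]
with one group element per piece doing double duty. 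To repair your argument you must construct $F$ and $\{W_s\}_{s\in F}$ before constructing $\beta$, invoke Proposition~\ref{P-extending} with this $F$ as input, and place $O_1^\beta,O_2^\beta$ inside the resulting tower base; the ad hoc target $O_0\times Q_0$ and the appeal to $\beta$-comparison in the second coordinate can then be discarded.
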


\begin{proof}
Let $\alpha\in \WA (G*H,X)$ and let us show that we can approximate $\alpha$ arbitrarily well by
an $O$-squarely divisible action in $\WA (G*H,X)$.
By minimality, continuity, and the fact that $X$ has no isolated points, we can find for every $x\in X$ 
a clopen neighbourhood $A_x$ of $x$ in $X$ and an $s_x \in G*H\setminus\{e\}$ such that 
$\alpha_{s_x} A_x \subseteq O$. 
Then $\{ A_x \}_{x\in X}$ is a clopen cover of $X$ and hence by compactness
has a finite subcover. Disjointifying this cover by a standard recursive process and taking unions of
members of the resulting clopen partition for which the corresponding group element $s_x$ is the same,
we obtain a finite set $F\subseteq G*H\setminus \{e\}$ and a clopen partition $\{ W_s \}_{s\in F}$ of $X$
such that $\alpha_ sW_s \subseteq O$ for every $s\in F$. 

Take an action $H\curvearrowright X$ as given by Proposition~\ref{P-min WM}.
This action has comparison by \cite[Theorem~A]{Nar24} together with \cite[Theorem~6.1]{KerSza20}.
By Proposition~\ref{P-extending} there is an action  
$G*H\stackrel{\beta}{\curvearrowright} X$ extending $H\curvearrowright X$ (viewing $H$ as a subgroup of $G*H$)
such that $\beta$ is $(O_1 , O_2 , E)$-squarely divisible for all nonempty clopen sets $O_1 , O_2 \subseteq X$
and finite sets $E\subseteq G*H$ containing $e$, 
$M_{G*H} (X)$ is equal to $M_H (X)$ (which is nonempty by the amenability of $H$),
and there exists a nonempty clopen set $A \subseteq X$ such that $(F\sqcup \{ e \},A)$ is a tower.

Form the diagonal action $G*H\stackrel{\alpha\times\beta}{\curvearrowright} X\times X$, which is minimal and 
weakly mixing by our choice of $H\curvearrowright X$ via Proposition~\ref{P-min WM}.

Fix two disjoint nonempty clopen subsets $O_1$ and $O_2$ of $A$ (which exist since $X$ has no isolated points). Let $E\subseteq G*H$ be a finite subset containing $e$. As $\beta$ is $(O_1 , O_2 , E)$-squarely divisible,
by Proposition~\ref{P-SD0Dim} there exist $n\in\Nb$, a collection $\{ V_{i,j} \}_{i,j=1}^n$ 
of pairwise equivalent and pairwise $E$-disjoint clopen subsets of $X$, and, writing $V = \bigsqcup_{i,j=1}^n V_{i,j}$,
$V_1 = \bigsqcup_{i=1}^n V_{i,1}$, $R = V^c$, and $B = V\cap (V^E)^c$, one has the following:
\begin{enumerate}
\item $V_{i,1} \prec_{\beta } O_1 \cap \bigsqcup_{j=2}^n V_{i,j} \cap B^c$ for every $i=1,\dots ,n$,

\item $R\prec_{\beta } O_2 \cap V \cap (V_1 \cup B)^c$, 

\item $B\prec_{\beta } O_2 \cap R$.
\end{enumerate}

Denoting by $C^\pr$ the product set $X\times C$ for every $C\subseteq X$, we obtain from this
\begin{enumerate}
\item $V_{i,1}^\pr \prec_{\alpha\times\beta } O_1^\pr \cap \bigsqcup_{j=2}^n V_{i,j}^\pr \cap (B^\pr )^c$ for every $i=1,\dots ,n$,

\item $R^\pr\prec_{\alpha\times\beta } O_2^\pr \cap V^\pr \cap (V_1^\pr\cup B^\pr)^c$, 

\item $B^\pr\prec_{\alpha\times\beta } O_2^\pr \cap R^\pr$.
\end{enumerate}

Now since the sets $\beta_s (O_1 \sqcup O_2 )$ for $s\in F$ are pairwise disjoint subsets of $X\setminus (O_1\sqcup O_2)$ (since $e\notin F$) and $\alpha_s W_s \subseteq O$ for every $s\in F$, we have
\begin{align*}
X\times (O_1 \sqcup O_2 )
&= \bigsqcup_{s\in F} W_s \times (O_1 \sqcup O_2 ) \\
&\prec_{\alpha \times \beta} \bigsqcup_{s\in F} (\alpha\times\beta )_s (W_s \times (O_1 \sqcup O_2 )) \\
&\subseteq O\times (X\setminus (O_1\sqcup O_2)).
\end{align*}
This shows that $O_1^\pr \sqcup O_2^\pr = X\times (O_1 \sqcup O_2 ) \prec_{\alpha\times\beta} O\times (X\setminus (O_1\sqcup O_2))$. 
Since the two sets in this subequivalence are disjoint and the definitions of $O_1^\pr$ and
$O_2^\pr$ did not depend on $E$, we have verified that $\alpha\times\beta$ is $(O\times X)$-squarely divisible.

By Proposition~\ref{P-extension approximation},
there is a homeomorphism $g : X\times X\to X$ which satisfies $g(O\times X) = O$ and 
conjugates $\alpha\times\beta$ to an action $\gamma$ of $G*H$ on $X$ that approximates $\alpha$ as closely as we wish.
The first of these conditions together with the $(O\times X)$-square divisibility of $\alpha\times\beta$
ensures that $\gamma$ is $O$-squarely divisible. Since $\alpha\times\beta$ is minimal and weakly mixing and admits an invariant Borel probability measure, the action $\gamma$ has these properties as well. Finally, since $\alpha$ is topologically free, the product $\alpha\times\beta$ 
is topologically free and hence $\gamma$ is topologically free. Thus $\gamma\in\WA (G*H,X)$,
which finishes the proof.
\end{proof}

\begin{theorem}\label{T-SD free products}
Suppose $X$ is the Cantor set. Suppose that $G$ is residually finite and that $H$ is infinite and amenable 
and contains a normal infinite cyclic subgroup.
Then the set of all weakly squarely divisible actions in $\WA (G*H,X)$ is a dense $G_\delta$.
\end{theorem}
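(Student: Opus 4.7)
The plan is to combine Proposition~\ref{P-SD G delta}, Lemma~\ref{L-SD free product}, and a Baire category argument carried out inside the Polish space $\WA(G*H,X)$. First I will note that by Proposition~\ref{P-Polish} the space $\WA(G*H,X)$ is a nonempty $G_\delta$ subset of the Polish space $\Act(G*H,X)$, and therefore is itself Polish and in particular Baire.

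Next, Proposition~\ref{P-SD G delta} already shows that the set of weakly squarely divisible actions is a $G_\delta$ in $\Act(G*H,X)$, so its intersection with $\WA(G*H,X)$ is a $G_\delta$ there as well. The entire task thus reduces to establishing density.

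For the density, I will unpack the definition: letting $\sC$ denote the countable collection of nonempty clopen subsets of $X$ and $\sE$ the countable collection of finite subsets of $G*H$ containing $e$, the set of weakly squarely divisible actions equals
\[
\bigcap_{O\in\sC}\bigcap_{E\in\sE}\sW_{O,E},
\]
where $\sW_{O,E}$ denotes the set of $(O,E)$-squarely divisible actions, which is open by Lemma~\ref{L-open}. By the Baire category theorem inside $\WA(G*H,X)$, it suffices to verify that $\sW_{O,E}\cap\WA(G*H,X)$ is dense for each fixed $O\in\sC$ and $E\in\sE$. For this I will appeal to Lemma~\ref{L-SD free product}, which supplies a dense collection of $O$-squarely divisible actions in $\WA(G*H,X)$, and then invoke Remark~\ref{R-0dimOSD}, according to which every $O$-squarely divisible action on the Cantor set is automatically $(O,E)$-squarely divisible for all finite $E\ni e$. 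Hence the dense set produced by Lemma~\ref{L-SD free product} is contained in $\sW_{O,E}\cap\WA(G*H,X)$, yielding the required density.

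The serious work has already been carried out in the proof of Lemma~\ref{L-SD free product} (which in turn rests on the diagonal action machinery of Section~\ref{S-diagonal machine} together with Proposition~\ref{P-extending}); the step above is a purely formal Baire category wrap-up, and I anticipate no substantive obstacle in executing it.
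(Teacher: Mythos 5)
Your proposal is correct and follows essentially the same route as the paper's own proof: both reduce to the openness of each $\sW_{O,E}$ (Lemma~\ref{L-open}) and the density supplied by Lemma~\ref{L-SD free product} together with the fact that $O$-square divisibility implies $(O,E)$-square divisibility, wrapped up by a Baire category argument over the countably many pairs $(O,E)$. Your explicit appeal to Proposition~\ref{P-Polish} to justify that $\WA(G*H,X)$ is a Baire space is a detail the paper leaves implicit but is a welcome clarification.
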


\begin{proof}
Let $O$ be a nonempty clopen subset of $X$ and $E$ a finite subset of $G*H$ containing $e$. Since there are only countably many such $O$ and $E$, by the Baire category theorem it suffices to show that the set $\sW_{O,E}$ of all $(O,E)$-squarely divisible actions in $\WA (G*H,X)$ is a dense open set. The openness is Lemma~\ref{L-open} and the density is Lemma~\ref{L-SD free product} (using that $O$-square divisibility implies $(O,E)$-square divisibility).
\end{proof}

For $d\geq 2$ the free group $F_d$ can be written as $G*\Zb$ where $G$ is a free product of $d-1$ copies of $\Zb$. 
From the above theorem we thus obtain:

\begin{corollary}\label{C-SD free groups}
For $d\geq 2$ and the Cantor set $X$ the set of all weakly squarely divisible actions in $\WA (F_d,X)$ is a dense $G_\delta$.
\end{corollary}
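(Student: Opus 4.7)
The plan is to derive Corollary~\ref{C-SD free groups} as a direct specialization of Theorem~\ref{T-SD free products}, using the decomposition of $F_d$ as a free product hinted at just above the statement. Specifically, for $d\geq 2$ one writes $F_d \cong F_{d-1}*\Zb$, where $F_{d-1}$ is the free product of $d-1$ copies of $\Zb$ (and $F_1 = \Zb$). I would then set $G := F_{d-1}$ and $H := \Zb$ in the notation of Theorem~\ref{T-SD free products} and verify that the hypotheses of that theorem apply.

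The verification breaks into three small observations. First, $G = F_{d-1}$ is residually finite: free groups are well-known to be residually finite (for instance via their linear representations, or via the fact that they embed in $\SL_2(\Zb)$). Second, $H = \Zb$ is infinite and amenable. Third, $H = \Zb$ trivially contains a normal infinite cyclic subgroup, namely itself. Hence all the standing assumptions of Theorem~\ref{T-SD free products} are met, and the theorem produces a dense $G_\delta$ set of weakly squarely divisible actions inside $\WA(F_{d-1}*\Zb,X) = \WA(F_d,X)$.

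There is no genuine obstacle here — the content of the corollary is entirely carried by Theorem~\ref{T-SD free products}, together with the elementary algebraic fact that $F_d$ decomposes as $F_{d-1}*\Zb$ and the classical residual finiteness of free groups. The only thing to be careful about is ensuring that the isomorphism $F_d\cong F_{d-1}*\Zb$ is used consistently when transporting the $G_\delta$ subset from $\WA(F_{d-1}*\Zb,X)$ to $\WA(F_d,X)$, but this is immediate since the identification is a group isomorphism and hence induces a homeomorphism of the respective action spaces that preserves minimality, topological freeness, weak mixing, the existence of an invariant Borel probability measure, and weak square divisibility.
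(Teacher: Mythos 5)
Your proposal is correct and follows exactly the paper's route: the paper likewise writes $F_d = G * \Zb$ with $G = F_{d-1}$ and invokes Theorem~\ref{T-SD free products}, the hypotheses being satisfied since free groups are residually finite and $\Zb$ is infinite, amenable, and its own normal infinite cyclic subgroup. Nothing further is needed.
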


\begin{remark}
As mentioned in the introduction, the space of all free minimal actions $F_2 \curvearrowright X$ on the Cantor set
with $M_{F_2} (X)\neq\emptyset$
has a generic action, namely the universal odometer \cite{DouMelTsa25}. 
The arguments for establishing Theorem~\ref{T-SD free products} also apply to this space
and thus show that the universal odometer is weakly squarely divisible, so that its reduced crossed product
has stable rank one by Theorem~\ref{T-SR1}. One can also verify this directly using translation actions on spaces of the form $F_2 /H$,
where $H$ is a finite-index subgroup of $F_2$, in which one generator acts at a much larger scale than
the other so as to effectively reproduce the situation in the proof of Proposition~\ref{P-extending}, but now
inside a single clopen tower partitioning the space, with the tower levels indexed by $F_2 /H$ and
permuted by the translation action of $F_2$.
\end{remark}

\section{Generic square divisibility II}\label{S-SD II}

For $d>1$, Corollary~\ref{C-SD free groups} says that, within the
space $\WA (F_d,X)$ of all actions $F_d \curvearrowright X$ that are minimal, weakly mixing, and topologically free 
and satisfy $M_{F_d} (X) \neq\emptyset$, the weakly squarely divisible ones form a dense $G_\delta$ set. 
We do not know much however about the size of this $G_\delta$ set modulo the relation of conjugacy,
although we will exhibit many examples of these actions in Section~\ref{S-examples}. 

In this section we examine instead the space $\Astar (F_d,X)$ of topologically free actions
$F_d \curvearrowright X$ that satisfy $M_{F_d} (X) \neq\emptyset$ and are strictly ergodic and spectrally aperiodic
on each generator. This intersects the space $\WA (F_d,X)$ but neither of these spaces seems to contain
the other. As we did for $\WA (F_d,X)$, we will prove that the set of all weakly squarely divisible actions in $\Astar (F_d,X)$ 
is a dense $G_\delta$ (Theorem~\ref{T-SD free groups}), but we will also be able to show that every conjugacy class in 
$\Astar (F_d,X)$ is meagre
(this is Theorem~\ref{T-meagre}, established in Section~\ref{S-meagre}), so that the set of actions in $\Astar (F_d,X)$ modulo the relation of conjugacy is quite large from the descriptive viewpoint. 

The \textit{continuous discrete spectrum} of a transformation $S\curvearrowright Y$ is the set of eigenvalues of the linear operator $g\mapsto g\circ S$ acting on $C(Y)$. 

\begin{lemma}\label{L-evalues}
Let $S\curvearrowright X$ and $T\curvearrowright Y$ be minimal homeomorphisms.
Suppose that there exists a $\nu\in M_T (Y)$ such that 
$T\curvearrowright (Y,\nu)$ is weakly mixing. Then $S$ and $S\times T$ have the same continuous discrete spectrum.
\end{lemma}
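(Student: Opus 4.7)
The plan is to prove the two inclusions separately. The easy direction: if $\lambda$ is an eigenvalue of $S$ with eigenfunction $f\in C(X)$, then $F(x,y):=f(x)$ defines an element of $C(X\times Y)$ satisfying $F\circ (S\times T)=\lambda F$, so $\lambda$ is an eigenvalue of $S\times T$.

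For the converse, suppose $F\in C(X\times Y)\setminus\{0\}$ satisfies $F\circ (S\times T)=\lambda F$. Define
\[
\phi(x)=\int_Y F(x,y)\,d\nu(y),
\]
which belongs to $C(X)$ by the uniform continuity of $F$. Using the $T$-invariance of $\nu$, one computes
\[
\phi(Sx)=\int_Y F(Sx,Ty)\,d\nu(y)=\lambda\int_Y F(x,y)\,d\nu(y)=\lambda\phi(x).
\]
Thus, provided $\phi\neq 0$, the eigenvalue $\lambda$ of $S\times T$ is also an eigenvalue of $S$.

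The main step is therefore to exclude the possibility $\phi\equiv 0$. Since $\Zb$ is amenable, $S$ admits an $S$-invariant Borel probability measure $\mu$ on $X$, whose support, being closed and $S$-invariant, equals $X$ by minimality of $S$; similarly $\nu$ has full support by minimality of $T$. View $F$ as a nonzero element of $L^2(X\times Y,\mu\times\nu)$, which is an eigenvector with eigenvalue $\lambda$ for the Koopman operator $U_{S\times T}=U_S\otimes U_T$ (with convention $U_Rg=g\circ R$). Decompose
\[
L^2(X\times Y,\mu\times\nu)=L^2(X,\mu)\otimes L^2(Y,\nu)=L^2(X,\mu)\oplus\bigl(L^2(X,\mu)\otimes L_0^2(Y,\nu)\bigr),
\]
where $L_0^2(Y,\nu)=L^2(Y,\nu)\ominus\Cb 1$. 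The projection of $F$ onto the first summand is precisely $\phi$, so the hypothesis $\phi\equiv 0$ forces $F\in L^2(X,\mu)\otimes L_0^2(Y,\nu)$, on which the Koopman operator acts as $U_S\otimes U_T\vert_{L_0^2(Y,\nu)}$.

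The maximal spectral type of this tensor product is (equivalent to) the convolution of the maximal spectral types of the two factors, and the weak mixing of $T\curvearrowright(Y,\nu)$ means precisely that $U_T\vert_{L_0^2(Y,\nu)}$ has atomless (continuous) maximal spectral type. Since convolution with an atomless measure on $\Tb$ is atomless, $U_S\otimes U_T\vert_{L_0^2(Y,\nu)}$ has purely continuous spectrum and hence no eigenvalues. This contradicts the fact that $F$ is a nonzero eigenvector (nonzero in $L^2(\mu\times\nu)$ because $F$ is a nonzero continuous function and $\mu\times\nu$ has full support). The main obstacle is thus essentially bookkeeping: keeping track of the spectral decomposition and invoking the standard fact that convolution with a continuous measure yields a continuous measure; everything else is routine.
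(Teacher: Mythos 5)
Your proof is correct, and while it shares the same spectral-theoretic core as the paper's argument (decompose the Koopman operator of $S\times T$, and use weak mixing of $T\curvearrowright (Y,\nu)$ to kill eigenvalues in the part of $L^2(X\times Y,\mu\times\nu)$ involving $L^2_0(Y,\nu)$), your route to the continuous eigenfunction of $S$ is genuinely different and somewhat slicker. The paper first shows that the eigenfunction $g$ must lie, modulo a.e.\ equality, in $L^2_0(X,\mu)\otimes\Cb\unit_Y$, and then has to run a separate topological upgrading argument (cross-sections are $\nu$-a.e.\ constant, hence constant by continuity and full support of $\nu$, then a density argument in $X$) to conclude that $g=g_0\otimes\unit_Y$ for a \emph{continuous} $g_0$; it also has to treat the eigenvalue $1$ separately at the outset. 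You instead integrate out the $Y$-variable to produce the candidate eigenfunction $\phi(x)=\int_Y F(x,y)\,d\nu(y)$ directly, which is automatically continuous and automatically satisfies $\phi\circ S=\lambda\phi$ by $T$-invariance of $\nu$; the spectral argument is then needed only to rule out $\phi\equiv 0$, uniformly in $\lambda$ (so no special case for $\lambda=1$). The two approaches also differ in the auxiliary fact invoked: the paper cites ``a tensor product with a weakly mixing unitary is weakly mixing,'' whereas you use the convolution formula for maximal spectral types together with the observation that convolving with an atomless measure yields an atomless measure — both are standard, and the convolution fact is even stated elsewhere in the paper. The net effect is that your argument avoids the a.e.-to-everywhere bookkeeping entirely, at the mild cost of checking that the orthogonal projection of $F$ onto $L^2(X,\mu)\otimes\Cb 1$ is exactly $\phi$ (which is immediate).
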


\begin{proof}
It is clear that the continuous discrete spectrum of $S$ is contained in that of $S\times T$,
since (identifying $C(X)\otimes C(Y)$ with $C(X\times Y)$) we can take a tensor product of a continuous eigenfunction for the former with $\unit_Y$ to get a
continuous eigenfunction for the latter with the same eigenvalue. To establish the reverse inclusion we will
show that all continuous eigenfunctions for $S\times T$ arise in this way.

By the amenability of $\Zb$ there exists a $\mu\in M_S (X)$.
Let $U_S \in \cB (L^2 (X,\mu ))$ and $U_T \in \cB (L^2 (Y,\nu ))$ be the Koopman operators
associated to $S\curvearrowright (X,\mu )$ and $T\curvearrowright (Y,\nu )$.
Then we have orthogonal decompositions of $U_S$ and $U_T$ as $\unit \oplus U_{S,0} \in \cB (\Cb \oplus L^2_0 (X,\mu ))$ and $\unit \oplus U_{T,0} \in \cB (\Cb \oplus L^2_0 (Y,\nu ))$. The Koopman operator $U_{S\times T}$
associated to the diagonal transformation $S\times T\curvearrowright (X\times Y,\mu\times\nu )$ is then unitarily equivalent to 
$1\oplus U_{S,0} \oplus U_{T, 0} \oplus (U_{S,0} \otimes U_{T, 0} )$
acting on $\Cb\oplus L^2_0 (X,\mu ) \oplus L^2_0 (Y,\nu ) \oplus (L^2_0 (X,\mu ) \otimes L^2_0 (Y,\nu ))$.

Suppose that $g\in C(X\times Y)$ is an eigenfunction for $S\times T$ with eigenvalue not equal to $1$ (which we may assume since the continuous discrete spectra of $S$ and $S\times T$ clearly share the eigenvalue $1$ by considering constant functions). 
Since $S$ and $T$ are minimal the measures $\mu$ and $\nu$ have full support and hence so does
the measure $\mu\times\nu$. We can then view $g$ as an eigenfunction
for $U_{S\times T}$ in $L^2 (X\times Y, \mu\times\nu )$. Since $T\curvearrowright (Y,\nu)$ is weakly mixing, $U_{T, 0}$ is weakly mixing 
as a unitary operator (see \cite[Theorem~2.25]{KerLi16}), which also means that the operator $U_{S,0} \otimes U_{T, 0}$ is weakly mixing (see \cite[Theorem~2.23]{KerLi16}).
Since weakly mixing unitary operators have no eigenvalues and $g$ is associated with an eigenvalue different from $1$, this implies that $g$
belongs to $L^2_0 (X,\mu ) \otimes \Cb\unit_Y$ and thus, modulo a.e.\ equality, has the form
$f\otimes \unit_Y$ for some $f\in L^2_0 (X,\mu )$. It follows that for $\mu$-a.e.\ $x\in X$ we have $g(x,y)=f(x)$ for $\nu$-a.e.\ $y\in Y$.
The $Y$-cross sections of $g$, namely $g(x,\cdot): Y\to \mathbb{C}$ for $\mu$-a.e.\ $x\in X$, are therefore 
$\nu$-a.e.\ constant and thus constant by the continuity of $g$ and the fact that $\nu$ has full support. Now the set $\{x\in X: g(x,y)=f(x) \text{ for all } y\in Y\}$ must be dense in $X$ since $\mu$ has full support. We deduce, again using the continuity of $g$, that $g$ has the form 
$g_0 \otimes \unit_Y$ for some $g_0 \in C(X)$. But then $g_0$ is a continuous eigenfunction
for $S$ with the same eigenvalue that $g$ has as an eigenfunction for $S\times T$. 
We conclude that the continuous discrete spectrum of $S\times T$ is contained in that of $S$, as desired.
\end{proof}

\begin{lemma}\label{L-evalues2}
Suppose $X$ is the Cantor set. Let $S\curvearrowright X$ and $T\curvearrowright X$ 
be homeomorphisms such that $S$ is spectrally aperiodic
and there exists a $\nu\in M_T(X)$ for which $T\curvearrowright (X,\nu)$ is weakly mixing. 
Suppose that $S\times T$ is minimal. Then $S\times T$ is spectrally aperiodic.
\end{lemma}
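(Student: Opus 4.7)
The strategy is to argue by contrapositive: I will assume that $S\times T$ is spectrally periodic and deduce the same for $S$. Concretely, suppose there exist a nonempty clopen set $A\subseteq X\times X$ and an integer $n\geq 1$ such that $A, (S\times T)A, \dots , (S\times T)^n A$ partition $X\times X$ (so in particular $(S\times T)^{n+1}A = A$). Setting $\zeta = e^{2\pi i/(n+1)}$ and
\[
f = \sum_{k=0}^n \zeta^k \unit_{(S\times T)^k A} \in C(X\times X),
\]
a direct bookkeeping computation gives $f\circ (S\times T) = \zeta f$, so $\zeta$ belongs to the continuous discrete spectrum of $S\times T$.

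Next I will transfer this eigenvalue from $S\times T$ to $S$ using Lemma~\ref{L-evalues}. The lemma requires $S$ and $T$ to be minimal, both of which follow from the minimality of $S\times T$ via the equivariant coordinate projections, together with the existence of a weakly mixing invariant measure for $T$, which is part of the hypothesis. Hence $\zeta$ lies in the continuous discrete spectrum of $S$: there is a nonzero $g\in C(X)$ with $g\circ S = \zeta g$.

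Finally I will convert $g$ into a clopen partition of $X$ witnessing spectral periodicity for $S$. Since $|g|$ is continuous and $S$-invariant, minimality of $S$ forces it to be a positive constant, so I may normalize to have $|g|\equiv 1$. Picking any $z_0$ in the range of $g$ and setting $A' = g^{-1}(z_0)$, which is nonempty and clopen, a direct computation shows $S^k A' = g^{-1}(\zeta^k z_0)$ for every $k\in\Zb$. Since $\zeta$ has exact order $n+1$, the sets $A', SA', \dots , S^n A'$ are pairwise disjoint clopen subsets of $X$, and their union is clopen and $S$-invariant, hence equals $X$ by minimality. This yields the partition $X = A' \sqcup SA' \sqcup \cdots \sqcup S^n A'$ with $n\geq 1$, contradicting the spectral aperiodicity of $S$.

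The core content of the argument is concentrated in Lemma~\ref{L-evalues}, which is already established; once it is invoked, the remaining steps amount to the standard correspondence between rational elements of the continuous discrete spectrum of a minimal Cantor system and clopen partitions cycled by the dynamics, so no substantive obstacle remains.
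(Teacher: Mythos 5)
Your argument is correct and follows essentially the same route as the paper: both proofs reduce the statement to Lemma~\ref{L-evalues} (applied after noting that factors of a minimal product are minimal), the only difference being that the paper cites \cite{Orm97} for the equivalence between spectral aperiodicity of a minimal Cantor homeomorphism and the absence of nontrivial roots of unity in its continuous discrete spectrum, whereas you prove that equivalence by hand in both directions. One small point to justify: $A'=g^{-1}(z_0)$ is a priori only closed, since preimages of points under continuous maps need not be open. It is in fact clopen here, either because $g^{n+1}$ is a continuous $S$-invariant function and hence constant by minimality, so that $g$ takes at most $n+1$ values and is locally constant; or a posteriori, because your subsequent steps only use that the sets $S^kA'$ are closed and that their union is $S$-invariant (hence all of $X$ by minimality), after which each $S^kA'$ is open as the complement of a finite union of closed sets.
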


\begin{proof}
Since $S\times T\curvearrowright X\times X$ is minimal, the factors $S$ and $T$ are themselves minimal. As observed in \cite{Orm97} (see also \cite{GioPutSka95}), the minimal homeomorphism $S$ is spectrally aperiodic if and only if there does not exist a positive integer $n\neq 1$ such that $e^{2\pi i/n}$ belongs to the continuous discrete spectrum of $S$, and likewise for $S\times T$. Since the continuous discrete spectra of $S$ and $S\times T$ agree by Lemma~\ref{L-evalues}, it follows from our assumption on $S$ that $S\times T$ is spectrally aperiodic.
\end{proof}

\begin{lemma}\label{L-SD free groups}
Suppose $X$ is the Cantor set. Let $d\in\mathbb{N}$ and let $O\subseteq X$ be a nonempty clopen set. Then the set of $O$-squarely divisible actions in $\Astar (F_d ,X)$ is dense.
\end{lemma}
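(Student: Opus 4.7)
The proof adapts the strategy of Lemma~\ref{L-SD free product}: given $\alpha\in\Astar(F_d,X)$ and a nonempty clopen set $O\subseteq X$, the plan is to construct a diagonal action $\alpha\times\beta$ on $X\times X$ that belongs to $\Astar(F_d,X\times X)$ and is $(O\times X)$-squarely divisible, and then use Proposition~\ref{P-extension approximation} to produce a conjugate action on $X$ that approximates $\alpha$. The key departure from the $\WA(G*H,X)$ setting is that the auxiliary action $\beta$ must be chosen so that the diagonal inherits strict ergodicity and spectral aperiodicity on every generator of $F_d$, which prevents reusing the construction of Proposition~\ref{P-extending} verbatim, since it would force $\beta_{a_k}$ to be of finite order for $k<d$.

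The first step is to produce a finite set $F\subseteq\Zb\setminus\{0\}$ and a clopen partition $\{W_n\}_{n\in F}$ of $X$ satisfying $\alpha_{a_1^n}W_n\subseteq O$ for every $n\in F$. Since $\alpha_{a_1}$ is strictly ergodic and hence minimal on the Cantor set, the clopen sets $\alpha_{a_1^n}^{-1}(O)$ for $n\in\Zb\setminus\{0\}$ form an open cover of $X$; extracting a finite subcover and disjointifying yields such a partition. Restricting the indexing to powers of $a_1$ is what eventually allows $\beta$ to be chosen to factor through a single $\Zb$-action while still separating the relevant group translates. Writing $\mu_k$ for the unique $\alpha_{a_k}$-invariant Borel probability measure, I would then apply Lemma~\ref{L-nontorsion} to the finite family $\{(\alpha_{a_k},\mu_k):1\leq k\leq d\}$ of p.m.p.\ $\Zb$-actions to produce a free weakly mixing p.m.p.\ $\Zb$-action disjoint from each of them, and invoke the Jewett--Krieger theorem to realize it as a free strictly ergodic weakly mixing homeomorphism $T:X\to X$ with unique invariant measure $\nu$. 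Setting $\beta_{a_k}=T$ for every $k$ defines an $F_d$-action $\beta$ that factors through $T$ via the abelianization $m:F_d\to\Zb,\ a_k\mapsto 1$, so in particular $\beta_{a_1^n}=T^n$. Using the topological freeness of $T$, I would pick a nonempty clopen $A\subseteq X$ for which the sets $T^nA$ with $n\in\{0\}\cup F$ are pairwise disjoint and then choose two disjoint nonempty clopen subsets $O_1,O_2\subseteq A$.

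Setting $O_i^\pr:=X\times O_i$ and $O_0^\pr:=O\times(X\setminus(O_1\sqcup O_2))$, the product decomposition
\[
X\times(O_1\sqcup O_2)=\bigsqcup_{n\in F}W_n\times(O_1\sqcup O_2),
\]
combined with the disjointness of the sets $T^nA$ for $n\in\{0\}\cup F$, yields pairwise disjoint $(\alpha\times\beta)_{a_1^n}$-images $\alpha_{a_1^n}W_n\times T^n(O_1\sqcup O_2)$ contained in $O_0^\pr$, hence $\overline{O_1^\pr}\sqcup\overline{O_2^\pr}\prec_{\alpha\times\beta}O_0^\pr$. Since $T$ is a free minimal $\Zb$-action on the Cantor set and therefore has the URP and comparison, Theorem~\ref{T-amenable} makes $T$ be $(O_1,O_2,K)$-squarely divisible for every finite $K\subseteq\Zb$ containing $0$; transporting the witnessing clopen sets through $V\mapsto X\times V$ shows that $\alpha\times\beta$ is $(O_1^\pr,O_2^\pr,E)$-squarely divisible for every finite $E\subseteq F_d$ containing $e$, so in total $\alpha\times\beta$ is $(O\times X)$-squarely divisible. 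Membership of $\alpha\times\beta$ in $\Astar(F_d,X\times X)$ is verified one generator at a time: topological freeness descends from $\alpha$, product measures witness $M_{F_d}(X\times X)\neq\emptyset$, and for each $k$ the transformation $\alpha_{a_k}\times T$ is minimal by Lemma~\ref{L-product disjoint}, uniquely ergodic because the disjointness of $(\alpha_{a_k},\mu_k)$ and $(T,\nu)$ forces its unique invariant measure to be $\mu_k\times\nu$, and spectrally aperiodic by Lemma~\ref{L-evalues2}. A final application of Proposition~\ref{P-extension approximation} with $h:X\times X\to X$ the projection onto the first coordinate and a clopen partition of $X$ containing $O$ then produces a homeomorphism $g:X\times X\to X$ with $g(O\times X)=O$ for which $g(\alpha\times\beta)g^{-1}$ is arbitrarily close to $\alpha$ and inherits both membership in $\Astar(F_d,X)$ and $O$-square divisibility.

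The hard part is to reconcile three competing demands on $\beta$: the tower geometry aligned with the partition $\{W_n\}_{n\in F}$, the $(O_1,O_2,E)$-square divisibility for every finite $E\subseteq F_d$ containing $e$, and the preservation under the diagonal product of strict ergodicity and spectral aperiodicity on every generator $a_k$. Letting $\beta$ factor through a single weakly mixing $\Zb$-action $T$ is what makes the last two compatible, while confining the partition to powers of $a_1$ is what salvages the tower condition despite the non-faithfulness of $\beta$.
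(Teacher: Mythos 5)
Your proposal is correct and follows essentially the same route as the paper's proof: the same auxiliary homeomorphism obtained from Lemma~\ref{L-nontorsion} plus the Jewett--Krieger theorem, set equal on all generators, with minimality, unique ergodicity, and spectral aperiodicity of each $\alpha_{a_k}\times T$ handled by Lemmas~\ref{L-product disjoint} and \ref{L-evalues2}, square divisibility transported from Theorem~\ref{T-amenable} through the product, and the approximation closed out via Proposition~\ref{P-extension approximation}. The only differences are notational (the paper writes $R$ for your $T$ and indexes the partition by distinct nonzero integers $j_1,\dots,j_l$ rather than by a subset $F\subseteq\Zb\setminus\{0\}$).
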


\begin{proof}
Let $\alpha\in\Astar (F_d ,X)$ and $\mu\in M_\alpha (X)$.
Let $T_1 , \ldots , T_d$ be the homeomorphisms obtained by
restricting $\alpha$ to the $d$ standard generators of $F_d$. 
For each $i=1,\ldots,d$ write $\mu_i$ for the unique measure in $M_{T_i}(X)$, which is ergodic. Since each $T_i$ is minimal and $X$ is infinite, the measures $\mu_i$ must be atomless. Therefore there exists a standard atomless Borel probability space $(Z,\zeta)$ such that $(X,\mu_i)\cong (Z,\zeta)$ for every $i=1,\ldots, d$.
By Lemma~\ref{L-nontorsion}
there is a weakly mixing p.m.p.\ transformation $S \curvearrowright (Z,\zeta)$ which is disjoint from
$T_i \curvearrowright  (X,\mu_i)$ for each $i=1,\ldots , d$. By the Jewett--Krieger theorem
there is a minimal homeomorphism $R \curvearrowright  X$ such that $M_R (X)$ contains a unique
element $\nu$ and the transformations $R \curvearrowright  (X,\nu)$ and $S \curvearrowright (Z,\zeta)$
are measurably conjugate. By Lemma~\ref{L-product disjoint} this implies that for each $i=1,\ldots , d$
the diagonal transformation $T_i \times R \curvearrowright X\times X$ is minimal and, by Lemma~\ref{L-evalues2}, spectrally aperiodic. It is also uniquely ergodic by the unique ergodicity of $T_i$ and $R$
and the disjointness of $T_i \curvearrowright  (X,\mu_i)$ and $R \curvearrowright  (X,\nu)$. 

By the minimality of $T_1$ and the compactness of $X$ we can find
a clopen partition $\{ W_k \}_{k=1}^l$ of $X$ and distinct integers $j_1 , \ldots , j_l \in\Zb$ such that $T_1^{j_k} W_k \subseteq O$
for every $k=1,\ldots,l$. 
We moreover may assume that $j_1,\ldots,j_l\in\Zb\setminus\{0\}$ since $T_1$ is minimal and $X$ has no isolated points.
Define an action $\beta\in\Act (F_d ,X)$ by setting $\beta_{a_i} = R$ for every $i=1,\ldots ,d$,
where $a_1 , \ldots , a_d$ are the standard generators of $F_d$.
Since $X$ is infinite, the homeomorphism $R$, being minimal, is free as a $\Zb$-action, and so we can find a
nonempty clopen set $A \subseteq X$ such that the sets $R^{j_k} A$ for $k=1,\dots , l$ are pairwise disjoint and also disjoint from $A$.
Fix $O_1$ and $O_2$ nonempty clopen disjoint subsets of $A$ (which exist since $X$ has no isolated points). 
As in the proof of Lemma~\ref{L-SD free product}, we then have
\begin{align*}
X\times (O_1 \sqcup O_2 )
&= \bigsqcup_{k=1}^l W_k \times (O_1\sqcup O_2)\\
&\prec_{T_1\times R} \bigsqcup_{k=1}^l T_1^{j_k} W_k \times R^{j_k} (O_1\sqcup O_2) \\
&\subseteq O \times (X\setminus (O_1\sqcup O_2)).
\end{align*}
By Theorem~\ref{T-amenable} the action $R$ is $(O_1,O_2,J)$-squarely divisible for all finite sets $J\subseteq \Zb$ containing $0$.
Arguing as in the proof of Lemma~\ref{L-SD free product}, this shows that $\alpha\times \beta$ is $(X\times O_1,X\times O_2,E)$-squarely divisible for every finite set $E\subseteq F_d$ containing $e$ (here we use the fact that, by the definition of $\beta$, for every $g\in F_d$ there exists a unique $N_g\in\Zb$ for which $\beta_g=R^{N_g}$).
Moreover, since the two sets in the subequivalence $X\times (O_1 \sqcup O_2 )\prec_{T_1\times R} O \times (X\setminus (O_1\sqcup O_2))$ are pairwise disjoint, we conclude that $\alpha\times \beta$ is $(O\times X)$-squarely divisible (see Proposition~\ref{P-SD0Dim}).
Again as in the proof of Lemma~\ref{L-SD free product},
we can conjugate $\alpha\times \beta$ to an action $\gamma$ on $X$ via a homeomorphism $g : X\times X \to X$ 
satisfying $g(O\times X) = O$ so that $\gamma$ approximates $\alpha$ as closely as we wish. 
In view of the $(O\times X)$-square divisibility of $\alpha\times\beta$, the equality $g(O\times X) = O$
ensures that $\gamma$ is $O$-squarely divisible.
Since $\alpha$ is topologically free, so is $\alpha\times \beta$. Moreover 
$M_{\alpha\times\beta}(X\times X) = \{ \mu\times \nu \}$, while $\alpha\times\beta$ is strictly ergodic and spectrally aperiodic 
on each standard generator of $F_d$. Therefore $\gamma\in\Astar (F_d ,X)$, which completes the proof.
\end{proof}

\begin{theorem}\label{T-SD free groups}
Suppose $X$ is the Cantor set. For $d\in\Nb$ the set of all weakly squarely divisible actions in $\Astar (F_d ,X)$ is a dense $G_\delta$.
\end{theorem}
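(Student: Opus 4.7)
The plan is to follow verbatim the template used in the proof of Theorem~\ref{T-SD free products}, simply replacing the ambient space $\WA(G*H, X)$ with $\Astar(F_d, X)$. First I would invoke Proposition~\ref{P-Polish}, which guarantees that $\Astar(F_d, X)$ is a $G_\delta$ subset of $\Act(F_d, X)$ and hence Polish, so that the Baire category theorem applies inside $\Astar(F_d, X)$.

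Next I would verify the two standard ingredients for the Baire-category packaging. Let $\sW_{O,E}$ denote the set of $(O,E)$-squarely divisible actions in $\Astar(F_d, X)$, where $O$ ranges over the countable collection of nonempty clopen subsets of $X$ and $E$ over the countable collection of finite subsets of $F_d$ containing $e$. By Lemma~\ref{L-open}, $\sW_{O,E}$ is open in $\Act(F_d, X)$, and hence relatively open in $\Astar(F_d, X)$. For density, Lemma~\ref{L-SD free groups} furnishes a dense set of $O$-squarely divisible actions in $\Astar(F_d, X)$; invoking Remark~\ref{R-0dimOSD}, which tells us that $O$-square divisibility implies $(O,E)$-square divisibility for every finite $E \ni e$, this dense set is contained in $\sW_{O,E}$, and so $\sW_{O,E}$ is itself dense in $\Astar(F_d, X)$.

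Since there are only countably many pairs $(O,E)$, the Baire category theorem gives that the intersection $\bigcap_{O,E} \sW_{O,E}$ is a dense $G_\delta$ in $\Astar(F_d, X)$. By Definition~\ref{D-weak SD}, this intersection is precisely the set of weakly squarely divisible actions in $\Astar(F_d, X)$, completing the argument. The real content has already been extracted in Lemma~\ref{L-SD free groups}, where the diagonal-action machine of Section~\ref{S-diagonal machine} (via Lemmas~\ref{L-nontorsion}, \ref{L-product disjoint}, and \ref{L-evalues2}) was combined with Theorem~\ref{T-amenable} and Proposition~\ref{P-extension approximation} to produce nearby $O$-squarely divisible perturbations of an arbitrary action in $\Astar(F_d, X)$ while preserving topological freeness, strict ergodicity, and spectral aperiodicity on each standard generator; at this stage there is no remaining obstacle beyond formal verification.
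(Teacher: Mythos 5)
Your proposal is correct and follows essentially the same route as the paper: the paper's proof likewise reduces to showing each $\sW_{O,E}$ is open (Lemma~\ref{L-open}) and dense (Lemma~\ref{L-SD free groups}, together with the fact that $O$-square divisibility implies $(O,E)$-square divisibility) and then applies the Baire category theorem over the countably many pairs $(O,E)$.
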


\begin{proof}
As in the proof of Theorem~\ref{T-SD free products}, it suffices to show that for every
nonempty clopen set $O\subseteq X$ and finite set $e\in E \subseteq F_d$ the set
$\sW_{O,E}$ of all $(O,E)$-squarely divisible actions in $\Astar (F_d ,X)$ is open and dense.
Lemma~\ref{L-open} yields the openness and Lemma~\ref{L-SD free groups} yields the density (using that $O$-square divisibility implies $(O,E)$-square divisibility).
\end{proof}

\section{Meagreness of orbits: proof of Theorem~\ref{T-meagre}}\label{S-meagre}

Our goal here is to establish the meagreness-of-orbits result announced in the introduction as Theorem~\ref{T-meagre}.
For this we develop a topological version of the entropy-and-disjointness argument from
Section~4 of \cite{ForWei04} that involved extending the generic disjointness result  
for ergodic p.m.p.\ $\Zb$-actions from \cite{Jun81} 
(whose proof did not involve entropy) 
to the more general setting of amenable groups.

Actually our disjointness argument concerns $\Zb$-actions that arise 
as restrictions of actions $F_d \cong \Zb * F_{d-1}$ preserving a Borel probability measure.
In \cite{Hoc08}, and like in \cite{Jun81} without the use of entropy, Hochman establishes
a generic disjointness result in the space of homeomorphisms of the Cantor set
that we would be able to apply here if it were not for the constraint that we cannot simply
perturb the $\Zb$ part of the action at will. Such a perturbation will always generate an action of the free product 
$\Zb * F_{d-1} \cong F_d$ but can easily result in the nonexistence of Borel probability measures that are
invariant for the whole $F_d$.
Our strategy has been to instead combine the entropy approach of \cite{ForWei04} 
with the orbit equivalence theory of Cantor minimal systems.

\begin{notation}
Suppose that $X$ is the Cantor set. We denote by $\Astar (\Zb,X)$ the space of strictly ergodic spectrally aperiodic 
actions in $\Act (\Zb ,X)$ (this agrees with $\Astar(F_d,X)$, as defined in Section~\ref{S-spaces of actions}, when $d=1$).
For $T\in\Astar (\Zb ,X)$ we write $\Astar (\Zb ,X)_T$ for the set of all transformations in 
$\Astar (\Zb ,X)$ which have the same orbits as $T$.
\end{notation}

When dealing with homeomorphisms $T\curvearrowright X$ of the Cantor set in this and the next section, 
our clopen castles will always partition $X$ (i.e., have empty remainder) and have shapes of the form $\{ 0,\dots , n-1\}$,
even when this is not explicitly stated. We write
$\sC=\{(B_i,n_i)\}_{i=1}^k$ for such a castle when the shape of the $i$th tower is the interval $\{0,\ldots,n_i-1\}$. 
Note that if $p\in\Nb$ divides all of the heights $n_i$, then one can find levels of the castle whose union $A$ 
produces a partition $X = A\sqcup TA\sqcup\dots\sqcup T^{p-1}A$.
Consequently, if $T$ is spectrally aperiodic then $\gcd (n_1,n_2,\ldots, n_k)=1$.
Write $\base(\sC)=\bigsqcup_{i=1}^{k}B_i$ and $\towertop(\sC)=\bigsqcup_{i=1}^{k}T^{n_i-1}B_i$, and note that $T(\towertop(\sC))=\base(\sC)$.
We will moreover consider measurable castles with interval shapes for p.m.p. transformations $S\curvearrowright (Z,\zeta)$, 
where the sets $B_i$ are instead required to be measurable and we similarly ask that the collection
$\{S^jB_i : j=0,\ldots, n_i-1, i=1,\ldots,k \}$ is disjoint and has union of full measure.

Given a minimal homeomorphism $T\curvearrowright X$ of the Cantor set, by a standard first return time
construction one can generate a clopen castle that partitions $X$ starting from any nonempty clopen set $Y\subseteq X$.
Indeed a straightforward consequence of compactness and minimality is the existence
of an $N\in\Nb$ such that $X = \bigcup_{n=1}^N T^{-n}Y$. The first return map $r_Y: Y\to \Nb$ 
determined by $r_Y(x) = \min\{n\in \Nb : T^nx \in Y\}$ for $x\in Y$ is then well-defined and continuous (since $Y$ is clopen) 
and takes finitely many values 
$\{n_1,\ldots,n_k\}$. Setting $B_i = r_Y^{-1}(\{ n_i \})$ for $i=1,\dots,k$, we obtain a clopen castle decomposition 
\[
X = \bigsqcup_{i=1}^k \bigsqcup_{j=0}^{n_i - 1} T^jB_i. 
\]
Note that the base of this castle is exactly $Y$.

The following theorem can be extracted from \cite{Orm97}. 
The fact that the set $\sM$ below is nonempty is 
a special case of Theorem~6.1 of \cite{Orm97}, and the fact that it intersects every neighbourhood of $T$ 
can be verified by making a careful choice of the initial castle
in the recursive construction in \cite{Orm97}, as we explain below.
Recall that two free transformations $T\curvearrowright X$ and $S\curvearrowright Y$ are said to be \textit{strongly orbit equivalent} if there exists an orbit-preserving homeomorphism $\varphi :X \to Y$ between them
with associated cocycle maps (as defined in Section~\ref{S-SD}) that have no more than one point of discontinuity each.

\begin{theorem}\cite[Theorem~6.1]{Orm97}\label{T-SOE}
Let $T\curvearrowright X$ be a minimal spectrally aperiodic homeomorphism of the Cantor set and let $\mu\in M_T(X)$ be an ergodic measure.
Let $S\curvearrowright (Z,\zeta)$ be an ergodic p.m.p.\ transformation of a standard atomless probability space. 
Let $\sM$ be the set of all minimal homeomorphisms $T'\curvearrowright X$ 
such that the identity map $\id: X\to X$ is a strong orbit equivalence between $T$ and $T'$ 
and $T'\curvearrowright (X,\mu)$ 
is measure conjugate to $S\curvearrowright (Z,\zeta)$ (note that $M_{T'}(X)=M_{T}(X)$ since $\orb_T(x)=\orb_{T'}(x)$ for every $x\in X$).
Then every neighbourhood of $T$ in $\Act (\Zb ,X)$ contains an element of $\sM$.
\end{theorem}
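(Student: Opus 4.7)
The plan is to use Ormes' castle recursion from the proof of \cite[Theorem~6.1]{Orm97}, which already yields the nonemptiness of $\sM$, and simply observe that the initial castle in that recursion can be chosen to match the topology of any prescribed neighbourhood of $T$. Recall that a basic open neighbourhood of $T$ in $\Act(\Zb,X)$ has the form $U_{T,E,\sP}=\{T':(T')^s A=T^sA\text{ for all }s\in E,\ A\in\sP\}$ for a finite set $E\subseteq\Zb$ and a clopen partition $\sP$ of $X$, and without loss of generality we may take $E=\{-N,\dots,N\}$ for some $N\in\Nb$.

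I would start by choosing a clopen Kakutani--Rokhlin castle $\sC_0=\{(B_i,n_i)\}_{i=1}^{k}$ for $T$ partitioning $X$ whose levels refine the partition $\bigvee_{|s|\leq 2N}T^{-s}\sP$ and for which every height $n_i$ exceeds $2N+1$. This is routine given the minimality of $T$: a first-return-time construction over any sufficiently small clopen set drawn from the Boolean algebra generated by $\bigvee_{|s|\leq 2N}T^{-s}\sP$ produces such a castle, and the minimum height can be made as large as we wish by shrinking the base. Spectral aperiodicity of $T$ then guarantees that $\gcd(n_1,\dots,n_k)=1$, which is the combinatorial condition needed for Ormes' recursion to proceed.

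Next I would feed $\sC_0$ into Ormes' recursion, obtaining a nested sequence of progressively finer clopen Kakutani--Rokhlin castles $\sC_0,\sC_1,\sC_2,\dots$ for a sequence of homeomorphisms $T=T^{(0)},T^{(1)},T^{(2)},\dots$, where each $T^{(n+1)}$ is obtained from $T^{(n)}$ by re-routing the top level $\towertop(\sC_n)$ back into the base in a different pattern, the base of $\sC_{n+1}$ sits inside the base of $\sC_n$, and the measure-dynamics is matched against an increasing sequence of Rokhlin castles for $S\curvearrowright(Z,\zeta)$. The resulting limit $T'=\lim_{n\to\infty}T^{(n)}$ is a minimal homeomorphism, strongly orbit equivalent to $T$ via the identity of $X$ (with a single point of cocycle discontinuity descending to the intersection of the shrinking bases), and measure conjugate to $S$ on $(X,\mu)$; these are exactly the outputs of Ormes' argument. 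Because every modification at stage $n$ takes place inside $\towertop(\sC_n)\subseteq\towertop(\sC_0)$, the action of $(T^{(n)})^s$ on the levels of $\sC_0$ below the top $2N$ layers is identical to that of $T^s$ for all $|s|\leq N$; and the choice that $\sC_0$ refines $\bigvee_{|s|\leq 2N}T^{-s}\sP$ then ensures that, on each atom $A\in\sP$, the iterates $(T')^sA$ and $T^sA$ coincide for every $|s|\leq N$. Hence $T'\in U_{T,E,\sP}\cap\sM$.

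The hard part is not conceptual but bookkeeping: one must verify that the only modifications Ormes' recursion makes at each stage are within the top levels of the preceding castle, and that these tops remain confined inside the top $2N$ levels of $\sC_0$, so that the refinement property of $\sC_0$ relative to $\bigvee_{|s|\leq 2N}T^{-s}\sP$ is propagated through the limit. This is transparent from the cutting-and-stacking nature of the recursion in \cite{Orm97}, where each new castle is obtained by slicing towers of the previous castle into substacks without disturbing the levels strictly below the top; one can furthermore arrange, if needed by passing to a further refinement of $\sC_0$, that $\towertop(\sC_0)$ has arbitrarily small measure, keeping the measure-theoretic adjustment dictated by $S$ within a controlled ``boundary'' that is invisible to the short-time topological behaviour captured by $U_{T,E,\sP}$.
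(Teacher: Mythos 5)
Your proposal is correct and follows essentially the same route as the paper: both rest on the observation that the initial castle in Ormes' recursion may be chosen freely (the gcd-one condition on heights being supplied by spectral aperiodicity) and that the resulting $T'$ carries each level of that initial castle onto the same set that $T$ does, so that adapting the castle to the prescribed neighbourhood forces $T'$ into it. The only difference is cosmetic — the paper uses the metric basis (levels of diameter $<\eps$), while you use the clopen-partition basis $U_{T,E,\sP}$, for which the height bound $2N+1$ and the refinement by $\bigvee_{|s|\le 2N}T^{-s}\sP$ are more than needed: once each atom of $\sP$ is a union of levels, the set-level identity $T'(L)=T(L)$ iterates directly to $(T')^sA=T^sA$ for all $s$.
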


We first sketch the ideas in \cite{HerPutSka92} and \cite{Orm97} that yield $\sM\neq \emptyset$.  
For more on Bratteli diagrams, orderings on such diagrams, and Bratteli--Vershik transformations, we refer the reader to \cite{Put18}.
Ormes proves that there exists a simple Bratteli diagram $(V,E)$ with two orderings $\leq$ and $\leq'$ such that

\begin{enumerate}
\item $(V,E,\leq )$ and $(V,E,\leq' )$ share minimal and maximal edges and both have unique minimal and maximal paths,

\item $(V,E,\leq )$ defines a Bratteli--Vershik homeomorphism $\lambda$ of $B_{V,E}$ and 
there is a homeomorphism $h: X\to B_{V,E}$ such that $T = h^{-1}\circ \lambda \circ h$,

\item $(V,E,\leq' )$ defines a Bratteli--Vershik homeomorphism $\lambda'$ of $B_{V,E}$ such that
$S\curvearrowright (Z,\zeta)$ is measure conjugate to $\lambda' \curvearrowright (B_{V,E}, h_* \mu )$.
\end{enumerate}
One can then check that $\lambda$ and $\lambda'$ are strongly orbit equivalent via the identity map and therefore $T':=h^{-1}\circ \lambda'\circ h$ belongs to $\sM$.

In what follows we explain how the above Bratteli diagrams arise, following \cite{HerPutSka92, Orm97}.
See Figures~\ref{F-partition} and \ref{F-tower} for an illustration.
Suppose that $\sC_1 \prec \sC_2 \prec \sC_3 \prec \dots$ is a sequence of clopen castles for $T$, where $\sC_m\prec \sC_{m+1}$ means
that the levels of the castle $\sC_m$ are unions of levels of the castle $\sC_{m+1}$ and $\base(\sC_{m+1})\subseteq \base(\sC_m)$. 
Let $\sC_0=\{(X,0)\}$ be the trivial castle.
Such a sequence gives rise to a Bratteli diagram $(V,E)$ with vertex sets $\{V_m\}_{m\geq 0}$ and edge sets $\{E_m\}_{m\geq 1}$ defined as follows: the vertices in $V_m$ are indexed by the towers in $\sC_m$, so that $|V_m|$ is the number of towers forming the castle $\sC_m$.
The number of edges $e\in E_m$ going from the vertex $k\in V_{m-1}$ to the vertex $l\in V_m$ is 
defined to be the number of levels of the $l$th tower of $\sC_m$ which are contained in the base of the $k$th tower of $\sC_{m-1}$.
Note for example that the number of edges going from the only vertex $1\in V_0$ to the vertex $i\in V_1$ 
is equal to the height of the $i$th tower of the castle $\sC_1$.

\begin{figure}
\includegraphics[width = 0.65\textwidth]{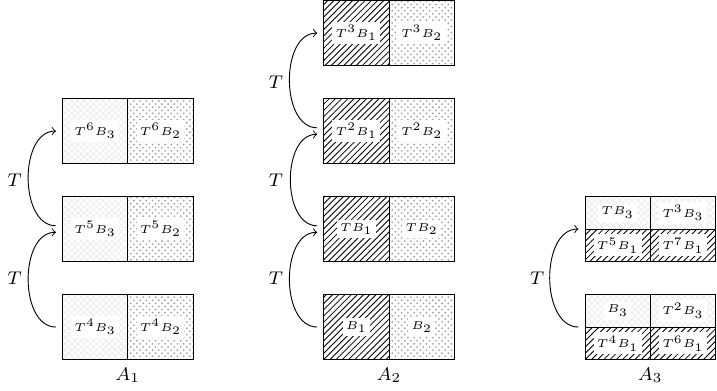}
\caption{Castles $\sC_1 \prec \sC_2$ for $T$ with $\base (\sC_1 ) = A_1 \sqcup A_2 \sqcup A_3$ and 
$\base (\sC_2 ) = B_1 \sqcup B_2 \sqcup B_3$.}
\label{F-partition}
\end{figure}

\begin{figure}
\includegraphics[width = 0.65\textwidth]{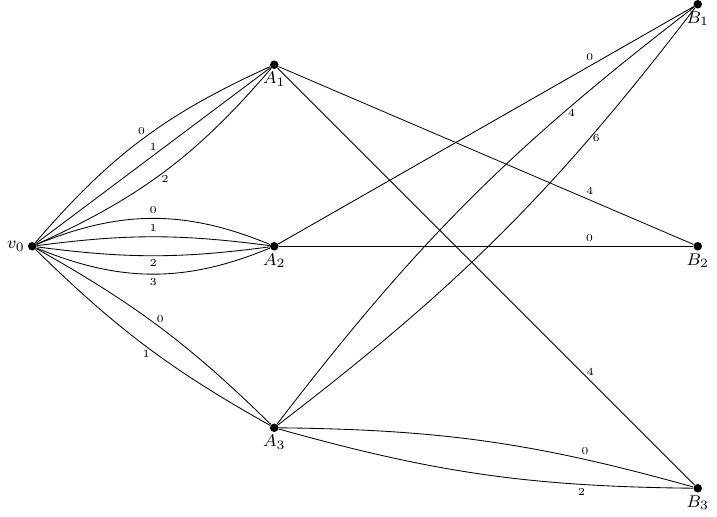}
\caption{The initial paths of length two in the ordered Bratteli diagram associated to Figure~\ref{F-partition}.}
\label{F-tower}
\end{figure}

Next define an ordering $\leq$ on the Bratteli diagram as follows.
For each vertex $l \in V_m$ one introduces a total ordering on the subset $r^{-1}(\{l\}) \subseteq E_m$, where $r: E_m \to V_m$ denotes the range map.
Writing $\sC_m = \{(B_i,n_i)\}_{i=1}^{k}$, we note by construction that the edges in $r^{-1}(\{l\})$ 
can be naturally labeled by the set of integers $\{ j\in \{ 0,\dots ,n_l-1 \} : T^jB_l\subseteq \base(\sC_{m-1})\}$.
For instance, if $l \in V_1$ then $r^{-1}(\{l\})$ is labeled by the entire interval $\{ 0,\dots ,n_l-1 \}$. This labeling induces a natural ordering on $r^{-1}(\{l\})$ via the standard order on the integers.
One moreover shows that for every $m\in \Nb$ there is a bijection 
\[
\big\{\text{initial paths of length } m \text{ in } (V,E)\big\} \to \big\{\text{levels of the castle } \sC_m\big\}
\] 
defined by $e_1e_2\cdots e_m \mapsto T^{\sum_{j=1}^{m}\mathrm{label}(e_j)}B_{m,r(e_m)}$,
where $B_{m,r(e_m)}$ is the base of the $r(e_m)$th tower of the castle $\sC_m$.
If $\bigvee_{m\in\Nb} \sC_m$ generates the topology on $X$, then the compatibility of these maps leads to a homeomorphism $B_{V,E}\cong X$
mapping an infinite path $e_1e_2\cdots\in B_{V,E}$ to $\bigcap_{m=1}^{\infty} T^{\sum_{j=1}^{m}\mathrm{label}(e_j)}B_{m,r(e_m)}$, which is an intersection of a decreasing sequence of sets.
Denote the inverse of this homeomorphism by $h: X\to B_{V,E}$.
If $\bigcap_{m\in\Nb}\towertop(\sC_m)=\{x_0\}$ then the Bratteli diagram constructed in this way has unique minimal and maximal paths $x_\mathrm{min}$ and $x_{\mathrm{max}}$ with $h(x_0)=x_{\mathrm{max}}$, 
and it is straightforward to check, by the definition of the ordering, that the Bratteli-Vershik homeomorphism $\lambda$ of $B_{V,E}$ satisfies $h\circ T=\lambda\circ h$.
 
This is essentially how condition~(ii) above is arranged. However, in order to achieve a Bratteli diagram satisfying conditions (i)--(iii), a recursive procedure is used to simultaneously build a sequence 
of measurable castles $\sD_1\prec \sD_2\prec\sD_3\prec\ldots$ for $S$ (with the relation $\prec$ defined on measurable castles in the same way as for clopen castles).
Letting again $\sD_0$ be the trivial castle, we obtain, exactly as above, an associated ordered Bratteli diagram $(V',E',\leq')$, and if $\{\sD_m\}_{m\in\Nb}$ separates points on a set of full $\zeta$-measure, one additionally gets a measurable isomorphism 
$h':(Z,\zeta)\to (B_{V',E'}, h'_*\zeta)$ 
that conjugates $S$ to the Bratteli–Vershik homeomorphism $\lambda'$ of $B_{V',E'}$, provided that $(V',E',\leq')$ has unique minimal and maximal paths.

In this recursive construction, the initial clopen castle $\sC_1$ can be chosen freely.
During the simultaneous construction of the sequences $\sC_1 \prec \sC_2 \prec \dots$ and $\sD_1 \prec \sD_2 \prec \dots$ one requires that $\sC_m$ and $\sD_m$ have the same number of towers with the same corresponding heights for all $m \in \Nb$.
This ensures that $V$ can be identified with $V'$.
A further technical ad hoc compatibility condition is imposed on the levels of the castles to guarantee that $E = E'$ and that the minimal and maximal edges coincide for the orderings $\leq$ and $\leq'$ on $(V,E)$.
Finally, one requires that if $\sC_m = \{(B_i,n_i)\}_{i=1}^{k}$ and $\sD_m = \{(A_i,n_i)\}_{i=1}^{k}$
then $\mu(B_i) = \zeta(A_i)$ for all $i = 1,\dots,k$.
This condition ensures that the measures $h'_*\zeta$ and $h_*\mu$ on $B_{V,E}$ agree, so that the map $h': (Z,\zeta)\to (B_{V,E},h_*\mu)$ defined above is a measurable isomorphism conjugating $S$ to $\lambda'$ as required in condition (iii). 

A key tool in this construction is Alpern's copying lemma (see \cite[Corollary~2]{Alp79} and \cite[Theorem~4.1]{Orm97}), which in our case can be applied in its original form since $T$ is assumed to be spectrally aperiodic (which, as explained earlier in this section, implies that each associated castle $\sC_m$ has towers whose heights have greatest common divisor equal to $1$).
Using Alpern's lemma, one can, for any clopen castle $\sC$ of $T$, construct a measurable castle $\sD$ for $S$ with the same number of towers and identical corresponding heights such that the $\mu$-measures of the levels of $\sC$ agree with the $\nu$-measures of the corresponding levels of $\sD$.
In particular, Alpern’s lemma is already used in the first step to construct $\sD_1$.

Given a neighbourhood
$U$ of $T$ in $\Act (\Zb ,X)$, we can construct a $\sC_1$ as follows in order to verify 
that $T'=h^{-1}\circ\lambda'\circ h$ moreover belongs to $U$. 

Let $\eps > 0$ be such that every $\tilde{T} \in\Act (\Zb ,X)$ with $d(\tilde{T}x,Tx) , d(\tilde{T}^{-1}x,T^{-1}x)< \eps$ 
for all $x\in X$ belongs to $U$.
Take a nonempty clopen set $B\subseteq X$ with diameter less than $\eps$
such that $T^{-1} B$ also has diameter less than $\eps$ (which is possible by the continuity of $T^{-1}$). 
As described at the beginning of this section, we then obtain a clopen
castle $\sC_1=\{(B_i, n_i)\}_{i=1}^{k}$ with $\base(\sC_1)=B$, via first-return times to $B$.
We may assume that the levels of $\sC_1$ all have diameter less than $\eps$. Indeed, let $\sP$ be any clopen partition of $X$ with elements of diameter less than $\eps$ and partition each $B_i$ by $\bigvee_{j=0}^{n_i-1}T^{-j}\sP$ so as to obtain 
a clopen partition $B_{i,0},\ldots,B_{i,r_i}$ of $B_i$. 
The castle $\{\{(B_{i,j}, n_i)\}_{j=0}^{r_i}\}_{i=1}^{k}$ has base equal to $B$ and levels of diameter less than $\eps$.

Let $A$ be a level of the castle $\sC_1$ that is not at the top of a tower, i.e., $A=T^{j}(B_i)$ for some $1\leq i\leq k$ and $0\leq j\leq n_i-2$. We claim that 
$T'(A)= h^{-1} \circ \lambda' \circ h (A)=h^{-1} \circ \lambda \circ h (A)=T(A)$.
Note that $h(A)$ is the set
\[
\{e_1e_2\dots \in B_{V,E} : r(e_1) \text{ corresponds to the $i$th tower of } \sC_1 \text{ and } \mathrm{label}(e_1) = j \}.
\]
By the construction above, the edges that join the vertex in $V_1$ corresponding to the $i$th tower of $\sC_1$
to the unique vertex in the level $V_0$ are ordered in the same way by $\leq$ and $\leq'$
(as induced from the natural order on the interval $\{0,1,\ldots,n_i-1\}$),
so that the images of $h(A)$ under $\lambda$ and $\lambda'$ are identical and equal to
\[
\{e_1e_2\cdots \in B_{V,E} : r(e_1) \text{ corresponds to the $i$th tower of } \sC_1 \text{ and } \mathrm{label}(e_1)=  j+1 \}.
\]
We conclude that $T'(A)=T(A)$. 

Assume now that $A=\towertop(\sC_1)$. We will show that $T'(A)=T(A)$.
Note that 
\[
h(A)=\{e_1e_2\cdots \in B_{V,E} : e_1 \text{ is a maximal edge (w.r.t.\ both orders)} \}.
\]
We let $\mathfrak{s}$ and $\mathfrak{p}$ denote the successor and predecessor operations,
with respect to $\leq$, on edges in the totally ordered sets $r^{-1}(v)$ for $v\in V$. 
The analogous operations associated with the order $\leq'$ we denote by $\mathfrak{s}'$ and $\mathfrak{p}'$.
We want to show that $\lambda(h(A))=\lambda'(h(A))$. Clearly both sets contain the unique (shared) minimal path $x_{\mathrm{min}}$ in $B_{V,E}$, since the unique (shared) maximal path $x_{\mathrm{max}}$ belongs to $h(A)$. Assume now that $e_1e_2\cdots \in h(A)$ is not the maximal path, and let $n\geq 2$ be the first integer such that $e_n\in E_n$ is not a maximal edge (w.r.t.\ both orders).
Then
$\lambda(e_1e_2\cdots)=y_1\cdots y_{n-1} \mathfrak{s}(e_n) e_{n+1}\cdots$
where $y_1\cdots y_{n-1}$ is the unique path consisting of minimal edges (w.r.t.\ both orders) such that the concatenation $y_{n-1} \mathfrak{s}(e_n)$ is well-defined.
We need to show that $y_1 \cdots y_{n-1} \mathfrak{s}(e_n) e_{n+1}\cdots$ belongs to
$\lambda'(h(A))$.
Note that since $\leq$ and $\leq'$ share minimal and maximal edges, $\mathfrak{s}(e_n)$
cannot be minimal for $\leq'$, and so $\mathfrak{p}'(\mathfrak{s}(e_n))$ is well-defined.
Let $q=z_1\cdots z_{n-1} \mathfrak{p}'(\mathfrak{s}(e_n))  e_{n+1} \cdots$
where $z_1 \cdots z_{n-1}$ is the unique path consisting of maximal edges (w.r.t.\ both orders) such that the concatenation $z_{n-1} \mathfrak{p}'(\mathfrak{s}(e_n))$ is well-defined.
Clearly, $q\in h(A)$, and we have $\lambda'(q)=y_1 \cdots y_{n-1} \mathfrak{s}(e_n) e_{n+1}\cdots$, completing the argument. Since $A=T^{-1}B$, we obtain $T'(T^{-1}B)=T(T^{-1}B)=B$ and finally conclude that $d(T'x,Tx)<\eps$ for all $x\in X$.

To see that $d(T'^{-1}x,T^{-1}x)<\eps$ for all $x\in X$, one similarly shows that for every level $A$ that is not at the base of $\sC_1$ 
one has $T'^{-1} A = T^{-1}A$, while $T'^{-1} B=T^{-1} B$. It follows by our choice of
$\eps$ that $T'$ belongs to $U$. This establishes the conclusion of Theorem~\ref{T-SOE}.

\begin{lemma}\label{L-upe dense}
Suppose $X$ is the Cantor set. Let $T\in \Astar (\Zb , X)$ and let $U$ be a neighbourhood of $T$ in $\Act (\Zb ,X)$.
Then $U$ contains an action in $\Astar (\Zb , X)_T$ with uniformly positive entropy.
\end{lemma}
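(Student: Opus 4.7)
The plan is to invoke Ormes's theorem (Theorem~\ref{T-SOE}) with a carefully chosen ergodic p.m.p.\ model, producing inside the given neighbourhood $U$ a strictly ergodic minimal homeomorphism $T'$ that is orbit equivalent to $T$ via the identity and whose unique invariant measure realizes a $K$-system. The u.p.e.\ property for $T'$ will then be derived from the $K$-property of the measure-theoretic model by appealing to the coincidence of topological and measure-theoretic entropy pairs under unique ergodicity.

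More precisely, since $T$ is strictly ergodic on the infinite Cantor set, it admits a unique invariant Borel probability measure $\mu$, which is atomless and of full support. I would take $S\curvearrowright(Z,\zeta)$ to be a Bernoulli shift on two symbols, which is an ergodic, weakly mixing $K$-system of positive entropy on a standard atomless probability space. Applying Theorem~\ref{T-SOE} to $T$, $\mu$, and $S$ produces a minimal homeomorphism $T'\in U$ that has the same orbits as $T$ and such that $T'\curvearrowright(X,\mu)$ is measure conjugate to $S\curvearrowright(Z,\zeta)$. To see that $T'\in\Astar(\Zb,X)_T$, I would observe that $M_{T'}(X)=M_T(X)=\{\mu\}$ since orbit equivalence preserves invariant measures, so $T'$ is strictly ergodic; minimality on the infinite Cantor set forces $T'$ to be aperiodic and hence topologically free; and the weak mixing of $S$ precludes nontrivial measurable eigenvalues for $T'\curvearrowright(X,\mu)$, hence also nontrivial continuous eigenvalues (continuous eigenfunctions yield $L^2(\mu)$-eigenfunctions with the same eigenvalues via $\mu$-a.e.\ equality), so by the characterization recalled in the proof of Lemma~\ref{L-evalues2}, $T'$ is spectrally aperiodic.

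The crux of the argument is the u.p.e.\ conclusion for $T'$. Here I would invoke Blanchard's theory of entropy pairs. The $K$-property of $S$, transported to $T'\curvearrowright(X,\mu)$ by measure conjugacy, is equivalent to the statement that the set $E_\mu(X,T')$ of $\mu$-entropy pairs of $T'$ equals $\supp(\mu\otimes\mu)\setminus\Delta$; since $\mu$ has full support, this equals $(X\times X)\setminus\Delta$. Under the unique ergodicity of $T'$, the topological entropy pairs of $(X,T')$ coincide with the $\mu$-entropy pairs, by the coincidence principle for uniquely ergodic minimal systems (see Blanchard--Host--Maass), so every pair of distinct points in $X$ is a topological entropy pair of $T'$, which is precisely the u.p.e.\ property.

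The main obstacle is the last step, where one must accurately invoke Blanchard's characterization of entropy pairs together with the coincidence result for uniquely ergodic minimal systems, bridging the purely measure-theoretic $K$-property of the Bernoulli model $S$ to topological u.p.e.\ for the particular $T'$ produced by Ormes's construction; the unique ergodicity afforded by orbit equivalence is exactly what makes this bridge work.
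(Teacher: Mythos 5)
Your proposal is correct and follows essentially the same route as the paper: apply Theorem~\ref{T-SOE} with a Bernoulli model to produce $T'\in U$ in the same strong orbit equivalence class, check membership in $\Astar(\Zb,X)_T$ using the (weak) mixing of the measure model and the full support of $\mu$, and then deduce u.p.e.\ from the $K$-property of the unique invariant measure. The only difference is cosmetic: the paper invokes Theorem~A of Glasner--Weiss \cite{GlaWei94} directly for the last step, whereas you re-derive that statement via Blanchard's entropy-pair machinery (for which only the inclusion of $\mu$-entropy pairs among topological entropy pairs is actually needed, not the full coincidence).
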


\begin{proof}
Since $T$ belongs to $\Astar (\Zb , X)$ there is a unique measure $\mu$ in $M_T (X)$.
By Theorem~\ref{T-SOE} we can find a minimal homeomorphism $T' \in U$ with the same orbits as $T$
such that $T' \curvearrowright (X,\mu )$ 
is a Bernoulli shift. The fact that $T' \curvearrowright (X,\mu )$ is Bernoulli
implies that it is mixing. By minimality $\mu$ has full support, and so $T'\curvearrowright X$ is topologically mixing and hence spectrally aperiodic,
so that $T'$ belongs to $\Astar (\Zb , X)_T$.
Bernoullicity also implies by Theorem~A of \cite{GlaWei94} that $T'$ has uniformly positive entropy.
\end{proof}

\begin{lemma}\label{L-zero entropy dense}
Suppose $X$ is the Cantor set. Let $T\in \Astar (\Zb , X)$ and let $U$ be a neighbourhood of $T$ in $\Act (\Zb ,X)$.
Then $U$ contains an action in $\Astar (\Zb , X)_T$ with zero entropy.
\end{lemma}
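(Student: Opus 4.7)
The plan is to mirror the strategy of Lemma~\ref{L-upe dense}, replacing the Bernoulli target with a weakly mixing zero-entropy one. Let $\mu$ be the unique element of $M_T(X)$, which is atomless since $T$ is strictly ergodic on the infinite Cantor set. Fix any ergodic p.m.p.\ transformation $S\curvearrowright (Z,\zeta)$ on a standard atomless probability space that is weakly mixing and has zero Kolmogorov--Sinai entropy; concrete examples include Chacon's transformation or any Gaussian automorphism whose spectral measure is continuous and concentrated on a Kronecker-independent set. By Theorem~\ref{T-SOE}, the neighbourhood $U$ contains a minimal homeomorphism $T'\curvearrowright X$ with the same orbits as $T$ such that $T'\curvearrowright(X,\mu)$ is measure conjugate to $S$. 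Since $M_{T'}(X)=M_T(X)=\{\mu\}$, the transformation $T'$ is already strictly ergodic and topologically free.

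It remains to check that $T'$ is spectrally aperiodic and has zero topological entropy. For the first, suppose for contradiction that there were a clopen set $A\subseteq X$ and an integer $n\geq 2$ such that $A,T'A,\ldots,T'^{n-1}A$ partition $X$. Then $\mathbf{1}_A\in L^2(X,\mu)$ would be a nonconstant eigenfunction of the Koopman operator of $T'\curvearrowright (X,\mu)$ with eigenvalue $e^{2\pi i/n}\neq 1$, contradicting the weak mixing of this action (which we have arranged through the measure-theoretic conjugacy with $S$). Thus $T'$ is spectrally aperiodic, and so $T'\in\Astar(\Zb,X)_T$. For the entropy, the variational principle together with unique ergodicity gives $h_{\mathrm{top}}(T')=h_\mu(T')=h_\zeta(S)=0$.

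The only nontrivial ingredient is Theorem~\ref{T-SOE}, which does all of the heavy lifting by letting us prescribe the measurable conjugacy class of $T'\curvearrowright(X,\mu)$ while keeping $T'$ arbitrarily close to $T$ and preserving orbits. The verification that the resulting $T'$ actually lands in $\Astar(\Zb,X)_T$ is routine once one realizes that weak mixing of the target system is the right property to enforce spectral aperiodicity, exactly as mixing of the Bernoulli target enforced it in the proof of Lemma~\ref{L-upe dense}.
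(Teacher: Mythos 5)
Your proof is correct and follows essentially the same route as the paper's: choose a weakly mixing zero-entropy p.m.p.\ transformation $S$, invoke Theorem~\ref{T-SOE} to realize it as $T'\curvearrowright(X,\mu)$ with $T'\in U$ sharing the orbits of $T$, and deduce spectral aperiodicity from weak mixing and zero topological entropy from unique ergodicity plus the variational principle. One small slip worth fixing: $\mathbf{1}_A$ itself is not an eigenfunction of the Koopman operator; the eigenfunction witnessing the contradiction is $\sum_{j=0}^{n-1}e^{2\pi i j/n}\mathbf{1}_{T'^{j}A}$ (the paper instead notes that since $\mu$ has full support, measure-theoretic weak mixing gives topological weak mixing, which rules out a periodic clopen partition).
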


\begin{proof}
We follow the ideas in the proof of \cite[Corollary~6.3]{Orm97}. Write $\mu$ for the unique measure in $M_T(X)$.
Let $S\curvearrowright (Z,\zeta)$ be a weakly mixing p.m.p.\ transformation on an atomless standard probability space with zero measure entropy. 
The existence of such an $S$ follows, for example, from the genericity of weak mixing \cite{Hal44}
and zero measure entropy \cite{Roh59} in $\Aut(Z,\zeta)$.
By Theorem~\ref{T-SOE}, there exists a minimal homeomorphism $T'\in U$ such that $T'\curvearrowright X$ is strongly orbit equivalent to $T\curvearrowright X$ via the identity map and $T'\curvearrowright(X,\mu)$ is measure conjugate to $S\curvearrowright(Z,\zeta)$.
Since $\mu$ has full support by minimality, we conclude that $T'\curvearrowright X$ is topologically weakly mixing, and in particular spectrally aperiodic.
Finally, since $T'$ is uniquely ergodic and $T'\curvearrowright (X,\mu)$ has zero measure entropy, it follows 
by the variational principle \cite[Theorem~9.48]{KerLi16} that $T'\curvearrowright X$ has zero topological entropy.
\end{proof}

Let $d\geq 2$.
Recall that $\Astar (F_d , X)$ denotes the set of all topologically free actions $\alpha\in\Act (F_d , X)$ with $M_\alpha (X) \neq\emptyset$
that are spectrally aperiodic and strictly ergodic on each standard generator (conditions which imply that $\alpha$ itself is strictly ergodic).

\begin{lemma}\label{L-perp}
Let $X$ be the Cantor set and $\gamma\in \Astar (F_d, X)$. Fix one of the standard generators $a$ of $F_d$. Then the set
\begin{align*}
\sD_\gamma :=\{\beta\in \Astar (F_d , X) : \text{\rm the homeomorphisms $\beta_a$ and $\gamma_a$ are disjoint}  \}
\end{align*}
is a $G_\delta$.
\end{lemma}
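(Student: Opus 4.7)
The plan is to reduce the lemma to the $G_\delta$ nature of minimality in the space of transformations of $X\times X$, together with the continuity of the diagonal product construction. The first step is to observe that for every $\beta\in\Astar(F_d,X)$, the homeomorphism $\beta_a$ is minimal, as membership in $\Astar(F_d,X)$ requires strict ergodicity on each standard generator. The same holds for $\gamma_a$.

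Next I would record the standard equivalence, valid for minimal topological systems, between disjointness of $\beta_a$ and $\gamma_a$ and minimality of the product transformation $\beta_a\times\gamma_a$ on $X\times X$. The forward implication is immediate from the definition of disjointness. For the reverse, any nonempty closed invariant subset $Z\subseteq X\times X$ has nonempty closed invariant projections onto the two copies of $X$, which by minimality of $\beta_a$ and $\gamma_a$ must equal $X$; hence disjointness forces $Z=X\times X$. This reformulation lets me rewrite
\[
\sD_\gamma = \Phi^{-1}(\sM),
\]
where $\Phi:\Astar(F_d,X)\to\Act(\Zb,X\times X)$ is defined by $\Phi(\beta)=\beta_a\times\gamma_a$ and $\sM\subseteq\Act(\Zb,X\times X)$ denotes the set of minimal transformations.

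The third step is to verify that $\Phi$ is continuous, which is a routine check from the definition of the topology on $\Act(\Zb,\cdot)$: the uniform closeness of $\beta'_a x$ to $\beta_a x$ (and of the corresponding inverses) implies the uniform closeness of $\Phi(\beta')(x,y)=(\beta'_a x,\gamma_a y)$ to $\Phi(\beta)(x,y)=(\beta_a x,\gamma_a y)$, since the second coordinate does not depend on $\beta$. The restriction map $\Act(F_d,X)\to\Act(\Zb,X)$, $\beta\mapsto\beta_a$, is continuous, and tensoring on the right with the fixed homeomorphism $\gamma_a$ is continuous too.

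Finally I would invoke the observation recorded at the start of Section~\ref{S-spaces of actions} that the set of minimal actions in $\Act(\Zb,X\times X)$ is a $G_\delta$ (since $X\times X$ is again the Cantor set). Being the preimage of a $G_\delta$ under a continuous map, $\sD_\gamma$ is then a $G_\delta$ subset of $\Astar(F_d,X)$. There is no real obstacle here: the key substantive input is the equivalence between disjointness of minimal systems and minimality of their product, after which the argument reduces to descriptive-set-theoretic bookkeeping.
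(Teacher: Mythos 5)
Your proposal is correct and follows essentially the same route as the paper: both arguments hinge on the equivalence, for minimal systems, between disjointness of $\beta_a$ and $\gamma_a$ and minimality of the product transformation on $X\times X$, and then reduce to the fact that minimality is a $G_\delta$ condition. The only (cosmetic) difference is that you pull back the $G_\delta$ set of minimal transformations of $X\times X\cong X$ along the continuous map $\beta\mapsto\beta_a\times\gamma_a$, whereas the paper re-derives the relevant open sets $\sV_U$ directly by a compactness argument; the content is the same.
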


\begin{proof}
Since the map $\gamma\mapsto\gamma_a$ from $\Astar (F_d , X)$ to $\Act (\Zb , X)$ is clearly continuous 
and its image is contained in the set of minimal actions in $\Act (\Zb , X)$,
it suffices to show, given a minimal homeomorphism $T\in\Act (\Zb , X)$, that the set $T^\perp$ of all minimal homeomorphisms of $X$ which are disjoint from $T$ is a $G_\delta$. 
Let $U$ be a nonempty open subset of $X\times X$. Write $\sV_U$ for the set of all $S\in\Act (\Zb ,X)$
such that every orbit of $T\times S$ intersects $U$. Let us verify that $\sV_U$ is open. Let $S\in \sV_U$. Then
for every $z\in X\times X$ there is an $n_z\in\Zb$ such that $(T\times S)^{n_z} z \in U$ and hence
by continuity an open neighbourhood $V_z$ containing $z$ such that $(T\times S)^{n_z} V_z \subseteq U$.
By the compactness of $X\times X$ there is a finite set $E\subseteq X\times X$ such that the sets
$V_z$ for $z\in E$ cover $X\times X$. For all $S'$ in a sufficiently small neighbourhood of $S$ in $\Act (\Zb ,X)$ 
we have $(T\times S')^{n_z} V_z \subseteq U$ for every $z\in E$, in which case $S' \in \sV_U$. Thus $\sV_U$ is open.

By metrizability there is a sequence $U_1 , U_2 , \ldots$ of nonempty open subsets of $X\times X$ that 
form a basis for the topology. Then, using the fact that the disjointness of two minimal homeomorphisms is equivalent to the 
minimality of the diagonal action, one sees that the set $T^\perp$ is equal to the set of minimal actions in $\Act(\Zb,X)$ intersected with $\bigcap_{n\in\Nb}\sV_{U_n}$ and hence is a $G_\delta$.
\end{proof}

\begin{proof}[Proof of Theorem~\ref{T-meagre}]
Suppose, towards a contradiction, that there is a $\gamma\in\Astar (F_d , X)$ whose conjugacy class is
nonmeagre. 
Fix a standard generator $a\in F_d$.
Since the map $\rho\mapsto\rho_a$ from $\Astar (F_d  , X)$ to $\Act (\Zb , X)$
is continuous and the set of homeomorphisms in $\Act(\Zb,X)$ with zero entropy is a $G_\delta$ \cite[Lemma~2.4]{GlaWei01}, the set
\[Z=\{\rho\in\Astar (F_d , X): \rho_a \text{ has zero entropy}\}\]
is a $G_\delta$. 

To see that $Z$ is dense in $\Astar(F_d,X)$, we start by letting $\alpha\in\Astar (F_d , X)$. Then there is a unique $\mu\in M_\alpha (X)$.
Since $\alpha_a$ is strictly ergodic by the definition of $\Astar (F_d , X)$,
the measure $\mu$ is also the unique element of $M_{\alpha_a} (X)$ and thus is ergodic for $\alpha_a$.
Lemma~\ref{L-zero entropy dense} tells us that we can find a $T\in\Astar (\Zb ,X)_{\alpha_a}$ with zero entropy
that is as close as we wish to $\alpha_a$. 
Consider the action $\alpha'$ of $F_d$ obtained by sending $a\mapsto T$ 
and having $\alpha'$ equal $\alpha$ on the other standard generators of $F_d$.
Then $\alpha'$ is strictly ergodic and spectrally aperiodic on each of the standard generators of $F_d$.
Since $T$ has the same orbits as of $\alpha_a$, the measure $\mu$ is also $T$-invariant, whence $M_{\alpha'} (X)$
contains $\mu$ and in particular is nonempty.
With the aim of showing that $\alpha'$ belongs to $\Astar (F_d ,X)$, it remains to verify topological freeness.

Since $\alpha$ is topologically free by virtue of its membership in $\Astar (F_d  , X)$,  
there exists a dense set $X_0 \subseteq X$ such that $\alpha_t x \neq x$ for all $t\in F_d \setminus \{ e \}$ and $x\in X_0$.
Let $s\in F_d \setminus \{ e \}$. As an element of $\langle a \rangle * F_{d-1} = F_d$ we can write $s$ uniquely as a reduced
word $a^{n_1} g_1 a^{n_2} g_2 \cdots a^{n_k} g_k$ where possibly $n_1 = 0$ and/or $g_k = e$ (but not both when $k=1$)
but otherwise $n_i \neq 0$ and $g_i \neq e$ for all $i$. Since $T$ and the restriction of $\alpha$
to $\langle a \rangle$ have the same orbits, for every $x\in X$ we can write $\alpha'_s x$ as $\alpha_t x$
where $t = a^{m_1} g_1 a^{m_2} g_2 \cdots a^{m_k} g_k$ for some $m_1 , \dots , m_k \in\Zb$ which depend on $x$ and are nonzero (using the freeness of $\alpha_a$)
except in the case when $n_1 = 0$. In particular this $t$ will be nonzero, and so when $x\in X_0$
we get $\alpha'_s x = \alpha_t x \neq x$, from which we conclude that $\alpha'$ is topologically free 
and hence belongs to $\Astar (F_d ,X)$.

The action $\alpha'$ we can make as close as we wish 
to $\alpha$, depending on how close $T$ is to $\alpha_a$. This shows that $Z$ is dense
in $\Astar (F_d , X)$, and hence is a dense $G_\delta$. The conjugacy class of $\gamma$, being nonmeagre,
must then intersect $Z$, so that $\gamma_a$ has zero entropy by the invariance of entropy under conjugacy.

Applying Lemma~\ref{L-upe dense} in the same way that we did with Lemma~\ref{L-zero entropy dense} above,
we see that the set of all $\rho\in\Astar (F_d , X)$ such that $\rho_a$ 
has uniformly positive entropy is dense in $\Astar (F_d , X)$. 
By Proposition~6 of \cite{Bla93}, every minimal homeomorphism of $X$ with zero entropy is disjoint 
from every homeomorphism of $X$ with uniformly positive entropy, and so we deduce using Lemma~\ref{L-perp}
that $\sD_\gamma=\{\beta\in \Astar (F_d , X) : \text{$\beta_a$ and $\gamma_a$ are disjoint}\}$ is a dense $G_\delta$ subset of $\Astar (F_d , X)$. But then the 
conjugacy class of $\gamma$, being nonmeagre, must intersect $\sD_\gamma$, contradicting the fact
that a homeomorphism of any compact metrizable space with more than one point is not disjoint from itself.
\end{proof}

\section{Examples of squarely divisible actions of free groups}\label{S-examples}

The purpose of this section is to establish Propositions~\ref{P-examples} and \ref{P-SD examples}, which 
together provide examples of topologically free actions $F_2 \curvearrowright X$ on the Cantor set with $M_{F_2} (X)\neq\emptyset$
that are (i) strictly ergodic and weakly mixing on each generator and 
(ii) squarely divisible. These actions in particular belong to both $\WA(F_2,X)$ and $\Astar(F_2,X)$.
By Theorem~\ref{T-SR1}, their reduced crossed products 
have stable rank one. One can also similarly construct actions $F_d \curvearrowright X$
for $d\in\Nb$, $d>2$, with the same properties. 

Given a minimal homeomorphism $T\curvearrowright X$ of the Cantor set and any nonempty clopen set $Y\subseteq X$, 
by the first return time construction described in Section~\ref{S-meagre} 
one can generate a clopen castle $\sC=\{(B_i,n_i)\}_{i=1}^{k}$ that partitions $X$, with $\base(\sC)=Y$.
The $\Zb$-action generated by $T$ is free by minimality and the fact that $X$ is infinite, and so
we can arrange for the heights $n_i$ of the towers to be as large as we wish by choosing $Y$ 
so that its images under a sufficiently large number of iterations of $T^{-1}$ are pairwise disjoint.

To construct our actions $F_2 = \langle a,b \rangle \curvearrowright X$ on the Cantor set,
we will take a minimal homeomorphism $T$ of $X$ (corresponding to $a$)
and then combine this with a second homeomorphism $S$ (corresponding to $b$) that is constructed
by taking some clopen castle decomposition with respect to $T$ and doing some shuffling of the levels within each tower.
This shuffling will be implemented by a permutation of the tower levels that sends the top level to the bottom one,
which permits us to define $S$ on the top level of each tower not according to the permutation (as on the other levels)
but rather so as to be equal to $T$. As a consequence
$S$ will have the same orbits and the same asymptotic behaviour as $T$ (in fact it will be conjugate to $T$).
We thus formulate the following definition.

\begin{definition}\label{D-permutation}
Let $T\curvearrowright X$ be a minimal homeomorphism of the Cantor set. A \emph{tower permutation} of $T$ is a homeomorphism $S\curvearrowright X$ for which there exists a clopen castle $\{(B_i,n_i)\}_{i=1}^k$ for $T$ partitioning $X$
and for each $i=1,\dots k$ a cyclic permutation $\pi_i$ of $\{ 0,\dots ,n_i - 1\}$
with $\pi_i (n_i-1) = 0$
such that $S = T^{\pi_i (j) - j}$ on $T^j B_i$ for $j=0,\dots , n_i -2$ and $S = T$ on $T^{n_i-1} B_i$.
\end{definition}

If $S$ is a tower permutation of $T$, then $T$ is a tower permutation of $S$. Indeed, letting $i\in\{1,\ldots,k\}$
we observe that $\pi_i^{n_i-1}(0)=n_i-1$ and that $S^j$ is equal to $T^{\pi_i^j(0)}$ on $B_i$ for every $j\in \{0,\ldots, n_i-1\}$.
In particular, 
$X = \bigsqcup_{i=1}^k \bigsqcup_{j=0}^{n_i - 1} S^jB_i $
is a clopen castle decomposition for $S$. 
For every $j=0,\ldots,n_i-2$, let $\sigma_i(j)$ be the unique integer in $\{1,\ldots,n_i-1\}$ satisfying the equation $\pi_i^j(0)+1=\pi_i^{\sigma_i(j)}(0)$, and set $\sigma_i(n_i-1)=0$.
It is then straightforward to check that $\sigma_i$ is a cyclic permutation of  $\{0,\ldots,n_i-1\}$
such that $T=S^{\sigma_i(j)-j}$ on $S^jB_i$ and $T=S$ on $S^{n_i-1}B_i$.

This discussion in particular implies that if $S$ is a tower permutation of $T$, then $\orb_T(x)=\orb_S(x)$ for all $x\in X$ and $M_T(X)=M_S(X)$.

\begin{lemma}\label{L-perm conj}
Let $T\curvearrowright X$ be a minimal homeomorphism of the Cantor set, and let $S\curvearrowright X$ be a 
tower permutation of $T$. Then $S$ and $T$ are conjugate. 
\end{lemma}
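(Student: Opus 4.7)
The plan is to construct an explicit conjugating homeomorphism $\varphi \colon X \to X$ that is the identity on the base $\bigsqcup_{i=1}^k B_i$ and on each tower reshuffles the levels so as to turn the $T$-orbit structure within the tower into the $S$-orbit structure. First I would record the consequence of the iteration formula already observed just after Definition~\ref{D-permutation}: if $x\in B_i$ and $0\le j\le n_i-1$, then
\[
S^j x = T^{\pi_i^j(0)} x,
\]
and in particular $S^{n_i-1}B_i = T^{n_i-1}B_i$ (using $\pi_i(n_i-1)=0$ and the cyclicity of $\pi_i$, which forces $\pi_i^{n_i-1}(0)=n_i-1$).

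With this in hand, I would define $\varphi$ by
\[
\varphi(T^j x) = S^j x = T^{\pi_i^j(0)} x \qquad \text{for } x\in B_i,\ 0\le j \le n_i-1,
\]
so that on the clopen set $T^j B_i$ the map $\varphi$ coincides with the power $T^{\pi_i^j(0)-j}$ and bijects $T^jB_i$ onto $T^{\pi_i^j(0)}B_i$. Since $\pi_i$ is a bijection of $\{0,\dots,n_i-1\}$, as $j$ ranges over this set the images $T^{\pi_i^j(0)}B_i$ run through the levels of the $i$-th $T$-tower, so $\varphi$ permutes the levels of each $T$-tower and is a bijection of $X$. The collection $\{T^j B_i : 1\le i\le k,\ 0\le j\le n_i-1\}$ is a finite clopen partition of $X$ on which $\varphi$ restricts to powers of the homeomorphism $T$, so $\varphi$ is a homeomorphism.

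Finally I would verify the intertwining $\varphi\circ T = S\circ\varphi$ by checking it on each level $T^jB_i$. For $0\le j \le n_i-2$, both sides send $T^j x$ (with $x\in B_i$) to $S^{j+1}x=T^{\pi_i^{j+1}(0)}x$: the left side because $\varphi(T^{j+1}x)=S^{j+1}x$ by definition, the right side because $S(\varphi(T^jx))=S(S^jx)=S^{j+1}x$. For $j=n_i-1$, the identity $\pi_i^{n_i-1}(0)=n_i-1$ gives $\varphi|_{T^{n_i-1}B_i}=\id$, and $T(T^{n_i-1}B_i)$ lies in the base $\bigsqcup_k B_k$, on which $\varphi$ is the identity; combined with the defining condition $S=T$ on $T^{n_i-1}B_i$, both $\varphi(T x)$ and $S(\varphi(x))$ reduce to $T(x)$. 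There is no real obstacle here; the only thing to be careful about is ensuring the formula for $\varphi$ is unambiguous at the top levels, which is precisely what the identity $\pi_i^{n_i-1}(0)=n_i-1$ guarantees.
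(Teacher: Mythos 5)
Your proof is correct and follows essentially the same route as the paper: both construct an explicit conjugating homeomorphism that acts as a power of $T$ on each clopen level of the castle, permutes the levels within each tower, and is checked to intertwine $T$ and $S$ level by level, with the top level handled via $\pi_i^{n_i-1}(0)=n_i-1$ and the fact that the map is the identity on the base. The only cosmetic difference is that your $\varphi$, written via $j\mapsto\pi_i^j(0)$, is the inverse of the paper's map $h$ (which uses a permutation $\sigma_i$ conjugating $\pi_i$ to the standard cycle, so that $\sigma_i^{-1}(j)=\pi_i^j(0)$), and accordingly you verify $\varphi\circ T=S\circ\varphi$ rather than $T\circ h=h\circ S$.
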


\begin{proof}
Obtain a castle $\{(B_i,n_i)\}_{i=1}^k$ and cyclic permutations $\pi_i$ 
for $i=1,\dots ,k$ from the definition of tower permutation.
Given $1\leq i\leq k$, the permutation $\pi_i$, being cyclic, is conjugate to the cyclic shift 
$\rho(j) = j+1 \pmod{n_i}$ in $\Sym(n_i)$, i.e., there exists a permutation $\sigma_i \in \Sym(n_i)$ 
such that $\sigma_i \circ \pi_i = \rho \circ \sigma_i$. We can additionally ask that $\sigma_i(0)=0$, 
which also forces $\sigma_i(n_i-1)=n_i-1$
by the nature of $\pi$ and $\rho$, as one can easily check. Define $h : X \to X$ by
\begin{equation*}
hx = T^{\sigma_i(j) - j}x \label{E-perm conjugacy}
\end{equation*}
for all $x \in T^jB_i$, $i=1,\dots ,k$, and $j=0,\dots , n_i -1$.
Since the tower levels form a clopen partition of $X$, this is a homeomorphism. Finally, we verify that $h$ implements the desired conjugacy. Let $x \in X$. Then $x \in T^jB_i$ for some $1\leq i\leq k$ and $0\leq j\leq n_i - 1$.
Suppose that $j\neq n_i - 1$. 
Then $Sx = T^{\pi_i(j) - j}x \in T^{\pi_i(j)} B_i$ and hence
\[
hSx = T^{\sigma_i(\pi_i(j)) - \pi_i(j)} Sx = T^{\sigma_i(\pi_i(j)) - \pi_i(j)}T^{\pi_i(j) - j}x = T^{\sigma_i(\pi_i(j))-j}x
\]
so that, using the equality $\sigma_i \circ \pi_i = \rho\circ \sigma_i$,
\[
Thx = T^{\sigma_i(j) + 1 - j}x = T^{\sigma_i(\pi_i(j)) - j}x = hSx. 
\]
Suppose finally that $j = n_i - 1$. Then on $T^j B_i$ we have $h = \id$ and $S = T$, and since
$h$ is also the identity on $B_{i'}$ for all $i' = 1, \dots , k$ and $T^{n_i}B_i\subseteq \bigsqcup_{i'=1}^{k}B_{i'}$ we obtain
$Thx = Tx = hTx = hSx$ for every $x\in T^j B_i$.
Therefore $S$ and $T$ are conjugate.
\end{proof}

If $S$ is a tower permutation of a minimal homeomorphism 
$T$ of the Cantor set $X$, then
the resulting action $F_2 = \langle a,b\rangle \curvearrowright X$ defined by $a\mapsto T$ and $b\mapsto S$
satisfies $M_T(X) = M_S(X) = M_{F_2}(X)$.
However, this action will in general be far from faithful, let alone topologically free. In order to remedy this we will construct diagonal products from such actions, in the same spirit as Section~\ref{S-diagonal machine}, only now with infinitely many factors, which will permit us to arrange for topological freeness asymptotically via measure-theoretic considerations.

A family of p.m.p.\ actions $G\curvearrowright (Z_i , \zeta_i )$ with countable index set $I$ is
{\it disjoint} if the only probability measure on $\prod_{i\in I} Z_i$ that is invariant under the diagonal action of $G$ and projects factorwise onto $\zeta_i$ for every $i\in I$
is the product measure $\prod_{i\in I} \zeta_i$.
A family of p.m.p.\ actions $G\curvearrowright (Z_i , \zeta_i )$ with arbitrary index set $I$ is {\it disjoint}
if every finite subfamily is disjoint. It is clear that these two definitions agree in the overlapping case
when $I$ is a countably infinite set.

\begin{lemma}\label{L-strerg}
Suppose $G$ is amenable.
Let $G\curvearrowright  X_n$ for $n\in\Nb$ be strictly ergodic actions on compact metrizable spaces
such that, writing $\mu_n$ for the unique measure in $M_G (X_n)$, 
the family of p.m.p.\ actions $G \curvearrowright (X_n,\mu_n )$ for $n\in\Nb$ is disjoint.
Then the diagonal action $G\curvearrowright \prod_{n\in\Nb} X_n$ given by 
$s(x_n)_{n\in\Nb} = (sx_n )_{n\in\Nb}$ is strictly ergodic.
\end{lemma}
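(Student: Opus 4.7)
The plan is to establish unique ergodicity first and then deduce minimality from this together with the amenability of $G$.

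For unique ergodicity, clearly the product measure $\mu := \prod_{n\in\Nb}\mu_n$ is $G$-invariant under the diagonal action, so $M_G(\prod_n X_n)$ is nonempty. Let $\nu\in M_G(\prod_n X_n)$ be arbitrary. For each $k\in\Nb$ the pushforward $(\pi_1\times\cdots\times\pi_k)_*\nu$, where $\pi_n$ denotes the $n$th coordinate projection, is a $G$-invariant Borel probability measure on $X_1\times\cdots\times X_k$. Its $n$th marginal $(\pi_n)_*\nu$ is a $G$-invariant Borel probability measure on $X_n$ and hence equals $\mu_n$ by strict ergodicity. By the hypothesis that the family of p.m.p.\ actions $G\curvearrowright(X_n,\mu_n)$ is disjoint, the only $G$-invariant Borel probability measure on $X_1\times\cdots\times X_k$ with these marginals is $\mu_1\times\cdots\times\mu_k$. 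Therefore $(\pi_1\times\cdots\times\pi_k)_*\nu=\mu_1\times\cdots\times\mu_k$ for every $k\in\Nb$. Since the cylinder sets form a $\pi$-system generating the Borel $\sigma$-algebra of $\prod_n X_n$, the measure $\nu$ is uniquely determined by these finite-dimensional marginals, so $\nu=\mu$.

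For minimality, suppose towards a contradiction that $Y\subseteq\prod_n X_n$ is a nonempty proper closed $G$-invariant subset. Since $Y$ is compact metrizable and $G$ acts on it continuously, the amenability of $G$ yields a $G$-invariant Borel probability measure $\lambda\in M(Y)$, which extends by zero to a measure $\tilde{\lambda}\in M_G(\prod_n X_n)$. By the unique ergodicity established above, $\tilde{\lambda}=\mu=\prod_n\mu_n$. But each $\mu_n$ has full support by the minimality of $G\curvearrowright X_n$ (invariant closed sets coming from supports would otherwise violate minimality), and so the product measure $\mu$ has full support on $\prod_n X_n$ as well; indeed, any basic open set is a finite intersection of preimages $\pi_n^{-1}(U_n)$ with $U_n\subseteq X_n$ nonempty open, and such sets have positive $\mu$-measure by Fubini. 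This contradicts the fact that $\tilde{\lambda}$ is supported on the proper closed set $Y$, and we conclude that $G\curvearrowright\prod_n X_n$ is minimal.

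The main obstacle is the first step, specifically the passage from agreement of all finite-dimensional marginals to agreement of the full measures on the infinite product. This is a standard measure-theoretic fact (the cylinder $\pi$-system generates the product Borel $\sigma$-algebra, and two finite Borel measures agreeing on a generating $\pi$-system are equal), but it is the only point at which one must go beyond what the pairwise/finite disjointness hypothesis directly supplies. Everything else is a straightforward combination of strict ergodicity of each factor, the definition of disjointness as given in the excerpt, and the standard observation that a uniquely ergodic continuous action of an amenable group whose invariant measure has full support is automatically minimal.
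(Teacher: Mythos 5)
Your proof is correct and follows essentially the same route as the paper's: disjointness forces the product measure to be the unique invariant measure, and amenability plus the full support of that measure rules out any proper closed invariant subset. The only cosmetic differences are that you run the minimality argument directly on the infinite product using the unique ergodicity just established (the paper instead first deduces minimality of each finite product and then projects a putative closed invariant set), and your $\pi$-system step simply makes explicit the paper's remark that the finite-subfamily and countable-product formulations of disjointness agree.
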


\begin{proof}
Set $\mu = \prod_{n\in\Nb} \mu_n$. 
By the definition of disjointness, the action $G\curvearrowright \prod_{n\in\Nb} X_n$ has a unique
invariant Borel probability measure, namely $\mu$. Now for every $m\in\Nb$ the diagonal action
$G\curvearrowright \prod_{n=1}^m X_n$ must be minimal, for any nonempty proper $G$-invariant
subset for this action would support a $G$-invariant Borel probability measure by the amenability of $G$,
and this measure would differ from the product measure $\prod_{n=1}^m \mu_n$ since the latter
has full support given that each $\mu_n$ has full support by the minimality of $G \curvearrowright X_n$,
leading us to a contradiction with the disjointness of the family $G \curvearrowright (X_n,\mu_n )$ for $n=1,\dots , m$.
It follows that if $A$ is a nonempty closed $G$-invariant subset of $\prod_{n\in\Nb} X_n$ then for every $m\in\Nb$
the projection of $A$ onto $\prod_{n=1}^m X_n$ is a nonempty closed $G$-invariant set and hence equal to $\prod_{n=1}^m X_n$,
from which we deduce that $A = \prod_{n\in\Nb} X_n$. Thus $G\curvearrowright \prod_{n\in\Nb} X_n$ is minimal.
\end{proof}

\begin{lemma}\label{L-sofic}
Let $E$ be a finite subset of $F_2 \setminus \{ e \}$ and let $\eps > 0$. Then there is an $N\in\Nb$ such that
for every finite set $X$ of cardinality at least $N$ there exists an action $F_2 \curvearrowright X$ 
in which each of the two standard generators $a$ and $b$ acts transitively and for which
\[
|\{ x\in X : sx = x \} | \leq \eps |X|
\]
for every $s\in E$.
\end{lemma}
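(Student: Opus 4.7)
The plan is to construct the action by random conjugation: take $\rho_a$ to be a fixed $n$-cycle on $X$ and $\rho_b$ to be a random conjugate of the same $n$-cycle, which guarantees that both generators act transitively (since conjugation preserves cycle type), and then apply the Collins--Dykema theorem (Theorem~\ref{T-CD}) together with a counting argument to find a conjugator that works for all $s\in E$ simultaneously. Concretely, identify $X$ with $\{0,1,\ldots,n-1\}$ where $n=|X|$, let $\sigma\in S_n$ denote the cyclic shift $j\mapsto j+1\pmod n$, and for each $U\in S_n$ consider the action $\rho^U\in\Act(F_2,X)$ determined by $\rho^U_a=\sigma$ and $\rho^U_b=U\sigma U^{-1}$. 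Both $\sigma$ and $U\sigma U^{-1}$ are single $n$-cycles and hence transitive on $X$, so it remains only to control the fixed-point counts of $\rho^U_s$ for $s\in E$, noting that $|\{x\in X:\rho^U_s x=x\}|=n\,\tr(\rho^U_s)$.

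Let $L$ be the maximum absolute value of a power of $a$ or $b$ appearing in the reduced form of some element of $E$, let $d$ be the maximum length of such a reduced word, and set $T:=\max_{1\le j\le d}t_j$ where the $t_j$ are the constants from Theorem~\ref{T-CD}. Choose $N>dL$ also large enough that $|E|T/(\eps N)<1$, and now assume $n\ge N$. For each $s\in E$ with reduced form $s=a^{n_1}b^{m_1}\cdots a^{n_j}b^{m_j}$, the element $\rho^U_s$ takes the alternating form
\[
\sigma^{n_1}(U\sigma^{m_1}U^{-1})\cdots\sigma^{n_j}(U\sigma^{m_j}U^{-1})
\]
to which Theorem~\ref{T-CD} applies directly, and since $|n_i|,|m_i|\le L<n$ each $\sigma^{n_i}$ and $\sigma^{m_i}$ has zero normalized trace; hence averaging over $U\in S_n$ yields $(1/n!)\sum_{U}\tr(\rho^U_s)\le r_j\cdot 0+t_j/n\le T/n$. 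Reduced words that begin with a power of $b$ or end with a power of $a$ are handled by first using cyclic invariance of trace to rearrange the product into an even-length alternating form, merging adjacent same-type factors when necessary (so that a cyclic rearrangement of $b^{m_0}\cdots b^{m_j}$ produces a factor $U\sigma^{m_j+m_0}U^{-1}$ whose exponent, being bounded by $dL<n$ in absolute value, still gives zero trace unless the exponent collapses to zero, in which case conjugation invariance replaces the product by a strictly shorter one and the argument recurses); in every case the bound $T/n$ (or better) is obtained.

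By Markov's inequality, for each $s\in E$ the fraction of $U\in S_n$ with $\tr(\rho^U_s)\ge\eps$ is at most $T/(\eps n)$, so the fraction of $U$ failing the requirement for some $s\in E$ is at most $|E|T/(\eps n)<1$ by the choice of $N$. Any $U$ outside this bad set gives an action $\rho^U$ in which both generators act transitively and $|\{x\in X:sx=x\}|<\eps|X|$ for every $s\in E$, as required. The only nontrivial technical point is the recursive bookkeeping for words whose alternating-product form does not match Theorem~\ref{T-CD} verbatim, handled via the cyclic-trace and conjugation-invariance arguments from the proof of Lemma~\ref{L-RandMatApp}; this introduces no new ideas beyond what is already present there.
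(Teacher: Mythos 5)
Your proof is correct, but it takes a genuinely different route from the paper's. The paper proves Lemma~\ref{L-sofic} without any probabilistic input: it uses residual finiteness of $F_2$ to get a finite quotient $H$ killing no element of $E$, builds the action from the translation action on $H\times\{1,\dots,r\}$ padded out to cardinality $n$, and then achieves transitivity of each generator by explicitly splicing the orbit cycles together at a small number ($O(n/J)$) of points, checking that this surgery perturbs the fixed-point counts by at most $\eps n/2$. You instead take $\rho_a$ to be a fixed $n$-cycle $\sigma$ and $\rho_b=U\sigma U^{-1}$ for a random $U\in S_n$, so transitivity of both generators is automatic, and you control fixed points by averaging over $U$ via Theorem~\ref{T-CD} plus Markov and a union bound. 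The key point making this work is that every $A_i=\sigma^{k}$ appearing in the alternating product has exactly zero trace once $0<|k|<n$, so the main term $r_d\max_i\tr(A_i)$ in Collins--Dykema vanishes and only the $t_d/n$ error survives; your handling of words not in strict alternating form by cyclic reduction (with merged exponents bounded by $dL<n$, and recursion when a merged exponent collapses to zero) mirrors the bookkeeping the paper already does in the proof of Lemma~\ref{L-RandMatApp}, so nothing new is needed there. In effect you have recycled the random-conjugation machinery that the paper reserves for Lemma~\ref{L-RandMatApp} (the free product setting) to reprove Lemma~\ref{L-sofic}; this buys a shorter argument and makes transitivity free, at the cost of invoking the probabilistic estimate where the paper's argument is elementary and fully explicit, and of dispensing with residual finiteness (which your argument never uses). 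Both proofs are valid.
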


\begin{proof}
Take a $q\in\Nb$ such that each element of $E$ can be written as a word of length at most $q$ in the generating set 
$D:= \{ a,a^{-1} , b , b^{-1} \}$ for $F_2$.

Let $J \geq 2$ be an integer satisfying $(4/J) |D^q | \leq \eps/2$.
It is well known that $F_2$ is residually finite, and so there exists a finite quotient $\pi : F_2 \to H$ 
such that none of the images $\pi (s)$ for $s\in E$ and $\pi (a^j)$ and $\pi (b^j)$ for $j=1,\dots , J$ are equal to $e$,
and in particular $|H|\geq J$. 
Let $N$ be any integer larger than $4|H|/\eps$ and let us show that this fulfils the requirements of the lemma.

Let $n$ be any integer with $n\geq N$. 
Then there is a largest integer $r\geq 1$ satisfying $(r+1)|H| \leq n$. Note that by the choice of $n$ we have
\begin{align*}
n-r|H|<\frac{\eps n}{2}.
\end{align*}
Set $H^+ := H\times \{ 1,\dots , r \}$ and enlarge this to some set $H^{++}$ of cardinality $n$.
Consider the diagonal action $\rho$ of $F_2$ on $H^+$ coming from 
the left translation action $(s,h)\mapsto \pi (s)h$ on the first factor and the trivial action on the second,
and extend this to an action of $F_2$ on $H^{++}$, which we also call $\rho$,
by having each of $a$ and $b$ act in some arbitrary transitive way
on $H^{++} \setminus H^+$.
The transitive pieces $Y_1 , \dots , Y_m$ into which $H^{++}$ partitions under the action of $\rho_a$ 
each have cardinality at least $J$ by our assumptions on $\pi$ from the first paragraph together with the fact that 
$|H^{++} \setminus H^+ | \geq |H|\geq J$. Therefore $m\leq n/J$.
For each $k=1,\dots , m$ choose a $y_{k,1}\in Y_k$. 
Then $Y_k$ has the form
\[
\{y_{k,1}, \rho_a(y_{k,1}), \rho_{a^2}(y_{k,1}),\ldots, \rho_{a^{j_k}}(y_{k,1})\}
\]
for some $j_k\geq J\geq 2$. Set $y_{k,2}=\rho_{a^{j_k}}(y_{k,1})$.
Similarly, the transitive pieces $Z_1 , \dots , Z_{m'}$ into which the action of $\rho_b$ partitions $H^{++}$
each have cardinality at least $J$, so that $m'\leq n/J$, and we choose for each $k=1,\dots , m'$ a $z_{k,1}\in Z_k$ and 
write $z_{k,2}\in Z_k$ for the image of $z_{k,1}$ under $\rho_{b^{l_k}}$, where $l_k\geq J\geq 2$ is the largest integer satisfying $\rho_{b^{l_k}}(z_{k,1})\neq z_{k,1}$.
Define a new action $\kappa$ of $F_2$ on $H^{++}$ by setting $\kappa_a (y_{k,2}) = y_{k+1,1}$ for $k=1,\dots , m-1$
and $\kappa_a (y_{m,2}) = y_{1,1}$ and setting $\kappa_a (y) = \rho_a y$ for other $y\in H^{++}$,
and likewise setting $\kappa_b (z_{k,2}) = z_{k+1,1}$ for $k=1,\dots , m'-1$
and $\kappa_b (z_{m',2}) = z_{1,1}$ and setting $\kappa_b (y) = \rho_b y$ for other $y\in H^{++}$. 
Then both $\kappa_a$ and $\kappa_b$ act transitively.

For each $s\in E$ write $s=s_1\cdots s_r$ with $s_1,\ldots, s_r\in D$ and $1\leq r\leq q$.
Suppose that $x\in H^{++}$ and that none of the elements $x, \kappa_{s_r}(x), \kappa_{s_{r-1}s_r}(x),\ldots, \kappa_{s}(x)$
is equal to $y_{k,1}$, $y_{k,2}$, $z_{k',1}$, or $z_{k',2}$ for $k=1,\ldots,m$ and $k'=1,\ldots,m'$. By the way $\kappa$ is defined
we then have $\kappa_s(x)=\rho_s(x)$. In particular, if $x\in H^+$ then $\kappa_s(x)\neq x$ since $\pi(s)\neq e$.

Since $E\subseteq D^q$ and $D^q$ is symmetric, we conclude that the set of all points $x\in H^{++}$ 
for which $\kappa_s x = x$ is contained in the set
\[
H^{++}\setminus H^+ \cup \bigcup_{t\in D^q} \kappa_t ( \{ y_{k,1}, y_{k,2} : k=1,\dots , m \} \cup \{ z_{k,1},z_{k,2} : k=1,\dots , m' \})
\]
and hence 
\begin{align*}
|\{x\in H^{++}: \kappa_s x=x\}\}|
&\leq |H^{++}\setminus H|+2(m+m')|D^q| \\
&\leq (n-r|H|)+\frac{4n}{J}|D^q|
\leq \eps n.
\end{align*} 
Finally, let $X$ be any set of cardinality $n$ and let $\theta: X\to H^{++}$ be any bijection.
Then the action of $F_2$ on $X$ defined by $s\mapsto \theta^{-1}\kappa_s\theta$ for $s\in F_2$ has the desired properties.
\end{proof}

\begin{lemma}\label{L-asymp free}
Let $T\curvearrowright X$ be a minimal homeomorphism of the Cantor set.  
Let $E$ be a finite subset of $F_2\setminus \{e\}$ and let $\eps > 0$. 
Then there exists a tower permutation $S$ of $T$ such that, for the induced action $F_2 = \langle a,b\rangle \curvearrowright X$ sending $a\mapsto T$ and $b\mapsto S$, one has
$\mu (\{x\in X  : sx = x \} )\leq \eps$ for every $s\in E$ and $\mu\in M_T (X)$. 
\end{lemma}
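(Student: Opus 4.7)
The plan is to combine Lemma~\ref{L-sofic} with a sufficiently tall clopen castle decomposition of $X$ under $T$. Let $q$ denote the maximum length of elements of $E$ as reduced words in $\{a^{\pm 1}, b^{\pm 1}\}$, apply Lemma~\ref{L-sofic} with $E$ and $\eps/4$ in place of $\eps$ to obtain the corresponding $N_0\in\Nb$, and fix an integer $N\geq \max(N_0, 16q/\eps)$. Since $T$ is free (as a minimal action of an infinite group on an infinite space), I can find a nonempty clopen set $Y\subseteq X$ such that the sets $Y,T^{-1}Y,\ldots,T^{-(N-1)}Y$ are pairwise disjoint, and the standard first-return construction then produces a clopen castle $\sC=\{(B_i,n_i)\}_{i=1}^{k}$ partitioning $X$ with $\base(\sC)=Y$ and all heights $n_i\geq N$. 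Since the tower levels partition $X$, the identity $\sum_i n_i\mu(B_i)=1$ for every $\mu\in M_T(X)$ forces $\mu(\base(\sC))\leq 1/N$, and by $T$-invariance we likewise have $\mu(\towertop(\sC))\leq 1/N$.

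For each tower $i$, I next apply Lemma~\ref{L-sofic} to the set $\{0,\ldots,n_i-1\}$ to obtain an action of $F_2$ in which both standard generators act transitively (and hence as $n_i$-cycles), with the fixed-point set of each $s\in E$ of proportion at most $\eps/4$. Relabeling along the orbit of the image of $a$, I may arrange that $a$ acts as the shift $j\mapsto j+1\pmod{n_i}$; writing $\pi_i$ for the resulting $n_i$-cycle representing $b$, a further local perturbation (for instance, post-composing with a transposition near the top level, or composing $b$ with a suitable power of $a$) secures the normalization $\pi_i(n_i-1)=0$ required by Definition~\ref{D-permutation}, at the cost of altering the fixed-point set of each $s\in E$ by a proportion of at most $O(q/n_i)\leq O(q/N)$. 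Taking $S$ to be the tower permutation of $T$ associated with $\sC$ and the family $\{\pi_i\}_{i=1}^{k}$ then furnishes the candidate.

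To estimate $\mu(\{x\in X:sx=x\})$ for $s\in E$ and $\mu\in M_T(X)$, I split this set into two pieces. The first consists of points $x\in T^jB_i$ whose trajectory under every prefix of $s$ (read right-to-left) remains inside tower $i$; for such $x$ the equality $sx=x$ forces $j$ to be a fixed point of the sofic $F_2$-action of $s$ on $\{0,\ldots,n_i-1\}$, so summing $\mu(B_i)\cdot(\eps/4+O(q/N))\,n_i$ over $i$ yields a contribution of at most $\eps/4+O(q/N)$. The second piece consists of points whose $s$-trajectory crosses the top or base of some tower along the way; since each letter of $s$ can trigger such a crossing only when the relevant iterate lies in $\towertop(\sC)$ or $\base(\sC)$, a union bound over the at most $q$ letters of $s$ controls this piece by $q\bigl(\mu(\towertop(\sC))+\mu(\base(\sC))\bigr)\leq 2q/N$. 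With $N\geq 16q/\eps$ the total is less than $\eps$, as required. The main obstacle is the normalization $\pi_i(n_i-1)=0$ built into Definition~\ref{D-permutation}: once $a$ has been forced onto the standard shift, producing a cyclic representative of $b$ that simultaneously respects this normalization and preserves the approximate-freeness bounds from Lemma~\ref{L-sofic} requires a careful perturbation argument, feasible because such adjustments touch only $O(1)$ level indices and therefore propagate through the length-$q$ word $s$ to spoil at most $O(q)$ of the $n_i$ possible starting points.
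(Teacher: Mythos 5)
Your overall strategy is the same as the paper's: decompose $X$ into a tall clopen castle, import an almost-free $F_2$-action on each tower's level set from Lemma~\ref{L-sofic}, normalize $a$ to the shift, let $b$ define the tower permutation, and control the error by separating trajectories that cross the top or base of a tower. The castle estimates, the identification of fixed points inside a tower with fixed points of the finite model, and the $2q/N$ crossing bound are all fine.

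The genuine gap is in the normalization step that you yourself flag as the main obstacle. Definition~\ref{D-permutation} requires each $\pi_i$ to be a \emph{cyclic} permutation of $\{0,\dots,n_i-1\}$ with $\pi_i(n_i-1)=0$; cyclicity is not cosmetic, since it is exactly what makes $S$ share its orbits with $T$ (and hence gives $M_S(X)=M_T(X)$, which Proposition~\ref{P-examples} relies on). Both of the concrete perturbations you propose destroy this: post-composing an $n_i$-cycle with a transposition splits it into two disjoint cycles, and composing an $n_i$-cycle with a nontrivial power of the shift can likewise produce an arbitrary cycle type (e.g.\ the square of the shift on four points is a product of two transpositions). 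So as written the step "secures the normalization $\pi_i(n_i-1)=0$" does not produce a tower permutation at all. The repair is to \emph{conjugate} $\kappa_b$ alone by a transposition exchanging $n_i-1$ with $\kappa_b^{-1}(0)$ (leaving $\kappa_a$ untouched): conjugation preserves the cycle type, achieves $\pi_i(n_i-1)=0$, and changes the permutation and its inverse at only $O(1)$ points, so the fixed-point set of each word of length at most $q$ moves by at most $O(q)$ points, which is the $O(q/n_i)$ loss you budgeted for. The paper avoids the issue entirely by applying Lemma~\ref{L-sofic} only to the interior levels $\{1,\dots,n_i-2\}$ and wiring the three boundary levels $0$, $1$, $n_i-1$ into the cycle by hand, so that $\pi_i=(0,\kappa_b(1),\dots,\kappa_b^{n_i-3}(1),1,n_i-1)$ is a single cycle with the required normalization by construction; this is the cleaner route, but your approach goes through once the perturbation is replaced by a conjugation.
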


\begin{proof}
Take a $q\in\Nb$ such that each element of $E$ can be written as a word of length at most $q$ in the generating set 
$D:= \{ a,a^{-1} , b , b^{-1} \}$ for $F_2$.

As recalled at the beginning of the section, by minimality $T$ admits a 
clopen castle $\{(B_i,n_i)\}_{i=1}^k$ partitioning $X$ such that each $n_i\geq 2$ is an integer greater than $4|D^q|/\eps$ 
that is also sufficiently large for a purpose to be described imminently.

Let $1\leq i \leq k$. Then by Lemma~\ref{L-sofic} we can assume $n_i$ to be large enough so that
there exists an action $F_2 \stackrel{\kappa}{\curvearrowright} \{ 1, \dots , n_i -2 \}$ such that $\kappa_a$ is the cyclic permutation sending $j$ to $j+1$ for $j=1,\dots , n_i -3$ and sending $n_i-2$ to $1$, the action of $\kappa_b$ is transitive, and 
\begin{gather}\label{E-fixed points}
|\{ x\in \{1,\ldots,n_i-2\} : \kappa_sx = x \} | \leq \frac{\eps}{2} (n_i-2 )
\end{gather}
for every $s\in E$.
Define $S$ on the $i$th tower by setting 
\begin{enumerate}
\item $S = T^{\kappa_b (1)}$ on $B_i$,

\item $S = T^{n_i-2}$ on $TB_i$,

\item $S = T^{\kappa_b (l)-l}$ on $T^l B_i$ for $2\leq l \leq n_i -2$,

\item $S=T$ on $T^{n_i-1} B_i$.
\end{enumerate}
Having done this over all $i=1,\dots , k$, we obtain a tower permutation $S$ of $T$ via the cyclic permutations 
$\pi_i=(0,\kappa_b(1),\ldots,\kappa_b^{n_i-3}(1),1,n_i-1)\in\Sym(n_i)$.
Define an action $F_2 = \langle a,b\rangle \stackrel{\alpha}{\curvearrowright} X$ 
by setting $\alpha_a = T$ and $\alpha_b = S$. 

Now let $s\in E$ and set $X_s = \{x\in X  : \alpha_s x = x \}$. Let $\mu\in M_T (X)$.
Given (\ref{E-fixed points}) and the way that we constructed $S$ on each tower, 
we see that the union of the levels in the $i$th tower 
that do not get sent under $\alpha_s$ to a different level in the same tower has $\mu$-measure at most 
$(\eps /2)(n_i -2)\mu(B_i) + 2|D^q|\mu(B_i)$, with the second summand accounting for the possibility that, under $\alpha$, 
the successive application of the generators in $D$ that spell out a reduced word in $E$
sends us at some point into the bottom or top level of the tower. 
More explicitly, write $C_i = \bigsqcup_{l=0}^{n_i-1} T^l B_i$ and let $x\in C_i$.
If $\alpha_s x\notin C_i$, then $x$ must belong to the union
\[
\bigcup_{t\in D^q} \alpha_t ( B_i \cup T^{n_i-1}B_i),
\]
which has $\mu$-measure bounded by $2|D^q|\mu(B_i)$ given that $M_T(X)=M_S(X)$. 
If $x$ is not in that union then
$x\in \bigsqcup_{l=1}^{n_i-2}T^l B_i$ and one can show by a straightforward
induction on the word length of $s$ that $\alpha_s x\in T^{\kappa_s(l)}B_i$ whenever $x\in T^l B_i$. 
Therefore
\[
X_s\cap C_i\subseteq \bigg( \bigcup_{t\in D^q} \alpha_t ( B_i \cup T^{n_i-1}B_i) \bigg) \cup 
\bigg( \bigsqcup_{l=1}^{n_i-2} T^l B_i\cap X_s \bigg)
\]
and so, using (\ref{E-fixed points}) and the fact that $n_i>4|D^q|/\eps$, 
\begin{align*}
\mu (X_s \cap C_i) 
\leq 2|D^q|\mu(B_i)+\frac{\eps}{2}(n_i-2)\mu(B_i)
\leq \eps n_i \mu (B_i ) 
= \eps \mu (C_i ).
\end{align*}
This yields finally
\begin{align*}
\mu (X_s ) = \sum_{i=1}^k \mu (X_s \cap C_i) \leq \eps \sum_{i=1}^k \mu (C_i ) = \eps ,
\end{align*}
giving the desired conclusion.
\end{proof}

We are now in a position to establish the first main result of this section, which produces actions in both $\WA(F_2,X)$ and $\Astar(F_2,X)$. We note that sequences $(T_n)_{n\in \Nb}$ as in the proposition statement exist in abundance, as explained in 
Remark~\ref{R-examples}. 

\begin{proposition}\label{P-examples}
Let $(T_n)_{n\in \Nb}$ be a sequence of strictly ergodic homeomorphisms of 
the Cantor set $X$ such that, writing $\mu_n$ for the unique measure in $M_{T_n} (X)$, 
the p.m.p.\ transformations $T_n \curvearrowright (X,\mu_n )$ for $n\in\Nb$ are weakly mixing and form a disjoint family.
Then there exists a sequence $(S_n)_{n\in \Nb}$ of homeomorphisms of $X$ 
such that, defining for each $n$ the action $F_2 = \langle a,b\rangle \stackrel{\alpha_n}{\curvearrowright} X$ by $a\mapsto T_n$ and $b\mapsto S_n$, one has the following:
\begin{enumerate}
\item $M_{\alpha_n} (X) = M_{T_n} (X) = M_{S_n} (X)$ for every $n\in\Nb$,

\item $\lim_{n\to\infty} \mu_n (\{ x\in X : \alpha_{n,s} x = x \} ) = 0$ for every $s\in F_2\setminus \{ e \}$,

\item the diagonal action $F_2 \curvearrowright X^\Nb$ is topologically free, topologically weakly mixing, and strictly ergodic,

\item the homeomorphisms $T=\prod_{n\in \Nb} T_n$ and $S=\prod_{n\in \Nb} S_n$ of $X^\Nb$
are strictly ergodic and topologically weakly mixing.
\end{enumerate} 
In particular, any conjugate 
$F_2 \curvearrowright X$ of the diagonal action $F_2 \curvearrowright X^\Nb$ by a
homeomorphism $X^\Nb \to X$ belongs to both $\WA(F_2,X)$ and $\Astar(F_2,X)$. 
\end{proposition}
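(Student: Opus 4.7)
The plan is to construct each $S_n$ as a tower permutation of $T_n$ via Lemma~\ref{L-asymp free} with increasingly stringent parameters, and then extract all the desired properties from the infinite product structure. First enumerate $F_2\setminus\{e\}$ as $s_1,s_2,\dots$ and, for each $n\in\Nb$, apply Lemma~\ref{L-asymp free} to $T_n$ with $E=\{s_1,\dots,s_n\}$ and $\eps=1/n$ to obtain a tower permutation $S_n$ of $T_n$ such that $\mu_n(\Fix(\alpha_{n,s}))\le 1/n$ for every $s\in\{s_1,\dots,s_n\}$. Tower permutations preserve orbits and hence invariant measures, so $M_{T_n}(X)=M_{S_n}(X)=\{\mu_n\}$ and consequently $M_{\alpha_n}(X)=\{\mu_n\}$, which gives (i); property (ii) is immediate from the construction.

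For (iv), Lemma~\ref{L-perm conj} shows $S_n$ is topologically conjugate to $T_n$, and unique ergodicity forces the implementing homeomorphism to preserve $\mu_n$, so $S_n\curvearrowright(X,\mu_n)$ is measure-conjugate to $T_n\curvearrowright(X,\mu_n)$ and is in particular weakly mixing. The family $\{S_n\curvearrowright(X,\mu_n)\}_{n\in\Nb}$ therefore inherits the hypothesized disjointness of the family $\{T_n\curvearrowright(X,\mu_n)\}_{n\in\Nb}$, so Lemma~\ref{L-strerg} applied separately to each family yields strict ergodicity of both $T=\prod_n T_n$ and $S=\prod_n S_n$ on $X^\Nb$. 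Writing $\mu=\prod_n\mu_n$, the Koopman representation of $T\curvearrowright(X^\Nb,\mu)$ on $L^2_0(X^\Nb,\mu)$ decomposes as a direct sum of finite tensor products of the weakly mixing Koopman representations of the factors, each such finite tensor product being weakly mixing by \cite[Theorem~2.23]{KerLi16}; hence $T\curvearrowright(X^\Nb,\mu)$ is weakly mixing, and the same argument gives weak mixing of $S\curvearrowright(X^\Nb,\mu)$. Since $\mu$ has full support, both $T$ and $S$ are topologically weakly mixing.

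For (iii), the diagonal action $F_2\curvearrowright X^\Nb$ is minimal because its restriction to $\langle a\rangle$, namely $T$, already is. Any $F_2$-invariant Borel probability measure is $T$-invariant, so unique ergodicity of $T$ gives $M_{F_2}(X^\Nb)=\{\mu\}$, i.e., strict ergodicity. Topological weak mixing of $T$ automatically upgrades to topological weak mixing of the $F_2$-action, since any $n\in\Zb$ witnessing weak mixing for $T$ on a given quadruple of open sets is realized by $a^n\in F_2$. For topological freeness, the fixed-point set of $\alpha_s$ on $X^\Nb$ equals the closed set $\prod_n\Fix(\alpha_{n,s})$, whose $\mu$-measure is $\prod_n\mu_n(\Fix(\alpha_{n,s}))$; by construction this product is eventually bounded by $\prod_{n\ge N_s}(1/n)=0$ where $s=s_{N_s}$, so $\Fix(\alpha_s)$ is $\mu$-null. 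Since $\mu$ has full support, a closed $\mu$-null set has empty interior, so $X^\Nb\setminus\Fix(\alpha_s)$ is open and dense; intersecting over the countably many $s\in F_2\setminus\{e\}$ and invoking Baire yields a dense $G_\delta$ set of points with trivial stabilizer.

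Finally, $X^\Nb$ is itself a Cantor set, so any homeomorphism $X^\Nb\to X$ conjugates the $F_2$-action to an action in $\Act(F_2,X)$ inheriting topological freeness, minimality, weak mixing, and strict ergodicity, placing it in $\WA(F_2,X)$. Membership in $\Astar(F_2,X)$ further requires strict ergodicity and spectral aperiodicity on each standard generator: both $T$ and $S$ are strictly ergodic by (iv), and a minimal homeomorphism admitting a clopen partition cyclically permuted with period $n\ge 2$ would possess the nontrivial continuous eigenvalue $e^{2\pi i/n}$, which is ruled out by topological weak mixing, so spectral aperiodicity is automatic. The main technical hurdle is verifying topological freeness of the $F_2$-action on $X^\Nb$; the diagonal enumeration built into the construction of $S_n$ is precisely what forces the infinite product of fixed-point measures to collapse to zero uniformly for every nontrivial group element, whereas Lemma~\ref{L-asymp free} on its own controls only finitely many elements at a time.
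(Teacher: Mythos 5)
Your proposal is correct and follows essentially the same route as the paper: construct each $S_n$ via Lemma~\ref{L-asymp free} over an exhausting sequence of finite subsets of $F_2\setminus\{e\}$ with $\eps=1/n$, get strict ergodicity of the diagonal action from Lemma~\ref{L-strerg}, deduce topological weak mixing from measure-theoretic weak mixing of the product together with full support of $\mu$, and obtain topological freeness by showing the fixed-point sets are $\mu$-null. The only cosmetic differences are that the paper transfers the properties of $S=\prod_n S_n$ from $T=\prod_n T_n$ by conjugating the two products globally via Lemma~\ref{L-perm conj} rather than rerunning the disjointness argument for the $S_n$-family, and it concludes topological freeness from essential freeness plus minimality rather than your (equally valid) Baire-category argument; your explicit derivation of spectral aperiodicity from topological weak mixing fills in a detail the paper leaves implicit.
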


\begin{proof}

Take an increasing sequence $K_1 \subseteq K_2 \subseteq\dots$ of finite subsets of $F_2\setminus \{ e\}$
with $\bigcup_{n=1}^\infty K_n = F_2\setminus \{ e\}$. 
For each $n$ apply Lemma~\ref{L-asymp free} to obtain a tower permutation $S_n$ of $T_n$ such that the action $\alpha_n$ of $F_2 = \langle a,b \rangle$ given by $a\mapsto T_n$ and $b\mapsto S_n$ satisfies $\mu_n(X_{\alpha_n , s}) \leq 1/n$
for all $s\in K_n$ where $X_{\alpha_n , s}$ denotes the fixed-point set $\{ x\in X : \alpha_{n,s} x =x \}$. It follows that for every $s\in F_2\setminus \{ e \}$ we have 
$\lim_{n\to\infty} \mu_n (X_{\alpha_n ,s} ) = 0$.
As noted earlier, it follows readily from the definition of tower permutation that 
$M_{S_n} (X) = M_{T_n} (X) = M_{\alpha_n} (X) = \{ \mu_n \}$ for all $n$.

It is now easy to see that the actions $F_2\stackrel{\alpha_n}{\curvearrowright} X$ are strictly ergodic and the family of p.m.p.\ actions $F_2\stackrel{\alpha_n}{\curvearrowright} (X,\mu_n)$ for $n\in\Nb$ is disjoint. 
By Lemma~\ref{L-strerg}, the diagonal action $\prod_{n\in\Nb} \alpha_n$ of $F_2$ on $X^\Nb$, which
is determined by $a\mapsto T= \prod_{n\in \Nb} T_n$ and $b\mapsto S= \prod_{n\in \Nb} S_n$, is strictly ergodic, and its unique invariant measure is $\mu=\prod_{n\in\Nb} \mu_n$.

Again applying Lemma~\ref{L-strerg}, the action $T \curvearrowright X^\Nb$ is strictly ergodic, and
as a p.m.p.\ transformation of $(X^\Nb,\mu )$ it is weakly mixing since each factor is weakly mixing.
Since by minimality each $\mu_n$ has full support, so does $\mu$. A simple exercise 
then shows that $T$ is topologically weakly mixing, and therefore so is the diagonal action
$F_2\curvearrowright X^\Nb$.

By Lemma~\ref{L-perm conj} the homeomorphism $S$ is conjugate to $T$ and hence is strictly ergodic and
topologically weakly mixing.

It remains to show that the action $F_2\curvearrowright X^\Nb$ is topologically free.
For every $s\in F_2\setminus \{ e \}$ we see,
given the equality $\{ x\in X^\Nb : sx=x \} =  \prod_{n\in \Nb} X_{\alpha_n ,s}$, that
\[
\mu(\{ x\in X^\Nb : sx=x \}) =\lim_{n\to\infty} \prod_{k=1}^{n} \mu_k(X_{\alpha_k ,s}) \leq \lim_{n\to\infty} \mu_n(X_{\alpha_n,s})= 0 ,
\]
which is tantamount to saying that the p.m.p.\ action $F_2\curvearrowright (X^\Nb , \mu )$ is free. 
From this we deduce that the action $F_2\curvearrowright X^\Nb$, being minimal, must be topologically free. 
\end{proof}

\begin{remark}\label{R-examples}
Examples of sequences $(T_n )_{n\in\Nb}$ as in Proposition~\ref{P-examples} can be produced as follows.
Del Junco showed that there is an uncountable disjoint family $\{ T_i \}_{i\in I}$ 
of weakly mixing p.m.p.\ transformations \cite[Corollary 2(b)]{Jun81},
and by the Jewett--Krieger theorem we may view each $T_i$ as a minimal homeomorphism of the Cantor set
with the invariant Borel probability measure being unique. Then any sequence drawn from this family will fulfill the requirements.
\end{remark}

Our remaining task is to establish square divisibility for the actions appearing in Proposition~\ref{P-examples}. As observed in Remark~\ref{R-0dimOSD}, this will also imply weak square divisibility. We reprise, in a simplified form, some of the arguments at play in Sections~\ref{S-SD I} and \ref{S-SD II}. 
The first of the lemmas we will need 
is naturally phrased in terms of topological full groups. 

\begin{definition}
Let $G\curvearrowright X$ be a minimal action on the Cantor set. The \emph{topological full group}, 
denoted $[[G\curvearrowright X]]$, is the group of all homeomorphisms $h:X\to X$ such that there exists a finite clopen partition $\{A_1,\ldots,A_n\}$ of $X$ and $s_1,\ldots,s_n \in G$ such that $hx = s_ix$ for all $i=1,\ldots,n$ and $x\in A_i$. 
In the case of a single homeomorphism $T\curvearrowright X$ we write $[[T]]$ for the topological full group of the 
action of $\Zb$ that it generates. Note that when the action $G\curvearrowright X$ is faithful 
one has a canonical embedding of $G$ into $[[G\curvearrowright X]]$.
\end{definition}

\begin{lemma}\label{L-SD full group}
Let $G\curvearrowright X$ be a faithful minimal action on the Cantor set and let $O_1,O_2\subseteq X$ be nonempty clopen subsets. Suppose that for every finite set $e\in E\subseteq G$ the action is $(O_1,O_2,E)$-squarely divisible. 
Then for every subgroup $H\subseteq [[G\curvearrowright X]]$ satisfying $G\subseteq H$ 
(identifying $G$ with its image under the canonical embedding into $[[G\curvearrowright X]]$ 
that we get from faithfulness) 
and every finite set $e\in F\subseteq H$, the action $H\curvearrowright X$ associated with the inclusion of $H$ in the homeomorphism group of $X$ 
is $(O_1,O_2,F)$-squarely divisible.
\end{lemma}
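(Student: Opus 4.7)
The plan is to invoke the zero-dimensional characterization in Proposition~\ref{P-SD0Dim} and to transfer witnesses from $G$ to $H$ by absorbing the piecewise structure of elements of $F$ into a single finite subset of $G$. Given the finite set $e \in F \subseteq H$, for each $h \in F$ its membership in $[[G\curvearrowright X]]$ supplies a clopen partition $\{A_{h,i}\}_{i=1}^{n_h}$ of $X$ together with elements $s_{h,1},\ldots,s_{h,n_h} \in G$ such that $h|_{A_{h,i}} = s_{h,i}|_{A_{h,i}}$. I would set
\[
E' \;=\; \{e\} \,\cup\, \{s_{h,i} : h \in F,\ 1 \le i \le n_h\} \;\subseteq\; G
\]
and then apply the hypothesized $(O_1,O_2,E')$-square divisibility of $G \curvearrowright X$ to produce, via Proposition~\ref{P-SD0Dim}, clopen sets $\{V_{i,j}\}_{i,j=1}^n$ that are pairwise equivalent, pairwise $E'$-disjoint, and satisfy conditions (i)--(iii) of that proposition with $B := V \cap (V^{E'})^c$, where $V = \bigsqcup_{i,j} V_{i,j}$.

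Next I would show that the same collection $\{V_{i,j}\}_{i,j=1}^n$ witnesses $(O_1,O_2,F)$-square divisibility for $H \curvearrowright X$, again via Proposition~\ref{P-SD0Dim}. Pairwise equivalence is inherited immediately from the inclusion $G \subseteq H$. For pairwise $F$-disjointness, the key computation is that, for every $h \in F$, the piecewise description gives $hV_{i,j} = \bigsqcup_{k} s_{h,k}(A_{h,k} \cap V_{i,j}) \subseteq E'V_{i,j}$, so $FV_{i,j} \subseteq E'V_{i,j}$ and the $F$-translates of the clopen sets $V_{i,j}$ inherit disjointness from the $E'$-translates. An analogous pointwise check yields $V^{E'} \subseteq V^F$: if $s(x) \in V$ for all $s \in E'$ and $h \in F$, then $h(x) = s_{h,k}(x) \in V$ for the unique $k$ with $x \in A_{h,k}$. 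Consequently $B_F := V \cap (V^F)^c \subseteq B$.

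Finally I would use this inclusion to transfer the three subequivalences. Since $G$-subequivalences are automatically $H$-subequivalences (one can use the same open cover and the same group elements, now viewed inside $H$), the subequivalence (i) from Proposition~\ref{P-SD0Dim} remains valid with the $G$-witnesses, and the inclusion $B_F \subseteq B$ (equivalently $B^c \subseteq B_F^c$) only enlarges the target open set; condition (ii) transfers identically with the same $V_1 \cup B \supseteq V_1 \cup B_F$; and for condition (iii), the smaller closed set $B_F$ is subequivalent to $O_2 \cap R$ because $B$ already is. The main obstacle I expect to navigate is simply the careful comparison of $B$ with $B_F$ in the right direction, but the inclusion $V^{E'} \subseteq V^F$ is precisely what makes the argument go through; the choice of $E'$ to include every ``letter'' occurring in the clopen decomposition of each $h \in F$ is exactly what forces this inclusion. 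No boundary analysis is needed because Proposition~\ref{P-SD0Dim} lets us argue entirely with clopen sets whose closures coincide with themselves.
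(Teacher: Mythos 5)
Your proposal is correct and follows essentially the same route as the paper's proof: collect the group elements from the piecewise-$G$ descriptions of the elements of $F$ into a finite set $E'\subseteq G$, apply the hypothesis with $E'$ via Proposition~\ref{P-SD0Dim}, and observe that $FY\subseteq E'Y$ and $Y^{E'}\subseteq Y^F$ so that $F$-disjointness is inherited and the new boundary set shrinks, which only enlarges the targets of the subequivalences. The only cosmetic difference is that the paper first passes to the common refinement of the partitions before extracting the group elements, which changes nothing substantive.
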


\begin{proof}
Let $e\in F\subseteq H$ be a finite set. 
By assumption, for each $h\in F$ there exists a clopen partition $\{A_{h,1},\ldots, A_{h,n_h}\}$ of $X$ and elements $s_{h,i} \in G$ such that $hx = s_{h,i}x$ for all $i=1,\ldots,n_h$ and $x\in A_{h,i}$. Write the join of these partitions 
as $\{B_1,\dots,B_m\}$. This is a clopen partition with the property that for every $h\in F$ and $j=1,\ldots,m$ there exists an $s_{h,j} \in G$ such that $hx = s_{h,j}x$ for all $x\in B_j$, as $B_j\subseteq A_{h,i}$ for some $i$. 
Write $E$ for the finite subset $\{s_{h,j} : h \in F, \ 1\le j \le m\}$ of $G$. By hypothesis, the action $G\curvearrowright X$ is $(O_1,O_2,E)$-squarely divisible. Hence by Proposition~\ref{P-SD0Dim} there exist an $n\in \Nb$ and pairwise equivalent and $E$-disjoint clopen subsets $\{V_{p,q}\}_{p,q=1}^n$ of $X$ such that, writing $V = \bigsqcup_{p,q=1}^n V_{p,q}$, $V_1 = \bigsqcup_{p=1}^n V_{p,1}$, $R = V^c$, and $B = V\cap (V^E)^c$, the following hold:
\begin{enumerate}
\item $V_{p,1} \prec O_1 \cap \bigsqcup_{q=2}^n V_{p,q} \cap B^c$ for every $p=1,\ldots,n$,
\item $R\prec O_2 \cap V \cap (V_1\cup B)^c$,
\item $B\prec O_2 \cap R$. 
\end{enumerate}
Now from the definition of $E$ we see that $FY \subseteq EY$ and $Y^F \supseteq Y^E$ for any set $Y\subseteq X$. The first of these inclusions
implies that the sets $\{V_{p,q}\}_{p,q=1}^n$ are $F$-disjoint for the action of $H$. The second
implies that the set $\tilde{B} := V\cap (V^F)^c$ is contained in $V\cap (V^E)^c$, and so from (i)--(iii) we immediately obtain
the following subequivalences with respect to the action of $G$ and hence also with respect to the action
of $H$ (since $G\subseteq H \subseteq [[G\curvearrowright X]]$ by hypothesis):
\begin{enumerate}
\item $V_{p,1} \prec O_1 \cap \bigsqcup_{q=2}^n V_{p,q} \cap \tilde{B}^c$ for every $p=1,\ldots,n$,
\item $R\prec O_2 \cap V \cap (V\cup \tilde{B})^c$,
\item $\tilde{B}\prec O_2 \cap R$. 
\end{enumerate}
Notice furthermore that the sets $\{V_{p,q}\}_{p,q=1}^n$ are pairwise equivalent for the action of $H$ since they
are pairwise equivalent for the action of $G$. We conclude 
by Proposition~\ref{P-SD0Dim} that $H\curvearrowright X$ is $(O_1,O_2,F)$-squarely divisible. 
\end{proof}

\begin{lemma}\label{L-SD permutation}
Let $T \curvearrowright X$ be a minimal homeomorphism of the Cantor set and $S$ a tower permutation of $T$. Let $F_2 =\langle a,b\rangle\curvearrowright X$ be the action given via $a\mapsto T$ and $b\mapsto S$. 
Let $O_1, O_2 \subseteq X$ be nonempty clopen sets and let $e\in E \subseteq F_2$ be a finite set. Then the action $F_2\curvearrowright X$ is $(O_1, O_2, E)$-squarely divisible. 
\end{lemma}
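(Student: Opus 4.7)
The plan is to reduce the statement to the amenable setting of Theorem~\ref{T-amenable} by exploiting the fact that every tower permutation lies in the topological full group of $T$. First I would observe that $S \in [[T]]$: the defining formula for $S$ shows that it acts as a locally constant power of $T$ with respect to the clopen partition given by the tower levels. Consequently, the homomorphism $\phi : F_2 \to \mathrm{Homeo}(X)$ determined by $a \mapsto T$ and $b \mapsto S$ has image $H := \langle T, S\rangle$ contained in $[[T]]$, and the original $F_2$-action factors through $H$ via $\phi$.

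Next I would apply Theorem~\ref{T-amenable} to $\Zb \curvearrowright X$ via $T$. Since $T$ is a minimal homeomorphism of an infinite Cantor set it is free and hence faithful, so $T$ embeds canonically into $[[T]]$. The action has the URP by \cite[Corollary~E]{Nar24b} and comparison by the main result of \cite{Nar22}. Theorem~\ref{T-amenable} therefore yields that $\Zb \curvearrowright X$ is $(O_1, O_2, K)$-squarely divisible for every nonempty open $O_1, O_2 \subseteq X$ and every finite $0 \in K \subseteq \Zb$.

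With this in hand, Lemma~\ref{L-SD full group} applies with the inclusion $\Zb \subseteq H \subseteq [[T]]$ and gives that the action $H \curvearrowright X$ is $(O_1, O_2, F)$-squarely divisible for every finite set $e \in F \subseteq H$. Taking $F := \phi(E)$, this produces pairwise equivalent, pairwise $\phi(E)$-disjoint clopen sets $\{V_{p,q}\}$ together with group elements in $H$ implementing the equivalences and subequivalences required by Proposition~\ref{P-SD0Dim}. Since $\phi : F_2 \to H$ is surjective, choosing preimages in $F_2$ of all the $H$-elements at play converts these witnesses into data for the $F_2$-action: the $E$-disjointness for $F_2$ coincides with $\phi(E)$-disjointness for $H$ (both being statements about the family of homeomorphisms $\{\phi(s) : s \in E\} = \{\alpha_s : s \in E\}$), and the pairwise equivalences and subequivalences depend only on the induced homeomorphisms and so transfer verbatim.

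The whole proof is structural and no step presents a genuine obstacle; the only point requiring a line of verification is the translation from $H$-square divisibility to $F_2$-square divisibility via $\phi$, which reduces to the tautology that the conditions in Definition~\ref{D-SD} and Proposition~\ref{P-SD0Dim} are phrased entirely in terms of the images of the group elements in $\mathrm{Homeo}(X)$.
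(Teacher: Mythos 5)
Your proposal is correct and follows essentially the same route as the paper: apply Theorem~\ref{T-amenable} to the $\mathbb{Z}$-action generated by $T$ (the paper gets the URP and comparison directly from almost finiteness, which is manifest from the clopen castle decompositions, rather than citing \cite{Nar22,Nar24b}, but both justifications are valid), note that $S\in[[T]]$ and that $T$ generates a faithful $\mathbb{Z}$-action, and invoke Lemma~\ref{L-SD full group}. Your explicit final step transferring square divisibility from $H=\langle T,S\rangle$ back to the (non-faithful) $F_2$-action via the surjection $\phi$ is a detail the paper leaves implicit, and your justification of it is the right one.
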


\begin{proof}
Since $T\curvearrowright X$ is almost finite (as is manifest by the clopen castle decompositions discussed at the beginning of 
this section and the previous one),
by Theorem~\ref{T-amenable} the action $\langle a \rangle \curvearrowright X$ is $(O_1,O_2,F)$-squarely divisible for every finite set 
$e\in F\subseteq \langle a \rangle$. Also, the minimality of $T$ and
the infiniteness of $X$ together imply that the action $\langle a \rangle\curvearrowright X$ is free (and hence faithful). 
Since it is clear from the definition of tower permutation that $S\in [[T]]$, we deduce by 
Lemma~\ref{L-SD full group} that the action $F_2\curvearrowright X$ given by $a\mapsto T$ and $b\mapsto S$ is $(O_1, O_2, E)$-squarely divisible.
\end{proof}

\begin{remark}\label{R-JM}
In Lemma~\ref{L-SD permutation} the action $F_2\curvearrowright X$ is not faithful. In fact 
a theorem of Juschenko and Monod \cite{JusMon13} implies that the action $F_2\curvearrowright X$ must
factor through an action of an amenable group.
\end{remark}

As Lemma~\ref{L-SD permutation} illustrates, 
Lemma~\ref{L-SD full group} yields many examples of squarely divisible actions of (frequently nonamenable) groups on the Cantor set. However, as Remark~\ref{R-JM} highlights, these actions are often far from being topologically free or even faithful. 
We now recapitulate a method that we have already used on a couple of occasions 
for showing square divisibility in the context diagonal actions (cf.\ Lemma~\ref{L-SD free product}).

\begin{lemma}\label{L-SD diagonal}
Let $G\stackrel{\alpha_n}{\curvearrowright} X_n$ for $n\in\Nb$ be actions on the Cantor set such that the 
corresponding diagonal action $G\stackrel{\alpha}{\curvearrowright} X:= \prod_{n\in\Nb} X_n$
is minimal. Suppose that each $\alpha_n$ is $(O_1,O_2,E)$-squarely divisible for all nonempty clopen sets $O_1,O_2\subseteq X_n$ and finite sets $e\in E\subseteq G$. Suppose that for every finite subset $F\subseteq G$ and $M\in\Nb$ there exist an integer $m\geq M$ and a nonempty clopen set $A\subseteq X_m$ such that $(F,A)$ is a tower for $\alpha_m$.
Then the action $G\curvearrowright X$ is squarely divisible. 
\end{lemma}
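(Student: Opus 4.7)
The plan is to establish $O$-square divisibility of the diagonal action, for each nonempty clopen $O\subseteq X$, by choosing $O_1$ and $O_2$ as cylinders pulled back from a tower base in a single factor $X_m$, whose own square divisibility then lifts to the required $(O_1,O_2,E)$-square divisibility of the diagonal action. Throughout, I will work in the clopen characterization from Proposition~\ref{P-SD0Dim}.

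Given a nonempty clopen $O\subseteq X$, first I would pick a basic cylindrical clopen set $V=V_1\times\cdots\times V_N\times\prod_{n>N}X_n\subseteq O$ with each $V_i$ clopen in $X_i$. The minimality of the diagonal action together with compactness yields, in the familiar way as at the beginning of the proof of Lemma~\ref{L-SD free product}, a finite set $F\subseteq G\setminus\{e\}$ and a clopen partition $\{W_s\}_{s\in F}$ of $X$ with $\alpha_sW_s\subseteq V$ for every $s\in F$. I then apply the tower hypothesis to $F\sqcup\{e\}$ with $M=N+1$ to obtain an index $m>N$ and a nonempty clopen set $A\subseteq X_m$ for which $(F\sqcup\{e\},A)$ is a tower for $\alpha_m$. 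Picking two disjoint nonempty clopen subsets $O_1',O_2'\subseteq A$ and setting $O_i:=\pi_m^{-1}(O_i')$, where $\pi_m:X\to X_m$ is the projection, yields disjoint nonempty clopen sets $O_1,O_2\subseteq X$.

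For the subequivalence $O_1\sqcup O_2\prec O_0$ with $O_0$ a clopen subset of $O$ disjoint from $O_1\cup O_2$, I would write $O_1\sqcup O_2=\bigsqcup_{s\in F}W_s\cap(O_1\sqcup O_2)$ and observe, via the equivariance $\alpha_s\circ\pi_m^{-1}=\pi_m^{-1}\circ\alpha_{m,s}$, that the clopen sets $\alpha_s(W_s\cap(O_1\sqcup O_2))\subseteq\pi_m^{-1}(\alpha_{m,s}(O_1'\sqcup O_2'))$ for $s\in F$ are pairwise disjoint by the tower property in $X_m$, and, since $e\notin F$, also disjoint from $O_1\sqcup O_2$. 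Taking $O_0$ to be their union produces a nonempty clopen subset of $V\subseteq O$, disjoint from $O_1\cup O_2$, with the desired subequivalence implemented via the elements of $F$.

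To verify $(O_1,O_2,E)$-square divisibility for an arbitrary finite $e\in E\subseteq G$, I would invoke the factorwise square divisibility hypothesis applied to $\alpha_m$ with data $O_1',O_2',E$ to obtain witnessing clopen sets $\{V'_{i,j}\}_{i,j=1}^n$ in $X_m$ as in Proposition~\ref{P-SD0Dim}, and pull them back to the cylinders $V_{i,j}:=\pi_m^{-1}(V'_{i,j})$ in $X$. The equivariance of $\pi_m$ ensures that pairwise equivalence, $E$-disjointness, the remainder, and the boundary all transfer cleanly from $X_m$ to $X$, and that the three subequivalences from Proposition~\ref{P-SD0Dim} lift via the same group elements. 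The main care point is this routine verification of transfer under $\pi_m^{-1}$; there is no genuine obstacle, since $\pi_m^{-1}$ commutes with the action and with the operations $Y\mapsto Y^E$ and closure on clopen sets.
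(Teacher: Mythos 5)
Your proposal is correct and follows essentially the same route as the paper's proof: reduce to a cylinder set inside $O$, use minimality to get the partition $\{W_s\}_{s\in F}$, invoke the tower hypothesis in a factor $X_m$ beyond the cylinder's coordinates to produce $O_1,O_2$ as pullbacks of disjoint clopen subsets of the tower base, and pull back the witnessing sets for $(O_1',O_2',E)$-square divisibility of $\alpha_m$ along $\pi_m^{-1}$. The only cosmetic differences are that you take $O_0$ to be the exact union $\bigsqcup_{s\in F}\alpha_s(W_s\cap(O_1\sqcup O_2))$ where the paper uses a slightly larger product set, and you are a bit more explicit that $m$, $A$, $O_1$, $O_2$ are chosen uniformly before quantifying over $E$.
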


\begin{proof}
Let $O$ be a nonempty clopen subset of $X$. By Proposition~\ref{P-SD0Dim} it is enough to establish $O$-square divisibility,
and for this we may assume, by passing to a smaller clopen set if necessary, 
that $O$ satisfies $O = \pi_I^{-1} (\pi_I (O))$ for some finite set $I\subseteq\Nb$
where $\pi_I : X\to X_I := \prod_{n\in I} X_n$ is the coordinate projection map.
Since $\alpha$ is minimal so is $\gamma := \prod_{n\in I} \alpha_n$,
and so we can apply a compactness argument as in the proof of Lemma~\ref{L-SD free product}
to find a finite set $F\subseteq G\setminus \{ e \}$ and a clopen partition $\{W_s\}_{s\in F}$ of $X_I$ indexed by $F$ 
such that $\gamma_s W_s\subseteq \pi_I (O)$ for each $s\in F$.

Let $E \subseteq G$ be a finite set containing $e$. By hypothesis there are an $m > \max I$ and a clopen set
$A\subseteq X_m$ such that $(F\cup \{e\},A)$ is a tower for $\alpha_m$.
Take disjoint nonempty clopen sets $O_1,O_2\subseteq A$. 
Since $G\stackrel{\alpha_m}{\curvearrowright} X_m$ is $(O_1,O_2,E)$-squarely divisible, by Proposition~\ref{P-SD0Dim}
there exist an $n\in \Nb$ and pairwise equivalent and pairwise $E$-disjoint clopen subsets $\{V_{i,j}\}_{i,j=1}^n$ of $X_m$ such that, writing
$V = \bigsqcup_{i,j=1}^n V_{i,j}$, $V_1 = \bigsqcup_{i=1}^n V_{i,1}$, $R = X_m\setminus V$, and $B = V\cap (V^E)^c$,
\begin{enumerate}
\item $V_{i,1} \prec_{\alpha_m} O_1 \cap \bigsqcup_{j=2}^n V_{i,j}\cap B^c$ for $i=1,\ldots,n$,
\item $R\prec_{\alpha_m} O_2 \cap V \cap (V_1 \cup B)^c$,
\item $B\prec_{\alpha_m} O_2 \cap R$.
\end{enumerate}
For each $C\subseteq X_m$ set 
$C' = \prod_{n=1}^{m-1} X_n \times C\times\prod_{n=m+1}^\infty X_n \subseteq \prod_{n\in\Nb} X_n$.
Then from the subequivalences above we get 
\begin{enumerate}
\item $V_{i,1}' \prec_\alpha O_1' \cap \bigsqcup_{j=2}^n V_{i,j}'\cap (B')^c$ for $i=1,\ldots,n$,
\item $R'\prec_\alpha O_2' \cap V'\cap (V_1' \cup B')^c$,
\item $B'\prec_\alpha O_2' \cap R'$.
\end{enumerate}
The properties of $E$-disjointness and pairwise equivalence of the clopen sets $V_{i,j}'$ are inherited from the $V_{i,j}$.
Hence the action $G\stackrel{\alpha}{\curvearrowright} X$ is $(O_1',O_2',E)$-squarely divisible. 
Write $\pi$ for the coordinate projection map $\prod_{n\in\Nb\setminus \{  m\}} X_n \to X_I$.
Since $\gamma_s W_s\subseteq \pi_I (O)$ for $s\in F$ and the sets $\alpha_{m,s} (O_1\sqcup O_2)$ for $s\in F$ are pairwise disjoint, 
we obtain, interpreting the product sets below via the identification of $X$ with 
$(\prod_{n\in\Nb\setminus \{  m\}} X_n )\times X_m$,
\begin{align*}
O_1'\sqcup O_2' = \bigsqcup_{s\in F} \pi^{-1} (W_s) \times (O_1\sqcup O_2) 
&\sim \bigsqcup_{s\in F} s(\pi^{-1} (W_s)\times (O_1\sqcup O_2)) \\
&\subseteq \pi^{-1} (\pi_I (O)) \times (X_m\setminus (O_1\sqcup O_2)) .
\end{align*}
This shows that $O_1'\sqcup O_2' \prec \pi^{-1} (\pi_I (O)) \times (X_m\setminus (O_1\sqcup O_2))$, and since the two
sets in this subequivalence are disjoint and the second one is contained in $O$, 
we conclude by Proposition~\ref{P-SD0Dim} that $G\curvearrowright X$ is $O$-squarely divisible.
\end{proof}

\begin{proposition}\label{P-SD examples}
Let $F_2\curvearrowright X$ be an action on the Cantor set as in the last sentence of Proposition~\ref{P-examples}. Then $F_2\curvearrowright X$ is squarely divisible.  
\end{proposition}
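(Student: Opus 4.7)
The plan is to reduce the statement to an application of Lemma~\ref{L-SD diagonal} for the underlying diagonal action. Square divisibility as formulated in Definition~\ref{D-SD} is expressed entirely in terms of open sets, closures, group elements, subequivalences, and pairwise equivalences, all of which are manifestly invariant under conjugation by a homeomorphism. So it suffices to verify that the unconjugated diagonal action $F_2\curvearrowright X^\Nb$ furnished by Proposition~\ref{P-examples} is squarely divisible, and for this I will check the three hypotheses of Lemma~\ref{L-SD diagonal}.

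Two of those hypotheses are essentially immediate. Minimality of the diagonal action on $X^\Nb$ is part~(iii) of Proposition~\ref{P-examples} (in fact one gets strict ergodicity there). The uniform $(O_1,O_2,E)$-square divisibility of each factor action $\alpha_n$, for all nonempty clopen $O_1,O_2\subseteq X$ and all finite $e\in E\subseteq F_2$, is exactly the conclusion of Lemma~\ref{L-SD permutation}, which applies because by construction $S_n$ is a tower permutation of the minimal homeomorphism $T_n$.

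The one hypothesis that requires a short argument is the tower-on-a-late-coordinate condition: for every finite $F\subseteq F_2$ and every $M\in\Nb$ we need $m\geq M$ and a nonempty clopen $A\subseteq X$ with $(F,A)$ a tower for $\alpha_m$. For this I will use the asymptotic freeness estimate (ii) of Proposition~\ref{P-examples}, namely that $\mu_n(\{x:\alpha_{n,s}x=x\})\to 0$ as $n\to\infty$ for each $s\in F_2\setminus\{e\}$. Applying this to the finitely many elements of $F^{-1}F\setminus\{e\}$, I can pick $m\geq M$ with $\sum_{s\in F^{-1}F\setminus\{e\}}\mu_m(\{x:\alpha_{m,s}x=x\})<1$, so the open set
\[
W=\bigcap_{s\in F^{-1}F\setminus\{e\}}\{x\in X:\alpha_{m,s}x\neq x\}
\]
has positive $\mu_m$-measure and is in particular nonempty. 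Choosing any $x\in W$, the points $\{\alpha_{m,s}x\}_{s\in F}$ are pairwise distinct; by the Hausdorff property and zero-dimensionality of $X$, I then pick pairwise disjoint clopen neighbourhoods $V_s\ni\alpha_{m,s}x$ and set $A=\bigcap_{s\in F}\alpha_{m,s}^{-1}V_s$. This $A$ is a clopen neighbourhood of $x$ with $\alpha_{m,s}A\subseteq V_s$ for each $s\in F$, whence $(F,A)$ is a tower. With all three hypotheses in hand, Lemma~\ref{L-SD diagonal} yields square divisibility of the diagonal action, completing the proof.

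The main subtlety is conceptual rather than technical: individually no $\alpha_n$ is topologically free (each factors through an amenable action, cf.\ Remark~\ref{R-JM}), so one cannot build the required clopen tower in a single coordinate by topological freeness alone. What rescues the argument is that the asymptotic freeness input (ii) of Proposition~\ref{P-examples}---originally engineered via the diagonal product construction to make the \emph{limiting} action topologically free---already suffices to produce clopen towers of any prescribed finite shape on late enough coordinates, which is exactly what Lemma~\ref{L-SD diagonal} requires.
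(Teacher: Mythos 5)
Your proposal is correct and follows essentially the same route as the paper: reduce (via conjugation invariance) to the diagonal action, invoke Lemma~\ref{L-SD permutation} for the factorwise square divisibility, use the asymptotic freeness estimate (ii) of Proposition~\ref{P-examples} applied to $F^{-1}F\setminus\{e\}$ to produce a clopen $F$-tower on a late coordinate, and conclude with Lemma~\ref{L-SD diagonal}. The only cosmetic difference is your choice of threshold (total fixed-point measure less than $1$ versus the paper's bound of $1/(2|F^{-1}F|)$ per element), which changes nothing.
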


\begin{proof}
By assumption there are $T_n$, $S_n$, and $\mu_n$ as in Proposition~\ref{P-examples} so that,
up to conjugacy, we can decompose the action $F_2\curvearrowright X$ as a diagonal product $F_2 = \langle a,b \rangle \curvearrowright X^\Nb$ of actions $F_2 \stackrel{\alpha_n}{\curvearrowright} X$ 
generated by $a\mapsto T_n$ and $b\mapsto S_n$ with 
\begin{gather}\label{E-fixed}
\lim_{n\to\infty} \mu_n ( \{ x\in X : \alpha_{n,s} x=x \} ) = 0
\end{gather}
for every $s\in F_2\setminus \{ e\}$.
By Lemma~\ref{L-SD permutation} each $\alpha_n$ is $(O_1,O_2,E)$-squarely divisible for all nonempty clopen sets $O_1,O_2\subseteq X$ and finite sets $e\in E\subseteq F_2$.

Now given a finite set $ F\subseteq F_2$ and an $M\in\Nb$, by (\ref{E-fixed}) we can find an integer $m\geq M$ 
such that $\mu_m ( \{ x\in X : \alpha_{m,s} x=x \} ) \leq 1/(2|F^{-1} F|)$ for all $s\in F^{-1} F\setminus \{ e \}$, 
which implies that the set of all $x\in X$ such that $\alpha_{m,s} x\neq x$ for all $s\in F^{-1} F \setminus \{ e \}$
has $\mu_m$-measure at least $1/2$. Choosing a particular $x$ in this set we then have $\alpha_{m,s} x\neq \alpha_{m,t} x$ 
for all distinct $s,t\in F$, which permits us to find clopen neighbourhood $A\subseteq X$ of $x$ 
such that $(F,A)$ is a tower for $\alpha_m$.
We have thus verified the conditions that enable us to apply Lemma~\ref{L-SD diagonal} 
and hence conclude that $F_2\curvearrowright X$ is squarely divisible.  
\end{proof}

\end{document}